\documentclass[11pt,reqno]{article}

\usepackage[leqno]{amsmath}
\usepackage{amssymb,xfrac}
\usepackage{amsthm}
\usepackage{ifthen} 
\usepackage[pagebackref,breaklinks,unicode]{hyperref} 
    \renewcommand*{\backrefalt}[4]{\ifcase #1 (Not cited).\or (Cited p.~#2).\else (Cited pp.~#2).\fi} 
\usepackage[capitalise,noabbrev]{cleveref}
\usepackage{enumitem}
\usepackage{graphicx}
\usepackage{tikz-cd, tikz}
\usepackage{float}
\usepackage[justification=centering]{caption}
\usepackage{subcaption}
\usepackage{wasysym}

\numberwithin{equation}{section}

\newtheorem{theorem}{Theorem}[section]
\newtheorem{proposition}[theorem]{Proposition}
\newtheorem{lemma}[theorem]{Lemma}
\newtheorem{corollary}[theorem]{Corollary}

\newtheorem*{theorem*}{Theorem}
\newtheorem*{proposition*}{Proposition}
\newtheorem*{lemma*}{Lemma}
\newtheorem*{corollary*}{Corollary}

\newtheorem{mthm}{Theorem} 
\newtheorem{mcor}[mthm]{Corollary}

\theoremstyle{definition}
\newtheorem{definition}[theorem]{Definition}
\newtheorem{observation}[theorem]{Observation}
\newtheorem{remark}[theorem]{Remark}
\newtheorem{example}[theorem]{Example}

\newtheorem*{definition*}{Definition}
\newtheorem*{observation*}{Observation}
\newtheorem*{remark*}{Remark}
\newtheorem*{example*}{Example}
\newtheorem*{question*}{Question}
\newtheorem*{exercise*}{Exercise}
\newtheorem*{fact*}{Fact}
\newtheorem*{notation*}{Notation}

\newcommand{\bbR}{\mathbb{R}}

\newcommand{\Z}{\mathbb{Z}}

\newcommand{\HH}{\mathcal{H}}

\DeclareMathOperator{\Hull}{Hull}

\newcounter{claimref}
\newcounter{claimcount}
\newenvironment{claim}{\refstepcounter{claimref}\stepcounter{claimcount}
    \par\medskip\textbf{Claim \theclaimcount:}\hspace{0.5mm}}{}
\newenvironment{claim*}{\par\medskip\noindent\textbf{Claim:}\hspace{0.5mm}}{}
\newenvironment{claimproof}{\medskip\noindent\emph{Proof of Claim \theclaimcount.}\hspace{0.5mm}}
    {\leavevmode\unskip\penalty9999\hbox{}\nobreak\hfill\quad\hbox{$\diamondsuit$}\medskip}
\newenvironment{claim*proof}{\medskip\noindent\emph{Proof of Claim.}\hspace{0.5mm}}
    {\leavevmode\unskip\penalty9999\hbox{}\nobreak\hfill\quad\hbox{$\diamondsuit$}\medskip}

\newcounter{subclaimcount}

\newenvironment{subclaim*}{\par\medskip\textbf{Subclaim:}\hspace{0.5mm}}{}
\newenvironment{subclaimproof*}{\medskip\noindent\emph{Proof of Subclaim.}\hspace{0.5mm}}
    {\leavevmode\unskip\penalty9999\hbox{}\nobreak\hfill\quad\hbox{$\fullmoon$}\medskip}

\crefname{cond}{condition}{conditions}
\creflabelformat{cond}{#2#1\@#3}
\crefname{obs}{observation}{observations}
\creflabelformat{obs}{#2#1\@#3}

\renewcommand{\cal}{\mathcal}
\newcommand*{\eps}{\varepsilon}

\newcommand*{\R}{\mathbb{R}}

\newcommand*{\dist}{d}

\newcommand*{\sgen}[1]{\langle#1\rangle}

\renewcommand*{\subset}{\subseteq}

\newcounter{shcount}
\newcounter{thmcount}

\swapnumbers
 
\newcommand*{\numberedtheorem}[3]{\theoremstyle{plain}\newtheorem*{makethm\thethmcount}{#1}
    \ifthenelse{\equal{#2}{}}{\begin{makethm\thethmcount}#3\end{makethm\thethmcount}\stepcounter{thmcount}}
    {\begin{makethm\thethmcount}[#2]#3\end{makethm\thethmcount}\stepcounter{thmcount}}} 

\makeatletter\newcommand{\linkdest}[1]{\Hy@raisedlink{\hypertarget{#1}{}}}\makeatother 

\usepackage{lmodern,anyfontsize} 
\usepackage{setspace} 
\usepackage{tocloft} 
\usepackage[nottoc]{tocbibind}

\usepackage{amsmath,amssymb,xfrac}
    \usepackage{enumitem} \setlist{nosep}
\usepackage{ifthen} 
\usepackage[pagebackref,breaklinks,unicode]{hyperref} 
    \renewcommand*{\backrefalt}[4]{\ifcase #1 (Not cited).\or (Cited p.~#2).\else (Cited pp.~#2).\fi} 
\usepackage{graphicx} 
\usepackage{mathrsfs}
\usepackage[T1]{fontenc}
\usepackage[margin=3cm]{geometry}
\usepackage{natbib}

\newcommand*{\ext}{\mathrm{ext}}
\newcommand*{\N}{\mathbb{N}}

\makeatletter
\newtheorem*{rep@theorem}{\rep@title}
\newcommand{\newreptheorem}[2]{%
\newenvironment{rep#1}[1]{%
 \def\rep@title{#2 \ref{##1}}%
 \begin{rep@theorem}}%
 {\end{rep@theorem}}}
\makeatother

\newreptheorem{theorem}{Theorem}
\newreptheorem{example}{Example}
\newreptheorem{fact}{Fact}

\newcommand*{\ssm}{\smallsetminus}

\definecolor{harrycomment}{rgb}{0.6,0,0.4}

\definecolor{shakedcomment}{rgb}{0, 0, 255}

\definecolor{oussamacomment}{rgb}{0,0.5,0}

\title{From branching quasiflats to flats in CAT(0) cube complexes}
\hypersetup{pdftitle = From branching quasiflats to flats}
\date{}
\author{Shaked Bader, Oussama Bensaid, and Harry Petyt}

\newcommand{\Addresses}{{\bigskip\footnotesize\par
\textsc{Mathematical Institute, University of Oxford, UK}\par\nopagebreak\textit{E-mail address}: 
\texttt{shaked.bader@sjc.ox.ac.uk}
\par\medskip\par
\textsc{Institut de Recherche en Mathématique et en Physique, Université Catholique de Louvain, Belgium} \par\nopagebreak\textit{E-mail address}: 
\texttt{oussama.bensaid@uclouvain.be}
\par\medskip\par
\textsc{Mathematics Institute, University of Warwick, UK}\par\nopagebreak\textit{E-mail address}: 
\texttt{harrypetyt@gmail.com}
}}

\begin{document}

\maketitle

\begin{abstract}
We study quasiisometric embeddings between finite-dimensional CAT(0) cube complexes. More specifically, we introduce geometric branching conditions under which flats in the domain, not necessarily of top rank, are mapped within finite Hausdorff distance of flats. As a consequence, one obtains embeddings between natural graphs associated with the Tits boundaries of those cube complexes. 

These results form a key step in understanding quasiisometric embeddings between right-angled Artin groups.
In an appendix, we also explain how the same methods recover previously established rigidity results for quasiisometric embeddings of symmetric spaces and Euclidean buildings of the same spherical type.
\end{abstract}

{\hypersetup{hidelinks}\setcounter{tocdepth}{1}\tableofcontents\setcounter{tocdepth}{2}}

\section{Introduction} 

Quasiisometric rigidity is one of the central themes of geometric group theory and coarse geometry. Broadly speaking, it asks to what extent groups or spaces can be determined by their large-scale geometry.
This guiding question has been very influential, and there is a rich body of work showing that in many important settings, quasiisometric groups or spaces turn out to be essentially the same. 

As a natural extension of this, one can ask when one group or space can be quasiisometrically embedded in another. This is especially relevant because in most cases, results on quasiisometric rigidity rely in some way on understanding quasiisometrically embedded flats. The goal of this paper is to provide geometric conditions under which a quasiisometric embedding between CAT(0) cube complexes sends flats within finite Hausdorff distance of flats. This is more difficult than in the setting of quasiisometric rigidity, because we do not have access to a quasiinverse.

\subsection{History and motivation}

The study of groups from the large-scale geometric perspective goes back at least as far as work of Milnor and Wolf on growth in groups \cite{milnor:note,wolf:growth,milnor:growth}, and really began to come into focus with important developments such as Stallings' theorem on ends of groups \cite{stallings:ontorsionfree}, Mostow's rigidity theorem \cite{mostow:quasiconformal,marden:geometry,prasad:strong}, and Gromov's polynomial growth theorem \cite{gromov:groups}. 

As a consequence of Gromov's theorem and the Bass--Guivarc'h formula \cite{bass:degree,guivarch:croissance}, any group quasiisometric to a finitely generated abelian group is itself virtually abelian. Subsequently, Gromov proposed the program of trying to classify finitely generated groups up to quasiisometry \cite{gromov:asymptotic}. The body of work in this direction is too large for us to encompass here, so let us give just a sample of some of it.

Generalising the case of Euclidean spaces, two symmetric spaces of non-positive curvature are quasiisometric if and only if (after permuting and rescaling the irreducible de Rham factors) they are isometric \cite{kleinerleeb:rigidity}; see also \cite{tukia:homeomorphic,pansu:metriques,chow:groups}, and an analogous rigidity statement also holds for higher-rank Euclidean buildings. For mapping class groups, one likewise has strong quasiisometric rigidity: except in some low-complexity cases, every self-quasiisometry of a mapping class group is close to an isometry, and consequently any finitely generated group quasiisometric to a mapping class group is commensurable with it \cite{behrstockkleinerminskymosher:geometry}. Further examples include lattices in semisimple Lie groups other than $\mathrm{Isom}(\mathbb H^2)$, and solvable Baumslag--Solitar groups $\mathrm{BS}(1,n)$ \cite{schwartz:quasiisometry,eskin:quasiisometric,farbmosher:quasiisometric}.

In the cubical setting, which is the setting of the present paper, Bestvina--Kleiner--Sageev \cite{bestvinakleinersageev:asymptotic} and Huang \cite{huang:quasiisometric:1,huang:quasiisometric:2} proved quasiisometric rigidity within natural classes of right-angled Artin groups. For example, any two quasiisometric right-angled Artin groups with finite outer automorphism group are isomorphic \cite{huang:quasiisometric:1}. Even more, Huang found a class of right-angled Artin groups that are strongly quasiisometrically rigid \cite{huang:commensurability}. It should be noted, though, that there are right-angled Artin groups for which these results fail \cite{behrstockneumann:quasiisometric}.

In some particularly nice situations, rigidity phenomena of this kind persist when one replaces quasiisometries by quasiisometric embeddings. In higher-rank symmetric spaces, Fisher--Whyte proved that, under natural compatibility assumptions, quasiisometric embeddings remain rigid, forcing maximal flats to be mapped within finite Hausdorff distance of maximal flats \cite{fisherwhyte:quasiisometric}. They also constructed exotic embeddings when these assumptions fail. See also \cite{nguyen:quasiisometric,bensaidnguyen:embedding} for further developments, including Euclidean buildings. Analogously, Bowditch proved that, for compact orientable surfaces of the same complexity at least $4$, any quasiisometric embedding between their mapping class groups is at bounded distance from an isometry \cite{bowditch:large:mapping}. In particular, those surfaces are homeomorphic and the embedding is a quasiisometry. As yet there has been no work in this direction for right-angled Artin groups.

A common feature of many of these rigidity theorems is that they rely, in one form or another, on a higher-rank analogue of the Morse lemma. Recall that the Morse lemma states that every quasigeodesic in a $\delta$--hyperbolic metric space lies at finite Hausdorff distance from a geodesic. This fails in higher-rank spaces: consider a log-spiral in the Euclidean plane, for instance. Nevertheless, some form of rigidity can often be recovered in higher rank by replacing quasigeodesics with top-rank quasiflats. 

In symmetric spaces and Euclidean buildings, foundational results of Kleiner--Leeb \cite{kleinerleeb:rigidity} and later Eskin--Farb \cite{eskinfarb:quasiflats} show that top-rank quasiflats are Hausdorff-close to finite unions of Weyl sectors. In CAT(0) cube complexes, analogous theorems for top-dimensional quasiflats, with Weyl sectors replaced by cubical orthants, were obtained by Bestvina--Kleiner--Sageev \cite{bestvinakleinersageev:quasiflats} in the cocompact 2--dimensional case, and by Huang \cite{huang:top} in general. A finer statement for top-rank biLipschitz flats in median metric spaces was also obtained by Bowditch \cite{bowditch:large:mapping}. Similar quasiflat theorems with coarser objects in the conclusion were later obtained by Behrstock--Hagen--Sisto and Bowditch in greater generality \cite{behrstockhagensisto:quasiflats,bowditch:quasiflats}.

Our goal in this paper is to find geometric conditions under which we can upgrade these quasiflats theorems for CAT(0) cube complexes. We wish to upgrade them in three ways. First of all, we want to obtain a genuine flat in the codomain, not just a union of orthants. Secondly, we would like to consider CAT(0) cube complexes whose dimension is strictly greater than the maximal dimension of a flat, for improved stability properties under quasiisometries. And finally, we want to obtain results for flats that are not necessarily top-rank. These refinements form the basis for results on quasiisometric embeddings between right-angled Artin groups in \cite{baderbensaidpetyt:quasiisometric:rigidity,baderbensaidpetyt:quasiisometric:flexibility}. In particular, having control over lower-rank flats is important for obtaining \emph{obstructions} to the existence of quasiisometric embeddings.

In the setting of quasiisometries, the first of these refinements can often be deduced from the existence of a quasiinverse. Indeed, Huang proved that if $f:X\to Y$ is a quasiisometry between universal covers of compact weakly special cube complexes of dimension $n$, then every $n$--flat in $X$ is mapped within uniformly bounded Hausdorff distance of an $n$--flat in $Y$ \cite[Thms~1.3,~5.4]{huang:top}. His argument, however, relies in an essential way on the existence of a quasiinverse, and thus does not generalise to the setting of quasiisometric embeddings. The necessity of additional assumptions for proving a quasiflats theorem for lower-dimensional flats can already be seen in the log-spiral example.


Our main input is a collection of branching conditions on geodesics and flats, designed to capture the presence of enough transverse flat structure to force rigidity. While our results are formulated for general CAT(0) cube complexes, the guiding examples come from universal covers of Salvetti complexes of right-angled Artin groups, where these branching conditions can be read directly from the defining graph, making them especially concrete and useful in practice.

\subsection{Main results}

As described above, our aim is to provide geometric branching conditions under which a quasiisometric embedding of CAT(0) cube complexes sends flats within finite Hausdorff distance of flats. The following is a pared-down version of \cref{thm:fully_branching_general} that fulfils this brief. We shall give more detailed statements below.

\begin{mthm} \label{mthm:sketch}
Let $X$ and $Y$ be finite-dimensional CAT(0) cube complexes, and let $f:X\to Y$ be a quasiisometric embedding. If $X$ and $Y$ have the same asymptotic rank, $n$, then $f(F)$ lies at finite Hausdorff distance from an $n$--flat of $Y$ for every directionally branch-complemented $n$--flat $F\subset X$.
\end{mthm}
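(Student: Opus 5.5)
The plan is to apply Huang's quasiflat theorem to $f(F)$, and then use the branching hypothesis to force the resulting union of orthants to assemble into a single flat. Since $X$ and $Y$ have asymptotic rank $n$, the image $f(F)$ is a top-rank quasiflat in $Y$. Huang's theorem \cite{huang:top} then says $f(F)$ lies at finite Hausdorff distance from a finite union of $n$--dimensional cubical orthants $O_1,\dots,O_k\subset Y$.

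To do this, I first pass to the Tits boundary. The union $\bigcup O_i$ determines an $(n-1)$--cycle $\sigma$ in the Tits boundary $\partial_T Y$; more precisely, its support is a union of all-right spherical simplices coming from the orthants. The map $f$ induces a correspondence between the boundary sphere $\partial_T F\cong S^{n-1}$ and $\sigma$, defined by sending the boundary of an orthant of $F$ to the union of boundary simplices of the orthants approximating its image. The goal is then to show that $\sigma$ is the boundary of a single flat. For this it suffices that $\sigma$ is a round sphere built from $2n$ "coordinate directions" $\{\pm e_1,\dots,\pm e_n\}$, where each pair $\pm e_j$ is represented by a pair of asymptotic geodesic rays forming a bi-infinite geodesic parallel to a hyperplane direction. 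Once that holds, the product of these $n$ lines, taken inside the corresponding product region in $Y$ (product of hyperplane carriers), is an $n$--flat $F'$ with $\partial_T F'=\sigma$. A standard argument comparing orthants with the same boundary then gives $d_{\mathrm{Haus}}(f(F),F')<\infty$.

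The branching hypothesis is used to control the directions. I take "directionally branch-complemented" to mean that each coordinate geodesic direction $\ell$ of $F$ admits a complementary $(n-1)$--flat, which branches in the sense that the parallel set of $\ell$ contains many half-flats diverging from $F$. The key step is to show that $f$ maps each coordinate line $\ell\subset F$ close to a geodesic line in $Y$ rather than to a log-spiral-type quasigeodesic. The argument is as follows:
\begin{enumerate}
\item Consider the top-rank quasiflats $\ell\times E$, where $E$ ranges over the branching complements.
\item Apply the quasiflat theorem to each of them.
\item Observe that the image of $\ell$ is coarsely the intersection of several such orthant-complexes, whose branches diverge.
\item Conclude that the image of $\ell$ must be coarsely a union of boundary-face rays common to all of them. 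A coarse intersection of top-rank orthant-unions meeting in codimension $n-1$ is a finite union of rays.
\end{enumerate}
The branching then forces exactly two opposite rays, yielding a line in $Y$. Repeating this for each coordinate direction, and using the fact that orthogonality in $F$ is preserved up to the combinatorics of hyperplanes (the images of complementary lines span quasiflats, hence the corresponding rays span orthants), identifies $\sigma$ with the join of $n$ zero-spheres.

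The main obstacle will be the third step: proving that the coarse intersection of the images of the branching quasiflats is a single geodesic line, and not a broken union of rays in possibly non-parallel hyperplane directions. Since there is no quasiinverse, this cannot be pulled back as in Huang's argument. My first attempt would be to compute coarse intersections of orthant complexes via hyperplane sets: an orthant is determined, up to finite Hausdorff distance, by the hyperplanes it crosses. From this I would show, using finite dimensionality and the divergence of the branches, that the hyperplanes crossed by $f(\ell)$ split into two unidirectional families that are each nested.
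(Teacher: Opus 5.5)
\textbf{There is a genuine gap, exactly at the step you flag.} Your proposed hyperplane criterion cannot close it.

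\emph{The angle between the two rays is not controlled.} Your steps 1--4 are essentially \cref{prop:intersection_orthants} plus \cref{prop:boundary_quasi_flat_sphere}. They can only show that $f(\ell)$ is Hausdorff-close to a union of two rays, and nothing in that coarse data controls the angle between them. For instance, take a product of two trees and two quasiflats, each a union of six orthants, whose boundary hexagons share exactly one vertex from each factor, placed opposite each other in both hexagons. These coarsely intersect in an ``L'' formed by two singular rays at Tits angle $\frac\pi2$. This L is a quasiline, and its hyperplanes split into two nested chains, so it passes your test. What separates a line from an L is whether hyperplanes of one chain cross those of the other, and coarse intersections do not see that.

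Ruling out the L by coarse intersection would require already knowing that the images of the top-rank flats through $\ell$ are near flats. That is circular, since those flats are not assumed directionally branch-complemented.

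\emph{The missing ingredient.} What is missing is a genuine use of the transverse \emph{branching} $(n-1)$--flat $H$, which your argument never uses. The paper uses it in the asymptotic cone, where one has fine rather than coarse control:
\begin{enumerate}
\item By Bowditch (\cref{prop:Bowditch_bilipflat_cubulated}), the biLipschitz image of the witnessing $n$--flat $\hat F$ is cubulated. Off a countably $\mathcal H^{n-2}$--rectifiable set $S$, $\hat f|_{\hat F}$ maps small balls into single $n$--cubes.
\item Images of branching subflats are cubulated, being intersections of such images.
\item A coarea-type result (\cref{lem:rectifiable_parallel_flats}) shows that almost every parallel of $\hat\gamma$ misses $S$.
\item At each point of such a parallel, the image of the parallel of $H$ is locally a union of cubes of dimension at most $n-1$ inside one $n$--cube, and it still separates the two branches of the image of $\hat\gamma$. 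In the link $\Sigma_{n-1}$ this forces the two branches to be antipodal (\cref{prop:top_argument}).
\item So the image of each such parallel is a local, hence global, geodesic, and by continuity the same holds for every parallel.
\end{enumerate}

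\textbf{Your assembly step also fails as stated.} Knowing, with non-uniform constants, that coordinate lines map near geodesics does not give a flat. In \cref{rem:non_uniform_stable_not_good_for_flats}, every singular geodesic of $\R^2$ is sent near a singular geodesic of $X_{C_6}$, and the axes go to $\langle s_0\rangle$ and $\langle s_3\rangle$. These lie in a common quasiflat but do not span an orthant, and the image is near no flat. So ``images of complementary lines span quasiflats, hence their rays span orthants'' is false.

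You need \emph{all} parallels of each spanning direction to map \emph{uniformly} near geodesics (\cref{prop:singular_quasi_flats}). The paper gets this with $D=0$ in the cone, then needs a separate argument to come back down:
\begin{itemize}
\item when $\dim Y=n$, support sets and volume growth (\cref{prop:quasiflats-from-bilip-flats});
\item in general, Bowditch's panel complexes, gluing of quasimedian maps, and \cref{prop:lines} (\cref{prop:quasiflat_from_biLipschitz:asymptotic_rank}).
\end{itemize}

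\textbf{Two further problems.} Your first step misapplies Huang's theorem, which requires $\dim Y=n$; the statement allows $\dim Y>n$, where one must use Bowditch's quasiflat theorem (compare \cref{thm:huang_for_asymptotic_rank}). Likewise, building $F'$ from a round boundary sphere via products of hyperplane carriers is unjustified for non-proper $Y$ of dimension greater than $n$ (compare \cref{eg:no_lines_2}).
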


\emph{Asymptotic rank}, introduced by Wenger \cite{wenger:asymptotic}, is a large-scale notion of dimension that is preserved by quasiisometries. The asymptotic rank of a CAT(0) cube complex $X$ is the largest $n$ such that one can find arbitrarily large $n$--dimensional boxes inside $X$; see \cref{lem:rank_cone_CCC}. For example, every hyperbolic CAT(0) cube complex has asymptotic rank at most one, even though its dimension could be arbitrarily large. In this case, \cref{mthm:sketch} is just the Morse lemma, but it serves to illustrate the significant increase in generality compared to taking $X$ and $Y$ to have dimension $n$.

The key notion in \cref{mthm:sketch} is that of a \emph{directionally branch-complemented} flat, which we introduce for general CAT(0) spaces in \cref{def:fully_branching_flats}. 

\begin{definition} \label{def:intro_branching}
Let $X$ be a CAT(0) space of asymptotic rank $n$. A flat $H\subset X$ is called \emph{branching} if it is an intersection of finitely many $n$--flats. 

A geodesic $\gamma\subset X$ is \emph{branch-complemented} if there is an $n$--flat $F\supset\gamma$ and an $(n-1)$--flat $H\subset F$ transverse to $\gamma$ such that $F$ is coarsely covered by branching parallels of $\gamma$ and also coarsely covered by branching parallels of $H$.

A flat $E\subset X$ is \emph{directionally branch-complemented} if it is spanned by geodesics such that the branch-complemented parallels of each one coarsely cover $E$.
\end{definition}

In the case of standard flats in universal covers of Salvetti complexes, these notions can be read directly from the defining graph $\Gamma$. More precisely, a standard geodesic corresponding to a vertex $v\in\Gamma$ is branching precisely when $v$ is an intersection of top-dimensional cliques. It is branch-complemented if moreover $v$ belongs to a top-dimensional clique $K$ such that the codimension-one face $K \ssm \{v\}$ is also an intersection of top-dimensional cliques. A standard flat is directionally branch-complemented precisely when each vertex of the corresponding clique defines a branch-complemented standard geodesic. For instance, if $\Gamma$ is triangle-free, then the standard geodesic corresponding to $v$ is branching as long as $v$ is not a leaf, and it is branch-complemented if $v$ also has a non-leaf neighbour. The special case of right-angled Artin groups is developed further in \cite{baderbensaidpetyt:quasiisometric:rigidity}.

\begin{figure}[ht]
    \centering
    \begin{tikzpicture}[baseline=-.1cm, scale=1.3]
\fill[fill=red!30]
(-.4,-.35) to (.4,-.35) to (0,.35);
\draw[thick]
(-.4,-.35) to (.4,-.35) to (0,.35) to (-.4,-.35);
\draw[thick]
(0,.35) to (-.2,.7) to (.2,.7) to (0,.35)
(-.2,.7) to (.2,.7) to (0,1.05) to (-.2,.7)
(-.4,-.35) to (-.8,-.35) to (-.6,-.7) to (-.4,-.35)
(-.8,-.35) to (-.6,-.7) to (-1,-.7) to (-.8,-.35)
(.4,-.35) to (.8,-.35) to (.6,-.7) to (.4,-.35)
(.8,-.35) to (.6,-.7) to (1,-.7) to (.8,-.35);
\fill[red] (-.4,-.35) circle(.08);
\fill[red] (.4,-.35) circle(.08);
\fill[red] (0,.35) circle(.08);
\fill (-.2,.7) circle(.08);
\fill (.2,.7) circle(.08);
\fill (0,1.05) circle(.08);
\fill (-.8,-.35) circle(.08);
\fill (-.6,-.7) circle(.08);
\fill (-1,-.7) circle(.08);
\fill (.8,-.35) circle(.08);
\fill (.6,-.7) circle(.08);
\fill (1,-.7) circle(.08);
\end{tikzpicture}
\qquad
\begin{tikzpicture}[baseline=-.1cm, scale=1.3]
\fill[fill=red!30]
(-.4,-.35) to (0,.35) to (.4,-.35);
\draw[thick]
(-.8,.35) -- (.8,.35)
(-1.2,-.35) -- (1.2,-.35);
\draw[thick]
(-1.2,-.35) to (-.8,.35) to (-.4,-.35) to (0,.35) to (.4,-.35) to
(.8,.35) to (1.2,-.35);
\fill (-1.2,-.35) circle(.08);
\fill (-.8,.35) circle(.08);
\fill[red] (-.4,-.35) circle(.08);
\fill[red] (0,.35) circle(.08);
\fill[red] (.4,-.35) circle(.08);
\fill (.8,.35) circle(.08);
\fill (1.2,-.35) circle(.08);
\end{tikzpicture}
\qquad
\begin{tikzpicture}[baseline=-.1cm, scale=1.25]
\fill[red!30]
(-.4,.35) -- (0,-.35) -- (.4,.35) -- cycle;

\draw[thick]
(-.8,-.35) -- (.8,-.35)
(-.4,.35) -- (.4,.35);

\draw[thick]
(-.8,-.35) -- (-.4,.35) -- (0,-.35) -- (.4,.35) -- (.8,-.35)
(-.4,.35) -- (0,1.05) -- (.4,.35);

\fill (-.8,-.35) circle(.08);
\fill[red] (-.4,.35) circle(.08);
\fill[red] (0,-.35) circle(.08);
\fill[red] (.4,.35) circle(.08);
\fill (.8,-.35) circle(.08);
\fill (0,1.05) circle(.08);
\end{tikzpicture}
\caption{Examples in rank $3$: the red vertices define branch-complemented standard geodesics, and the red cliques define directionally branch-complemented standard flats.}
    \label{fig:Intro_exples_FB_geodesics_and_cliques}
\end{figure}
 
Another basic example is given by products of $3$--regular trees. In that case, every geodesic contained in a factor is branch-complemented, and every $k$--flat contained in a product of $k$ factors is directionally branch-complemented. In fact, this example has a much stronger intersection pattern than the one required in \Cref{def:intro_branching}: every such directionally branch-complemented flat is itself branching, as in the example on the right of \Cref{fig:Intro_exples_FB_geodesics_and_cliques}. By contrast, the two other examples show that a $3$--flat may be directionally branch-complemented even though none of its $2$--dimensional subflats is branching. Thus, our notion only imposes branching in dimension $1$ and codimension $1$, and not a full higher-dimensional intersection pattern.

The following is a combination of Theorems~\ref{thm:fully_branching_general}, \ref{thm:subflat_branching_in_fully_branching}, and \ref{thm:subflat_branching_asymptotic_rank}.

\begin{mthm} \label{mthm:subflats}
Let $X$ and $Y$ be finite-dimensional CAT(0) cube complexes of asymptotic rank $n$. For each $q$ there exists $D=D(q,Y)$ such that the following holds for every $q$--quasiisometric embedding $f:X\to Y$.

For each directionally branch-complemented $n$--flat $F\subset X$, the image $f(F)$ lies at Hausdorff distance at most $D$ from an $n$--flat of $Y$. Moreover, if $H$ is a branching subflat of $F$, then $f(H)$ is at finite Hausdorff distance from a semisingular flat $H'\subset Y$.

If $\dim X=\dim Y=n$, then $H'$ is singular and $D$ depends only on $q$.
\end{mthm}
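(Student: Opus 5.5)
We will deduce the statement by combining three results proved later in the paper: the top-rank statement (\cref{thm:fully_branching_general}), the statement for branching subflats (\cref{thm:subflat_branching_in_fully_branching}), and the comparison with the dimension-$n$ case (\cref{thm:subflat_branching_asymptotic_rank}). Even so, it is worth sketching how each ingredient would be proved.

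\textbf{Step 1: quasiflats close to unions of orthants.} The first step is a quasiflat theorem adapted to asymptotic rank. Fix a directionally branch-complemented $n$--flat $F\subset X$. Then $f(F)$ is an $n$--quasiflat in $Y$, and $Y$ has asymptotic rank $n$ by \cref{lem:rank_cone_CCC}. Passing to asymptotic cones, which are median spaces of rank $n$, we obtain that $f(F)$ is at finite Hausdorff distance from a finite union of $n$--dimensional cubical orthants. Here Huang's argument should be run with $\dim Y$ replaced by asymptotic rank. The bound depends on $q$ and on $Y$ (through its dimension and the quasiflat constants); when $\dim Y=n$ the dependence on $Y$ disappears.

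\textbf{Step 2: geodesics and codimension-one flats.} Next we use the branching hypotheses to show that the orthants glue into a single flat. Take a spanning geodesic $\gamma\subset F$ and a branch-complemented parallel $\gamma'$ of it. This gives an $n$--flat $F'\supset\gamma'$ and an $(n-1)$--flat $H'\subset F'$, each coarsely covered by branching parallels.

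\begin{itemize}
\item A branching flat is an intersection of $n$--flats $F_1,\dots,F_k$. Each $f(F_i)$ is close to a union of orthants, and coarse intersections of such unions are again unions of (lower-dimensional) orthants. The aim is to show that $f$ of a branching geodesic is close to a union of rays through a point, and indeed a bi-infinite line, since it is a quasigeodesic.
\item Applying the same reasoning to $H'$ and its branching parallels shows that $f(H')$ is close to an $(n-1)$--dimensional object.
\item Since $F'$ is coarsely covered both by parallels of $\gamma'$ and by parallels of $H'$, the image $f(F')$ looks coarsely like a product of a line with an $(n-1)$--dimensional set. This product structure rules out orthant configurations that bend along the $\gamma$ direction.
\end{itemize}

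Carrying this out for every spanning direction of $F$ shows that the orthant complex approximating $f(F)$ has no branching or bending in any direction. It is therefore a single $n$--flat at distance $D(q,Y)$.

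\textbf{Step 3: branching subflats.} For a branching subflat $H=\bigcap F_i$, each $f(F_i)$ is close to a flat $F_i'$. The coarse intersection $\bigcap F_i'$ of the $F_i'$ is then close to a convex subcomplex-type intersection of flats, which is a semisingular flat $H'$. When $\dim Y=n$, the flats $F_i'$ are top-dimensional and hence singular, and intersections of singular top flats are singular; this gives the final clause.

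\textbf{Main obstacle.} I expect the hardest part to be Step 2, namely controlling the coarse intersection of images when only a quasiisometric embedding (and no quasiinverse) is available. The plan is to work in asymptotic cones, where coarse intersections become genuine intersections of bilipschitz flats in a rank-$n$ median space. There we would argue with the product decomposition coming from $\gamma'$ and $H'$, and then use uniformity over ultrafilters to return to bounded Hausdorff distance.
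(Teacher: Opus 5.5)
Your top-level deduction is exactly the paper's: the statement is \cref{thm:fully_branching_general} combined with \cref{thm:subflat_branching_in_fully_branching} and \cref{thm:subflat_branching_asymptotic_rank}. You have the roles of the last two swapped: the first is the $\dim X=\dim Y=n$ case with singular $H'$, the second the general semisingular case. However, your sketches of why these ingredients hold have genuine gaps.

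\textbf{Step 2.} The inference ``close to a union of rays and a quasigeodesic, hence close to a line'' is invalid. Two singular rays at Tits angle $\frac\pi2$ (an L--shape in a $2$--flat) already form a quasigeodesic.

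More fundamentally, no coarse argument of the kind you describe can rule out bending. In \cref{rem:non_uniform_stable_not_good_for_flats}, every parallel of either axis of $\R^2$ is sent within finite, non-uniform, distance of a singular geodesic. Yet the image of the plane is a cyclic union of twelve orthants. So ``$F'$ is coarsely covered by parallels of $\gamma'$ and of $H'$'' does not make $f(F')$ coarsely a product.

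The paper gets the necessary fine control only in the asymptotic cone, via a specific mechanism.
\begin{enumerate}
\item By \cref{prop:Bowditch_bilipflat_cubulated}, images of branching flats are cubulated, and the singular set of $\hat f|_{\hat F}$ has codimension $2$.
\item By \cref{lem:rectifiable_parallel_flats}, almost every parallel of $\hat\gamma$ avoids that singular set.
\item At each point of such a parallel, a neighbourhood maps into a single $n$--cube. The cubulated image of the transverse branching $(n-1)$--flat lies in the $(n-2)$--skeleton of the link $\Sigma_{n-1}$ and separates the two directions of $\hat f(\hat\gamma)$. This forces those directions to be antipodal (\cref{prop:top_argument}).
\item Hence $\hat f(\hat\gamma)$ is a local, hence global, geodesic. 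Continuity handles the remaining parallels, and \cref{prop:singular_quasi_flats} with $D=0$ shows that $\hat f(\hat F)$ is a flat.
\end{enumerate}

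Returning to $Y$ is also not a formal ``uniformity over ultrafilters''; for orthants the analogous implication is even false (\cref{rem:bad_orthants}).
\begin{itemize}
\item When $\dim Y=n$, it needs Huang's quasiflat theorem, support sets and volume rigidity (\cref{prop:quasiflats-from-bilip-flats}).
\item In general, it needs Bowditch's panel-complex quasiflat theorem, the quasimedian gluing lemmas and \cref{prop:lines} (\cref{prop:quasiflat_from_biLipschitz:asymptotic_rank}).
\end{itemize}
This is where $D(q,Y)$ and $D(q)$ come from.

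Relatedly, Step 1 is off in the general case. Huang's support-set argument needs ambient dimension $n$. Also, the orthants can only be taken semisingular: $X^n$ in \cref{eg:corner_to_corner} has no singular orthants (see \cref{thm:huang_for_asymptotic_rank}).

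\textbf{Step 3.} You assume each $f(F_i)$ is close to a flat. But the $n$--flats $F_i$ cutting out $H$ are arbitrary; only $F$ is assumed directionally branch-complemented. Nothing controls $f(F_i)$ beyond the quasiflat theorems.

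The paper uses only those theorems for the $F_i$. From them, $f(H)$ is close to a finite union of $k$--orthants (\cref{prop:intersection_orthants}, resp.\ \cref{prop:subflat_image_union_orthants}). Its Tits boundary is a topological $(k-1)$--sphere (\cref{prop:boundary_quasi_flat_sphere}) lying in $\partial_TF'$.

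That is still not enough, because a topological sphere subcomplex of $\partial_TF'$ need not be round. For example, the $4$--cycle through $e_1,e_2,-e_1,e_3$ is the Tits boundary of the bent plane $\{y\ge0,z=0\}\cup\{z\ge0,y=0\}\subset\R^3$.

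The missing input is that $H$ is itself directionally branch-complemented (\cref{lem:subflats_of_fully_branching_ccc}). Then:
\begin{itemize}
\item the cone argument above makes $\hat f(\hat H)$ a singular $k$--flat;
\item \cref{lem:embedding-Titsboundary-into_link_and_Tits_boundary} then makes $\partial_Tf(H)$ a round sphere;
\item $H'$ is the subflat of $F'$ with that boundary.
\end{itemize}
$H'$ is singular when $\dim Y=n$ because $F'$ is singular (\cref{lem:top_dim_singular}). In general it is semisingular by \cref{lem:top_rank_semisingular} and \cref{lem:semisingular_subflat_boundary}.
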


As well as giving more precise information about the image of $F$ than \cref{mthm:sketch}, this seems to be the first general result giving control on the images of lower-rank flats. Also, the control it gives is rather strong. Not only are the images close to genuine flats, those flats are \emph{semisingular}, or even \emph{singular}. Let us define these terms; see also \cref{sec:singular}.

CAT(0) cube complexes arise naturally in various settings, and classically have two natural metric structures: the CAT(0) metric and the median, or $\ell^1$ metric. In this paper we are working with the CAT(0) metric, so the ``flats'' mentioned above are isometrically embedded copies of $(\R^d,\ell^2)$ with respect to the CAT(0) metric. However, a particularly nice family of flats are those that are also isometric embeddings of $(\R^d,\ell^1)$ with respect to the median metric. These are precisely the \emph{singular} flats.

In an $n$--dimensional CAT(0) cube complex $X$, an $n$--flat is automatically singular (\cref{lem:top_dim_singular}), but in the generic case where the dimension of $X$ is greater than its asymptotic rank $n$, it can happen that $X$ has no singular $n$--flats, even when $n=1$: see \cref{eg:corner_to_corner}. We therefore introduce the more flexible notion of \emph{semisingularity}: a flat is semisingular if it becomes singular after passing to an asymptotic cone. See \cref{def:semisingular}.

Thus, \cref{mthm:subflats} does not merely produce flats, it produces flats that match well with the cubical structure on $Y$. We also highlight here that there is no properness assumption in \cref{mthm:subflats}. This contrasts with the general CAT(0) setting: \cref{eg:no_lines_2} describes a complete CAT(0) space that is quasiisometric to a line, has Tits boundary consisting of two points, and yet contains no biinfinite geodesic. We rule out this type of degeneracy for finite-dimensional CAT(0) cube complexes in \cref{prop:lines}. 



\medskip

All of the flats that can be understood using \cref{mthm:subflats} are either top-rank or are intersections of top-rank flats. Since the behaviour of lower-rank flats is in general much wilder than that of top-rank flats, if one wants to control maximal flats that are not top-rank, then one needs a stronger branching hypothesis. This leads to the following definition, requiring branch-complemented geodesics to arise as intersections of directionally branch-complemented top-rank flats; see \cref{def:strong_fully_branching}.

\begin{definition} \label{def:intro_strong_branching}
Let $X$ be a CAT(0) space of asymptotic rank $n$. A geodesic $\gamma\subset X$ is \emph{strongly branch-complemented} if it is an intersection of directionally branch-complemented $n$--flats.

A flat $H\subset X$ is \emph{directionally strongly branch-complemented} if it is spanned by geodesics such that the strongly branch-complemented parallels of each one coarsely cover $E$.
\end{definition}


As before, for standard geodesics in universal covers of Salvetti complexes these properties can be read directly from the defining graph. It is worth noting that directionally strongly branch-complemented flats of dimension greater than one need not be branching, and in fact they need not even be contained in any $n$--flat. See, for example, the standard flats associated to the three middle edges in Figure~\ref{fig:Intro_fully_branching_subgraphs_3D}.

\begin{figure}[ht]
\centering
\begin{tikzpicture}[baseline=-.1cm, scale=1.3]

\coordinate (tL) at (-.85,1);
\coordinate (bL) at (-.85,-1);

\coordinate (v1) at (-1.35,.08);
\coordinate (v2) at (-1.10,-.28);
\coordinate (v3) at (-.60,-.28);
\coordinate (v4) at (-.35,.08);
\coordinate (v5) at (-.85,.35);

\coordinate (a1) at (.6,-.3);
\coordinate (a2) at (1.4,-.3);
\coordinate (a3) at (1.8,.3);
\coordinate (a4) at (1,.3);
\coordinate (tR) at (1.2,1);
\coordinate (bR) at (1.2,-1);

\draw[thick]
(v1) -- (v2) -- (v3) -- (v4);
\draw[thick,dotted]
(v4) -- (v5) -- (v1);

\draw[thick]
(tL) -- (v1) (tL) -- (v2) (tL) -- (v3) (tL) -- (v4)
(bL) -- (v1) (bL) -- (v2) (bL) -- (v3) (bL) -- (v4);
\draw[thick,dotted]
(tL) -- (v5)
(bL) -- (v5);

\draw[thick]
(a1) -- (a2) -- (a3);
\draw[thick,dotted]
(a1) -- (a4) -- (a3);

\draw[thick]
(a1) -- (tR)
(a2) -- (tR)
(a3) -- (tR)
(a1) -- (bR)
(a2) -- (bR)
(a3) -- (bR);
\draw[thick,dotted]
(a4) -- (tR)
(a4) -- (bR);

\draw[thick]
(tL) -- (tR)
(bL) -- (bR)

(v4) -- (a1);

\fill[red] (v1) circle(.06);
\fill[red] (v2) circle(.06);
\fill[red] (v3) circle(.06);
\fill[red] (v4) circle(.06);
\fill[red] (v5) circle(.06);
\fill[red] (tL) circle(.06);
\fill[red] (bL) circle(.06);

\fill[red] (a1) circle(.06);
\fill[red] (a2) circle(.06);
\fill[red] (a3) circle(.06);
\fill[red] (a4) circle(.06);
\fill[red] (tR) circle(.06);
\fill[red] (bR) circle(.06);
\end{tikzpicture}
\caption{In the universal cover of the Salvetti complex associated with this graph, all standard geodesics are strongly branch-complemented, and hence all standard flats are directionally strongly branch-complemented. Nevertheless, the standard $2$-flats associated to the three middle edges are not contained in any 3--flat.}
    \label{fig:Intro_fully_branching_subgraphs_3D}
\end{figure}

Note that even though \cref{def:intro_strong_branching} contains more information than \cref{def:intro_branching}, it is still the case that a flat being directionally strongly branch-complemented depends only on its one-dimensional subspaces. Such one-dimensional information is not enough for us to consider quasiisometries with the domain having lower asymptotic rank than dimension (see \cref{rem:no_asrk}), but we can still allow that in the codomain - see \cref{thm:strong_branching_asymptotic_rank}. Our results are strongest in the 
%
%
case where the dimensions of $X$ and $Y$ agree with their asymptotic rank, and there we can even control Hausdorff distances. For this reason, and to simplify the discussion below, we now restrict to that setting. The following is a combination of \cref{thm:strong_fully_branching} and \cref{cor:embedding_orthant_strong_fullybranching}.

\begin{mthm} \label{mthm:strong_ndim}
Let $X$ and $Y$ be $n$--dimensional CAT(0) cube complexes of asymptotic rank $n$, and let $f:X\to Y$ be a $q$--quasiisometric embedding. If $H\subset X$ is a directionally strongly branch-complemented flat, then there is a constant $D$, depending only on $n$, $q$, and the coarse density in \cref{def:intro_strong_branching}, such that the following holds. 

There is a singular flat of $Y$ at Hausdorff distance at most $D$ from $f(H)$. Moreover, if $O\subset H$ is a singular orthant, then $f(O)$ lies at Hausdorff distance at most $D$ from a singular orthant.
\end{mthm}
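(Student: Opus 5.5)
We reduce everything to the behaviour of $f$ on strongly branch-complemented geodesics and then reassemble the flat. The tool is \cref{mthm:subflats}, specialised to $\dim X=\dim Y=n$. There, for each directionally branch-complemented $n$--flat $F\subset X$, the image $f(F)$ lies within Hausdorff distance $D(q)$ of an $n$--flat $F'\subset Y$, which is singular by \cref{lem:top_dim_singular}. Moreover, any branching subflat $H_0\subset F$ is sent near a singular flat.

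\textbf{Step 1: strongly branch-complemented geodesics.} Let $\gamma$ be strongly branch-complemented, so $\gamma=F_1\cap\dots\cap F_k$ with each $F_i$ a directionally branch-complemented $n$--flat. By Step 0 each $f(F_i)$ is $D(q)$--close to a singular $n$--flat $F_i'$. Hence $f(\gamma)$ lies in the coarse intersection of the $F_i'$.

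In an $n$--dimensional cube complex, the coarse intersection of singular flats is at finite Hausdorff distance from a genuine intersection of parallel copies. That intersection is a convex subcomplex-like set, namely a singular flat times a bounded set. So $f(\gamma)$ is a quasigeodesic lying close to a singular flat $L'$ of dimension at least one.

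To force $\dim L'=1$, compare coarse intersections. The $F_i$ can be chosen so that their pairwise coarse intersections are only $\gamma$, up to bounded distance. Because $f$ is a quasiisometric embedding, the coarse intersection of the images $f(F_i)$ is the image of the coarse intersection, up to a constant. This would make $f(\gamma)$ quasiisometric to $L'$, so $L'$ is a singular line $\gamma'$ with $d_{Haus}(f(\gamma),\gamma')\le D_1$.

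The main obstacle is uniformity: $D_1$ must depend only on $n$, $q$ and the density constant, and not on the number $k$ of flats or on how the $F_i$ sit relative to one another. I would first try to control the coarse-intersection constants using the fact that singular flats in an $n$--dimensional complex are unions of cubes. This should give a bound depending only on $D(q)$ and $n$, via a halfspace-counting argument on hyperplanes crossing $F_i'$.

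\textbf{Step 2: assembling $H$.} Suppose $H$ is spanned by directions $\gamma_1,\dots,\gamma_m$ whose strongly branch-complemented parallels coarsely cover $H$. By Step 1, each such parallel maps $D_1$--close to a singular line. Parallel geodesics in $H$ lie at bounded distance from one another, so their image lines are parallel in $Y$. Each parallel family therefore determines a product region $P_j\cong\gamma_j'\times C_j$, and coarse density places $f(H)$ uniformly close to every $P_j$.

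Take the hyperplane set $\mathcal W_j$ crossing $\gamma_j'$. These sets must pairwise cross, since the corresponding directions in $H$ span coarse quadrants whose images contain arbitrarily large products. So the combined hyperplane set defines a singular $m$--flat in the intersection of the $P_j$, by the standard criterion that pairwise crossing families of hyperplanes, each family linearly ordered along a line, span a cubical flat. A final coarse-density argument shows that $f(H)$ is $D$--Hausdorff close to this flat.

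\textbf{Step 3: orthants.} A singular orthant $O\subset H$ is a product of singular rays in the $\gamma_j$ directions. Applying Step 1 to rays, the image of each boundary ray is close to a singular ray. The orthant in the flat from Step 2 spanned by those rays, which correspond to a choice of halfspaces in each $\mathcal W_j$, is then $D$--close to $f(O)$. This is \cref{cor:embedding_orthant_strong_fullybranching}.
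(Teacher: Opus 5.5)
Your architecture matches the paper's: \cref{thm:fully_branching_general} for the directionally branch-complemented $n$--flats, then coarse intersections for strongly branch-complemented geodesics, then reassembly of $H$ from its singular directions, then orthants. The genuine gap is Step~1. That step carries everything this theorem adds beyond \cref{mthm:subflats}, namely the uniform constant, and it is not proved. You correctly flag uniformity as the obstacle, but the fix you propose (hyperplane counting on the $F_i'$ in $Y$) looks in the wrong place, because the decisive estimate is needed in the domain.

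The missing idea is the Helly property. Since $\dim X=n$, each $F_i$ is singular (\cref{lem:top_dim_singular}), hence a convex subcomplex. Helly, applied to the $F_i$ together with an $\ell^\infty$--ball, gives $\bigcap_iF_i^{+D}\subseteq(\bigcap_iF_i)^{+D\sqrt n}$, with a constant independent of the number of flats and of how they sit (\cref{prop:coarse_intersection_subcomplexes_ccx}). Consequently $\bigcap_if(F_i)^{+D}\subseteq f(\gamma)^{+T(q,D,n)}$ (\cref{cor:distance_from_intersection}). This is exactly the step that fails when $\dim X>n$ (\cref{rem:no_asrk}).

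Your target-side description is also wrong as stated. A coarse intersection of singular flats need not be ``a singular flat times a bounded set'': two singular $2$--flats in a product of two trees can share a half-plane. Even when it is coarsely a line, you still need a uniform bound on the bounded factor. The paper handles the target in \cref{prop:intersection_semisingular} as follows.
\begin{itemize}
\item The set $C=E_1\cap\bigl(\bigcap_{i\ge2}E_i^{+D}\bigr)^{+D}$ is a convex subset of the flat $E_1$, uniformly Hausdorff-close to $f(\gamma)$ by the estimate above.
\item Its Tits boundary is a round subcomplex sphere, so $C$ contains a singular flat $P$ with that boundary.
\item The remaining inclusion of $f(\gamma)$ in a neighbourhood of $P$ is uniform because $\bar f\circ\pi_{f(\gamma)}\colon P\to\gamma$ is a uniform quasiisometric embedding between flats of equal dimension, hence uniformly coarsely surjective (\cref{lem:quasiisom_quasi_surj}).
\end{itemize}
Also, you cannot in general choose the $F_i$ so that their pairwise intersections equal $\gamma$, and you do not need to: only the total intersection enters.

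Steps~2 and~3 are not justified as written either. In Step~2 you do not derive that every element of $\mathcal W_i$ crosses every element of $\mathcal W_j$; large products in the image would at best give this for all but boundedly many pairs. Nor do you argue that a singular flat realising the combined families exists, or that it is uniformly close to $f(H)$. Hyperplanes are unnecessary here. Once every parallel of each spanning direction is uniformly close to a singular geodesic, \cref{prop:singular_quasi_flats} builds the flat inductively inside the parallel set $P(F)\cong F\times T$ of the previously constructed $(k-1)$--flat, with uniformity again coming from \cref{lem:quasiisom_quasi_surj}.

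In Step~3, knowing that the axis rays of $O$ go near rays is not enough. Take an orientation-reversing biLipschitz homeomorphism of the circle fixing the directions $0$ and $\frac\pi2$ and sending the arc $[0,\frac\pi2]$ onto $[\frac\pi2,2\pi]$. The cone over it is a biLipschitz map of $\R^2$ that fixes both positive axes but swaps the first quadrant with its complement. So you must use control of more than the axis rays. The paper uses the full singular flats through the faces of $O$, together with a coarse separation argument by induction on dimension (\cref{lem:orthants_to_semisingular_orthants}).
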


Recall that an \emph{orthant} is a subspace isometric to $[0,\infty)^k$. It should be noted that it is not automatic for a quasiisometry that sends flats to flats to also send orthants to orthants. See \cref{rem:bad_orthants}, for instance. 

The fact that \cref{mthm:strong_ndim} gives us control over the images of orthants implies that the quasiisometric embedding induces an embedding between distinguished subsets of the Tits boundaries of $X$ and $Y$. The following is a rephrasing of part of \cref{def:singular_boundary_graph}, together with part of \cref{def:fully_branching_boundary}. We say that a geodesic ray is strongly branch-complemented if it is contained in a strongly branch-complemented geodesic.

\begin{definition} \label{def:boundary_intro}
Let $X$ be a CAT(0) cube complex. The \emph{singular boundary graph} of $X$, denoted $\partial_{\mathrm{sing}}X$, has a vertex for each point of $\partial_TX$ represented by a singular geodesic ray, and two vertices are joined by an edge whenever they are at angle $\frac\pi2$.

The \emph{strongly branch-complemented boundary graph} of $X$, denoted $\partial_{\mathrm{sbc}}X$, has a vertex for each point of $\partial_TX$ represented by a strongly branch-complemented geodesic, and two such vertices $\xi,\eta$ are joined by an edge whenever there exists a directionally strongly branch-complemented 2--flat $H$ with $\xi,\eta\in\partial_TH$.
\end{definition}

A basic example is obtained when $X$ is a product of $n$ trees. Since the singular geodesics are precisely those contained in a single factor, $\partial_{\mathrm{sing}}X$ is just the 1--skeleton of the join of the boundaries of the factors: it is a complete $n$--partite graph. If each factor is a regular tree of degree at least three, then $\partial_{\mathrm{sbc}}X = \partial_{\mathrm{sing}}X$. 

Another important example is for $X$ the universal cover of the Salvetti complex of a right-angled Artin group $A_\Gamma$. In this setting, the adjacency relations between the endpoints in $\partial_T X$ of standard geodesics are encoded by the \emph{extension graph} $\Gamma^\ext$ of $\Gamma$, introduced by Kim--Koberda \cite{kimkoberda:embedability}. In particular, if every standard geodesic is branch-complemented, which, as mentioned, can be read from $\Gamma$, then all standard flats of $X$ are directionally strongly branch-complemented, and hence $\Gamma^\ext$ is a subgraph of $\partial_{\mathrm{sbc}}X$. In particular, $\Gamma\subset\partial_{\mathrm{sbc}}X$. We refer to \cref{eg:singular_boundary} for more examples and a comparison with other notions of boundary.

Interpreting \cref{mthm:strong_ndim} on the level of boundaries, we obtain the following corollary.

\begin{mcor} \label{mcor:boundary}
Let $X$ and $Y$ be $n$--dimensional CAT(0) cube complexes of asymptotic rank $n$. Every quasiisometric embedding $X\to Y$ induces a graph embedding $\partial_{\mathrm{sbc}}X\to\partial_{\mathrm{sing}}Y$.
\end{mcor}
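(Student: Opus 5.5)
The plan is to define the map on vertices using the orthant part of \cref{mthm:strong_ndim}, and then check well-definedness, injectivity, and preservation of edges in turn. Fix a $q$--quasiisometric embedding $f:X\to Y$ and a vertex $\xi\in\partial_{\mathrm{sbc}}X$, represented by a ray $r$ lying in a strongly branch-complemented geodesic $\gamma$.

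\textbf{Defining the map.} A strongly branch-complemented geodesic is a directionally strongly branch-complemented $1$--flat: its strongly branch-complemented parallels include itself. We will check that $r$ is a singular $1$--orthant of $\gamma$; in an $n$--dimensional complex, a geodesic that is an intersection of $n$--flats should be singular, since top-dimensional flats are singular by \cref{lem:top_dim_singular}. \cref{mthm:strong_ndim} then gives a singular ray $r'$ in $Y$ with $d_{\mathrm{Haus}}(f(r),r')\le D$, and we set $f_\partial(\xi)=r'(\infty)$. For well-definedness, two rays representing $\xi$ are asymptotic, so their images are at finite Hausdorff distance. The corresponding singular rays in $Y$ are then at finite Hausdorff distance, hence asymptotic, and define the same point of $\partial_TY$.

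\textbf{Injectivity.} Suppose $\xi\ne\eta$. First I will show that there is a strongly branch-complemented geodesic or flat in $X$ containing rays toward both points, or else compare them directly in $\partial_TX$. If $\angle(\xi,\eta)>0$, the rays $r_\xi$ and $r_\eta$ diverge linearly, so their images diverge linearly. The singular rays $r'_\xi$ and $r'_\eta$ are then not asymptotic, and $f_\partial(\xi)\ne f_\partial(\eta)$. If instead the angle is $0$ but the points are distinct, this cannot happen in a CAT(0) cube complex of finite dimension: distinct points at Tits angle $0$ coincide by definition of the Tits metric, since the Tits angle is a metric on $\partial_TX$ when finite. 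So injectivity reduces to linear divergence. The detail to confirm is that nonzero Tits angle gives linear divergence of the rays from a common basepoint, which is standard.

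\textbf{Edges.} If $\xi,\eta$ are joined in $\partial_{\mathrm{sbc}}X$, there is a directionally strongly branch-complemented $2$--flat $H$ with $\xi,\eta\in\partial_TH$. By \cref{mthm:strong_ndim}, $f(H)$ is $D$--close to a singular $2$--flat $H'$, and $f$ induces a map $\partial_TH\to\partial_TH'$ between circles. I expect this step to be the main obstacle: the rays toward $\xi$ and $\eta$ must be contained in a common singular quarter-plane of $H$, so that their images lie near a singular $2$--orthant $O'$ of $H'$ by the orthant clause of \cref{mthm:strong_ndim}. The boundary rays of $O'$ are at angle $\pi/2$, and the hope is that they are exactly $f_\partial(\xi)$ and $f_\partial(\eta)$. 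I would first try to show that $\xi$ and $\eta$ are the endpoints of parallels of two singular spanning directions of $H$. These directions would be strongly branch-complemented, hence at angle $\pi/2$ in the cubical structure since they span a singular flat. One would then show that the quarter-plane they bound is a singular orthant. Its image is close to a singular orthant $O'$, and the boundary rays of $f(O)$ are close to the singular boundary rays of $O'$, via the coarse uniqueness of the singular ray approximating each image ray. Their endpoints are therefore at angle $\pi/2$, which gives an edge in $\partial_{\mathrm{sing}}Y$.
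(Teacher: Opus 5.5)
Your route is essentially the paper's. Vertices go to the endpoints of the singular geodesics that lie near the images of strongly branch-complemented geodesics (you use \cref{thm:strong_fully_branching}; the paper uses \cref{thm:subflat_branching_in_fully_branching}). Edges come from \cref{cor:embedding_orthant_strong_fullybranching} applied to a singular quarter-plane of the witnessing $2$--flat $H$. The paper asserts the existence of that quarter-plane in one line. In your proposal it is the step you flag and leave as a hope, and your sketch for it misses two things.

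First, nothing in it explains why $\xi$ and $\eta$ should be endpoints of singular directions of $H$, rather than arbitrary points of the circle $\partial_TH$. Second, it does not rule out that $\xi$ and $\eta$ are the two endpoints of the \emph{same} singular direction. This second point is essential. The bare condition $\xi,\eta\in\partial_TH$ is satisfied by antipodal vertices of the square $\partial_TH$, and for such a pair the conclusion is false: the singular geodesic of $H$ joining them is sent near a single singular geodesic of $Y$, so the images are at angle $\pi$ and are not adjacent in $\partial_{\mathrm{sing}}Y$.

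To close the gap, combine two facts.
\begin{itemize}
\item By \cref{def:fully_branching_boundary}, $\partial_{\mathrm{sbc}}X$ is a subgraph of $\partial_{\mathrm{ss}}X$, whose edges join points at angle exactly $\frac\pi2$.
\item By \cref{lem:orthants_are_weyl_cones}, distinct endpoints of singular rays are at angle at least $\frac\pi2$.
\end{itemize}
Each vertex of $\partial_{\mathrm{sbc}}X$ is the endpoint of a singular ray, because a strongly branch-complemented geodesic is Hausdorff-close to an intersection of $n$--flats, which is singular by \cref{lem:branching_singular}. Since $H$ is singular by \cref{lem:dbc_semisingular}, $\partial_TH$ is a circle with four vertices spaced $\frac\pi2$ apart, each the endpoint of a singular ray. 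Every other point of $\partial_TH$ is at angle less than $\frac\pi2$ from one of these vertices. Hence $\xi$ and $\eta$ are vertices of $\partial_TH$, and $\angle(\xi,\eta)=\frac\pi2$ makes them adjacent rather than antipodal, so they span a singular quarter-plane $O\subset H$.

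The same angle gap should replace your appeal to ``coarse uniqueness'' at the end. The points $f_\partial(\xi)\ne f_\partial(\eta)$ are endpoints of singular rays lying in the arc $\partial_TO'$ of length $\frac\pi2$. So they are its two endpoints, and hence adjacent in $\partial_{\mathrm{sing}}Y$.

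Two minor points. A strongly branch-complemented geodesic need not itself be branch-complemented, so calling it a directionally strongly branch-complemented $1$--flat is not justified. This is harmless, because the $k=1$ case of the proof of \cref{thm:strong_fully_branching} uses only strong branch-complementedness. Also, injectivity is quicker than your angle argument: asymptotic image rays are Hausdorff-close, so the original rays are Hausdorff-close because $f$ is a quasiisometric embedding.
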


Again, a version of this also holds in more generality; see \cref{cor:boundary_map_asymptotic_rank}. In fact, in this $n$--dimensional setting, we are able to establish a more general version of \cref{mcor:boundary} involving a larger subgraph of $\partial_{\mathrm{sing}}X$ whose vertices need not be represented by strongly branch-complemented rays. Although the strong orthant-rigidity statement of \Cref{mthm:strong_ndim} can fail in this greater generality, one still has enough control on certain singular $2$--orthants to induce a similar boundary embedding; see \Cref{thm:fully_branching_2_flat,cor:embedding_fully_branching_boundary}.

Since asymptotic rank is a quasiisometry invariant, the fact that some of the above results can be stated without relying on the dimensions of the CAT(0) cube complexes gives them additional stability properties. For example, one can extend them to allow $X$ and $Y$ to be products of CAT(0) cube complexes and hyperbolic spaces of finite asymptotic dimension. Indeed, hyperbolic spaces of finite asymptotic dimension are quasiisometric to finite-dimensional CAT(0) cube complexes of asymptotic rank one \cite{haglundwise:combination,bonkschramm:embeddings,petyt:mapping}, and the various branching conditions are preserved by such a quasiisometry. If we consider only products of hyperbolic spaces, then our results can be compared with \cite{bowditch:quasiisometric}. We refer to \Cref{rem:cor_for_hyperbolic_spaces} for more discussion.

As an illustration of this, we record the following simple consequence of \cref{mcor:boundary}. Recall that $A_{C_n}$ denotes the right-angled Artin group defined on the cycle graph with $n$ vertices. See \cref{eg:c5}.

\begin{mthm} \label{mthm:c5}
If $Y_1$ and $Y_2$ are hyperbolic spaces of finite asymptotic dimension and $n>1$ is odd, then there is no quasiisometric embedding $A_{C_n}\to Y_1\times Y_2$.
\end{mthm}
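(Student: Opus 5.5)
Suppose for contradiction that $f\colon A_{C_n}\to Y_1\times Y_2$ is a quasiisometric embedding. I will reduce to \cref{mcor:boundary} and then obtain a contradiction from odd-cycle combinatorics.

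\emph{Step 1: cubulate the codomain.} Each $Y_i$ is hyperbolic of finite asymptotic dimension. By \cite{haglundwise:combination,bonkschramm:embeddings,petyt:mapping}, it quasiisometrically embeds into a finite-dimensional CAT(0) cube complex $Z_i$ of asymptotic rank one, presumably a tree or a quasitree-like complex. Composing, we get a quasiisometric embedding $X\to Z_1\times Z_2$, where $X$ is the universal cover of the Salvetti complex of $A_{C_n}$. Since $C_n$ is triangle-free, $X$ is 2--dimensional of asymptotic rank $2$, and $Z_1\times Z_2$ has asymptotic rank $2$.

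The dimension of $Z_1\times Z_2$ may exceed $2$. So I would invoke the asymptotic-rank version \cref{cor:boundary_map_asymptotic_rank} rather than \cref{mcor:boundary} itself. This gives a graph embedding $\partial_{\mathrm{sbc}}X\to\partial_{\mathrm{sing}}(Z_1\times Z_2)$, or into the semisingular analogue of $\partial_{\mathrm{sing}}$.

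\emph{Step 2: locate an odd cycle in the domain graph.} For $n\ge 5$ odd, no vertex of $C_n$ is a leaf and every vertex has a non-leaf neighbour, so every standard geodesic is branch-complemented. By the discussion after \cref{def:boundary_intro}, $C_n=\Gamma\subset\partial_{\mathrm{sbc}}X$. The case $n=3$ is trivial, since then $A_{C_3}=\Z^3$ has asymptotic rank $3>2$, which contradicts the quasiisometric embedding because asymptotic rank cannot increase under such maps.

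\emph{Step 3: show the target graph is bipartite.} I must check that $\partial_{\mathrm{sing}}(Z_1\times Z_2)$, or its semisingular version, is bipartite. The singular (or semisingular) geodesic rays in a product of two rank-one complexes should be exactly those whose Tits boundary point lies in $\partial_T Z_1$ or in $\partial_T Z_2$. Here $\partial_T(Z_1\times Z_2)$ is the spherical join, so a point is either a factor point or a genuine join point at angle strictly between $0$ and $\frac\pi2$ to both factors. A diagonal ray projects to geodesic rays in both factors, so it is not $\ell^1$-isometric, even in asymptotic cones. Each $Z_i$ has asymptotic rank one, so its Tits boundary is discrete, meaning all distinct points are at angle $\pi$. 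Hence two points in the same factor are never at angle $\frac\pi2$, while points in different factors always are. The graph is therefore complete bipartite.

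\emph{Conclusion.} A graph embedding sends the odd cycle $C_n$ injectively, with adjacent vertices to adjacent vertices, into a bipartite graph. That is impossible.

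The main obstacle is Steps 1 and 3: checking that the cubulation hypotheses of the general corollary are met, and that semisingular rays in a product of rank-one complexes really are the factor rays. I would verify the latter in asymptotic cones, where the cones are $\mathbb R$-tree products and singular directions are clearly the factor directions.
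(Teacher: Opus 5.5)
Your proposal is correct and follows essentially the paper's route: handle $n=3$ by asymptotic rank, replace $Y_1,Y_2$ by finite-dimensional CAT(0) cube complexes of asymptotic rank one, apply \cref{cor:boundary_map_asymptotic_rank} to embed $\partial_{\mathrm{sbc}}X$ (which contains the $C_n$ coming from the standard rays) into $\partial_{\mathrm{ss}}(Z_1\times Z_2)$, and note that the target graph is bipartite. One small correction in Step~3: a diagonal ray in a product of $\R$--trees is still the image of a $d_1$--isometric embedding (a monotone staircase), so it is not excluded for failing to be $\ell^1$--isometric; it is excluded because its ultralimit is not median-convex, i.e.\ not axis-parallel.
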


This result complements a theorem of Rull \cite{rull:embedding}, which states that if $\Gamma$ is an $n$--colourable graph, then $A_\Gamma$ can be quasiisometrically embedded in a product of $n$ trivalent trees. In particular, if $n$ is even then $A_{C_n}$ can be quasiisometrically embedded in a product of two free groups.

\cref{mthm:c5} will be greatly expanded upon in \cite{baderbensaidpetyt:quasiisometric:rigidity}.



\medskip

Because \cref{def:intro_branching,def:intro_strong_branching} are phrased for general CAT(0) spaces, the methods of this paper can be used more widely. We include \Cref{sec:appendix}, in which we explain how our results can be used to recover rigidity results for quasiisometric embeddings of symmetric spaces and Euclidean buildings of the same spherical type. 

The case of type $A_1^n$ follows rather directly, since symmetric spaces of non-compact type and Euclidean buildings of spherical type $A_1^n$ are products of rank-one factors, and hence quasiisometric to finite-dimensional CAT(0) cube complexes of asymptotic rank $n$. The general case requires only minor adaptations of our arguments, beyond replacing orthants with Weyl cones and using the appropriate quasiflat rigidity theorems. The details, together with the analogues of the relevant results, are proved in \Cref{sec:general_spherical_type}. 

Although these results are already known \cite{fisherwhyte:quasiisometric,nguyen:quasiisometric}, this illustrates the flexibility of our approach. In fact, the branching properties found in symmetric spaces and Euclidean buildings tend to be considerably stronger than is needed for the definitions given above. The analogue of Theorems~\ref{mthm:sketch} and~\ref{mthm:strong_ndim} is that a quasiisometric embedding between symmetric spaces of non-compact type or thick (not necessarily irreducible) Euclidean buildings, provided that the domain and codomain have the same spherical type, sends singular flats of every dimension within uniformly bounded Hausdorff distance of singular flats. In particular, this yields a genuine embedding of the Tits boundary of the domain into that of the codomain.

\subsection{Strategy and main tools in the proofs} \label{subsec:strat}

As is the case in \cite{kleinerleeb:rigidity}, one of the main tools in the proofs of Theorems~\ref{mthm:subflats} and~\ref{mthm:strong_ndim} is the asymptotic cone, which is a way of looking at a space from infinity. The point is that a quasiisometric embedding $f : X \to Y$ induces a biLipschitz embedding between asymptotic cones $f_\omega \colon X_\omega \to Y_\omega$. This removes coarseness and enables the use of topological and analytic arguments. 

The basic idea is to show that if $X$ and $Y$ are finite-dimensional CAT(0) cube complexes of asymptotic rank $n$, and if $F \subset X$ is a directionally branch-complemented $n$--flat, then $f_\omega$ sends its ultralimit $F_\omega \subset X_\omega$ to a genuine $n$--flat of $Y_\omega$, and then to bring this information back to the original spaces. This is done in three steps. 

\smallskip\noindent\textbf{Step 1.}
Since $F$ is an $n$--flat, its ultralimit $F_\omega \subseteq X_\omega$ is again an $n$--flat. More importantly, the fact that $F$ is directionally branch-complemented transfers to $F_\omega$ but instead of having coarsely dense sets of parallels as in \cref{def:intro_branching}, we have a property of \emph{all} parallels. More precisely, $F_\omega$ is spanned by geodesics $\gamma_1,\dots,\gamma_n$ such that, for each $i$, the geodesic $\gamma_i$ admits a transverse $(n-1)$--flat $H_i \subset F_\omega$, and moreover all parallels of $\gamma_i$ and $H_i$ inside $F_\omega$ are branching. This first step is to prove that the $f_\omega$-image of each $\gamma_i$ is equal to a genuine geodesic of $Y_\omega$.

To do so, we use the fact that the asymptotic cone $Y_\omega$ carries the structure of a \emph{median metric space}. Since $Y$ has asymptotic rank $n$, the median rank of $Y_\omega$ is at most $n$ \cite{bowditch:coarse}. A key input is then a structure theorem of Bowditch \cite{bowditch:large:mapping} on top-dimensional biLipschitz flats in median metric spaces; see \Cref{prop:Bowditch_bilipflat_cubulated}. Applied to the restriction of $f_\omega$ to $F_\omega$, it implies that outside a subset $S \subset F_\omega$ of codimension $2$, every point of $F_\omega$ has a neighbourhood on which $f_\omega$ is \emph{flat}, in the sense that its image is contained in a Euclidean $n$--cube. The same theorem also implies that the images of all branching flats are \emph{cubulated}, namely locally finite unions of Euclidean cubes of the corresponding dimension, endowed with the $\ell^1$ metric. 

Using a tool from geometric measure theory, see \Cref{thm:rectifiable-level-sets}, one shows that almost every parallel of each $\gamma_i$ avoids $S$. Fix such a parallel $\gamma$, and let $x \in \gamma$. Let $H$ be the transverse parallel of $H_i$ passing through $x$. Near $x$, the image of $\gamma$ is a finite union of ``straight'' segments, and the image of $H$ is a finite union of $(n-1)$--dimensional cubes, while both lie inside a single Euclidean $n$--cube because $x$ avoids $S$. At this point, a local topological argument enters: since $H$ separates $\gamma$ inside $F_\omega$, the same separation phenomenon must persist in the image, which prevents $f_\omega(\gamma)$ from ``turning'' locally; see \Cref{sec:top_arguments}. It follows that $f_\omega(\gamma)$ is locally a CAT(0) geodesic around every point, and hence is a global geodesic. By continuity of $f_\omega$, the same conclusion holds for all parallels of the $\gamma_i$.

\smallskip\noindent\textbf{Step 2.}
The second step is to show that $F_\omega$ is mapped to a genuine $n$--flat. Since all parallels of $\gamma_1$ and $\gamma_2$ are sent to geodesics, one can translate the image of $\gamma_1$ along the image of $\gamma_2$ to show that the $2$--flat spanned by $\gamma_1$ and $\gamma_2$ is sent to a genuine $2$--flat by $f_\omega$. One then translates this $2$--flat along the image of $\gamma_3$, and so on, continuing inductively. These first two steps are concluded in \cref{sec:flats_rigidity}.

\smallskip\noindent\textbf{Step 3.}
Once one knows that $f_\omega(F_\omega) = (f(F))_\omega \subset Y_\omega$ is an $n$--flat, the final step is to transfer this information back down to $Y$. This is carried out in \Cref{sec:back_from_cone}. In this part of the argument, things are considerably simpler in the case that $Y$ is $n$--dimensional, so let us outline that first.

When $Y$ is $n$--dimensional, it follows from Huang's quasiflats theorem, \cite{huang:top}, that $f(F) \subset Y$ lies at finite Hausdorff distance from a union of finitely many $n$--orthants, and the fact that $(f(F))_\omega \subset Y_\omega$ is a genuine flat then forces this union to consist of exactly $2^n$ orthants. One then passes to the ``support set'' (see \cref{sec:back_from_cone}) of the quasiflat $f(F)$ and uses a sharp volume-growth argument, again following \cite{huang:top}, to conclude that this support set is itself an $n$--flat. We thus conclude that $f(F)$ is Hausdorff-close to an actual $n$--flat in $Y$. This argument is \cref{prop:quasiflats-from-bilip-flats}.

In the more general setting where $Y$ only has asymptotic rank $n$, the argument is more involved, for two main reasons. Firstly, one no longer has support sets for $n$--quasiflats when the ambient space is higher-dimensional. And secondly, in this setting the output of Huang's quasiflats theorem is no longer true. The correct statement here is Bowditch's quasiflats theorem \cite{bowditch:quasiflats}, which instead of providing a genuine union of orthants in $Y$ gives only a coarse map $\Omega\to Y$, where $\Omega$ is a \emph{panel complex} (see \cref{def:panel}). In this case, carrying out Step~3 decomposes into three sub-steps.

\smallskip\noindent\textbf{Step 3a.}
First we analyse Bowditch's construction of the panel complex $\Omega$. Up to finite Hausdorff distance, $\Omega$ is built as a union of orthants inside a larger $n$--dimensional CAT(0) panel complex $\Psi$. By using the same strategy as in the $n$--dimensional case of Step~3, we show that $\Omega$ can actually be taken to be a flat subcomplex inside $\Psi$. Bowditch's quasiflats theorem now tells us that $f(F)$ lies at finite Hausdorff distance from the image $\phi(\Omega)$ of $\Omega$ under a quasiisometric embedding that is \emph{quasimedian} on each orthant of $\Omega$.

\smallskip\noindent\textbf{Step 3b.}
So far we have replaced the quasiflat $f(F)$ by another quasiflat $\phi(\Omega)$ at finite Hausdorff distance. But now we have extra median information on the orthants of $\Omega$ that we did not have for $F$. It suffices to find a flat at finite Hausdorff distance from $\phi(\Omega)$, and in fact, by an argument similar to that used in Step~2, for this it suffices to show that every singular geodesic of $\Omega$ is mapped at finite Hausdorff distance from a geodesic in $Y$.

With that goal in mind, in this sub-step we upgrade the median information we have about $\phi$, by showing that it is globally quasimedian, and not merely quasimedian when restricted to its orthants. To do this, we show that the $\phi$-image of each orthant is \emph{coarsely median-convex}, and then iteratively apply a gluing result for the quasimedian property to build up larger and larger subspaces of $\Omega$ on which $\phi$ is quasimedian. After a finite number of steps, we obtain that $\phi$ is globally quasimedian.

\smallskip\noindent\textbf{Step 3c.}
Once $\phi$ is known to be quasimedian, we can show that if $\gamma\subset\Omega$ is a singular geodesic, then $\phi(\gamma)$ lies at finite Hausdorff distance from a convex subcomplex of $Y$. That convex subcomplex is itself a finite-dimensional CAT(0) cube complex quasiisometric to a line, so \cref{prop:lines} shows that it contains a CAT(0) geodesic, which is necessarily at finite Hausdorff distance from $\phi(\gamma$. As described above, we can now conclude from an argument similar to Step~2 that $\phi(\Omega)$, and hence $f(F)$, is at finite Hausdorff distance from a flat in $Y$.

\smallskip\noindent\textbf{Concluding.}
The three steps above establish \cref{mthm:sketch}. Let us describe how to complete the proofs of Theorems~\ref{mthm:subflats} and~\ref{mthm:strong_ndim}.

To complete the proof of \Cref{mthm:subflats} and show that the image of a branching subflat of $F$ lies within finite Hausdorff distance of a singular flat, one has to control coarse intersections of finite unions of coarse orthants. Indeed, if $H \subset F$ is a branching $k$--dimensional subflat, we show that its image $f(H)$ is at finite Hausdorff distance from a union $\cal O$ of coarse $k$--orthants, obtained as a coarse intersection of the coarse $n$--orthants provided by \cite{bowditch:quasiflats}. Since $f(F)$ is at finite Hausdorff distance from an actual $n$--flat $F'$ by \cref{mthm:sketch}, the Tits boundary of $\cal O$ forms a round $(k-1)$--sphere inside $\partial_T F'$. Hence $f(H)$ is at finite Hausdorff distance from a $k$--dimensional subflat of $F'$. The details of these arguments are in \Cref{sec:structure_quasi_flats}. 

For \cref{mthm:strong_ndim}, the stronger branching assumption allows the last step of the previous argument to be strengthened: every branching subflat of $F$ is mapped within uniformly bounded Hausdorff distance of a singular flat. In particular, all singular geodesics of $F$ are mapped within uniformly bounded Hausdorff distance of singular geodesics. This uniformity then allows us to apply an argument similar to the one in Step~2 to produce the desired flat. Essentially the same argument also shows that orthants are mapped within uniformly bounded Hausdorff distance of orthants. The details of this step are in \Cref{sec:singular_quasiflats}.

\subsection*{Outline of the article}

\begin{itemize}
\item   \cref{sec:prelim} contains background material on asymptotic cones, CAT(0) spaces, and CAT(0) cube complexes.
\item   In \cref{sec:singular} we introduce the notion of semisingular flats and state a couple of basic facts about them.
\item   \cref{sec:singular_quasiflats} is where we carry out the argument described in Step~2 of \cref{subsec:strat}. Namely, we give methods for building flats and orthants near a quasiflat under assumptions on the 1--dimensional subspaces of that quasiflat.
\item   The purpose of \cref{sec:structure_quasi_flats} is to show that certain structural properties of quasiflats pass down to intersections. These results are used in ``Concluding'' part of \cref{subsec:strat}.
\item   \cref{sec:qm_cc} develops various statements about quasimedian maps between orthants and CAT(0) cube complexes that are needed for carrying out Step~3 of \cref{subsec:strat} in the general case. It also contains analogues of some of the results of \cref{sec:structure_quasi_flats} under more general, coarser hypotheses.
\item   \cref{sec:back_from_cone} is about ``coming back'' from the asymptotic cone. That is, it shows that if one wants to find a flat near a quasiflat, then it suffices to do so in the asymptotic cone. This corresponds to Step~3 in \cref{subsec:strat}.
\item   The short \cref{sec:top_arguments} contains the topological argument described in Step~1 of \cref{subsec:strat}.
\item   In \cref{sec:flats_rigidity}, we introduce the various branching conditions on flats and carry out Step~1 from \cref{subsec:strat}. That is, we show that the branching conditions are stable under passing to asymptotic cones, and that their images on the level of asymptotic cones are flat.
\item   Our main results are proved in \cref{sec:fully_branching_theorems}.
\item   Finally, \cref{sec:appendix} is devoted to the analogous statements for symmetric spaces and Euclidean buildings.
\end{itemize}

The reader who is interested only in the $n$--dimensional case of the main results can avoid essentially all of the quasimedian-related aspects of the paper and need not concern themself with semisingularity. They can therefore completely ignore \cref{sec:qm_cc}, and from \cref{sec:back_from_cone} only \cref{subsec:back_dim_n} is needed.


\subsection*{Acknowledgements}
S.B.\ and O.B.\ are grateful to Stephan Stadler for several stimulating discussions at the Max Planck Institute for Mathematics in Bonn in 2024, from which this project grew and to which some of its initial ideas are due.

O.B.\ is grateful to the Max Planck Institute for Mathematics in Bonn for its financial support, and acknowledges support from the FWO and F.R.S.-FNRS under the Excellence of Science (EOS) programme (project ID 40007542). SB and HP thank the Isaac Newton Institute for their hospitality during the programme \emph{Operators, Graphs, Groups}, where some of the work on this paper took place (EPSRC grant EP/Z000580/1).

\section{Preliminaries}\label{sec:prelim}

Let $(X,d)$ be a metric space. We write $B(x,r)$ and $S(x,r)$ for the closed ball and sphere of radius $r$ centred on $x$, respectively. That is,
$$
B(x,r) := \{ z \in X \mid d(x,z) \leq r \}, \qquad
S(x,r) := \{ z \in X \mid d(x,z) = r \}.
$$
For $A\subseteq X$ and $D\geq 0$, the \emph{$D$--neighbourhood} of $A$ is $A^{+D}=\bigcup_{a\in A}B(a,D)$. The \emph{Hausdorff distance} between subsets $A,B\subset X$ is
$$
d_{\mathrm{Haus}}(A,B) \,=\, \inf\{r\geq 0 \mid A\subseteq B^{+r}\text{ and }B\subseteq A^{+r}\}.
$$
When two subsets lie at finite Hausdorff distance from one another, we sometimes say that they are \emph{Hausdorff-close}. We say that $B$ \emph{coarsely contains} $A$ if $A\subseteq B^{+D}$ for some $D\geq 0$. If $B$ is a subset of $A$ that coarsely contains $A$, then we say that $B$ is \emph{coarsely dense} in $A$.



\subsection{Asymptotic cones}

Asymptotic cones provide a way to ``zoom out'' from a space. This allows one to work with fine objects and maps rather than coarse ones.

\begin{definition}[Ultrafilter]
An \emph{ultrafilter} on $\N$ is a set $\omega$ of subsets satisfying:
\begin{itemize}
\item   for each $A\subset\N$, exactly one of $A\in\omega$ and $\N\ssm A$ belongs to $\omega$;
\item   if $A\in\omega$ and $A\subseteq B$, then $B\in\omega$;
\item   if $A,B\in\omega$, then $A\cap B\in\omega$.
\end{itemize}
The ultrafilter $\omega$ is \emph{nonprincipal} if it contains no finite sets.
\end{definition}

We shall only consider nonprincipal ultrafilters in this article, so we drop the word ``nonprincipal''.

\begin{definition}[Asymptotic cone]
Let $\omega$ be a (nonprincipal) ultrafilter. Given a sequence $(x_n)$ in $\R$, if there exists some $x\in\R$ such that for each $\eps>0$, the set $\{n\,:\,|x-x_n|<\eps\}$ lies in $\omega$, then we call $x$ the \emph{ultralimit} of the sequence, and write $x=\lim_\omega(x_n)$. Each sequence has at most one ultralimit.

Let $(\lambda_n)$ be a divergent sequence of positive numbers, called a \emph{scaling sequence}. For a metric space $(X,\dist)$ with a sequence of basepoints $(o_n)$, the \emph{asymptotic cone} $\lim_\omega(X,(\lambda_n),(o_n))$ is the complete metric space obtained as follows. Let $Y$ be the set of all sequences $(x_n)$ with $x_n\in X$ such that $\lim_\omega\frac1{\lambda_n}\dist(o_n,x_n)$ exists. The function $\hat\dist$ on $Y\times Y$ given by $\hat\dist((x_n),(y_n))=\lim_\omega\frac1{\lambda_n}\dist(x_n,y_n)$ is a pseudometric. We define $\lim_\omega(X,(\lambda_n),(o_n))$ to be the metric quotient of $Y$.

If $(Z_n)$ is a sequence of subsets of $X$, then we get a subset $\lim_\omega(Z_n)\subset\lim_\omega(X,(\lambda_n),(o_n))$ by considering only sequences whose $n^\mathrm{th}$ term lies in $Z_n$. If $\lim_\omega(Z_n)$ is empty, then we say that the ultralimit of $(Z_n)$ \emph{does not exist}. Otherwise, we refer to $\lim_\omega(Z_n)$ as \emph{the ultralimit} of $(Z_n)$. If $(Z_n)$ is a constant sequence $(Z)$, then we simply refer to it as the ultralimit of $Z$.
\end{definition}


For any ultrafilter $\omega$, scaling sequence $(\lambda_n)$, and sequence $(o_n)$ of basepoints in $X$, if $f:X\to Y$ is a $q$--quasiisometric embedding of metric spaces, then $f$ induces a biLipschitz embedding $\hat f:\lim_\omega(X,(\lambda_n),(o_n))\to \lim_\omega(Y,(\lambda_n),(f(o_n)))$ of asymptotic cones. 

The following definition comes from \cite{wenger:asymptotic}, see Proposition~3.1 thereof. It provides a large-scale notion of rank for general metric spaces.

\begin{definition}[Asymptotic rank]\label{def:asymptotic_rank}
Let $X$ be a metric space. The \emph{asymptotic rank} of $X$ is the supremal $n$ such that there exists an asymptotic cone of $X$ and a sequence $(B_k)$ of subsets of $X$ whose ultralimit is isometric to the unit ball in some normed space $(\R^n,\|\cdot\|)$.
\end{definition}

\subsection{CAT(0) spaces}\label{sec:CAT(0)_spaces}

We refer the reader to \cite{ballmann:lectures} and \cite{bridsonhaefliger:metric} for background on CAT(0) spaces. The main classes of CAT(0) spaces considered in this paper are \emph{CAT(0) cube complexes} (see \cref{subsec:ccc}) and their asymptotic cones. 
Our results hold for symmetric spaces of non-compact type and Euclidean buildings, and the appendix will deal with those. 

\begin{definition}[Flats and orthants]\label{def:flats_in_CAT(0)}
Let $X$ be a CAT(0) space. A \emph{$k$--flat} in $X$ is the image of an isometric embedding $(\mathbb R^k,\|\cdot\|_2)\to X$. A \emph{$k$--orthant} is the image of an isometric embedding $([0,\infty)^k,\|\cdot\|_2)\to X$. The \emph{cone point} of an orthant is the image of $(0,\dots,0)$. 

When the dimension is implied or not important, we will just say \emph{flat} and \emph{orthant}.
\end{definition}

For example, $\R^n$ has $2^n$ $n$--orthants whose cone points are the origin.

\begin{definition}[Parallels] \label{def:parallel_set}
Let $X$ be a complete CAT(0) space, and suppose that $f_1,f_2:A\to X$ are two isometric embeddings of a metric space $A$ with closed, convex images. Let $\pi_1:X\to A_1$ and $\pi_2:X\to A_2$ denote the closest-point projection maps. We say that $f_1(A)$ and $f_2(A)$ are \emph{parallel} if $f_2=\pi_2f_1$ and $f_1=\pi_1f_2$ and the function $a\mapsto d(f_1(a),f_2(a))$ is constant on $A$.

The \emph{parallel set} of a closed, convex subset $B\subset X$, denoted $P(B)$, is the union of all subsets parallel to $B$. It is a closed convex subset of $X$, and it admits a canonical splitting as a metric product $P(B)=B\times Y$ for some complete CAT(0) space $Y$. See \cite[Section~2.3.3]{kleinerleeb:rigidity}.
\end{definition}

Note that any two flats that are at finite Hausdorff distance are parallel, by convexity of the metric. Moreover, they necessarily have the same dimension.

\begin{definition}[Angles]
Let $\gamma_1,\gamma_2:[0,1]\to X$ be geodesics in a CAT(0) space with $\gamma_1(0)=\gamma_2(0)=x$. The \emph{angle} $\angle_x(\gamma_1,\gamma_2)$ between $\gamma_1$ and $\gamma_2$ at $x$ is the minimal nonnegative number $\theta$ such that $\cos\theta \,=\, 1-\lim_{t\to0}\frac{\dist(\gamma_1(t),\gamma_2(t))^2}{2t^2}$. This expression is derived from the cosine law. For $a,b,x\in X$, we define $\angle_x(a,b)=\angle_x([x,a],[x,b])$. 
\end{definition}

\begin{definition}[Tits boundary]
We denote by $\partial_TX$ the Tits boundary of a CAT(0) space $X$. As a set, $\partial_TX$ is the set of equivalence classes of geodesic rays, where two rays are defined to be equivalent if they are Hausdorff-close. Unless otherwise stated, we shall equip $\partial_TX$ with the \emph{angular metric $\angle$}, which, given $\xi,\eta\in\partial_TX$, is defined by setting
$$
\angle(\xi,\eta)=\sup_{p\in X}\angle_p(\xi,\eta),
$$
where $\angle_p(\xi,\eta)=\angle_p([p,\xi),[p,\eta))$. See \cite[Prop.~II.9.5]{bridsonhaefliger:metric}. 
\end{definition}


\begin{definition}[Link] \label{def:link}
Let $X$ be a CAT(0) space and let $x\in X$. The \emph{link} of $x$, denoted $\Sigma_xX$, is the space of directions of $X$ at $x$. That is, it is the set of equivalence classes of geodesics $\gamma:[0,1]\to X$ with $\gamma(0)=x$, where we declare $\gamma_1$ and $\gamma_2$ to be equivalent if $\angle_x(\gamma_1,\gamma_2)=0$. The angle function $\angle_x(\cdot,\cdot)$ descends to a metric on $\Sigma_xX$ \cite[II.3.18]{bridsonhaefliger:metric}.
\end{definition}

\begin{definition}[Euclidean cone] \label{def:Euclidean_cone}
    The \emph{Euclidean cone} over a metric space $\Delta$ is a uniquely geodesic metric space, denoted $C(\Delta)$, with a specified \emph{cone point}. As a set it is obtained from $\Delta\times[0,\infty)$ by identifying all points at height zero; see \cite[Def.~I.5.6]{bridsonhaefliger:metric} for the definition of the metric. If $\Delta$ is \emph{CAT(1)}, then $C(\Delta)$ is CAT(0), and its boundary with the angle metric is $\Delta$ \cite[II.3.14]{bridsonhaefliger:metric}. 
\end{definition}

The following lemma will be used in several places. Part of it is stated in \cite[Lem.~10.6]{kleiner:local}. 

\begin{lemma}\label{lem:embedding-Titsboundary-into_link_and_Tits_boundary}
Let $X$ be a CAT(0) space, and let $\hat X$ be an asymptotic cone of $X$ with respect to a fixed basepoint $x$. Write $o = (x)\in\hat X$. For each $\xi\in\partial_T X$, let $r_\xi \subseteq X$ denote the geodesic ray $[x,\xi)$, and let $\hat r_\xi \subseteq \hat X$ denote its ultralimit. The following three maps are all isometric embeddings.
\[
\begin{aligned}
\varphi_{o}&:(\partial_T X,\angle)\to (\Sigma_o\hat X,\angle_o), \\
\varphi_T&:(\partial_T X,\angle)\to (\partial_T\hat X,\angle), \\
\Phi&:C(\partial_T X)\to \hat X,
\end{aligned}
\;
\begin{aligned}
&\text{ given by setting } \varphi_{o}(\xi) \text{ to be the initial direction of } \hat r_\xi. \\
&\text{ given by } \varphi_T(\xi)=\hat r_\xi(+\infty). \\
&\text{ given by } \Phi(\xi,t)=\hat r_\xi(t).
\end{aligned}
\]
\end{lemma}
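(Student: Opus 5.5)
The plan is to prove the statement for $\Phi$ first, since the other two maps can then be read off from it. Fix $\xi,\eta\in\partial_TX$ and write $\theta=\angle(\xi,\eta)$. The basic input is the standard comparison fact for CAT(0) spaces \cite[II.9.8]{bridsonhaefliger:metric}: for rays $r_\xi,r_\eta$ issuing from the common point $x$, the function
\[
t\mapsto \frac{d(r_\xi(t),r_\eta(t))}{t}
\]
is nondecreasing, with limit $2\sin(\theta/2)$ as $t\to\infty$. More generally, for $s,t\ge0$ the ratio $d(r_\xi(\lambda s),r_\eta(\lambda t))/\lambda$ converges, as $\lambda\to\infty$, to the Euclidean comparison distance $\sqrt{s^2+t^2-2st\cos\theta}$. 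This follows from the monotonicity of comparison angles along rays, together with the fact that $\angle(\xi,\eta)$ is the limit of the comparison angles $\bar\angle_x(r_\xi(\lambda s),r_\eta(\lambda t))$; I would cite this or derive it from \cite[II.9.8]{bridsonhaefliger:metric}.

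Passing to the ultralimit along the scaling sequence $(\lambda_n)$, and using that the basepoint is the fixed point $x$, gives
\[
\hat d(\hat r_\xi(s),\hat r_\eta(t))=\sqrt{s^2+t^2-2st\cos\theta}.
\]
The right-hand side is exactly the distance between $(\xi,s)$ and $(\eta,t)$ in $C(\partial_TX)$, at least when $\theta\le\pi$. Since the Tits angle metric takes values in $[0,\pi]$, this covers every case, and the cone metric uses $\min(\theta,\pi)$ in any event. Hence $\Phi$ is an isometric embedding. Well-definedness is also immediate: each $\hat r_\xi$ is a geodesic ray from $o$, because the ultralimit of a geodesic ray through the fixed basepoint is a geodesic ray.

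For $\varphi_o$: the image $\Phi(C(\partial_TX))$ is an isometric copy of a Euclidean cone, and the rays $\hat r_\xi,\hat r_\eta$ both start at $o$. The Riemannian angle in $\hat X$ is computed by
\[
\lim_{t\to0}\frac{\hat d(\hat r_\xi(t),\hat r_\eta(t))}{t}=2\sin(\theta/2).
\]
By the displayed formula this ratio is constant in $t$, so it equals the same quantity in the cone. Consequently $\angle_o(\hat r_\xi,\hat r_\eta)=\theta$, because the angle is determined by this limit via the cosine law as in the definition. So $\varphi_o$ is isometric, and in particular injective on directions.

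For $\varphi_T$: the Tits angle between $\hat r_\xi(+\infty)$ and $\hat r_\eta(+\infty)$ in $\hat X$ (which is CAT(0)) can be computed as
\[
2\arcsin\Bigl(\lim_{t\to\infty}\frac{\hat d(\hat r_\xi(t),\hat r_\eta(t))}{2t}\Bigr),
\]
using rays from the common point $o$ \cite[II.9.8]{bridsonhaefliger:metric}. By the formula again this limit equals $\sin(\theta/2)$, which gives angle $\theta$. The main point requiring care is the limiting-comparison claim in the first step, namely that the two-parameter distances converge to the cone distances uniformly enough to pass to the ultralimit. That follows since for fixed $s,t$ we take a single limit $\lambda_n\to\infty$ and ultralimits agree with genuine limits.
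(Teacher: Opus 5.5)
Your proof is correct, but it is organised differently from the paper's. The paper works only with diagonal distances. It sets $c=\lim_{s\to\infty}\frac1s d(r_\xi(s),r_\eta(s))$, identifies $\angle(\xi,\eta)$ from $c$ via Ballmann, and notes that every triangle $(o,p_t,q_t)$ has comparison angle $\angle(\xi,\eta)$ at $o$. Letting $t\to0$ and $t\to\infty$ then gives $\varphi_o$ and $\varphi_T$. Only afterwards does it obtain $\Phi$: it applies the Flat Sector Theorem in $\hat X$, treating the case $\angle(\xi,\eta)=\pi$ separately (there $\hat r_\xi\cup\hat r_\eta$ is a geodesic), and finishes with the Euclidean cosine law.

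You instead use the two-variable form of \cite[II.9.8(1)]{bridsonhaefliger:metric}: $\bar\angle_x(r_\xi(\lambda s),r_\eta(\lambda t))\to\angle(\xi,\eta)$ as $\lambda\to\infty$ for fixed $s,t>0$. Combined with the cosine law in the comparison triangle with sides $\lambda s,\lambda t$ at $x$, this computes every distance $\hat d(\hat r_\xi(s),\hat r_\eta(t))$ at once. That gives $\Phi$ directly, and $\varphi_o$ and $\varphi_T$ follow as the special case $s=t$.

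What your route buys is that flatness of the sector spanned by $\hat r_\xi$ and $\hat r_\eta$ comes straight from comparison geometry in $X$. You need neither the Flat Sector Theorem nor a case split at $\pi$. The paper's route only needs the one-variable asymptotic in $X$, at the cost of using more structure of the CAT(0) space $\hat X$.

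Two minor points. The angle at $o$ is the Alexandrov angle, not a ``Riemannian'' one. For $s=0$ or $t=0$, the distance formula is trivial rather than a consequence of comparison angles.
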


\begin{proof}
For every $\xi\in\partial_TX$, the ultralimit $\hat r_\xi$ is a geodesic ray based at $o$, so the maps are all well defined. Given $\xi,\eta\in\partial_T X$, set 
\[ 
c=\lim_{s\to\infty}\frac{1}{s}\dist\bigl(r_\xi(s),r_\eta(s)\bigr).
\]
According to \cite[Thm~II.4.4]{ballmann:lectures}, the angle $\angle(\xi,\eta)$ is equal to the angle opposite the side $c$ in the Euclidean triangle with side-lengths $1$, $1$, and $c$.

Let $\Lambda=(\lambda_n)$ be the scaling sequence associated with $\hat X$. Given $t>0$, consider the points $p_t=(r_\xi(t\lambda_n))\in\hat r_\xi$ and $q_t=(r_\eta(t\lambda_n))\in\hat r_\eta$. By construction we have $\dist(o,p_t)=\dist(o,q_t)=t$. We also have
\[
\dist(p_t,q_t) 
    \,=\, \lim_\omega \frac1{\lambda_n}\dist\big(r_\xi(t\lambda_n),r_\eta(t\lambda_n)\big)
    \,=\, t\lim_{n\to\infty}\frac1{t\lambda_n}\dist\big(r_\xi(t\lambda_n),r_\eta(t\lambda_n)\big)
    \,=\, tc.
\]
Hence, for every $t>0$, the comparison angle at $o$ of the triangle $(o,p_t,q_t)$ is equal to $\angle(\xi,\eta)$.

Letting $t\to 0$, we obtain $\angle_o(\hat r_\xi,\hat r_\eta)=\angle(\xi,\eta)$, which shows that $\varphi_o$ is an isometric embedding. Instead letting $t\to\infty$ and applying \cite[Prop.~II.9.8]{bridsonhaefliger:metric}, we obtain $\angle\big(\hat r_\xi(\infty),\hat r_\eta(\infty)\big)=\angle(\xi,\eta)$, so $\varphi_T$ is also an isometric embedding.

Finally we consider $\Phi$. We have shown that $\angle_o(\hat r_\xi,\hat r_\eta)=\angle\big(\hat r_\xi(\infty),\hat r_\eta(\infty)\big)=\angle(\xi,\eta)$. By the Flat Sector Theorem \cite[Cor.~II.9.9]{bridsonhaefliger:metric}, if $\angle(\xi,\eta)<\pi$ then the rays $\hat r_\xi$ and $\hat r_\eta$ bound a Euclidean sector of angle $\angle(\xi,\eta)$. If $\angle(\xi,\eta)=\pi$, then $\hat r_\xi\cup \hat r_\eta$ is a geodesic. Therefore, for all $s,t\geq 0$, the points $\Phi(\xi,s)$ and $\Phi(\eta,t)$ lie in a Euclidean sector of angle $\angle(\xi,\eta)$. The Euclidean law of cosines now yields
\[
\dist\bigl(\Phi(\xi,s),\Phi(\eta,t)\bigr)^2 \,=\, s^2+t^2-2st\cos\angle(\xi,\eta).
\]
This is exactly the distance formula in the Euclidean cone $C(\partial_T X)$, so $\Phi$ is an isometric embedding.
\end{proof}

\begin{remark}
The maps in Lemma~\ref{lem:embedding-Titsboundary-into_link_and_Tits_boundary} are not in general surjective, even if $X$ is a simplicial tree with bounded valence and no leaves. For instance, let $X$ be obtained from $\Z$ by attaching a ray at each integer. Its Tits boundary is countable, but $\hat X$ has uncountable valence.
\end{remark}

We will use the following fact, due to Leeb. 

\begin{lemma}[{\cite[Prop.~2.1]{leeb:characterization}}] \label{lem:sphere_in_ccc_boundary}
Let $X$ be a proper CAT(0) space, and let $S\subset\partial_TX$ be a unit $(d-1)$--sphere that does not bound a hemisphere. There is a $d$--flat $F\subseteq X$ with $\partial_T F=S$.

This applies in particular if $X$ is a proper CAT(0) space of asymptotic rank $d$.
\end{lemma}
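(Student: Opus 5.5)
The first assertion is exactly \cite[Prop.~2.1]{leeb:characterization}, so I would only prove the final sentence. The plan is to show that if $X$ is proper, CAT(0), and of asymptotic rank $d$, then no unit $(d-1)$--sphere $S\subset\partial_TX$ can bound a hemisphere. The reason is that a $d$--dimensional hemisphere in $\partial_TX$ would produce a Euclidean $(d+1)$--ball as an ultralimit of subsets of $X$. By \cref{def:asymptotic_rank}, that would force the asymptotic rank to be at least $d+1$, a contradiction.

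Suppose, for contradiction, that $S$ bounds a hemisphere $\Delta\subset\partial_TX$, that is, an isometrically embedded closed unit $d$--hemisphere with boundary $S$. Fix a basepoint $x$, a scaling sequence $(\lambda_n)$, and the constant basepoint sequence, and let $\hat X$ be the resulting asymptotic cone with $o=(x)$. By \cref{lem:embedding-Titsboundary-into_link_and_Tits_boundary}, the map $\Phi:C(\partial_TX)\to\hat X$ is an isometric embedding. Its restriction to $C(\Delta)$ is therefore an isometric embedding of the Euclidean cone over a closed $d$--hemisphere, which is a closed half-space of $\R^{d+1}$. In particular, $C(\Delta)$ contains a Euclidean unit ball $B\subset\R^{d+1}$, for instance the ball centred at the point at distance $1$ along the pole direction, with cone coordinates $(\xi,t)$.

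Next I would realise $\Phi(B)$ as an ultralimit of subsets of $X$. Set
\[
B_n=\{\,r_\xi(t\lambda_n) : (\xi,t)\in B\,\}\subset X.
\]
Clearly $\lim_\omega(B_n)\supseteq\Phi(B)$. The reverse inclusion is the one point needing care. Take a sequence $(r_{\xi_n}(t_n\lambda_n))$ with $(\xi_n,t_n)\in B$. The set $B$ is compact, because $\Delta$ is a compact round hemisphere in the angle metric. Hence $(\xi_n,t_n)$ has an $\omega$--limit $(\xi,t)\in B$.

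To see that the corresponding point of $\hat X$ equals $\Phi(\xi,t)$, I would use CAT(0) comparison. The function $s\mapsto d(r_\xi(s),r_\eta(s))/s$ is nondecreasing with limit $c$, the quantity from the proof of \cref{lem:embedding-Titsboundary-into_link_and_Tits_boundary}. Combined with convexity of the metric, this gives the bound
\[
\tfrac{1}{\lambda}\,d\big(r_{\xi'}(s'\lambda),r_\xi(s\lambda)\big)\;\le\; d_{C(\partial_TX)}\big((\xi',s'),(\xi,s)\big).
\]
So distances in $X$ rescaled by $\lambda$ are dominated by cone distances. It follows that $d\big(r_{\xi_n}(t_n\lambda_n),r_\xi(t\lambda_n)\big)/\lambda_n\to0$ along $\omega$. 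Thus $\lim_\omega(B_n)=\Phi(B)$, which is isometric to the unit ball of $(\R^{d+1},\|\cdot\|_2)$. This contradicts asymptotic rank $d$, so $S$ does not bound a hemisphere, and Leeb's result applies.

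The main obstacle is this last identification of the ultralimit with exactly $\Phi(B)$ rather than something larger. It rests on two ingredients: compactness of the hemisphere in the Tits metric, and the comparison inequality showing that rescaled distances in $X$ are at most cone distances. If the comparison inequality proved awkward in general, I would instead fall back on showing that a cone containing an isometric $(d+1)$--ball forces rank $\ge d+1$ via Wenger's equivalent characterisations \cite[Prop.~3.1]{wenger:asymptotic}.
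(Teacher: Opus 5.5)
Your proposal is correct and follows the same route as the paper: cite Leeb for the first assertion, and for the second use the isometric embedding $\Phi\colon C(\partial_TX)\to\hat X$ of \cref{lem:embedding-Titsboundary-into_link_and_Tits_boundary} to turn a $d$--dimensional hemisphere in $\partial_TX$ into a Euclidean $(d+1)$--ball in an asymptotic cone, which contradicts asymptotic rank $d$. Your extra step, realising $\Phi(B)$ as an ultralimit of subsets $B_n\subset X$, fills in a detail that the paper leaves implicit. It rests on compactness of $B$ and on the map $(\xi,s)\mapsto r_\xi(s)$ being $1$--Lipschitz from the cone. For unequal times that Lipschitz bound comes from monotonicity of comparison angles rather than from the equal-time statement alone, although a cruder bound would also suffice here.
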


\begin{proof}
The first statement is precisely \cite[Prop.~2.1]{leeb:characterization}. If $X$ has asymptotic rank $d$, then its asymptotic cones cannot contain normed $(d+1)$--balls. In particular, no asymptotic cone of $X$ can contain the Euclidean cone over a $d$--dimensional hemisphere, so \cref{lem:embedding-Titsboundary-into_link_and_Tits_boundary} shows that $\partial_TX$ cannot contain a $d$--dimensional hemisphere. 
\end{proof}

\subsection{CAT(0) cube complexes and median metric spaces} \label{subsec:ccc}

We refer the reader to \cite{wise:structure,bowditch:median:book,genevois:algebraic} for background on CAT(0) cube complexes. 

One of the many equivalent ways to define them is as follows. A simplicial graph is \emph{median} if for every triple $v_1,v_2,v_3$ of vertices there is a unique vertex $\mu$ such that $\dist(v_i,v_j)=\dist(v_i,\mu)+\dist(\mu,v_j)$ for all $i,j$. A CAT(0) cube complex is the cell complex obtained from a median graph by attaching, in the obvious way and with obvious identifications, a unit cube $[0,1]^n$ to every subgraph isometric to the Cartesian product of $n$ edges, for each $n\ge2$. 

The \emph{dimension} of a CAT(0) cube complex is the supremal dimension among cubes involved in its construction. If a CAT(0) cube complex is not finite-dimensional then it is not a complete metric space.

If $X$ is a CAT(0) cube complex, then the length metric $d_2$ obtained by equipping each cube with the $\ell^2$ metric makes $X$ into a CAT(0) space. If one instead equips the cubes with the $\ell^1$ metric, then one obtains a metric $d_1$ on $X$ that makes $X$ into a \emph{median metric space}. If $X$ is finite-dimensional then these metrics are both complete. See \cite{bowditch:median:book} for an authoritative account of median metric spaces and \emph{median algebras}. 

\begin{definition}[Median metric space] \label{def:median_metric_space}
Let $(M,d)$ be a metric space. For $a,b\in M$ set $[a,b] = \{x \in M \mid d(a,b)= d(a,x) + d(x,b)\}$.

A \emph{median metric space} is a metric space $(M,d)$ such that for every triple $a,b,c\in M$, the intersection $[a,b]\cap[b,c]\cap[c,a]$ consists of exactly one point. We denote this point $\mu(a,b,c)$ and call it the \emph{median} of the triple $(a,b,c)$. The map $\mu\colon M^3\to M$ is a ternary operation that is 1--Lipschitz in each coordinate and makes $M$ into a \emph{median algebra}.

The \emph{median rank} of $M$ is the largest $n\in\mathbb{N}\cup\{\infty\}$ such that $M$ contains a subset median-isomorphic to $\{0,1\}^n$ with the product median. 
\end{definition}

A median graph is precisely a graph whose edge metric makes the vertex set into a median metric space. If $X$ is a CAT(0) cube complex, then $(X,d_1)$ and all of its asymptotic cones are median metric spaces as well. If $X$ is a finite-dimensional CAT(0) cube complex, then corresponding asymptotic cones of $(X,d_2)$ and $(X,d_1)$ are biLipschitz equivalent. More generally, a result of Bowditch shows that every complete, connected median metric space of finite \emph{median rank} admits a biLipschitz equivalent CAT(0) metric \cite{bowditch:some}.

\begin{definition}[Hyperplanes]
Every edge $xy$ of a CAT(0) cube complex has a \emph{hyperplane} $h$ dual to it, which consists of all points that are equidistant from $x$ and $y$ in the $\ell^1$ metric. If two edges are opposite in some square then they define the same hyperplane. Each hyperplane $h$ separates $X$ into exactly two connected components, called \emph{halfspaces}. We say that $h$ \emph{separates} a point $z$ from a point $w$ if they do not lie in the same halfspace of $h$. 
\end{definition}

The metric $\dist_1$ on the vertex set of a CAT(0) cube complex can be characterised with hyperplanes: $d_1(x,y)$ is equal to the number of hyperplanes that separate $x$ from $y$.

\begin{definition}[Crossing]
Let $h_1$ and $h_2$ be hyperplanes of a CAT(0) cube complex $X$, with halfspaces $h_1^+$, $h_1^-$, $h_2^+$, $h_2^-$. We say that $h_1$ and $h_2$ \emph{cross} if all four intersections $h_1^\pm\cap h_2^\pm$ are nonempty. If $h_1$ and $h_2$ cross, then there is a 2--cell of $X$ in which they can be seen to cross.
\end{definition}

CAT(0) cube complexes are CAT(0) spaces when given the metric $d_2$, so we can talk about subcomplexes being convex with respect to $\dist_2$. In fact, convexity of subcomplexes interacts strongly with both the median and the hyperplane structures.

\begin{remark} \label{rem:ccc_convexity}
Let $X$ be a finite-dimensional CAT(0) cube complex. The following are equivalent for a subcomplex $A\subset X$.
\begin{itemize}
\item   $A$ is convex as a subspace of the CAT(0) space $(X,\dist_2)$.
\item   $A$ is convex as a subspace of the metric space $(X,\dist_1)$.
\item   $\mu(a,b,x)\in A$ for every $a,b\in A$ and every $x\in X$.
\item   $A$ is a nonempty intersection of halfspaces of $X$.
\item   For every $x\in X$ there is a unique point $\pi_A(x)\in A$ with the property that $\mu(x,\pi_A(x),a)=\pi_A(x)$ for all $a\in A$.
\end{itemize} 
The point $\pi_A(x)$ is the unique closest point in $A$ to $x$ in the metric $\dist_1$. It can be obtained from $x$ by crossing exactly the hyperplanes that separate $x$ from $A$.
\end{remark}

More generally, a subset $A$ of a median space $M$ is \emph{median-convex} if $\mu(a,x,b)\in A$ for all $a,b\in A$, $x\in M$.

\begin{definition}[Hull] \label{def:hull}
Let $X$ be a finite-dimensional CAT(0) cube complex, and let $A\subset X$. The \emph{hull} of $A$ is the convex subcomplex $\Hull(A)$ obtained by intersecting all halfspaces that contain $A$. If $A=\{a,b\}$ consists of two points then we write $[a,b]=\Hull\{a,b\}$.
\end{definition}

\begin{remark} \label{rem:d_infty}
There is a third metric on a CAT(0) cube complex $X$ that is often useful, which is the metric $\dist_\infty$ obtained by equipping the cubes of $X$ with the $\ell^\infty$ metric. This makes $X$ into an \emph{injective} metric space. See \cite{lang:injective} for a discussion of injective spaces. We will not need anything from the theory of injective spaces; we only require two facts about $d_\infty$. Firstly, if $\dim X=n$, then we have 
\[
d_\infty \,\le\, d_2 \,\le\, \sqrt nd_\infty.
\]
Secondly, if $x$ is a vertex of $X$, then the ball $B_\infty(x,m)\subset(X,d_\infty)$ is a convex subcomplex of $X$ in the sense of \cref{rem:ccc_convexity} for all $m\in\mathbb N$. Note that it need not be convex for the metric $d_\infty$. One can also consider $\ell^p$ metrics on $X$ for other values of $p$, see \cite{haettelhodapetyt:lp}, but we shall not need those here.
\end{remark}

Unless otherwise stated, the metric we consider on a CAT(0) cube complex is the CAT(0) metric $d=d_2$.

The following relates the asymptotic rank of a CAT(0) cube complex to its cubical structure; its statement is essentially contained in \cite[\S3]{munropetyt:coarse}.

\begin{proposition} \label{lem:rank_cone_CCC}
Let $X$ be a CAT(0) cube complex and let $\hat X$ be an asymptotic cone of $X$. The rank of the median algebra $\hat X$ is bounded above by the asymptotic rank of $X$, which is equal to the supremal $n$ such that $X$ contains a subcomplex of the form $[0,m]^n$ for every positive integer $m$.
\end{proposition}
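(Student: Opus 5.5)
The proof has three parts. First, show that the median rank of $\hat X$ is at most the asymptotic rank of $X$. Second, show that if $X$ contains $[0,m]^n$ for all $m$, then the asymptotic rank is at least $n$. Third, show that if the asymptotic rank is at least $n$, then $X$ contains arbitrarily large cubical boxes $[0,m]^n$. The first two parts chain together via the third: median rank $\le$ supremal box dimension $\le$ asymptotic rank.

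\emph{Boxes give asymptotic rank.} Suppose $X$ contains a subcomplex $[0,m]^n$ for every $m$. Take the scaling sequence $\lambda_k=m_k\to\infty$, with basepoints at the corners of these boxes. Since the boxes are convex subcomplexes with the Euclidean metric in $d_2$, their rescaled ultralimit is isometric to the Euclidean unit cube $[0,1]^n$. A unit Euclidean cube contains the unit ball of a norm, for instance a scaled copy of the $\ell^2$ ball. To match \cref{def:asymptotic_rank} exactly, take $B_k$ to be the appropriate subset of the box. This gives asymptotic rank at least $n$.

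\emph{Asymptotic rank bounds median rank.} This is the direction quoted from \cite[\S3]{munropetyt:coarse}.
\begin{itemize}
\item Suppose $\hat X$ contains a median cube $\{0,1\}^n$. Its median hull is then a product of $n$ intervals. In $(\hat X,d_1)$, the cube $\{0,1\}^n$ gives $n$ pairwise transverse ``walls'' separating opposite pairs.
\item Pull this back to $X$. For $\omega$-almost every $k$, there are $n$ families of hyperplanes, each of size about $\epsilon\lambda_k$, that pairwise cross across families. Each family separates the lifts of the corresponding opposite faces.
\item Two hyperplanes in the same family cannot cross on a large scale, so pass to nested subfamilies. By Dilworth/Ramsey, and using finite dimension or a Ramsey-type bound on crossing chains, obtain $n$ chains of pairwise-disjoint hyperplanes of length $m_k\to\infty$, with every pair from different chains crossing.
\item Pairwise crossing chains in a CAT(0) cube complex give a subcomplex $[0,m]^n$ via the Helly property for halfspace intersections. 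So the box dimension is at least the median rank.
\end{itemize}

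\emph{Asymptotic rank gives boxes.} This is the main obstacle. Suppose some cone $\hat X$ contains the unit ball of a normed $\mathbb R^n$. That ball is a biLipschitz image of a Euclidean $n$-ball, using $d_1$ on $\hat X$. I would use that a complete median space of finite rank has topological dimension equal to its rank (Bowditch). Then an $n$-ball inside it forces median rank at least $n$, and the previous step produces boxes.

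The delicate point is that $X$ may be infinite-dimensional, so the cone could be wild. My first attempt would be to extract the $n$ pairwise transverse wall families directly from the normed ball: take a median cube spanned by the ball's corners after taking the median hull. I expect this step to need the most care.
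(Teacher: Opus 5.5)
Your proposal closes the same loop of inequalities as the paper. Your treatment of the hard direction is also exactly the paper's: a normed $n$--ball in some cone forces that cone to have median rank at least $n$, because rank agrees with a topological dimension for complete median spaces of finite rank; the paper cites \cite[Thm~6.9]{bowditch:large:mapping} and \cite[Cor.~3.7]{haettel:higher} for rank $=$ separation dimension. The genuine difference is the middle step, cone rank $\le$ box dimension. The paper gets it by citation: \cite[Thm~2.3]{bowditch:coarse} bounds the rank of every cone by the coarse median rank of $X$, and the construction of \cite[Prop.~3.1]{munropetyt:coarse}, resting on \cite[\S9]{bowditch:quasiflats}, turns coarse median rank $n$ into uniformly quasimedian boxes. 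You instead pull a median $n$--cube of $\hat X$ back to $n$ families of hyperplanes that cross across families, with the bookkeeping of \cref{lem:quasimedian_cube} and error $o(\lambda_k)$. You then extract chains of length roughly $\lambda_k/\dim X$ by Dilworth. This is more elementary and purely cubical, at the cost of using $\dim X<\infty$ explicitly, as you also do when comparing $d_1$-- and $d_2$--cones. The paper's route is shorter and stays inside the coarse median framework it needs later anyway. Your remark that hyperplanes of one family ``cannot cross on a large scale'' is false, but Dilworth is what actually does the work.

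Two steps need repair. First, when showing that boxes give asymptotic rank you assume the boxes are convex with Euclidean $d_2$--metric. The grids you produce from chains are only combinatorial, $d_1$--isometric subcomplexes, and that is all the statement can mean: the complex of \cref{eg:corner_to_corner} has asymptotic rank one but contains no subcomplex $d_2$--isometric to $[0,3]$. For combinatorial grids the rescaled $d_2$--ultralimit is only biLipschitz to $[0,1]^n$, so you need the biLipschitz formulation of asymptotic rank in \cite[Prop.~3.1]{wenger:asymptotic}.

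Second, Helly only shows that the products of slabs between consecutive chain elements are nonempty. That gives a coarse grid, not a subcomplex. A genuine subcomplex can be built as follows:
\begin{enumerate}
\item Choose vertices $a,b$ separated by all chain hyperplanes, and order the hyperplanes separating them by $h<k$ when $h$ separates $a$ from $k$. Vertices of $[a,b]$ are then the down-sets of this poset.
\item Let $D_0$ consist of the elements lying above no first element $c_i^1$.
\item Add the elements of each interval $[c_i^1,c_i^m]$ of the poset in the order of a linear extension.
\end{enumerate}
Elements of different chains are incomparable. Hence these intervals are pairwise disjoint, and everything below an element of $[c_i^1,c_i^m]$ lies in $D_0\cup[c_i^1,c_i^m]$. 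This yields a $d_1$--isometric subcomplex $\prod_i[0,N_i]$ with $N_i\ge m$.

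Finally, your fallback for infinite-dimensional $X$ is not needed. A $d$--cube of $X$ contains combinatorial grids $[0,m]^n$ with $mn\le d$, and Euclidean $n$--cubes of side roughly $\sqrt{d/n}$. So if $\dim X=\infty$, both the asymptotic rank and the box dimension are infinite.
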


\begin{proof}
By \cite[Thm~2.3]{bowditch:coarse}, the rank of $\hat X$ is bounded above by the \emph{coarse median rank} of $X$. As noted in \cite[Rem.~3.3]{munropetyt:coarse}, the construction of \cite[Prop.~3.1]{munropetyt:coarse} shows that if the coarse median rank of $X$ is $n$, then there is a sequence $Q_m$ of subsets of $X$ such that $Q_m$ is the image of $[0,m]^n$ under a uniform-quality \emph{quasimedian} quasiisometric embedding. This relies on results of Bowditch \cite[\S9]{bowditch:quasiflats}. Since $X$ is a CAT(0) cube complex, this implies the existence of subcomplexes isometric to $[0,m]^n$ for all $m$.

Clearly the existence of a sequence of subcomplexes $[0,m]^n$ for all $m$ implies that $X$ has asymptotic rank at least $n$. It remains to see that the asymptotic rank bounds the dimension of such a sequence.
From \cite[Thm~6.9]{bowditch:large:mapping} and \cite[Cor.~3.7]{haettel:higher}, it is known that the rank of an asymptotic cone $X_\omega$ is equal to the \emph{separation dimension} of $X_\omega$, and it is an immediate consequence that the asymptotic rank of $X$ is bounded above by the supremal rank among asymptotic cones of $X$. Since this is bounded above by the coarse median rank of $X$, this completes the proof.
\end{proof}

\section{Singular flats} \label{sec:singular}

The metric $d_1$ on a CAT(0) cube complex is usually not uniquely geodesic, and the metric $d_2$ does not respect the cubical structure of $X$. When we refer to a flat or a geodesic in $X$ or an asymptotic cone of $X$, we will be considering the CAT(0) metric on $X$ unless specified otherwise. A special role is played by subsets that are flat when considered with both metrics.

\begin{definition}[Singular] \label{def:ccc_singular}
Let $X$ be a CAT(0) cube complex or an asymptotic cone of one, and let $A\subset(X,d_2)$ be a $k$--flat or $k$--orthant. We say that $A$ is \emph{singular} if it is the image of an isometric embedding $(\R^k,\dist_1)\to(X,\dist_1)$ or $([0,\infty)^k,\dist_1)\to(X,\dist_1)$, respectively. 

When $k=1$, we refer to these as \emph{singular geodesics} and \emph{singular geodesic rays}.
\end{definition}

Equivalently, a flat or orthant $A$ is singular if $\mu(a,b,x)\in A$ for all $a,b\in A$ and all $x\in X$.

Singular geodesics may be thought of as the ``axis-parallel'' ones. For instance, in $\mathbb{R}^2$ (with its standard cubulation), the singular geodesics are precisely the horizontal and vertical lines. They can be characterised in terms of hyperplanes: a geodesic $\gamma$ is singular if no two hyperplanes crossed by $\gamma$ cross each other.

\begin{lemma} \label{lem:top_dim_singular}
Let $X$ be an $n$--dimensional CAT(0) cube complex. If $F\subset X$ is an $n$--flat, then $F$ is singular.
\end{lemma}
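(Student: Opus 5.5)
I would argue locally, cube by cube, and use that the $\ell^1$ and $\ell^2$ structures agree on a single cube up to the choice of axes. The strategy is to show first that $F$ is a locally finite union of $n$--dimensional cells meeting only along faces. Then I would show that the isometric identification $\R^n\to F$ sends the induced cell structure to a tiling of $\R^n$ by unit cubes that are all parallel to a single set of coordinate axes. Once that holds, the $d_1$ metric restricted to $F$ is the $\ell^1$ metric in those axes, which is exactly singularity. It is perhaps cleaner to verify the equivalent criterion that $\mu(a,b,x)\in F$, but I would aim for the tiling description.

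\textbf{Step 1: local structure.} Take a point $p\in F$ in the interior of a maximal cube $C$ of $X$ meeting $F$. Since $\dim X=n$, the link $\Sigma_pX$ has dimension at most $n-1$, and $\Sigma_pF$ is a round $(n-1)$--sphere inside it. Points of $F$ in the interior of a cube of dimension $k<n$ have links that are $(n-k-1)$--fold spherical joins over the link of the cube in $X$. The space of directions perpendicular to the cube is then an all-right spherical complex of dimension at most $n-k-1$. It contains the round $(n-k-1)$--sphere coming from the part of $\Sigma_pF$ orthogonal to the cube's directions, provided $F$ is not contained in the cube's span nearby.

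In an all-right complex of dimension $m$, a round $m$--sphere is a subcomplex: it is a union of $2^{m+1}$ all-right simplices, that is, the boundary of an $(m+1)$--dimensional cross-polytope. The reason is that top-dimensional spheres in an $m$--dimensional CAT(1) piecewise-spherical complex are subcomplexes. I would argue this by noting that an $m$--sphere cannot pass through the interior of an $m$--simplex without containing it, by invariance of domain, and then induct on $m$.

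\textbf{Step 2: consequences for $F$.} Step 1 shows that $F$ is a subcomplex of $X$ consisting of $n$--cubes. It also shows that at each vertex $v\in F$, the $2n$ edge directions at $v$ occurring in $F$ form $n$ mutually orthogonal pairs of antipodal directions, one pair per coordinate direction of the cross-polytope. So $F$, with the induced cube structure, is isometric to the standard cubulation of $\R^n$, and the edges of $F$ are parallel to the axes. Consequently, each axis-parallel line in $F$ is a combinatorial geodesic through edges that cross pairwise distinct hyperplanes of $X$. Distinct hyperplanes here because they are distinct in $F$, and convexity of hyperplane carriers prevents a hyperplane from being recrossed.

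\textbf{Step 3: singularity.} For $a,b\in F$, an $\ell^1$ path in $F$ between them crosses each hyperplane of $F$ separating them exactly once. Each hyperplane of $X$ meets $F$ in at most one hyperplane of $F$: distinct parallel hyperplanes of $F$ come from distinct edges along a geodesic line, hence from distinct hyperplanes of $X$. So the path is a $d_1$--geodesic in $X$, and $d_1|_F=\ell^1$, which gives singularity.

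\textbf{Main obstacle.} I expect the hardest part to be Step 1, the claim that a top-dimensional round sphere in an all-right spherical complex is a subcomplex, together with organising the induction over cube dimension. I would handle it by induction on $m$ using links of simplices.
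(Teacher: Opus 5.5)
Your argument is correct in outline, but it takes a genuinely different route from the paper. The paper inducts on $n$ through hyperplanes. For a hyperplane $h$ crossing $F$, the intersection $h\cap F$ is an $(n-1)$--flat in the $(n-1)$--dimensional cube complex $h$, so it is singular by induction. The parallel set $P(h\cap F)=(h\cap F)\times Y$ has $Y$ one-dimensional, i.e.\ a tree, so a geodesic of $F$ orthogonal to $h\cap F$ is singular, and $F$ is then spanned by singular geodesics.

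You instead argue locally. You use links and invariance of domain to show that $F$ is a subcomplex isomorphic to the standard cubulation of $\R^n$, and then you count hyperplanes. Your route gives more: it recovers the cube structure of $F$ and an injection from hyperplanes of $F$ to hyperplanes of $X$, and it avoids parallel-set splittings. However, it relies on the exact dimension count in simplicial links of a cube complex. The paper's hyperplane/parallel-set induction is the argument that transfers, via ultralimits of hyperplanes, to the asymptotic-rank setting of \cref{lem:top_rank_semisingular}, where there are no simplicial links to exploit.

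A few steps need tightening.

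\emph{The link at a non-vertex point.} Take $p$ in the interior of a $k$--cube $Q$ with $0<k<n$. The link there is $S^{k-1}*\mathrm{Lk}_X(Q)$, a $k$--fold suspension. Speaking of ``the part of $\Sigma_pF$ orthogonal to the cube'' presupposes $\Sigma_pF\supseteq S^{k-1}$, i.e.\ that $F$ contains $Q$ near $p$, and this needs proof. The simplest fix is to avoid such points:
\begin{itemize}
\item Take a cube $C$ of maximal dimension whose interior meets $F$. Near such a point, $F$ lies in $\mathring C$, since the other cubes in the open star are higher-dimensional and miss $F$. So $\dim C=n$ by invariance of domain.
\item $F\cap\mathring C$ is then open and closed in $\mathring C$, so $C\subseteq F$.
\item Now you only need your cross-polytope lemma at vertices, plus an open--closed argument in $F$ showing that $F$ is the union of the $n$--cubes it contains.
\end{itemize}

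\emph{Distinctness of hyperplanes.} ``Distinct because they are distinct in $F$'' is not a reason. The real reason is that an axis-parallel line of $F$ is a CAT(0) geodesic of $X$. Its intersection with the convex carrier $N(h)\cong h\times[-\tfrac12,\tfrac12]$ is a segment along which the interval coordinate is affine, so the line cannot cross $h$ twice. You also need injectivity for non-parallel hyperplanes of $F$. These cross in a square of $F$, which is a square of $X$, and hyperplanes of $X$ do not self-intersect.

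\emph{From vertices to all points.} The hyperplane argument only gives $d_1=\ell^1$ on the vertices of $F$. To pass to all points of $F$, use cubical subdivisions and continuity, for instance.
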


\begin{proof}
If $n=1$, then $X$ is a tree, and every geodesic in a tree is singular. Now suppose that we have proved the lemma in dimensions less than $n$. Let $F\subset X$ be an $n$--flat. Take an arbitrary geodesic in $F$, and let $h$ be a hyperplane of $X$ that separates its endpoints. Recall that $h$ is itself a CAT(0) cube complex of dimension $n-1$. The intersection $h\cap F$ is an $(n-1)$--flat in an $(n-1)$--dimensional CAT(0) cube complex, so it is singular by induction. 

Let $\gamma\subset F$ be a geodesic orthogonal to the subflat $h\cap F$. The parallel set $P(h\cap F)$ splits as $P(h\cap F)=(h\cap F)\times Y$ for some CAT(0) cube complex $Y$. Since $X$ is $n$--dimensional, $Y$ must be 1--dimensional. Hence $\gamma\subset Y$ is singular. This shows that $F$ is spanned by singular geodesics, and hence it is singular.
%
\end{proof}

Since we will be considering CAT(0) cube complexes whose dimension is greater than their asymptotic rank, we shall need the following generalisation of singular flats and orthants.

\begin{definition}[Semisingular] \label{def:semisingular}
Let $X$ be a finite-dimensional CAT(0) cube complex. A flat, orthant, or ray in $X$ is \emph{semisingular} if its ultralimit is singular in every asymptotic cone of $X$ where it exists. 
\end{definition}

\begin{example} \label{eg:corner_to_corner}
For each $i\in\Z$, let $S_i$ be a unit square. Let $x_i$ and $y_i$ be a pair of opposite vertices in $S_i$. Let $X$ be the CAT(0) square complex obtained from $\bigcup_{i\in\Z}S_i$ by gluing $x_i$ to $y_{i-1}$ for all $i$, as in \cref{fig:semisingular_geod}. The asymptotic rank of $X$ is one. It contains no singular geodesics, but it contains a semisingular geodesic, drawn in \cref{fig:semisingular_geod}. Similarly, $X^n$ contains no singular $n$--flats, but does contain a semisingular $n$--flat.
\end{example}

\begin{figure}[ht]
    \centering
    \includegraphics[height=1.5cm]{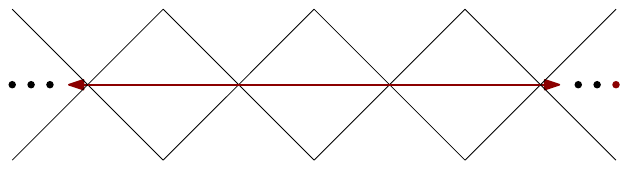} 
    \caption{A semisingular geodesic in a CAT(0) square complex with no singular geodesics.} \label{fig:semisingular_geod}
\end{figure}

Note that every flat or orthant that is \emph{coarsely median-convex} in the sense of \cref{def:cmc} is semisingular. The following example shows that the converse does not always hold.

\begin{example}
Let $H_n=\Hull\{(2^n,n),(2^{2n},2n)\}\subset\R^2$ be a rectangle in the plane whose height is logarithmic in its length. Let $Y=\bigcup_{n=1}^\infty H_{2^n}$. There is a unique geodesic ray $\gamma\subset Y$ emanating from $(2,1)$. In every asymptotic cone, the ultralimit $\hat\gamma$ is singular, but $\gamma$ is not coarsely median-convex. Note that $Y$ has asymptotic rank two, even though every asymptotic cone with fixed basepoint is a ray. 
\end{example}

This example also shows that not all semisingular geodesics are Morse.

\begin{lemma} \label{lem:top_rank_semisingular}
Let $X$ be a finite-dimensional CAT(0) cube complex of asymptotic rank $n$. If $F\subset X$ is an $n$--flat, then $F$ is semisingular.
\end{lemma}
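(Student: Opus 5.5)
By \cref{def:semisingular}, I have to fix an arbitrary asymptotic cone $\hat X$ of $X$ in which the ultralimit $\hat F$ of $F$ exists, and show that $\hat F$ is singular in $\hat X$. Nothing is lost by working only in the cone, because semisingularity of $F$ is by definition the statement that $\hat F$ is singular in every such cone.

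\textbf{Setup in the cone.} Since $F$ is an $n$--flat for $d_2$, the ultralimit $\hat F$ is an $n$--flat for the ultralimit CAT(0) metric $\hat d_2$. The ultralimit metric $\hat d_1$ makes $\hat X$ a median metric space, and by finite-dimensionality $\hat d_1$ and $\hat d_2$ are biLipschitz. By \cref{lem:rank_cone_CCC}, $\hat X$ has median rank at most $n$. I will use the characterisation stated after \cref{def:ccc_singular}: a flat $A$ is singular if and only if $\mu(a,b,x)\in A$ for all $a,b\in A$ and $x\in\hat X$. So the goal becomes: $\hat F$ is median-convex in $\hat X$.

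\textbf{Main step: median-convexity of $\hat F$.} Let $J$ be the median-convex hull of $\hat F$ in $\hat X$. Then $J$ is a complete connected median space of rank at most $n$ containing the topological $n$--manifold $\hat F$, and it suffices to show $J=\hat F$. I would argue this in three parts.
\begin{enumerate}
\item[(i)] $J$ has topological (covering) dimension at most $n$. For finite-rank median spaces, rank equals separation dimension (Bowditch \cite[Thm~6.9]{bowditch:large:mapping}, \cite[Cor.~3.7]{haettel:higher}, as in the proof of \cref{lem:rank_cone_CCC}), and I expect this to control the topological dimension of $J$.
\item[(ii)] $\hat F$ is open in $J$. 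Here I would use invariance of domain for an $n$--manifold embedded in an $n$--dimensional space; failing that, I would fall back on Bowditch's local description of top-rank biLipschitz flats in rank-$n$ median spaces \cite{bowditch:large:mapping}, under which $J$ is locally a finite union of $n$--cubes near each point of $\hat F$, and openness can be checked cube by cube.
\item[(iii)] $\hat F$ is closed in $J$, being complete.
\end{enumerate}
Since $J$ is connected, (ii) and (iii) give $J=\hat F$.

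\textbf{Conclusion and main obstacle.} It follows that $\hat F$ is median-convex, hence singular in $\hat X$. Since the cone was arbitrary, $F$ is semisingular. I expect step (ii) to be the main obstacle: the dimension-theoretic input for median spaces that are not locally compact needs care, and there I would lean on Bowditch's structure theorem rather than on pure topology. For comparison, when $\dim X=n$ the result is immediate from \cref{lem:top_dim_singular}: $F$ is singular, its $\ell^1$--isometric parametrisation passes to the ultralimit, and so $\hat F$ is singular.
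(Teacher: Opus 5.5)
Your reduction to median-convexity of $\hat F$, step (iii), and the connectedness of $J$ are all fine. Step (ii), however, is a genuine gap, and neither of your proposed routes closes it. Your argument never uses that $\hat F$ is a flat for the CAT(0) metric $\hat d_2$. It only uses that $\hat F$ is a complete biLipschitz (hence topological) copy of $\R^n$ in a median space of rank $n$, and for such sets the conclusion is false.

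Here is a counterexample inside the setting of the lemma. Let $X=T\times T$ with $T$ the simplicial tripod; this is a $2$--dimensional CAT(0) cube complex of asymptotic rank $2$. Its asymptotic cone with fixed basepoint is $T_\R\times T'_\R$, a product of two $\R$--tripods with legs $a,b,c$ and $d,e,f$ meeting at centres $o,o'$. Let $Q$ be the union of the six quadrants $a\times d$, $b\times d$, $b\times e$, $c\times e$, $c\times f$, $a\times f$. This is a biLipschitz plane: it is the ultralimit of a hexagonal $2$--quasiflat of $X$, and Bowditch's theorem applies to it. Take $c_1\in c\ssm\{o\}$, $a_1\in a$, $d_1\in d\ssm\{o'\}$, $e_1\in e$. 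Then $\mu\big((c_1,e_1),(a_1,d_1),(c_1,d_1)\big)=(c_1,d_1)\notin Q$. So the median hull of $Q$ contains the quadrant $c\times d$, and $Q$ is not open in its hull at points of $c\times\{o'\}$, even though the hull there is locally a union of three squares.

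This shows three things. First, ``invariance of domain for an $n$--manifold in an $n$--dimensional space'' is false; it needs the ambient space to be a manifold. Second, checking openness cube by cube cannot work. Third, Bowditch's theorem describes $\hat F$ itself, not $J$. Step (i) is also unsupported: separation dimension is not covering dimension, and $\hat X$ is far from locally compact. That issue is secondary.

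The missing ingredient is the Euclidean geometry of $\hat F$, which is exactly what excludes $Q$: the space of directions of $Q$ at $(o,o')$ is a circle of length $3\pi$, not $2\pi$. The paper brings this in by induction on $n$, modelled on \cref{lem:top_dim_singular}. For $n=1$ the cone is an $\R$--tree. In general, one takes hyperplanes $h_m$ of $X$ crossing a geodesic of $F$ within distance $O(\lambda_m)$ of the basepoints. Then $\lim_\omega(h_m\cap F)$ is an $(n-1)$--flat, singular by induction. The product splitting of its parallel set shows that the direction of $\hat F$ orthogonal to it is a singular geodesic, so $\hat F$ is spanned by singular geodesics. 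Any repair of your route needs an input of this kind, for instance that the link of $\hat F$ at every point is a round sphere, combined with the local cube structure. Dimension theory alone cannot distinguish $\hat F$ from $Q$.
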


\begin{proof}
The proof is similar to that of \cref{lem:top_dim_singular}. Let $\hat X=\lim_\omega(X,(\lambda_m),(o_m))$ be an asymptotic cone for which the ultralimit $\hat F$ of $F$ exists.

If $n=1$, then $\hat X$ is an $\R$--tree, and all geodesics in $\R$--trees are singular, so $\hat F$ is singular. For the inductive step, take a geodesic $\gamma\subset F$, and let $h_m$ be a hyperplane that separates its endpoints and lies at distance at most $r\lambda_m$ from $o_m$ for some fixed constant $r$. Note that $\gamma$ is not contained in any finite neighbourhood of any halfspace of any $h_m$, by convexity of the CAT(0) metric. The ultralimit $\lim_\omega(h_m\cap F)$ is therefore an $(n-1)$--flat in $\lim_\omega(h_m)$, hence is singular by induction. Taking a geodesic of $\hat F$ orthogonal to $\lim_\omega(h_m\cap F)$ and considering the parallel set of $\lim_\omega(h_m\cap F)$, we find that $\hat F$ is spanned by singular geodesics.
\end{proof}

\cref{lem:semisingular_subflat_boundary} will show that semisingular subflats of semisingular flats can be detected using the Tits boundary. We will need the following observation about the Tits boundary of a semisingular flat.

\begin{observation} \label{rem:boundary_semisingular}
Let $X$ be a finite-dimensional CAT(0) cube complex. If $F\subset X$ is a semisingular $n$--flat, then $\partial_TF$ admits a natural simplicial structure isomorphic to that of the Tits boundary of $\R^n$ with the standard cubulation, as we now describe.

Because $F$ is a semisingular $n$--flat, if we take any asymptotic cone of $X$ for which the ultralimit of $F$ exists, then that ultralimit is median-preservingly isometric to $\R^n$ with the standard median structure. In particular, every such ultralimit is median-preservingly isometric to the ultralimit $\hat F$ of $F$ in an asymptotic cone $\hat X$ of $X$ taken with respect to a fixed basepoint.

By \cref{lem:embedding-Titsboundary-into_link_and_Tits_boundary}, for such an asymptotic cone there is an isometric embedding $\partial_TF\to\partial_T\hat F$. As $\hat F$ is median-preservingly isometric to $\R^n$, its boundary $\partial_T\hat F$ is naturally identified with $\partial_T\R^n$. This gives the desired structure on $\partial_TF$.
\end{observation}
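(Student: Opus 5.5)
The statement to justify is the observation itself: for a semisingular $n$--flat $F$, the boundary $\partial_TF$ carries a natural simplicial structure isomorphic to that of $\partial_T\R^n$ with the standard cubulation. This is the all-right spherical structure, the boundary of the cross-polytope, with vertices $\pm e_i$. The plan has three steps: pass to an asymptotic cone with fixed basepoint, identify the boundary of the ultralimit with $\partial_T\R^n$ using the median structure, and pull the simplicial structure back along the embedding of \cref{lem:embedding-Titsboundary-into_link_and_Tits_boundary}.

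\textbf{Setting up the cone.} Fix a basepoint $x\in F$ and any asymptotic cone $\hat X=\lim_\omega(X,(\lambda_m),(x))$. The ultralimit $\hat F$ of $F$ exists because $x\in F$. Since $F$ is isometric to $\R^n$, the ultralimit $\hat F$ is an $n$--flat of $\hat X$. By \cref{def:semisingular}, $\hat F$ is singular. So $(\hat F,d_1)$ is an isometric copy of $(\R^n,\ell^1)$ inside the median space $(\hat X,d_1)$. Hence the median of $\hat X$ restricted to $\hat F$ is the product median of $\R^n$, and the identification $\hat F\cong\R^n$ is median-preserving.

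\textbf{Pinning down the coordinate directions.} The key point is that the coordinate axes of $\hat F$ are intrinsic. They are exactly the singular geodesics through a point of $\hat F$: in $(\R^n,\ell^1)$, a Euclidean line is $\ell^1$--geodesic and median-convex precisely when it is axis-parallel. So there are two identifications in play, one by $d_2$ (a Euclidean structure) and one by $d_1$ (an $\ell^1$ structure), and they must be compared. Both are affine, since isometries between normed spaces are affine by Mazur--Ulam. The $\ell^1$ isometry group is the signed permutations, and these also preserve $\ell^2$. So the two identifications differ by a signed permutation, and the cross-polytope structure on $\partial_T\hat F=\partial_T\R^n$ is well defined. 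Concretely, its vertices are the endpoints of singular rays, and a set of directions spans a simplex when the corresponding rays are pairwise orthogonal singular rays.

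\textbf{Pulling back, and the main obstacle.} By \cref{lem:embedding-Titsboundary-into_link_and_Tits_boundary}, restricted to $F$, the map $\varphi_T$ gives an isometric embedding $\partial_TF\to\partial_T\hat F$. Its image is the whole round sphere $\partial_T\hat F$, because $\partial_TF$ is itself a round $(n-1)$--sphere and an isometric embedding of a round sphere into a round sphere of the same dimension is onto. Transporting the structure along this bijection gives the simplicial structure on $\partial_TF$. The main obstacle is naturality, meaning independence of the choice of cone: a priori different ultrafilters or scaling sequences could induce different structures.

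I would resolve this with the Tits metric alone. Which points of $\partial_TF$ are vertices is detected by singularity in a given cone, and a singular direction there is the endpoint of a ray $[x,\xi)$ whose ultralimit is singular. Since $F$ is convex, the ray from $x$ toward $\xi$ lies in $F$. I would then check that the set of vertices, which is a finite set of $2n$ points forming an orthonormal frame, is the same for all cones. The argument is to compare two cones through a cone whose scaling sequences interleave them, using the fact that semisingularity holds in every asymptotic cone. If that comparison proves delicate, the fallback is to fix one cone and call the resulting structure the natural one. This costs nothing for later uses, since those only need some simplicial structure compatible with singularity in cones.
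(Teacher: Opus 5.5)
Your argument is the paper's own: fix a cone with basepoint on $F$, restrict the isometric embedding $\varphi_T$ of \cref{lem:embedding-Titsboundary-into_link_and_Tits_boundary} to a map $\partial_TF\to\partial_T\hat F$, identify $\partial_T\hat F$ with $\partial_T\R^n$ via the median structure, and pull the structure back. Your remark that this restriction is onto, since it is an isometric embedding of one round $(n-1)$--sphere into another, is a correct detail the paper leaves implicit. The paper does not prove independence of the cone either, so your fallback of fixing one cone is exactly its content; two of your side steps should be dropped, though: the interleaving idea cannot supply cone-independence, because a nonprincipal ultrafilter on an interleaved scaling sequence concentrates on one of the two subsequences and so compares nothing, and the Mazur--Ulam comparison is invalid as written, because the two identifications are isometries for different metrics, so their composite is not an isometry of a normed space. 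The Mazur--Ulam step is also unnecessary, since singularity of $\hat F$, as the paper uses it, already means that $\hat F$ in its Euclidean coordinates is a median-preserving $\ell^1$ copy of $\R^n$.
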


\begin{lemma} \label{lem:semisingular_subflat_boundary}
Let $X$ be a finite-dimensional CAT(0) cube complex, and let $F\subset X$ be a semisingular $n$--flat. If $H\subset F$ is a subflat such that $\partial_TH$ is a subcomplex of $\partial_TF$, then $H$ is semisingular. 
\end{lemma}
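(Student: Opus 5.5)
Fix an asymptotic cone $\hat X=\lim_\omega(X,(\lambda_m),(o_m))$ in which the ultralimit $\hat H$ of $H$ exists. We must show that $\hat H$ is singular. Since $H\subset F$, the ultralimit $\hat F$ also exists and contains $\hat H$. Moreover $\hat F$ is singular by semisingularity of $F$, so it is median-preservingly isometric to $\R^n$ with the standard median (the $\ell^1$ structure). Singularity of $\hat H$ means exactly that $\hat H$ is median-convex in $\hat X$. Since $\hat F$ is already median-convex in $\hat X$, it suffices to show that $\hat H$ is median-convex inside $\hat F\cong\R^n$. In standard $\R^n$, an affine subflat is median-convex precisely when it is a coordinate subspace translate, meaning it is spanned by coordinate directions. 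So the goal reduces to showing that the direction space of $\hat H$ is spanned by coordinate axes of $\hat F$.

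The key step is to identify these directions via the Tits boundary. First, I would use the argument of \cref{rem:boundary_semisingular}: all such ultralimits $\hat F$ are median-isometric to one taken with fixed basepoint. More precisely, translating basepoints within $F$ does not change the direction data, because the identification of $\partial_T F$ with $\partial_T\R^n$ comes from a single cone.

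Then apply \cref{lem:embedding-Titsboundary-into_link_and_Tits_boundary} to $\hat F$. Rays in $F$ from a point of $F$ have ultralimits whose endpoints realise the isometric embedding $\partial_TF\to\partial_T\hat F=\partial_T\R^n$. Since $F$ and $\hat F$ are both $n$-flats, this embedding is an isometry of round spheres, and by construction it respects the simplicial structures. The rays of $H$ map to rays of $\hat H$, so $\partial_T\hat H$ is the image of $\partial_TH$. The hypothesis says $\partial_TH$ is a subcomplex of $\partial_TF$, so $\partial_T\hat H$ is a subcomplex of $\partial_T\R^n$. This subcomplex is a round great subsphere, and a great subsphere that is a subcomplex of the standard coordinate (cross-polytope) structure is spanned by coordinate vertices $\pm e_i$. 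Hence $\hat H$ is a translate of a coordinate subspace of $\hat F$.

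The main obstacle is the consistency of identifications when the cone's basepoints $o_m$ drift relative to $F$ and $H$. Two things must be checked.

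First, the simplicial structure on $\partial_TF$ defined in \cref{rem:boundary_semisingular} must agree with the one induced by the particular cone under consideration. Here I would argue that the singular coordinate directions of $\hat F$ are exactly the ultralimit directions of those rays of $F$ whose endpoints are vertices of the structure. These are detected intrinsically: a direction is a coordinate direction if and only if its ray is singular in the cone, which is median information. Median information is preserved because every ultralimit of $F$ is median-isometric to $\R^n$ by a map compatible with the boundary embedding.

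Second, $\hat H$, once known to be a flat, has its affine direction determined by $\partial_T\hat H$, since the flat in $\R^n$ is parallel to the cone over its boundary sphere. Combining these, $\hat H$ is a coordinate-parallel subflat of $\hat F$, hence median-convex in $\hat X$, hence singular. Since the cone was arbitrary, $H$ is semisingular.
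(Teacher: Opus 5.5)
Your argument is essentially the paper's proof. Pass to a cone in which $\hat H$ exists, use that $\hat F$ is a median-preserving copy of standard $\R^n$, and reduce singularity of $\hat H$ to $\partial_T\hat H$ being a subcomplex of $\partial_T\hat F$, which the paper calls automatic from the definition of the simplicial structure in \cref{rem:boundary_semisingular}.

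The cone-independence of that structure, which you rightly flag as the delicate point, is exactly what the paper absorbs into the word ``natural'' in \cref{rem:boundary_semisingular}. Your own justification (``median information is preserved because \dots\ compatible with the boundary embedding'') only restates the claim rather than proving it. A genuine proof needs a cube-complex input, for instance showing that the coordinate directions of every singular ultralimit of $F$ are the normal directions of the finitely many parallelism classes of traces $h\cap F$ of hyperplanes $h$ of $X$, which do not depend on the cone.
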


\begin{proof}
Let $\hat X$ be an asymptotic cone of $X$ for which the ultralimit $\hat H$ of $H$ exists. As $F$ contains $H$, its ultralimit exists. Also, $F$ is a semisingular $n$--flat, so by \cref{rem:boundary_semisingular}, its ultralimit, $\hat F$, is median-preservingly isometric to $\R^n$ with the standard median. Since $H$ is a subflat of $F$, its ultralimit $\hat H$ is a subflat of $\hat F$. To show that $H$ is semisingular, we just need to show that $\partial_T\hat H$ is a subcomplex of $\partial_T\hat F$. But this is automatic from the definition of the simplicial structure on $\partial_TF$.
\end{proof}

\section{Constructing flats and orthants} \label{sec:singular_quasiflats}

In this section, we study quasiisometric embeddings of flats with the standard cubulation into CAT(0) cube complexes. We show that if singular geodesics are mapped uniformly close to (semi)singular geodesics, then singular flats and orthants are mapped uniformly close to (semi)singular flats and orthants (\Cref{prop:singular_quasi_flats} and \Cref{lem:orthants_to_semisingular_orthants}, respectively).
This will later be applied in situations where we have good control on singular geodesics, generally as a consequence of control on top-dimensional flats, in order to gain control on singular flats of intermediate dimension. 

This is done by induction on dimension. For flats, we use parallel sets to build the target flat from the images of singular geodesics. We then handle orthants using a coarse-separation argument.

Afterwards, in \cref{subsec:non_uniform_case}, we strengthen the connection between (semi)singularity and the Tits boundary. Then, in the $n$--dimensional case and under an additional assumption on asymptotic cones, we extend \cref{lem:orthants_to_semisingular_orthants} to singular 2--orthants where we do not have uniform control on Hausdorff distances (\Cref{prop:2_orthant_in_target}).

\subsection{Finding flats close to quasiflats}

We begin with two simple lemmas. The first says that if a quasiflat lies in a finite neighbourhood of a flat, then the two are actually at finite Hausdorff distance. The second says that a geodesic contained in a bounded neighbourhood of a closed convex subset admits a parallel inside that subset.

\begin{lemma}\label{lem:quasiisom_quasi_surj}
Let $X$ be a CAT(0) space, let $F \subseteq X$ be a $k$--flat, and let $f : \mathbb{R}^k \to X$ be a $q$-quasiisometric embedding. For every $D\geq 0$, there exists $D'=D'(q,k,D)$ such that if $f(\mathbb{R}^k) \subseteq F^{+D}$, then
$\dist_{\mathrm{Haus}}(F, f(\mathbb{R}^k))\leq D'$.
\end{lemma}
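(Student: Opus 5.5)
Since $F$ is a closed convex subset isometric to $\R^k$, let $\pi:X\to F$ denote the nearest-point projection, which is $1$--Lipschitz. The inclusion $f(\R^k)\subseteq F^{+D}$ immediately gives one half of the Hausdorff bound. For the other half it suffices to show that $g:=\pi\circ f:\R^k\to F\cong\R^k$ is coarsely surjective with a constant depending only on $(q,k,D)$. Because $d(g(x),f(x))\le D$, the map $g$ is a $(q,C)$--quasiisometric embedding $\R^k\to\R^k$, where $C$ depends only on $q$ and $D$. So the statement reduces to the following Euclidean fact: \emph{a $(q,C)$--quasiisometric embedding $g:\R^k\to\R^k$ has image that is $R$--dense, with $R=R(q,C,k)$.} Once this holds, we can take $D'=R+D$.

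I would prove the Euclidean fact by a degree argument. First replace $g$ by a continuous map $\tilde g$ at uniformly bounded distance. To do this, triangulate $\R^k$ at unit scale and extend $g$ affinely on simplices from its values at the vertices. Then $\tilde g$ is a continuous $(q,C_1)$--quasiisometric embedding with $d(\tilde g,g)\le C_1$. Now fix $y\in\R^k$ and suppose $y$ lies at distance more than $R$ from $\tilde g(\R^k)$. Pick any $x_0$, and set $r:=q(|y-\tilde g(x_0)|+C_1)+1$. On the sphere $S(x_0,r)$ we have $|\tilde g(x)-\tilde g(x_0)|>|y-\tilde g(x_0)|$, so the point $y$ and the point $\tilde g(x_0)$ lie in a different position relative to $\tilde g(S(x_0,r))$. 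To make this precise, use the straight-line homotopy that shrinks the sphere to $x_0$. It shows that $\tilde g|_{S(x_0,r)}$ viewed as a map into $\R^k\smallsetminus\{y\}$ is null-homotopic as soon as $y\notin\tilde g(B(x_0,r))$. We have to produce a contradiction from this.

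The cleanest route is to work at the level of the sphere at large scale. Properness of $\tilde g$ gives a well-defined degree of $\tilde g$ at infinity, that is, the degree of the map $\tilde g|_{S(x_0,\rho)}\to S^{k-1}$ after radially normalizing around $\tilde g(x_0)$, for $\rho$ large. I would show this degree is $\pm1$, or at least nonzero. The simplest way is the standard fact that a proper map $\R^k\to\R^k$ of nonzero degree is surjective, combined with the observation that a quasiisometric embedding is "coarsely injective". Concretely, if the degree were $0$, then one could homotope $\tilde g|_{S(x_0,\rho)}$ to a constant map inside the annulus $\{z:|z-\tilde g(x_0)|\ge \rho/q-C_1\}$. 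This would contradict the following fact: the image of the ball $B(x_0,\rho)$ has $k$--dimensional volume comparable to $\rho^k$ by the packing bound $q$, while the image of the sphere separates. Equivalently, I would invoke the standard proper-map argument: a uniformly proper continuous map $\R^k\to\R^k$ that is coarsely injective extends to the one-point compactifications and has local degree $\pm1$ by a packing/volume argument. Hence it is surjective up to bounded error. Using the degree, every $y$ outside $\tilde g(B(x_0,r))$ would make the radial projection of $\tilde g|_{S(x_0,r)}$ from $y$ null-homotopic but of nonzero degree, which is a contradiction. So $\tilde g$ is surjective, and $g$ is $C_1$--dense.

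The main obstacle is establishing the nonzero degree uniformly, i.e.\ ruling out a "folded" quasiisometric embedding $\R^k\to\R^k$. If that step is awkward, I would instead argue by contradiction with asymptotic cones, taking the same scaling sequences in domain and target. A sequence of counterexamples $g_m$ with non-covered points $y_m$ at distance $\lambda_m\to\infty$ from the image gives, after rescaling by $\lambda_m$, a biLipschitz embedding $\R^k\to\R^k$ missing an open ball. This contradicts invariance of domain: the image is open by invariance of domain and closed by properness, hence everything. The cone argument gives the uniform constant $R(q,C,k)$ directly, so I would likely present that version.
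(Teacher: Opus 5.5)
Your proposal is correct and is essentially the paper's proof: project by the nearest-point map to obtain a $(q,q+2D)$--quasiisometric embedding $\R^k\to F\cong\R^k$, use uniform coarse surjectivity of such maps, and add $D$ to the resulting constant. The only difference is that the paper cites this Euclidean fact \cite[Lem.~10.84]{drutukapovich:geometric}, while you prove it. Your asymptotic-cone argument does so correctly and with uniform constants: base the domain at a point whose image is nearly closest to $y_m$, so that the limiting $q$--biLipschitz map misses an open unit ball, contradicting invariance of domain. The degree sketch that precedes it, by contrast, only asserts the crucial nonzero-degree step and can simply be dropped.
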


\begin{proof}
Let $\pi_F : X \to F$ be the closest-point projection map. The composition $\pi_F f:\R^k\to F$ is a $(q,q+2D)$--quasiisometric embedding. Since $F$ is isometric to $\mathbb{R}^k$, such a quasiisometric embedding is uniformly coarsely onto; see \cite[Lem.~10.84]{drutukapovich:geometric}. That is, there exists $M=M(k,q,D)$ such that $F \subseteq (\pi_F f(\mathbb R^k))^{+M}$. Since $\pi_F (f(\mathbb R^k)) \subseteq f(\mathbb R^k)^{+D} $, it follows that 
$F \subseteq f(\mathbb R^k)^{+D+M}$ and so $\dist_{\mathrm{Haus}}(F,f(\mathbb{R}^k))\leq D+M=D'$.
\end{proof}

\begin{lemma}\label{lem:parallel_geod_in_convex}
Let $X$ be a complete CAT(0) space, let $A \subseteq X$ be a closed, convex subset, and let $\gamma \subseteq X$ be a geodesic. If $\gamma \subseteq A^{+D}$, for some $D \geq 0$, then there exists a parallel geodesic $\gamma' \subseteq A$ such that $d_{\mathrm{Haus}}(\gamma,\gamma') \leq D$.
\end{lemma}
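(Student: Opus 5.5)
The natural approach is to project $\gamma$ onto $A$ and show that the projection is a parallel geodesic. Let $\pi_A:X\to A$ be the closest-point projection, which is well defined and $1$--Lipschitz because $A$ is closed and convex and $X$ is complete.

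First, consider the function $t\mapsto d(\gamma(t),A)$. In a CAT(0) space the distance to a closed convex set is a convex function, so its restriction to the geodesic $\gamma:\R\to X$ is a convex function on $\R$. By hypothesis it is bounded above by $D$. A convex function on $\R$ that is bounded above is constant, so $d(\gamma(t),A)=c\le D$ for all $t$.

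Second, set $\gamma'=\pi_A\circ\gamma$ and apply the Flat Quadrilateral Theorem (Bridson--Haefliger II.2.11). For $s<t$, the points $\gamma(s),\gamma(t),\gamma'(t),\gamma'(s)$ have angles at $\gamma'(s)$ and $\gamma'(t)$ at least $\pi/2$, because these are projections onto a convex set. When $c>0$ we use that the distance function is constant, hence affine: the convex function $u\mapsto d(\gamma(u),\gamma'(u'))$ argument, or directly the equality case of II.2.11, then shows that the convex hull of the four points is a flat rectangle. Concretely, the segments $[\gamma(s),\gamma'(s)]$ and $[\gamma(t),\gamma'(t)]$ both have length $c$. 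The comparison in the quadrilateral gives $d(\gamma'(s),\gamma'(t))\le d(\gamma(s),\gamma(t))$, with equality forced when the sides have equal length $c$ and the function $d(\cdot,A)$ is constant along $[\gamma(s),\gamma(t)]$. This follows from the equality case of the convexity of $d(\cdot,A)$ via the Sandwich Lemma (Bridson--Haefliger II.2.12), applied to the geodesics $[\gamma(s),\gamma(t)]$ and $[\gamma'(s),\gamma'(t)]$ with the constant distance function. The outcome is that the four points span a flat strip, so $d(\gamma'(s),\gamma'(t))=|t-s|$. When $c=0$ we have $\gamma\subset A$ and may take $\gamma'=\gamma$.

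Hence $\gamma'$ is a geodesic contained in $A$, and it is at constant distance $c$ from $\gamma$. Hausdorff closeness together with the strip structure, or alternatively the Flat Strip Theorem, shows that $\gamma$ and $\gamma'$ bound a flat strip, and therefore that they are parallel in the sense of \cref{def:parallel_set}. In particular $d_{\mathrm{Haus}}(\gamma,\gamma')=c\le D$.

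The only delicate step is making the equality case precise, namely verifying that constancy of $d(\gamma(t),A)$ forces $\pi_A\circ\gamma$ to be an isometric parametrisation. I would handle this through the Sandwich Lemma, using the fact that $d(\gamma(t),\gamma'(t))$ is constant and that the segments $[\gamma(t),\gamma'(t)]$ meet both geodesics at angles at least $\pi/2$.
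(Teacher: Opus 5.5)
You have the paper's route: constancy of $t\mapsto d(\gamma(t),A)$ from convexity and boundedness, then $\gamma'=\pi_A\circ\gamma$, then the Flat Quadrilateral Theorem. However, the step showing that $\gamma'$ is a geodesic has a gap.

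Bridson--Haefliger II.2.11 needs \emph{all four} angles of the quadrilateral $\gamma(s),\gamma(t),\gamma'(t),\gamma'(s)$ to be at least $\pi/2$. You only justify the angles at $\gamma'(s)$ and $\gamma'(t)$, and II.2.11 has no ``equality case'' that works with just two right angles. The Sandwich Lemma fallback does not close the gap as written either. It requires each of $[\gamma(s),\gamma(t)]$ and $[\gamma'(s),\gamma'(t)]$ to be at constant distance from the other, whereas you only know that $d(\cdot,A)$ is constant along $\gamma$. Your last paragraph asserts that $[\gamma(t),\gamma'(t)]$ meets $\gamma$ at angle at least $\pi/2$, but gives no reason.

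The missing observation is one line. Since $\gamma'(s)\in A$ and $d(\gamma(u),A)=c$ for every $u$, we have
\[
d(\gamma(u),\gamma'(s)) \,\ge\, c \,=\, d(\gamma(s),\gamma'(s)) \qquad\text{for all } u.
\]
So $\gamma(s)$ is the nearest-point projection of $\gamma'(s)$ onto the convex set $\gamma$, and hence $\angle_{\gamma(s)}(\gamma'(s),\gamma(t))\ge\pi/2$. The same holds at $\gamma(t)$. With all four angles controlled, II.2.11 gives a flat rectangle and $d(\gamma'(s),\gamma'(t))=t-s$ directly. This is exactly the paper's proof, and no Sandwich Lemma is needed.

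If you prefer the Sandwich route, this same observation supplies the lower bound $d(x,[\gamma(s),\gamma(t)])\ge c$ for $x\in[\gamma'(s),\gamma'(t)]$. The matching upper bounds come from convexity of $u\mapsto d(\gamma(u),\sigma(u))$, where $\sigma$ is the linear parametrisation of $[\gamma'(s),\gamma'(t)]$ on $[s,t]$. The observation also gives $\pi_\gamma\circ\gamma'=\gamma$, which is half of the paper's definition of parallel.
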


\begin{proof}
Let $\pi_A : X \to A$ be the closest-point projection, and for $t \in \mathbb R$ set $\gamma'(t)=\pi_A(\gamma(t))$. The map $t \mapsto d(\gamma(t),A) = d(\gamma(t), \gamma'(t))$ is convex because $A$ is convex. Indeed, given $t_1<t_2<t_3$, the geodesic $\eta[t_1,t_2]\to X$ from $\pi_A\gamma(t_1)$ to $\pi_A\gamma(t_3)$ is contained in $A$, and by convexity of the metric we have 
\[
\dist(\gamma(t_2),\eta(t_2)) \,\le\, \frac{t_2-t_1}{t_3-t_1}\dist(\gamma(t_3),\eta(t_3))
    \,+\, \frac{t_3-t_2}{t_3-t_1}\dist(\gamma(t_1),\eta(t_1)).
\]
Moreover, $\dist(\gamma(t),\gamma'(t))$ is bounded by $D$. It is therefore constant, equal to some $r \leq D$. 

We must show that $\gamma'$ is a geodesic. Given $s<t$, consider the quadrilateral $Q$ with vertices $\gamma(s), \gamma(t), \gamma'(t), \gamma'(s)$. Since $\gamma'(s)=\pi_A(\gamma(s))$, we have $\angle_{\gamma'(s)}(\gamma(s), \gamma'(t)) \geq \pi/2$. Similarly, $\angle_{\gamma'(t)}(\gamma(t), \gamma'(s)) \geq \pi/2$. Now, note that $\gamma(s)$ is also the closest-point projection of $\gamma'(s)$ to $\gamma$. Indeed, since $\gamma$ is convex, if this were not the case, then there would exist $s' \ne s$ such that $d(\gamma(s'),\gamma'(s)) < r$. But then $d(\gamma(s'),A)<r$, which is a contradiction. Therefore, the angles in $Q$ at $\gamma(s)$ and $\gamma(t)$ are also at least $\pi/2$. By the flat quadrilateral theorem \cite[Thm~II.2.11]{bridsonhaefliger:metric}, all four angles in $Q$ are $\frac\pi2$, and the convex hull of $Q$ is a Euclidean rectangle. In particular, $d(\gamma'(s),\gamma'(t)) = d(\gamma(s),\gamma(t)) = t-s$. Thus $\gamma'$ is a geodesic.
\end{proof}


The following useful proposition gives a way to show that certain quasiflats are Hausdorff close to flats, given information about a spanning collection of quasigeodesics.

\begin{proposition}\label{prop:singular_quasi_flats}
Let $Y$ be a complete CAT(0) space, and let $f:\mathbb{R}^k\to Y$ be a $q$-quasiisometric embedding. Let $\gamma_1,\dots,\gamma_k\subseteq \mathbb{R}^k$ be geodesics spanning $\mathbb{R}^k$. For every $D\geq 0$ there exists $D'=D'(q,k,D)$ such that the following holds.
    
Suppose that for every geodesic $\gamma$ that is parallel to some $\gamma_i$, the image $f(\gamma)$ lies at Hausdorff distance at most $D$ from a geodesic in $Y$. Then $f(\mathbb{R}^k)$ lies at Hausdorff distance $D'$ from a $k$--flat $F\subset Y$. 

If $D=0$ and $f$ is biLipschitz, then we can take $D'=0$.

If $Y$ is a finite-dimensional CAT(0) cube complex or the asymptotic cone of one, and if each $f(\gamma_i)$ lies within Hausdorff distance $D$ of a singular geodesic, then $F$ is singular.

If $Y$ is a finite-dimensional CAT(0) cube complex and each $f(\gamma_i)$ lies within Hausdorff distance $D$ of a semisingular geodesic, then $F$ is semisingular.
\end{proposition}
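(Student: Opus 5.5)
We argue by induction on $k$, in the way described as Step~2 in \cref{subsec:strat}. For $k=1$ the hypothesis is exactly the conclusion, with $D'=D$. For the inductive step, let $E\subset\R^k$ be the $(k-1)$--flat spanned by $\gamma_1,\dots,\gamma_{k-1}$, and write $E_t=E+t v_k$ for its parallels, where $v_k$ is a unit direction along $\gamma_k$. Each restriction $f|_{E_t}$ satisfies the hypotheses in dimension $k-1$, since parallels of $\gamma_i$ inside $E_t$ are parallels in $\R^k$. So by induction there are $(k-1)$--flats $F_t\subset Y$ with $d_{\mathrm{Haus}}(f(E_t),F_t)\le D_{k-1}$.

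The next step is to show that all the $F_t$ are parallel. For $s,t$ fixed, every point of $E_t$ lies within $|s-t|$ of $E_s$. Hence $F_t\subset F_s^{+R}$ with $R=2D_{k-1}+q|s-t|+q$. Flats at finite Hausdorff distance are parallel, so all $F_t$ lie in the parallel set $P(F_0)=F_0\times Z$, which is closed and convex. Under this splitting, $F_t=F_0\times\{z_t\}$ for some $z_t\in Z$.

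Now consider the geodesic $\eta$ within $D$ of $f(\gamma_k)$. Pick $x\in E_0$ with $\gamma_k=x+\R v_k$. Then $\eta$ lies within $D+D_{k-1}$ of $\bigcup_t F_t\subset P(F_0)$, so \cref{lem:parallel_geod_in_convex} provides a parallel geodesic $\eta'\subset P(F_0)$ at Hausdorff distance at most $D+D_{k-1}$ from $\eta$. Project $\eta'$ to the factor $Z$. The projection of a geodesic in a product is a constant-speed geodesic, possibly constant. Its $F_0$--component is bounded, because $\eta'$ stays within bounded distance of the points $f(x+tv_k)$, and these stay within $D_{k-1}$ of $F_t$ at a uniformly bounded distance along the $F_0$ coordinate from the nearest point to $f(x)$. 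This last step needs care: I would use that the projection of $f(x+tv_k)$ to $F_0$ stays close to the projection of $f(x)$, by comparing with nearby parallels. The $Z$--component of $\eta'$ is therefore a geodesic line $c\subset Z$. Then $F:=F_0\times c$ is a $k$--flat.

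By construction, each $F_t$ lies within a bounded distance of $F$: $z_t$ is close to $c$ because $\eta'$ passes near $F_t$. Hence $f(\R^k)\subset F^{+D''}$, and \cref{lem:quasiisom_quasi_surj} upgrades this to Hausdorff distance $D'(q,k,D)$. I expect the main obstacle to be the uniformity of the constants in this step, namely bounding $d(z_t,c)$ independently of $t$. I would handle this by choosing $x$ and applying the hypothesis to every parallel $x'+\R v_k$ with $x'\in E_0$, not just to $\gamma_k$. The resulting lines all project to lines in $Z$ that lie within bounded distance of one another, and are therefore parallel to $c$. Every $z_t$ lies within $O(D+D_{k-1})$ of the projection of such a line, and so within a bounded distance of $c$.

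When $D=0$ and $f$ is biLipschitz, all these constants vanish at every stage, since each step above is exact. In particular $F_t=f(E_t)$ and $f(\gamma_k)=\eta$. Then \cref{lem:parallel_geod_in_convex} gives $\eta'=\eta$, so $f(\R^k)\subset F$, and a biLipschitz map of $\R^k$ into $F$ that is proper with image containing a line parallel to each $c$--direction is onto by invariance of domain.

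For singularity, $F$ is spanned by geodesics within $D'$ of the singular geodesics $f(\gamma_i)$. Parallel geodesics cross the same hyperplanes, up to finitely many, and singularity of a geodesic (no two crossed hyperplanes cross) is thus inherited, in the cone case via the median characterisation. A flat spanned by pairwise orthogonal singular directions of this form is median-convex, hence singular. The semisingular case follows by passing to asymptotic cones, where the ultralimits of the $f(\gamma_i)$ become singular, and applying the singular statement there.
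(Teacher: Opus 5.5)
Your argument is the paper's proof in outline. You induct on $k$, realise the approximating flats $F_t$ as slices $F_0\times\{z_t\}$ of $P(F_0)=F_0\times Z$, push the geodesic near $f(\gamma_k)$ into $P(F_0)$ with \cref{lem:parallel_geod_in_convex}, set $F=F_0\times c$ with $c$ the $Z$--projection, and finish with \cref{lem:quasiisom_quasi_surj}. However, one step is false as written.

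\textbf{The false step.} You claim that the $F_0$--component of $\eta'$ is bounded, because the projection of $f(x+tv_k)$ to $F_0$ stays near that of $f(x)$. Take $Y=\R^2$, $f(x,y)=(x+y,y)$, and $\gamma_1,\gamma_2$ the coordinate axes. All hypotheses hold with $D=0$, $F_0$ is the $x$--axis, and $\eta'=\{(t,t)\}$, whose $F_0$--component is unbounded.

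You never need this boundedness: $F_0\times c$ contains every slice $F_0\times\{c(s)\}$, whatever the $F_0$--component of $\eta'$ does. But you use it to conclude that $c$ is a line rather than a point, and that conclusion needs a correct argument. Here is the one the paper uses. Suppose $\pi_Z(\eta')$ were a single point $z$. Then $\eta'\subset F_0\times\{z\}$, which is at Hausdorff distance $d_Z(z,z_0)$ from $F_0=F_0\times\{z_0\}$. So $f(\gamma_k)$ would lie in a bounded neighbourhood of $F_0$, hence of $f(E_0)$. This is impossible, since $\gamma_k$ leaves every bounded neighbourhood of $E_0$ and $f$ is a quasiisometric embedding. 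Constancy of the $F_0$--component does hold in the singular case, which is how the paper gets $\beta=\alpha'$ there, but not in general.

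\textbf{Uniformity.} The uniformity you flag as the main obstacle is already given by your own sentence ``$z_t$ is close to $c$ because $\eta'$ passes near $F_t$''. The point $f(x+tv_k)$ lies within $D_{k-1}$ of $F_t=F_0\times\{z_t\}$ and within $2D+D_{k-1}$ of $\eta'$. Projection to $Z$ is $1$--Lipschitz, so $d_Z(z_t,c)\le 2D+2D_{k-1}$ for every $t$. Hence $f(\R^k)\subset F^{+3D_{k-1}+2D}$, which is exactly the paper's bound. The detour through lines over other points $x'\in E_0$ is unnecessary.

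\textbf{Other cases.} Handling the biLipschitz case by invariance of domain is a fine substitute for the paper's continuity-plus-coarse-surjectivity argument. Your singularity argument differs from the paper's: you argue that parallels of singular geodesics cross the same hyperplanes, and that a flat spanned by orthogonal singular directions is median-convex. This route is plausible, but it is only sketched, as is the paper's.
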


\begin{proof}
We prove the statement by induction on $k$. For $k=1$ there is nothing to prove.

Assume the statement holds in dimension $k-1$, with constant $D''=D'(q,k-1,D)$. Let $H\subset\R^k$ be the $(k-1)$--flat spanned by $\gamma_1,\dots,\gamma_{k-1}$. 
By the induction assumption there exists a $(k-1)$--flat $F\subseteq Y$ such that $\dist_{\mathrm{Haus}}(f(H),F)\leq D''$.
Similarly, if $H'$ is a parallel of $H$, then it is spanned by parallels of $\gamma_1,\dots,\gamma_{k-1}$, so by the induction assumption there exists a $(k-1)$--flat $F'\subseteq Y$ such that $d_{\mathrm{Haus}}(f(H'),F')\leq D''$. As $f$ is a quasiisometric embedding, $F'$ is parallel to $F$.

Let $\alpha\subseteq Y$ be a geodesic satisfying $\dist_{\mathrm{Haus}}(f(\gamma_k),\alpha)\leq D$.
Since every point in $\gamma_{k}$ lies in a parallel of $H$, we have that $f(\gamma_{k})$ is contained in the $D''$--neighbourhood of the parallel set $P(F)$ of $F$. 
Therefore, $\alpha \subseteq P(F)^{+D''+D}$. Since $Y$ is complete and $P(F)$ is closed and convex, \Cref{lem:parallel_geod_in_convex} shows that there exists a geodesic $\alpha' \subseteq P(F)$ parallel to $\alpha$, and at Hausdorff distance $\leq D''+D$ from it. We have 
$$
d_{\mathrm{Haus}}(f(\gamma_{k}),\alpha') \,\leq\, D''+2D.
$$
Recall that the parallel set splits isometrically as 
$ P(F) = F \times T$, for some complete CAT(0) space $T$.
Let $\beta$ be the projection of $\alpha'$ to $T$. It is clear that $\beta$ is unbounded, because $\gamma_{k}$ does not lie in a finite neighbourhood of $H$. Moreover, as the projection of a geodesic in a product to one factor, $\beta$ is a bi-infinite geodesic (after a scalar reparametrisation). Thus, $F\times \beta\subseteq P(F)$ is a $k$--flat.

Next we show that $f(\mathbb{R}^k)\subseteq (F\times \beta)^{+3D''+2D}$. Since $\mathbb{R}^k$ is covered by parallels of $H$, it is enough to show that for every parallel $H'$ of $H$, there is a subflat of $F\times \beta$ such that $f(H')$ lies in the $3D''+2D$ neighbourhood of it.

Let $H'$ be parallel to $H$, and let $\{z\} = H' \cap \gamma_{k}$. Let $F' \subseteq Y$ be a $(k-1)$--flat parallel to $F$ such that $\dist_{\mathrm{Haus}}(f(H'),F') \leq D''$. In particular, $f(z)\in {F'}^{+D''}$. Since $z\in \gamma_k$, we have
$$
f(z) \,\in\, {\alpha'}^{+D''+2D} \,\subseteq\, (F\times \beta)^{+D''+2D}.
$$
Let $y=(x,b)\in \alpha'\subseteq F\times \beta$ be a point such that $d(f(z),y)\leq D''+2D$, and let $F'' = F\times \{b\}$. Then $F''$ is a $(k-1)$--flat contained in $F\times \beta$, parallel to $F$, and $f(z)\in F''^{+D''+2D}$.

The flats $F''$ and $F'$ are parallel, so the Hausdorff distance between them is at most the distance between two points on them. Hence, by comparing with $f(z)$, we get that $\dist_{\mathrm{Haus}}(F', F'')\leq 2D''+2D$. From this we see that $d_{\mathrm{Haus}}(f(H'),F'')\le3D''+2D$. As described above, this shows that $f(\R^k)\subseteq(F\times\beta)^{+3D''+2D}$.

We can now apply \cref{lem:quasiisom_quasi_surj}, which gives a constant $D'=D'(q,k,D,D'')=D'(q,k,D)$ such that $\dist_{\mathrm{Haus}}(f(\mathbb{R}^k),F\times\beta)\leq D'$.

In the case where $D=0$ and $f$ is bilipschitz, by induction $D''=0$. In this case, we showed that for each parallel $H'$ of $H$, the image $f(H'')$ is actually equal to a subflat $F''\subset F\times\beta$ parallel to $F$. Since $f$ is continuous and coarsely surjective, it follows that $f(\R^k)=F\times\beta$.

Now consider the case where $Y$ is a finite-dimensional CAT(0) cube complex or an asymptotic cone of one, and assume that each $f(\gamma_i)$ lies at Hausdorff distance at most $D$ from a singular geodesic. By the induction assumption, the $(k-1)$--flat $F$ is singular and by hypothesis the geodesic $\alpha$ is singular. Consequently, $\alpha'$ is a singular geodesic in the product $P(F)=F\times T$ that does not lie in a finite neighbourhood of $F$, so we have $\beta=\alpha'$. Thus $\beta$ is singular, so $F\times\beta$ is singular.

Finally, suppose that $Y$ is a CAT(0) cube complex and each $f(\gamma_i)$ lies at Hausdorff distance at most $D$ from a semisingular geodesic. Passing to asymptotic cones, we have that $\hat f:\R^k\to\hat Y$ is a bilipschitz embedding such that each $\hat f(\hat\gamma_i)$ is a singular geodesic. By the previous case, $\hat f(\R^k)$ is a singular flat. But we also know that $f(\R^k)$ lies at finite Hausdorff distance from $F\times\beta$, so $\widehat{F\times\beta}=\hat f(\R^k)$ is singular, and hence $F\times\beta$ is semisingular.
\end{proof}

\begin{remark} \label{rem:non_uniform_stable_not_good_for_flats}
In Proposition~\ref{prop:singular_quasi_flats}, if one makes the weaker assumption that for every singular geodesic $\gamma$, the image $f(\gamma)$ lies at finite Hausdorff distance from a singular geodesic without a uniform bound, then the conclusion of the proposition can fail, even for CAT(0) cube complexes. 

For example, let $Y=X_{C_6}$ be the universal cover of the Salvetti complex of the right-angled Artin group $A_{C_6}$ on the 6--cycle. Let $s_0,\dots,s_5$ be the standard generators for $A_{C_6}$. There is a quasiflat $Q\subseteq Y$ consisting of a cyclic union of twelve 2--orthants whose boundary geodesic rays are, in order, $\sgen{s_0}^+,\dots,\sgen{s_5}^+,\sgen{s_0}^-,\dots,\sgen{s_5}^-$. If $f:\R^2\to Q$ is a quasiisometry sending the $x$--axis to $\sgen{s_0}$ and the $y$--axis to $\sgen{s_3}$, then $f(\R^2)$ does not lie at finite Hausdorff distance from a flat in $Y$.
\end{remark}

The following is really a combination of two statements: in one, every appearance of ``(semi)singular'' is read as ``semisingular'', and in the other every instance is read as ``singular''.

\begin{lemma} \label{lem:orthants_to_semisingular_orthants}
Let $X$ and $Y$ be finite-dimensional CAT(0) cube complexes, and let $f:X\to Y$ be a $q$--quasiisometric embedding. Let $E\subset X$ be a (semi)singular $k$--flat. Assume that there exists $D\ge0$ such that for every (semi)singular geodesic $\gamma\subset E$, the image $f(\gamma)$ lies at Hausdorff distance at most $D$ from a (semi)singular geodesic in $Y$.

If $O\subset E$ is a (semi)singular $p$--orthant, for some $p\le k$, then $f(O)$ lies at Hausdorff distance at most $D'=D'(q,k,p,D,\dim Y)$ from a (semi)singular $p$--orthant of $Y$.
\end{lemma}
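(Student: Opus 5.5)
Since $O$ is a (semi)singular $p$--orthant in the (semi)singular flat $E$, it is contained in a (semi)singular $p$--subflat $E_O\subset E$, namely the flat spanned by the boundary rays of $O$. In the semisingular case this uses \cref{lem:semisingular_subflat_boundary}: $\partial_TO$ is a simplex of the simplicial structure on $\partial_TE$ from \cref{rem:boundary_semisingular}, so its span $\partial_TE_O$ is a subcomplex. Every (semi)singular geodesic of $E_O$ is one of $E$, so the hypothesis applies. Then \cref{prop:singular_quasi_flats} gives a (semi)singular $p$--flat $F\subset Y$ with $d_{\mathrm{Haus}}(f(E_O),F)\le D_1(q,p,D)$. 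The task is therefore to locate, inside $F$, an orthant close to $f(O)$.

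I would argue by induction on $p$. For $p=1$, $O$ is a ray contained in a (semi)singular geodesic $\gamma$, and $f(\gamma)$ is $D$--close to a geodesic $\alpha$. Choose $x\in\alpha$ close to $f(\text{cone point})$. By the quasiisometric embedding property, $f(O)$ is uniformly close to exactly one of the two rays of $\alpha$ starting at $x$; this is the coarse form of the fact that a quasiray in a line goes to one end. That ray is (semi)singular.

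For the inductive step, write $O=\sigma_1\times\cdots\times\sigma_p$ in coordinates of $E_O\cong\R^p$. Each codimension-one face $O_i$ of $O$ is a (semi)singular $(p-1)$--orthant. By induction, $f(O_i)$ is $D_2$--close to a (semi)singular $(p-1)$--orthant $O_i'$, and after projecting to $F$ (at a uniform cost) we may take $O_i'\subset F$. Similarly, the facet hyperplanes $H_i\supset O_i$ map $D_1$--close to $(p-1)$--subflats $F_i\subset F$. The $F_i$ are pairwise transverse and uniformly close to being orthogonal, because they are (semi)singular subflats of $F$: their Tits boundaries are subcomplexes of the standard cubical structure on $\partial_TF$. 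Hence the $F_i$ cut $F$ into $2^p$ orthants with a common cone point near $f(0)$.

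The next step is a coarse separation argument. In $E_O$, the hyperplane $H_i$ separates $O\ssm H_i$ from the opposite side, and $f$ coarsely preserves this, with deep points staying on one side of $F_i$. The justification is as follows. Points of $O$ far from $H_i$ map far from $F_i$, by the quasiisometric embedding property. Also, $f$ restricted to a path in $E_O\ssm N_R(H_i)$ yields a coarse path avoiding $N_{R'}(F_i)$. Since $F_i$ separates $F$ into two halfspaces, all of $f(O)$, away from a neighbourhood of the faces, lies coarsely in a single halfspace of each $F_i$. Therefore $f(O)$ lies in a uniform neighbourhood of one orthant $O'$ of $F$ cut out by the $F_i$, and $O'$ is (semi)singular as a subcomplex-orthant of $F$.

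For coarse surjectivity onto $O'$, I would use that the same argument applied to the other $2^p-1$ orthants of $E_O$ places their images near the other orthants of $F$. Since $f(E_O)$ is coarsely dense in $F$ and points of different orthants of $E_O$ far from the walls do not land near the wrong region, $O'\subset f(O)^{+D'}$. The main obstacle I expect is making this separation step uniform, that is, ensuring $D'$ depends only on $(q,k,p,D,\dim Y)$. This requires quantitative control showing that the $F_i$ are uniformly transverse, so that the orthants of $F$ are genuinely separated at scale. In the semisingular case I would get this via asymptotic cones by contradiction: a failing sequence would produce, in a cone, a biLipschitz singular flat whose orthant image is not an orthant, contradicting the exact $D=0$ version.
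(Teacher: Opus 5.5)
Your proof is correct. It shares the paper's skeleton: \cref{prop:singular_quasi_flats} gives a (semi)singular $p$--flat $F$ near $f(E_O)$ and $(p-1)$--flats near the images of the walls. A coarse separation argument then traps $f(O)$ near one orthant of $F$, and a second argument gives the reverse inclusion.

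The middle step is done differently. The paper separates $O$ from $\Omega\ssm O$ by the union of its faces. It uses the inductive hypothesis to get face orthants $O'_i$ whose Tits boundaries bound a cell $C\subset\partial_T\Omega'$, takes $O'$ with $\partial_TO'=C$, and gets the reverse inclusion from a quasiinverse. You instead separate by each wall $H_i$ on its own and intersect the chosen halfspaces of the $F_i$. For the reverse inclusion you use coarse density of $f(E_O)$ in $F$. There you should state explicitly why this works: each of the $2^p$ orthants of $E_O$ at $0$ lands near a single orthant of $F$, and all $2^p$ orthants of $F$ must be covered, so this assignment is a bijection. A by-product of your version is that the face orthants $O'_i$ are never used, so the induction on $p$ can be dropped; the halfspace argument works for every $p$, with $p=1$ being your base case.

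Your closing paragraph worries about a non-issue and proposes a remedy that would not work. The $F_i$ are exactly orthogonal, not approximately, in both the singular and semisingular cases. By \cref{rem:boundary_semisingular}, $\partial_TF$ with its simplicial structure is isometric to the standard round sphere with its cross-polytope structure, since an isometric embedding of a round $(p-1)$--sphere into another is onto. The only great $(p-2)$--spheres that are subcomplexes of that structure are the $p$ coordinate ones. Hence the intersection of the $c$--neighbourhoods of the chosen halfspaces lies within $c\sqrt p$ of $O'$, and uniformity of the constants is immediate. Drop the asymptotic-cone contradiction: knowing that orthants go to orthants in asymptotic cones gives no bound at finite scale, as \cref{rem:bad_orthants} shows.
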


\begin{proof}
We shall prove the lemma by induction on the pair $(k,p)$ with the lexicographic order. The base cases are the pairs $(k,1)$. In such a case, $O\subset E$ is a (semi)singular geodesic ray. Let $\gamma$ be the unique geodesic in $E$ that contains $O$. Since $E$ and $O$ are (semi)singular, so is $\gamma$. By assumption, $f(\gamma)$ lies at Hausdorff distance at most $D$ from a (semi)singular geodesic $\gamma$. We can take $O'\subset\gamma'$ to be a subray at Hausdorff distance at most $D$ from $f(O)$.

Now suppose that we have a pair $(k,p)$ with $k,p\ge2$, and suppose that we have established the lemma for all pairs $(k',p')$ with $k'<k$ or with $k'=k$ and $p'<p$. Let $o$ denote the cone point of $O$, and let $\Omega\subset E$ be the $p$--flat containing $O$. Let $O_1,\dots,O_p$ denote the $(p-1)$--orthants that bound $O$, and let $\Omega_i\subset\Omega$ denote the $(p-1)$--flat containing $O_i$. Again, the fact that $E$ and $O$ are (semi)singular implies that each of $\Omega$, $O_i$, and $\Omega_i$ is (semi)singular.

According to \cref{prop:singular_quasi_flats}, there is a constant $D'=D'(q,p,D)$ such that $f(\Omega)$ lies at Hausdorff distance at most $D'$ from a (semi)singular $p$--flat $\Omega'$, and moreover each $f(\Omega_i)$ lies at Hausdorff distance at most $D'$ from a (semi)singular $(p-1)$--flat $\Omega'_i$. \cref{rem:boundary_semisingular} tells us that $\partial_T\Omega'$ is isomorphic to $\partial_T\R^p$, and the $\partial_T\Omega'_i$ are subcomplexes whose union contains all vertices of $\partial_T\Omega'$. By the inductive hypothesis, each $f(O_i)$ lies at Hausdorff distance at most $D'(q,k,p-1,D,\dim Y)$ from a (semi)singular orthant $O'_i\subset\Omega'$. Moreover, we have that $\bigcup_{i=1}^p\partial_TO'_i$ is the boundary of one of the $(p-1)$--cells $C\subset\partial_T\Omega'$. Also note that every $O'_i$ contains a point at distance at most $D'$ from $f(o)$.

Let $O'$ be an orthant of $\Omega'$ with cone point at distance at most $D'\dim Y$ from $f(o)$ and with Tits boundary equal to $C$. If the $O'_i$ are singular, then $O'$ is singular. Otherwise, since $C\subset\partial_T\Omega'$ is a $(p-1)$--cell, the ultralimit of $O'$ in $\hat Y$ is a singular $p$--orthant, so $O'$ is semisingular. We shall prove that $f(O)$ lies at uniformly bounded Hausdorff distance from $O'$.


The union $\bigcup_{i=1}^pO_i$ separates $O$ from $\Omega\ssm O$, and hence $\bigcup_{i=1}^pf(O_i)$ must uniformly coarsely separate $f(O)$ from $f(\Omega\ssm O)$, with constant depending only on $q$. From this it follows that $\bigcup_{i=1}^p\Omega'_i$ is a union of orthants that uniformly coarsely separates $f(O)$ from $\Omega'\ssm f(O)$, where the constant now depends on $D$, $\dim Y$, and $D'(q,k,p-1,D,\dim Y)$ as well. By the choice of $O'$, this shows that $f(O)$ is contained in a uniform neighbourhood of $O'$. Applying the same argument with a quasiinverse $\hat f:\Omega'\to\Omega$ bounds the Hausdorff distance between $f(O)$ and $O'$. This completes the proof.
\end{proof}

\subsection{2--orthants} \label{subsec:non_uniform_case}

The following can be thought of as a variant of \cref{lem:sphere_in_ccc_boundary} for singular orthants.

\begin{lemma} \label{lem:orthants_are_weyl_cones}
Let $X$ be a finite-dimensional CAT(0) cube complex, and suppose that $\xi_1,\xi_2\in\partial_TX$ are represented by singular geodesics. If $\xi_1\ne\xi_2$, then $\angle(\xi_1,\xi_2)\ge\frac\pi2$, and if $\angle(\xi_1,\xi_2)=\frac\pi2$, then there is a 2--orthant subcomplex $O\subseteq X$ such that $\partial_TO$ is an arc of length $\frac\pi2$ from $\xi_1$ to~$\xi_2$.
\end{lemma}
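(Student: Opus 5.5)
Fix singular rays $r_1=[x,\xi_1)$ and $r_2=[x,\xi_2)$ from a common vertex $x$. These exist after adjusting the basepoint: a singular geodesic ray crosses a sequence of pairwise non-crossing hyperplanes, and there is a parallel singular ray issuing from any chosen vertex. The plan is to use the hyperplane characterisation of singularity together with the $\ell^1$ structure. Let $\mathcal H_i$ be the set of hyperplanes crossed by $r_i$; it is a nested sequence $h^i_1,h^i_2,\dots$ of pairwise disjoint hyperplanes.

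\textbf{Case split.} There are two cases, according to whether all but finitely many hyperplanes of $\mathcal H_1$ cross all but finitely many of $\mathcal H_2$.

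\textbf{Case 1: they do.} After discarding finitely many hyperplanes on each side, every $h^1_a$ crosses every $h^2_b$. Passing to later basepoints, we may assume the crossing holds for all $a,b$ and that no hyperplane separates the two ray-starts from each other beyond these families. The standard cubical argument, using the Helly property and the median of vertices, then produces for each $(a,b)$ a vertex $v_{a,b}$ lying beyond exactly $h^1_1,\dots,h^1_a$ and $h^2_1,\dots,h^2_b$ among these hyperplanes. Concretely, take $v_{a,b}=\mu(r_1(a),r_2(b),\cdot)$, or its projection to the hull of the two rays. Consecutive vertices span squares, so they form a subcomplex isomorphic to $[0,\infty)^2$. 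This subcomplex is $\ell^1$-convex: it is the intersection of halfspaces, being the hull of the two rays, because the two families cross completely. It is therefore $\ell^2$-convex by \cref{rem:ccc_convexity}, hence a CAT(0) 2--orthant $O$. Its Tits boundary is a quarter-circle from $\xi_1$ to $\xi_2$, so $\angle(\xi_1,\xi_2)=\frac\pi2$. One must still check that the hull contains no extra cubes. It does not, since any hyperplane meeting the hull separates two points of $r_1\cup r_2$.

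\textbf{Case 2: they do not.} Then there are infinitely many pairs $h^1_{a}$, $h^2_{b}$, with $a,b\to\infty$, that do not cross. The aim is to show $\angle(\xi_1,\xi_2)\ge\pi$ or at least $>\frac\pi2$, and in fact $\ge\frac\pi2$ with equality impossible, unless $\xi_1=\xi_2$. The non-crossing $h^1_a,h^2_b$ are either nested or disjoint in a way that separates $r_1(t)$ from $r_2(s)$ efficiently. This is the main obstacle, and the first approach to try is an $\ell^1$ estimate.

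First sub-case: $\mathcal H_1$ and $\mathcal H_2$ share infinitely many hyperplanes. The rays then fellow-travel in $\ell^1$ along the shared hyperplanes, and because each family is a nested chain, they eventually coincide up to finite distance, which gives $\xi_1=\xi_2$.

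Second sub-case: the common part is finite. Then $d_1(r_1(t),r_2(t))\ge 2t-C$, since every hyperplane crossed by either ray separates the two points. We want this to force, in the CAT(0) metric, the estimate $d_2(r_1(t),r_2(t))\ge\sqrt2\,t-o(t)$, with strict excess when there are non-crossings. The tool is the $\ell^1$ geometry of the interval $[r_1(t),r_2(t)]$. Its hyperplanes split into the chains $\mathcal H_1$, $\mathcal H_2$ (truncated). If a definite proportion of pairs fail to cross, the interval is not a product. In that case, using the median $\mu(x,r_1(t),r_2(t))$ and Pythagoras in the maximal product subcomplex, $d_2$ exceeds $\sqrt2t$ by a linear amount. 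Passing to the asymptotic cone then gives angle $>\frac\pi2$.

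If the proportion of non-crossing pairs is sublinear, the cone picture is a singular 2--orthant, and the angle equals $\frac\pi2$. In this situation we instead need to extract a genuine orthant. To do so, use the Tits-boundary embedding of \cref{lem:embedding-Titsboundary-into_link_and_Tits_boundary} to upgrade the coarse information to exact information. Then argue that the existence of a CAT(0) flat sector of angle $\frac\pi2$ (the Flat Sector Theorem, valid as the angle is $<\pi$) between $r_1$ and $r_2$ forces, via convexity of hyperplane carriers, the crossing pattern of Case 1 eventually. The reason is that a hyperplane meeting a flat sector with singular boundary rays cuts it in a line parallel to one side. This last step is where I expect the real work to lie.
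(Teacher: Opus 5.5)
Your proposal has two genuine gaps.

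\textbf{The common-vertex reduction is false.} This matters, because the case it discards is one the lemma has to handle. A singular ray to $\xi_i$ need not issue from every vertex. Attach a $1$--dimensional ray at one corner of a square: from the opposite corner, the unique geodesic ray to its endpoint crosses the square diagonally. Worse, there need not be \emph{any} common vertex. Take the cube $[0,1]^3$ with $1$--dimensional rays attached at $(1,1,0)$ and $(0,0,1)$. The points from which $\xi_1$ is represented by a singular ray are the three cube edges at $(1,1,0)$, plus the attached ray; the same holds for $\xi_2$ and $(0,0,1)$. These sets are disjoint. Here $\angle(\xi_1,\xi_2)=\pi$, but your argument never gets started. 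The paper handles this by taking singular rays from vertices $x_1,x_2$ that minimise $d_1(x_1,x_2)$. If $x_1\neq x_2$, minimality forces each hyperplane of $[x_1,x_2]$ to separate $\gamma_1$ from $\gamma_2$. It also forces each such hyperplane to miss some $h_{1,j}$ and some $h_{2,k'}$, so $h_{1,j}$ misses every $h_{2,k}$ with $k\ge k'$. A local angle estimate at $\gamma_1(j+1)$ then gives $\angle(\xi_1,\xi_2)>\frac\pi2$. This is exactly what guarantees a common basepoint when the angle equals $\frac\pi2$, and your orthant construction presupposes one.

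\textbf{Case~2 is not proved, even with a common vertex.} Your claim that a definite proportion of non-crossing pairs forces a linear excess over $\sqrt2\,t$ is false for general crossing patterns. A missing region of area $\varepsilon t^2$ lying beyond the antidiagonal of the staircase does not obstruct the straight segment. You also defer the ``sublinear'' sub-case to a flat-sector argument that you do not supply.

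The missing idea is monotonicity. First rebase at the last common vertex of the two rays, so that $\mathcal H_1\cap\mathcal H_2=\varnothing$. Then if $h^1_a$ misses $h^2_b$, it follows that $h^1_{a'}$ misses $h^2_{b'}$ whenever $a'\ge a$ and $b'\ge b$. The reason is that $h^1_a$ separates $h^2_b$ from $h^1_{a'}$, and symmetrically for the other index. Consequently:
\begin{itemize}
\item Case~1 just means every pair crosses, which gives the orthant at $x$ without moving basepoints.
\item In Case~2, a single non-crossing pair $(a_0,b_0)$ suffices. It confines the convex interval $[r_1(t),r_2(t)]$, embedded cubically in $\mathbb{R}^2$ via the two chains, to an L-shaped region of widths $a_0$ and $b_0$. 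Hence $d(r_1(t),r_2(t))\ge 2t-a_0-b_0$ and the angle is $\pi$.
\end{itemize}
So the sub-case you identified as the real work is vacuous. The paper uses the same monotonicity, but turns a non-crossing pair into an angle bound locally at a point of $\gamma_1$, rather than through distance growth.
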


\begin{proof}
Let $\gamma_i$ be a singular geodesic ray in $X$ that represents $\xi_i$ for $i=1,2$. By replacing $\gamma_i$ with a parallel copy, we can assume that $\gamma_i$ lies in the 1--skeleton of $X$ and begins at some vertex $x_i$. We can also make a choice of $x_1$ and $x_2$ that minimises $\dist_1(x_1,x_2)$. There is a unique hyperplane $h_{i,j}$ separating $\gamma_i(j-1)$ from $\gamma_i(j)$ for each $j$. Since $\gamma_1$ and $\gamma_2$ are singular, no two $h_{1,j}$ cross, and no two $h_{2,j}$ cross.

Observe first that if there exist $j$ and $k'$ such that $h_{1,j}$ does not cross any $h_{2,k}$ with $k\geq k'$, then $\angle(\xi_1,\xi_2)>\frac\pi2$. 
Indeed, let $p=\gamma_1(1+j)$. The geodesic ray $[p,\xi_2)$ must cross $h_{1,j}$, otherwise $\gamma_2(k)$ and $[p,\xi_2)$ would be contained in different halfspaces separated by $h_{1,j}$ for every $k\geq k'$. As $k$ grows, $\gamma_2(k)$ gets unboundedly far from $h_{1,j}$, in contradiction to the fact that $\gamma_2$ and $[p,\xi_2)$ converge to the same boundary point.
Since $[p,\xi_2)$ crosses $h_{1,j}$, we have $\angle(\xi_1,\xi_2)\geq \angle_p(\xi_1,\xi_2)>\frac\pi2$. 

Suppose that $x_1\ne x_2$. Let $C$ denote the smallest convex subcomplex of $X$ that contains $x_1$ and $x_2$, which can equivalently be described as the convex subcomplex obtained by intersecting all halfspaces that contain both $x_1$ and $x_2$. By definition, every hyperplane dual to an edge of $C$ separates $x_1$ from $x_2$. Since $\dist_1(x_1,x_2)<\infty$, there are finitely many such hyperplanes. Additionally, the fact that $\gamma_i$ is singular and $\dist(x_1,x_2)$ is minimal implies that the projection of $\gamma_i$ to $C$ is $x_i$, so every hyperplane dual to an edge of $C$ separates $\gamma_1$ from $\gamma_2$.

If $h'$ is a hyperplane of $C$ dual to an edge containing $x_1$, then $h'$ must cross $h_{11}$, for otherwise we could extend $\gamma_1$ to reduce $\dist_1(x_1,x_2)$.
Moreover, $h'$ cannot cross every $h_{1,j}$, for then we could move $\gamma_1$ to a parallel ray to reduce $\dist_1(x_1,x_2)$. Thus there exists some $j$ such that none of the finitely many hyperplanes dual to edges in $C$ that contain $x_1$ cross $h_{1,j}$.
Because the projection of $\gamma_1$ to $C$ is $x_1$, each hyperplane of $C$ not adjacent to $x_1$ is separated from $\gamma_1$ by a hyperplane of $C$ adjacent to $x_1$. Thus, none of the hyperplanes dual to $C$ can cross $h_{1,j}$. 
Similarly, there exists $k'$ such that none of the hyperplanes dual to $C$ can cross $h_{2,k'}$. We have that $h_{1,j}$ cannot cross any $h_{2,k}$ with $k\geq k'$, because they are separated by the hyperplanes dual to $C$. Thus, as observed above, $\angle(\xi_1,\xi_2)>\frac{\pi}{2}$.


Now suppose instead that $x_1=x_2=x$. As $[x,\xi_1)$ and $[x,\xi_2)$ are singular, $\angle_x(\xi_1,\xi_2)\in\{0,\frac\pi2\}$. As $\xi_1\neq \xi_2$, there exists $t\in \mathbb{N}$ such that $\gamma_1(t)=\gamma_2(t)$ and $\gamma_1(t+1)\neq \gamma_2(t+1)$. The angle between $\xi_1$ and $\xi_2$ at $\gamma_1(t)=\gamma_2(t)$ is not 0, so it must be $\frac{\pi}{2}$. We obtain the desired inequality because $\angle(\xi_1,\xi_2)\geq \angle_{\gamma_1(t)}(\xi_1,\xi_2)=\frac{\pi}{2}$.

We have shown that $\angle(\xi_1,\xi_2)\ge\frac\pi2$. Moreover, if $\angle(\xi_1,\xi_2)=\frac\pi2$, then we can take $\gamma_1$ and $\gamma_2$ to start at a common point $x$ such that $\angle_x(\xi_1,\xi_2)=\frac\pi2$. With this choice, if $h_{1,j}$ does not cross ${h_{2,k'}}$ for some $j$ and $k'$, then $h_{1,j}$ does not cross $h_{2,k}$ for any $k\geq k'$, because $h_{2,k'}$ separates $h_{1,j}$ from $h_{2,k}$. As observed above, this would imply that $\angle(\xi_1,\xi_2)>\frac{\pi}{2}$.
Hence $h_{1,j}$ crosses $h_{2,k}$ for every $j,k\in \mathbb{N}$ and so the singular geodesic rays $\gamma_1$ and $\gamma_2$ span an orthant whose Tits boundary is an arc of length $\frac\pi2$ from $\xi_1$ to $\xi_2$.
\end{proof}


\begin{proposition}\label{prop:2_orthant_in_target}
Let $Y$ be a finite-dimensional CAT(0) cube complex. Let $Y_\omega$ be an asymptotic cone of $Y$ with fixed basepoint. Let $E=\mathbb R^2$, equipped with the standard cubulation, and let $f:E\to Y$ be a $q$-quasiisometric embedding. Assume that:
\begin{enumerate}
    \item for every singular geodesic $\gamma\subseteq E$, the image $f(\gamma)$ lies at finite Hausdorff distance from a singular geodesic in $Y$;
    \item the ultralimit $(f(E))_\omega$ is a $2$--flat in $Y_\omega$.
\end{enumerate}
Then for every singular $2$--orthant $Q\subseteq E$, the two axis rays of $Q$ are mapped by $f$ within finite Hausdorff distance of the two axis rays of a singular $2$--orthant of $Y$.
\end{proposition}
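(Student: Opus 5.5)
Let $Q\subseteq E$ be a singular 2--orthant with cone point $o$ and axis rays $\rho_1,\rho_2$, contained in singular geodesics $\gamma_1,\gamma_2$ (one horizontal, one vertical). By hypothesis (1), $f(\gamma_i)$ lies at finite Hausdorff distance from a singular geodesic $\gamma_i'\subseteq Y$. Then $f(\rho_i)$ is Hausdorff-close to a subray $\rho_i'\subseteq\gamma_i'$, which is a singular geodesic ray. Let $\xi_i=\rho_i'(\infty)\in\partial_TY$. The plan is to show that $\angle(\xi_1,\xi_2)=\frac\pi2$. Then \cref{lem:orthants_are_weyl_cones} gives a 2--orthant subcomplex $O\subseteq Y$ with $\partial_TO$ an arc from $\xi_1$ to $\xi_2$. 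Its axis rays are singular (being in the 1--skeleton of an orthant subcomplex) and asymptotic to $\rho_i'$. Two asymptotic singular rays in a CAT(0) cube complex are at finite Hausdorff distance, since asymptotic rays are always Hausdorff-close. So the axis rays of $O$ are Hausdorff-close to $f(\rho_1),f(\rho_2)$, as required.

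To compute the angle, pass to $Y_\omega$ with fixed basepoint $y=f(o)$; we may take $y$ on $\rho_1'$ and move it to the start of $\rho_2'$ at bounded cost, which does not matter in the cone. By \cref{lem:embedding-Titsboundary-into_link_and_Tits_boundary}, $\angle(\xi_1,\xi_2)$ equals the angle at the basepoint between the ultralimit rays $\hat\rho_1',\hat\rho_2'$. Since these are Hausdorff-close to $f(\rho_i)$, their ultralimits coincide with $f_\omega(\hat\rho_i)$. By hypothesis (2), $(f(E))_\omega=f_\omega(\hat E)$ is a 2--flat $P$. The rays $f_\omega(\hat\gamma_i)$ are ultralimits of singular geodesics, so they are singular geodesics in $Y_\omega$ lying in $P$, and they pass through the basepoint. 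Each of the four rays $f_\omega(\hat\rho_1^\pm)$, $f_\omega(\hat\rho_2^\pm)$ is a geodesic ray in the Euclidean plane $P$ from the same point. Since $f_\omega$ is a biLipschitz homeomorphism $\hat E\to P$, these rays are distinct and occur in the cyclic order $\rho_1^+,\rho_2^+,\rho_1^-,\rho_2^-$. Also $\hat\gamma_1\ne\hat\gamma_2$ gives a genuine angle $\theta\in(0,\pi)$ between $f_\omega(\hat\gamma_1)$ and $f_\omega(\hat\gamma_2)$.

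The main obstacle is forcing $\theta=\frac\pi2$. My approach is to apply \cref{lem:orthants_are_weyl_cones} at the level of $Y$: its boundary points $\xi_1^\pm,\xi_2^\pm$ are represented by singular rays, so pairwise Tits angles are $\ge\frac\pi2$. Via $\varphi_T$ of \cref{lem:embedding-Titsboundary-into_link_and_Tits_boundary}, they embed isometrically into the circle $\partial_TP\subseteq\partial_TY_\omega$ of length $2\pi$. This requires checking that the angle in $\partial_TY_\omega$ between points of $\partial_TP$ equals the intrinsic one for angles $<\pi$, which holds because $P$ is a flat in a CAT(0) space. Four points on a circle of length $2\pi$ with all consecutive gaps at least $\frac\pi2$ must have all gaps exactly $\frac\pi2$. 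Hence $\angle(\xi_1^+,\xi_2^+)=\frac\pi2$, which finishes the argument. If the identification of Tits angles with angles in $P$ is delicate, I would instead use that $P$ is a singular flat in the median sense: it is spanned by the two singular geodesics $f_\omega(\hat\gamma_i)$, so by the argument of \cref{prop:singular_quasi_flats} it is a product and the angle is $\frac\pi2$.
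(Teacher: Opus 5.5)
Your proof is correct, but it forces the angle to be $\frac\pi2$ by a different route from the paper.

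\textbf{The paper's route.} It works in the link $\Sigma_o Y_\omega$. It uses that the singular geodesics $(\alpha_Y)_\omega,(\beta_Y)_\omega$ through $o$ in the flat $(f(E))_\omega$ meet at an angle in $\{0,\frac\pi2,\pi\}$. This is attributed to that flat being singular, which is not separately justified there. It then rules out $0$ and $\pi$ by injectivity of $\varphi_o$ on the three distinct points $\xi^+,\xi^-,\eta$. Finally it transfers $\frac\pi2$ back to $\partial_TY$ and applies \cref{lem:orthants_are_weyl_cones}.

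\textbf{Your route.} You take the lower bound $\angle\ge\frac\pi2$ from the first half of \cref{lem:orthants_are_weyl_cones}, which is proved combinatorially inside $Y$. You push it through $\varphi_T$ onto the round circle $\partial_TP$, where the Tits angle equals the Euclidean angle by \cite[Prop.~II.9.8]{bridsonhaefliger:metric}; that identification is routine, not delicate. You then finish with a length count on the circle.

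\textbf{Comparison.} Your argument never needs a claim about how singular directions can meet in the asymptotic cone, at the cost of using both halves of the lemma. The paper's argument is shorter.

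\textbf{Trimming.} The cyclic-order claim is unnecessary. Since $\xi_1^\pm$ are antipodal, $\varphi_T(\xi_2^+)$ lies on a semicircle between them. Its distances to the two endpoints sum to $\pi$ and are each at least $\frac\pi2$, so both equal $\frac\pi2$.

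\textbf{The fallback.} Drop the fallback via \cref{prop:singular_quasi_flats}. The step ``$\beta=\alpha'$'' there rests on the same orthogonality of singular geodesics that is in question, so it is not an independent argument.
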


\begin{proof}
Let $Q\subseteq E$ be a singular $2$--orthant, and let $\alpha_E^+$ and $\beta_E^+$ be its axis rays, with common initial point $v$. Extend $\alpha_E^+$ and $\beta_E^+$ to singular geodesics $\alpha_E$ and $\beta_E$ of $E$. Since asymptotic cones of $Y$ with fixed basepoints and the same scaling sequence are canonically isometric, we may assume that the fixed basepoint is $y=f(v)$. 

Let $\alpha_Y$ and $\beta_Y$ be singular geodesics of $Y$ at finite Hausdorff distance from $f(\alpha_E)$ and $f(\beta_E)$, respectively. Let $\xi^+, \xi^- \in \partial_TY$ be the endpoints of $\alpha_Y$, chosen so that $\xi^+$ is the endpoint of the subray at finite Hausdorff distance from $f(\alpha_E^+)$, and let $\eta \in \partial_T Y$ denote the endpoint of the subray of $\beta_Y$ at finite Hausdorff distance from $f(\beta_E^+)$. Since $f(\beta_E^+)$ is at infinite Hausdorff distance from $\alpha_Y$, the points $\xi^+, \xi^-, \eta$ are pairwise distinct. 

    
Set $\hat F=(f(E))_\omega$, and let $o=(y)\in Y_\omega$. First notice that, since $\alpha_Y$ and $\beta_Y$ lie at finite Hausdorff distance from $f(E)$, their ultralimits $(\alpha_Y)_\omega$ and $(\beta_Y)_\omega$ belong to the $2$--flat $\hat F$. Since $\alpha_Y$ and $\beta_Y$ are singular, so are $(\alpha_Y)_\omega$ and $(\beta_Y)_\omega$. Moreover, as $v\in \alpha_E\cap\beta_E$, we have that $o\in (\alpha_Y)_\omega\cap (\beta_Y)_\omega\subset\hat F$. Let $\varphi_o:(\partial_T Y,\angle)\to (\Sigma_{o} \hat Y,\angle_{o})$ be the isometric embedding given by \Cref{lem:embedding-Titsboundary-into_link_and_Tits_boundary}. Since $(\alpha_Y)_\omega$ and $(\beta_Y)_\omega$ are singular geodesics through $o$ contained in the singular flat $\hat F$, we must have $\angle_{o}((\alpha_Y^{\pm})_\omega,(\beta_Y^{\pm})_\omega)\in\{0,\frac\pi2,\pi\}$. But $\xi^+$, $\xi^-$, and $\eta$ are pairwise distinct, so $(\alpha_Y)_\omega$ and $(\beta_Y)_\omega$ must meet at angle $\pi/2$. In particular,
$$
\angle_{o}\bigl(\varphi_o(\xi^-),\varphi_o(\eta)\bigr)=\angle_{o}\bigl(\varphi_o(\xi^+),\varphi_o(\eta)\bigr)=\pi/2.
$$
Since $\varphi_o$ is an isometric embedding, it follows that $\angle(\xi^+, \eta) = \pi/2$. Therefore, by \cref{lem:orthants_are_weyl_cones}, there is a singular 2--orthant in $Y$ whose boundary rays are at finite Hausdorff distance from $f(\alpha_E)$ and $f(\beta_E)$.
\end{proof}

We finish this section by proving an analogue of \cref{lem:orthants_are_weyl_cones} for semisingular orthants. It will not be used in the proofs of our main theorems, but can be used to give an alternative definition to \cref{def:singular_boundary_graph}.

\begin{lemma} \label{lem:asymptotic_Weyl_cone}
Let $X$ be a finite-dimensional CAT(0) cube complex, let $\hat X$ be an asymptotic cone with respect to a fixed basepoint. Suppose that $\xi_1,\xi_2\in\partial_TX$ are represented by semisingular geodesic rays. 

If $\xi_1\ne\xi_2$, then $\angle(\xi_1,\xi_2)\ge\frac\pi2$. If $\angle(\xi_1,\xi_2)=\frac\pi2$ then there is a singular orthant $\hat O\subset\hat X$ such that $\partial_T\hat O$ is an arc of length $\frac\pi2$ from $\varphi_T(\xi_1)$ to $\varphi_T(\xi_2)$ in $\partial_T(\hat X)$.
\end{lemma}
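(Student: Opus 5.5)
The plan is to pass to the asymptotic cone at the outset and carry out the argument there, where semisingularity becomes genuine singularity. Fix the basepoint $x$ of $\hat X$, and let $r_i=[x,\xi_i)$. The hypothesis gives a semisingular ray $\rho_i$ representing $\xi_i$; since $r_i$ and $\rho_i$ are asymptotic, they are at finite Hausdorff distance. Thus, in the cone with fixed basepoint, the ultralimit $\hat r_i$ coincides with the (translated) ultralimit of $\rho_i$. I would first check that this ultralimit exists in $\hat X$, which holds because $\rho_i$ passes within bounded distance of $x$ after reparametrising. It follows that $\hat r_i$ is a singular geodesic ray in $\hat X$ based at $o=(x)$.

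\textbf{Angle bound.} By \cref{lem:embedding-Titsboundary-into_link_and_Tits_boundary}, $\angle(\xi_1,\xi_2)=\angle_o(\hat r_1,\hat r_2)$, and the map $\varphi_o$ is injective, so $\hat r_1\neq\hat r_2$ when $\xi_1\ne\xi_2$. I then want the analogue, for singular rays from a common point in a median space of finite rank such as $\hat X$, of the dichotomy in \cref{lem:orthants_are_weyl_cones}: the angle at $o$ between two distinct singular rays is at least $\frac\pi2$. My approach is to consider the median $\mu(o,\hat r_1(t),\hat r_2(t))$. Since each $\hat r_i$ is median-convex, this median lies on both rays, at some common initial segment $\hat r_1([0,s])=\hat r_2([0,s])$. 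Past the branch point $p=\hat r_1(s)$, the $\ell^1$ metric satisfies $d_1(\hat r_1(s+t),\hat r_2(s+t))=2t$, because $p$ is the median. Using biLipschitz comparability between $d_1$ and the CAT(0) metric is not sharp enough here, so instead I would use that the convex hull of the two rays from $p$ is (median-)isomorphic to a subset of a product of two $\R$--trees. This forces the CAT(0) angle at $p$ to be at least $\frac\pi2$, with equality exactly when the hull is a quarter-plane $[0,\infty)^2$. Because $\angle(\xi_1,\xi_2)\ge\angle_p$, the bound follows.

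\textbf{Equality case.} Suppose $\angle(\xi_1,\xi_2)=\frac\pi2$. Since the cone-angle equals $\frac\pi2<\pi$, the Flat Sector Theorem, used as in the proof of \cref{lem:embedding-Titsboundary-into_link_and_Tits_boundary}, shows that $\hat r_1,\hat r_2$ bound a flat Euclidean sector $\hat O$ of angle $\frac\pi2$. Moreover, $\Phi$ identifies $\hat O$ with the cone over the arc from $\xi_1$ to $\xi_2$, so $\partial_T\hat O$ is the arc from $\varphi_T(\xi_1)$ to $\varphi_T(\xi_2)$. It remains to see that $\hat O$ is singular. It is a 2--orthant spanned by two singular rays that meet at angle $\frac\pi2$. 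Using the median structure, the hull $\Hull(\hat r_1\cup\hat r_2)$ is the $\ell^1$ product of the two rays, and it should contain $\hat O$ and coincide with it. This parallels how \cref{prop:singular_quasi_flats} deduces singularity from singular spanning geodesics.

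The main obstacle is the angle dichotomy for singular rays in the cone, which is the median-space replacement for the hyperplane-crossing argument of \cref{lem:orthants_are_weyl_cones}. I would try to reduce it to a finite-rank median-algebra statement, namely Bowditch's description of hulls of two intervals. If that fails, a fallback is to pull back to $X$ along the sequences and argue with hyperplanes there.
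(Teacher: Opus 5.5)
Your argument for the inequality follows the paper's: in the cone, $\varphi_o$ gives $\angle(\xi_1,\xi_2)=\angle_o(\hat r_1,\hat r_2)$, and distinct singular rays from $o$ meet at angle $\frac\pi2$ or $\pi$. The paper also treats this local dichotomy as known. However, your claim that equality holds ``exactly when the hull is a quarter-plane'' is false as a local statement. Take the CAT(0) square complex formed by $[0,1]^2$ with a ray $[1,\infty)\times\{0\}$ glued at $(1,0)$ and a ray $\{0\}\times[1,\infty)$ glued at $(0,1)$. The two axis rays from the origin are singular and meet at angle $\frac\pi2$ there. Yet their hull is the whole complex, and their Tits angle is $\pi$. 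Angle $\frac\pi2$ at the branch point only gives a small square $[0,\eps]^2$ in the hull.

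The genuine gap is in the equality case. The Flat Sector Theorem does give you a CAT(0) quarter-plane $\hat O$ with the right Tits boundary, which is a nice shortcut. But the whole content of the statement is that $\hat O$ is \emph{singular}. For that you only assert that $\Hull(\hat r_1\cup\hat r_2)$ ``is the $\ell^1$ product of the two rays'' and ``should coincide'' with $\hat O$. By the example above, this cannot come from the angle at $o$ alone. The analogy with \cref{prop:singular_quasi_flats} does not help either: there, singularity of $F\times\beta$ comes from $\beta$ itself being singular, whereas you know nothing about the rays of $\hat O$ parallel to $\hat r_1$.

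The paper obtains the global square structure from the self-similarity of the cone. Since $\hat r_i(t)=(r_i(t\lambda_n))$ with a fixed basepoint, re-indexing the scaling sequence transports the small square at $\hat o$ to every scale. Nestedness of squares spanned by $\hat r_1,\hat r_2$ (the five-point condition) then assembles the singular orthant.

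Your route could instead be repaired topologically, along these lines:
\begin{itemize}
\item Each point of $\hat O$ lies on a CAT(0) geodesic from some $\hat r_1(s)$ to some $\hat r_2(t)$. Such geodesics lie in median intervals, being ultralimits of geodesics in $d_2$-convex intervals of $X$. Hence $\hat O\subseteq\Hull(\hat r_1\cup\hat r_2)$.
\item Since $\mu(o,\hat r_1(s),\hat r_2(t))=o$, no wall crosses both rays. So the hull embeds $d_1$-isometrically in $([0,\infty)^2,\ell^1)$, with each $\hat r_i$ going onto an axis.
\item Restricted to the Euclidean quarter-plane $\hat O$, this is a biLipschitz injection with closed image. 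The image of the interior of $\hat O$ is open by invariance of domain and misses the axes, so it fills the open quadrant.
\item Therefore $\hat O=\Hull(\hat r_1\cup\hat r_2)$, which is median-convex and $\ell^1$-isometric to $[0,\infty)^2$, i.e.\ singular.
\end{itemize}
Some argument of this kind is needed; as written, the equality case is unproved.
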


\begin{proof}
Let $\hat X$ be an asymptotic cone of $X$ with fixed basepoint $o$. By \cref{lem:embedding-Titsboundary-into_link_and_Tits_boundary}, there is an isometric embedding $\varphi_T:\partial_TX\to\partial_T\hat X$. If $\gamma_1$ and $\gamma_2$ are semisingular geodesic rays in $X$ that represent $\xi_1$ and $\xi_2$, respectively, then their ultralimits $\hat\gamma_1\subset\hat X$ and $\hat\gamma_2\subset\hat X$ are singular geodesic rays that represent $\varphi_T\xi_1$ and $\varphi_T\xi_2$, respectively. Moreover, $\hat\gamma_1$ and $\hat\gamma_2$ emanate from the same point $\hat o=(o)\in\hat X$. Since they are both singular, we must have $\angle_{\hat o}(\hat\gamma_1,\hat\gamma_2)\in\{0,\frac\pi2,\pi\}$.

\cref{lem:embedding-Titsboundary-into_link_and_Tits_boundary} also gives an isometric embedding $\varphi_o:\partial_TX\to\Sigma_{\hat o}\hat X$, where $\Sigma_{\hat o}\hat X$ is the link at $\hat o\in\hat X$. In particular, if $\xi_1\ne\xi_2$, then the angle $\angle_{\hat o}(\hat\gamma_1,\hat\gamma_2)$ is positive, and consequently is either $\frac\pi2$ or $\pi$. Hence $\angle(\xi_1,\xi_2)=\angle(\varphi_T(\xi_1),\varphi_T(\xi_2))\ge\frac\pi2$.

Now suppose that $\angle(\xi_1,\xi_2)=\frac\pi2$. Let $(\lambda_n)$ be the scaling sequence for the asymptotic cone $\hat X$. Since $\hat\gamma_1$ and $\hat\gamma_2$ are singular, we have that $\mu(\hat\gamma_1(t),\hat o,\hat\gamma_2(t))=\hat o$ for all $t\ge0$. Furthermore, since $\hat\gamma_1$ and $\hat\gamma_2$ are singular and $\angle_{\hat o}(\hat\gamma_1,\hat\gamma_2)=\frac\pi2$, there must exist $\eps>0$ such that $\hat\gamma_1(\eps)$, $\hat\gamma_2(\eps)$, and $\hat o$ are three vertices of a square in $\hat X$. Let $\hat x$ be the fourth vertex. We have 
\begin{align}
\mu(\hat o,\hat x,\hat\gamma_i(\eps)) \,=\, \hat\gamma_i(\eps), 
    \quad \mu(\hat\gamma_1(\eps),\hat x,\hat\gamma_2(\eps)) \,=\, \hat x. \label{eq:square}
\end{align}
In other words, if $(x_n)$ is a sequence that represents $\hat x$, then
\begin{align}\begin{split}
\lim_\omega \frac1{\lambda_n}
    \dist\big(\mu(o,x_n,\gamma_i(\lambda_n\eps)),\,\gamma_i(\lambda_n\eps)\big) \,=\, 0, \\
\lim_\omega \frac1{\lambda_n}
    \dist\big(\mu(\gamma_1(\lambda_n\eps),x_n,\gamma_2(\lambda_n\eps)),\,x_n) \,=\, 0. 
    \label{eq:limit}
\end{split}\end{align}

Any point $\hat x$ that satisfies the identities in~\eqref{eq:square} spans a square with $\hat o,\hat a_1,\hat a_2$. Moreover, it follows from the five-point condition of median algebras that for any two such squares, one contains the other (this is analogous to the fact that median graphs cannot contain $K_{2,3}$ as a subgraph; see for example the proof of \cite[Lem.~4.1]{munropetyt:coarse}). Thus, in order to find a singular orthant in $\hat X$ spanned by $\hat\gamma_1$ and $\hat\gamma_2$, it suffices to show that they have arbitrarily long initial subsegments that span squares, which we will do by showing they satisfy \cref{eq:square}.

Let $(x_n)$ be a sequence representing $\hat x$. Given $t>0$, there is a sequence $(k_n)$ such that $\lim_\omega\frac{\lambda_{k_n}}{\lambda_n}=t$. Consider the point $\hat x_t=(x_{k_n})$. Using~\eqref{eq:limit}, we can derive
\begin{align*}
\lim_\omega&\frac1{\lambda_n}
        \dist\big(\mu(o,x_{k_n},\gamma_i(\lambda_{k_n}\eps)),\gamma_i(\lambda_{k_n}\eps)\big) 
    \,=\, t\lim_\omega \frac1{\lambda_{k_n}}
        \dist\big(\mu(o,x_{k_n},\gamma_i(\lambda_{k_n}\eps)),\gamma_i(\lambda_{k_n}\eps)\big) \,=\, 0,
\end{align*}
and similarly for the other expression. This shows that $\hat x_t$ satisfies the identities in~\eqref{eq:square}. By taking increasingly large values of $t$, we see that $\hat\gamma_1$ and $\hat\gamma_2$ have arbitrarily long initial subsegments that span squares. As described above, this completes the proof.
\end{proof}

Note that there are CAT(0) square complexes that contain two semisingular geodesics at angle $\frac\pi2$ but do not contain any orthants; see \cref{eg:singular_boundary}. On the other hand, \cref{lem:orthants_to_semisingular_orthants} rules out this type of behaviour in certain quasiflats.


\section{Structures inherited by intersections of flats} \label{sec:structure_quasi_flats}

In this section, we are interested in the images of intersections of flats in CAT(0) spaces under quasiisometric embeddings. The goal is to show that rigidity of flats passes down to their intersections. More precisely, \cref{prop:intersection_orthants} says that if some sets have images Hausdorff-close to unions of singular orthants, then the image of their intersection is also Hausdorff-close to a union of singular orthants.
If, in addition, the intersection is a flat, then \cref{prop:boundary_quasi_flat_sphere}, gives us that the boundary of the union of orthants is a sphere.

This will later be applied in situations where we have already established good control on top-dimensional flats, by using \cref{thm:fully_branching_general}, for example, and want to gain similar control over their intersections.

The proof of \cref{prop:intersection_orthants} is a combination of the following two slogans: ``the image of the intersection is close to the intersection of the images'' (\cref{cor:distance_from_intersection}), and  ``a coarse intersection of orthants is essentially an orthant'' (\cref{lem:intersection_weyl_cones_is_cone}).

The bound on Hausdorff distance in \cref{prop:intersection_orthants} is finite but not uniform. A statement with uniform bounds is given in \cref{prop:intersection_semisingular}: if some flats intersect in a flat and their images are uniformly Hausdorff-close to flats, then the image of their intersection is also uniformly Hausdorff-close to a flat.

\subsection{The Tits boundary of certain quasiflats}

In this subsection, we show that if a quasiflat lies at finite Hausdorff distance from a finite union of orthants, then the Tits boundary of that union is homeomorphic to a sphere. 

The following lemma is a reformulation of \cite[Thm~4.4]{bjorner:let}. We include a direct proof for the reader's convenience.

\begin{lemma}\label{lem:Bjorner}
Let $K$ be a finite, $n$-dimensional simplicial complex. Assume that $K$ is homotopy equivalent to $\mathbb S^n$, and that for every simplex $\sigma \in K$ of dimension $d<n$, the link $\operatorname{Lk}_K(\sigma)$ is homotopy equivalent to $\mathbb S^{n-d-1}$. Then $K$ is homeomorphic to $\mathbb S^n$.
\end{lemma}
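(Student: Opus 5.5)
The plan is induction on $n$: first show that $K$ is a closed topological $n$--manifold, then invoke the (topological) generalised Poincaré conjecture. The base case $n=0$ is immediate, since a finite $0$--dimensional complex homotopy equivalent to $\mathbb S^0$ is exactly two points.

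For the inductive step, fix a vertex $v$ of $K$ and set $L=\operatorname{Lk}_K(v)$. Taking $d=0$ in the hypothesis, $L$ is homotopy equivalent to $\mathbb S^{n-1}$, and $L$ is finite since $K$ is. Its dimension is at most $n-1$ because $\dim K=n$. It is also at least $n-1$, since a complex of dimension less than $n-1$ has vanishing $H_{n-1}$ (for $n-1\ge1$; the case $n=1$ is trivial). So $\dim L=n-1$.

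Next I will check that $L$ satisfies the link hypothesis in dimension $n-1$. Let $\tau$ be a simplex of $L$ of dimension $e<n-1$. The standard identity $\operatorname{Lk}_L(\tau)=\operatorname{Lk}_K(v*\tau)$ applies, and $v*\tau$ is a simplex of $K$ of dimension $e+1<n$. Hence $\operatorname{Lk}_L(\tau)\simeq\mathbb S^{n-(e+1)-1}=\mathbb S^{(n-1)-e-1}$, as required. By the inductive hypothesis, $L$ is homeomorphic to $\mathbb S^{n-1}$.

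This holds for every vertex of $K$. The open star of $v$ is homeomorphic to the open cone on $\operatorname{Lk}_K(v)$, which is therefore an open $n$--ball. The open stars of the vertices cover $K$, so $K$ is a closed topological $n$--manifold. It is compact because $K$ is finite, and it is connected and simply connected when $n\ge2$ because $K\simeq\mathbb S^n$.

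Finally, a closed topological $n$--manifold homotopy equivalent to $\mathbb S^n$ is homeomorphic to $\mathbb S^n$. For $n=1$ this follows from the classification of closed $1$--manifolds, and for $n=2$ from the classification of surfaces. For $n=3$ it is Perelman's theorem (together with Moise's theorem that $3$--manifolds are triangulable, although here a triangulation is already given). For $n=4$ it is Freedman's theorem, and for $n\ge5$ it is the topological Poincaré conjecture of Smale and Newman.

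The combinatorial part, the link identity and the dimension count, is routine. The main obstacle is the final step: there seems to be no way to avoid the full topological Poincaré conjecture in every dimension. The induction could be strengthened to give a PL sphere, but that version fails in dimension $4$ and beyond, since links that are merely homotopy spheres need not be PL spheres. So I will work in the topological category throughout and cite these deep theorems as black boxes. This is consistent with the lemma being a reformulation of \cite[Thm~4.4]{bjorner:let}.
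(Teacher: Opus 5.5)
Your proof is correct and follows the paper's argument: induction via the identity $\operatorname{Lk}_L(\tau)=\operatorname{Lk}_K(v*\tau)$, vertex links homeomorphic to $\mathbb S^{n-1}$, hence a closed topological $n$--manifold, and then the Poincaré conjecture; you are also slightly more careful than the paper in checking that $\dim\operatorname{Lk}_K(v)=n-1$, so that the inductive hypothesis literally applies. One small correction to your closing aside, which does not affect your argument: the PL strengthening is not known to fail, since under the lemma's recursive link hypothesis it reduces exactly to the PL (equivalently smooth) Poincaré conjecture in dimension $4$, which is open.
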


\begin{proof}
The proof is by induction on $n$. The case $n=0$ is immediate. 

Assume $n\ge 1$, and that the statement is true for all smaller dimensions. Let $\sigma$ be a simplex of $K$, with $\dim(\sigma)=d<n$, and set $L=\operatorname{Lk}_K(\sigma)$. By hypothesis, $L$ is homotopy equivalent to $\mathbb S^{n-d-1}$. If $\tau$ is a simplex of $L$, then
$$
\operatorname{Lk}_L(\tau) \,=\, \operatorname{Lk}_K(\sigma * \tau),
$$
so every link in $L$ is again homotopy equivalent to a sphere of the right dimension. Therefore $L$ satisfies the same assumptions as $K$, but in smaller dimension. By induction, $L$ is homeomorphic to $\mathbb S^{n-d-1}$. In particular, every vertex link of $K$ is homeomorphic to $\mathbb S^{n-1}$, hence the open star of every vertex is homeomorphic to $\mathbb R^n$. Since every point of $K$ lies in the open star of some vertex, $K$ is a closed topological $n$-manifold. Since $K$ is homotopy equivalent to $\mathbb S^n$, it is homeomorphic to $\mathbb S^n$ by the $n$--dimensional Poincaré conjecture.
\end{proof}

We expect that the following statement remains true for more general simplicial complexes, but it is sufficient for our purposes as-is. 

\begin{proposition}\label{prop:Euclid_cone_bilip_Rn}
Let $K$ be a finite simplicial complex, equipped with a piecewise-spherical simplicial metric. If the Euclidean cone $C(K)$ over $K$ is biLipschitz equivalent to $\mathbb{R}^n$, then $K$ is homeomorphic to $\mathbb S^{n-1}$.
\end{proposition}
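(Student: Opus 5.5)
The plan is to verify the hypotheses of \cref{lem:Bjorner} for $K$ (with $n-1$ in place of $n$) using local homology. Write $C=C(K)$ with cone point $o$, and let $h:C\to\R^n$ be a biLipschitz homeomorphism. Since $C$ is homeomorphic to $\R^n$, it is an $n$--manifold. At every point $p$ the local homology is therefore $H_*(C,C\ssm p)\cong H_*(\R^n,\R^n\ssm 0)$, which is $\Z$ in degree $n$ and $0$ otherwise. We then compute these local homology groups combinatorially and read off information about $K$ and its links.

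\textbf{Step 1: the global condition on $K$.} At the cone point, $C\ssm o$ deformation retracts onto $K\times\{1\}$ and $C$ is contractible. The long exact sequence then gives $\tilde H_{*-1}(K)\cong H_*(C,C\ssm o)$. So $K$ has the integral homology of $\mathbb S^{n-1}$.

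\textbf{Step 2: the links.} Take $\sigma$ a simplex of $K$ of dimension $d$ and $p$ an interior point of $\sigma$, viewed at height $1$ in $C$. A small neighbourhood of $p$ in $C$ is homeomorphic to $\R\times(\R^d\times C(\operatorname{Lk}_K\sigma))$: the $\R$ factor is the radial direction, and the rest is the standard local structure of a simplicial complex near an interior point of a simplex. Equivalently, it is an open cone on the $(d+1)$--fold suspension $\Sigma^{d+1}\operatorname{Lk}_K(\sigma)$. Hence
\[
H_*(C,C\ssm p)\cong \tilde H_{*-1}\bigl(\Sigma^{d+1}\operatorname{Lk}_K\sigma\bigr)\cong \tilde H_{*-d-2}(\operatorname{Lk}_K\sigma).
\]
So every link $\operatorname{Lk}_K(\sigma)$ has the homology of $\mathbb S^{n-d-2}$. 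The same computation also shows that $K$ is pure of dimension $n-1$. A maximal simplex of dimension $d$ has empty link, and the local homology at an interior point would then be concentrated in degree $d+1$, which forces $d=n-1$.

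\textbf{Step 3: upgrading homology to homotopy, the main obstacle.} \cref{lem:Bjorner} asks for homotopy equivalences, whereas local homology only gives homology spheres. The clean way around this is to avoid \cref{lem:Bjorner} entirely. By Steps 1 and 2, $K$ is a homology $(n-1)$--manifold, and in fact a generalized homology sphere. What we still need is simple connectivity of $K$ and of the links of dimension at least $2$. I would try to get this from the biLipschitz structure. Punctured neighbourhoods of $o$ in $C$ retract to $K$. Because the map $h$ is biLipschitz, annuli $\{r\le|x|\le 2r\}$ in $\R^n$ correspond to regions squeezed between annuli in $C$, so small loops in $K\times\{1\}\subset C\ssm o$ can be pushed through $h$ into an annulus of $\R^n\ssm 0$. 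For $n\ge3$ they contract there, inside a slightly larger annulus, which pulls back into $C\ssm o$ and then retracts to $K$. Applying the same argument at points $p$ as in Step 2 (with $C$ locally biLipschitz to $\R^n$ near $p$) gives simple connectivity of the suspended links, and hence of the links themselves when their dimension is at least $2$.

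Low-dimensional links are handled directly, since homology circles and $0$--spheres that are graphs are genuine spheres. With simply connected homology spheres, Hurewicz and Whitehead give homotopy equivalences to spheres, and \cref{lem:Bjorner} finishes the proof. I expect the annulus-comparison argument to be the delicate point, because it is exactly where the biLipschitz hypothesis, rather than mere homeomorphism, is used. That hypothesis is needed because of double-suspension phenomena: cones on non-simply-connected homology spheres can still be homeomorphic to $\R^n$.
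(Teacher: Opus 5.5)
Your Steps 1 and 2 are fine, but Step 3 has a genuine gap, and it is exactly where the content of the proposition lies. At a point $p$ in the interior of $\sigma\times\{1\}$, the punctured neighbourhood $U\ssm p$ is homotopy equivalent to $\Sigma^{d+1}\operatorname{Lk}_K\sigma$. A suspension of a connected space is always simply connected. So showing that small loops near $p$ contract in $C\ssm p$ tells you nothing about $\pi_1(\operatorname{Lk}_K\sigma)$, and ``hence of the links themselves'' does not follow.

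Everything you extract in Step 3 already follows from a mere homeomorphism $C\cong\R^n$. At the cone point, $K\simeq C\ssm o\cong\R^n\ssm\{\mathrm{pt}\}$ directly, with no annuli needed. A homeomorphism cannot suffice: for $K=\Sigma^2P$, with $P$ the Poincar\'e homology $3$--sphere, the double suspension theorem gives $C(K)\cong\R^6$, yet the link of an edge of the suspension circle is $P$. This also corrects your closing remark. Here $K$ itself is always a homotopy sphere; the non-simply-connected homology spheres appear as links of simplices of $K$.

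The missing idea is to remove the whole open stratum through $p$, not the point.
\begin{itemize}
\item Let $S$ be the open sector over $\mathring\sigma$. It has dimension $d+1\le n-3$ when $\dim\operatorname{Lk}_K\sigma\ge2$.
\item A small neighbourhood $N$ of $p$ satisfies $N\ssm S\cong B^{d+1}\times\operatorname{Lk}_K\sigma\times(0,\eps)\simeq\operatorname{Lk}_K\sigma$.
\item Take a PL loop $\gamma$ in $N\ssm S$ close to $p$, and cone $h\circ\gamma$ off to a Lipschitz disc $D$ inside a small ball in $h(N)$.
\item Because $h$ is Lipschitz, $h(S)$ has Hausdorff dimension at most $d+1$. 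Hence the set of vectors $v$ with $(D+v)\cap h(S)\neq\varnothing$ has Hausdorff dimension at most $d+3<n$.
\item For small generic $v$, the translate $D+v$, joined to $h\circ\gamma$ by the thin collar $h\circ\gamma+[0,1]v$, is a null-homotopy in $h(N\ssm S)$.
\end{itemize}

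This Lipschitz general-position argument is the content of Siebenmann--Sullivan's theorem \cite{siebenmannsullivan:oncomplexes}. The paper simply cites it: it triangulates $C(K)$ compatibly with its sectors, so that $C(K)$ is a Lipschitz $n$--manifold, and obtains directly that $K$ and all links of simplices are homotopy spheres. It then applies \cref{lem:Bjorner}.

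If you cite that theorem, your Steps 1--2 become unnecessary. If instead you prove the simple connectivity as above, then your Steps 1--2 together with Hurewicz and Whitehead complete your route.
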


\begin{proof}
Since each simplex of $K$ is spherical, the Euclidean cone over each simplex is isometric to a Euclidean sector. Hence $C(K)$ is obtained by gluing finitely many Euclidean sectors along subsectors. Choose a triangulation $T$ of $C(K)$ compatible with this decomposition. Then $T$ is a locally finite simplicial complex with a piecewise Euclidean metric, and the link $L:=\operatorname{Lk}_T(v)$ is a triangulation of $K$ for every vertex $v\in T$. In particular, $L$ is homeomorphic to $K$. By hypothesis, $C(K)$ is biLipschitz equivalent to $\mathbb{R}^n$. Therefore $T$ is a Lipschitz $n$-manifold. By \cite[Thm~1]{siebenmannsullivan:oncomplexes} $L$ is homotopy equivalent to $\mathbb S^{n-1}$, and the link of each simplex of $L$ is homotopy equivalent to a sphere of the appropriate dimension. By \Cref{lem:Bjorner}, $L$, and therefore $K$, is homeomorphic to $\mathbb S^{n-1}$.
\end{proof}

Let $X$ be a finite-dimensional CAT(0) cube complex. If a subset $A \subseteq X$ lies at finite Hausdorff distance from a finite union of orthants in $X$, then we denote by $\partial_T A$ the union of the Tits boundaries of these orthants. This is well defined, since whenever $A$ lies at finite Hausdorff distance from two finite unions $\bigcup_{i=1}^m O_i$ and $\bigcup_{j=1}^{\ell} O'_j$ of orthants, one has $\bigcup_{i=1}^m \partial_T O_i=\bigcup_{j=1}^{\ell} \partial_T O'_j$.

The following proposition applies to every $n$--dimensional quasiflat in an $n$--dimensional CAT(0) cube complex, by \cite[Thm~1.1]{huang:top}. Note that the orthants in its statement are not required to be singular. 

\begin{proposition}\label{prop:boundary_quasi_flat_sphere}
Let $X$ be a finite-dimensional CAT(0) cube complex, and let $f : \mathbb R^k \to X$ be a quasiisometric embedding. If $f(\mathbb R^k)$ lies at finite Hausdorff distance from a finite union of orthants of $X$, then $(\partial_T (f(\mathbb R^k)),\angle)$ is homeomorphic to $\mathbb S^{k-1}$.
\end{proposition}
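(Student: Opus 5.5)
The plan is to pass to an asymptotic cone with fixed basepoint and reduce to \cref{prop:Euclid_cone_bilip_Rn}. Write $f(\mathbb R^k)$ at finite Hausdorff distance from $U=\bigcup_{i=1}^m O_i$, where each $O_i$ is an orthant of $X$. Orthants with a common Tits boundary are parallel, so up to finite Hausdorff distance we may assume all cone points coincide with a single point $x$. Indeed, an orthant can be replaced by the orthant spanned by the rays $[x,\xi)$ over the vertices $\xi$ of its boundary simplex, using the flat sector theorem and convexity to see that these rays span a Euclidean orthant. Then $U$ is a union of Euclidean orthants based at $x$, and $K:=\partial_TU=\bigcup_i\partial_TO_i$ is a finite union of spherical all-right simplices in $\partial_TX$.

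First I will give $K$ the structure of a finite simplicial complex with a piecewise-spherical metric. Pairwise intersections $\partial_TO_i\cap\partial_TO_j$ are closed convex subsets of both spherical simplices, because the Tits boundary of the convex set $O_i\cap O_j$ (after the basepoint normalisation) is this intersection. I will subdivide finitely many times so that $K$ becomes a genuine finite simplicial complex. The metric on $K$ is the restriction of the Tits angle metric, but since $\angle$ restricted to each simplex is spherical, the induced length metric makes $K$ piecewise spherical. Homeomorphism type is unaffected by this choice, since $K$ is compact and the two metrics induce the same topology on a finite complex.

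Next I will identify the cone. Take an asymptotic cone $\hat X$ with basepoint $x$. By \cref{lem:embedding-Titsboundary-into_link_and_Tits_boundary}, $\Phi:C(\partial_TX)\to\hat X$ is an isometric embedding, and its restriction to $C(K)$ has image exactly the ultralimit $\hat U$: each $O_i$ is a cone over $\partial_TO_i$ at $x$, and its ultralimit under rescaling is $\Phi(C(\partial_TO_i))$. Here I must be careful that the Euclidean cone metric on $C(K)$ built from the angle metric agrees, up to biLipschitz change, with the cone over the length metric. This holds because $K$ is finite, so the two metrics are biLipschitz. On the other hand, $\hat U=\hat f(\mathbb R^k)$ where $\hat f:\mathbb R^k\to\hat X$ is the induced biLipschitz embedding, whose domain is the ultralimit of $\mathbb R^k$, namely $\mathbb R^k$ itself. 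Hence $C(K)$ is biLipschitz equivalent to $\mathbb R^k$, and \cref{prop:Euclid_cone_bilip_Rn} gives $K\cong\mathbb S^{k-1}$.

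I expect the main obstacle to be the normalisation step. Making the orthants share a cone point requires showing that rays from $x$ to the boundary vertices of $O_i$ span a flat orthant. In a general CAT(0) space, pairwise flat sectors need not assemble into a higher-dimensional Euclidean orthant. I would try to avoid this altogether by working only in the cone. Each $\hat O_i$ is a Euclidean orthant in $\hat X$ whose cone point is the ultralimit of the cone point of $O_i$. Since that point lies at bounded distance from $x$, the cone point is $\hat o$. Therefore $\hat O_i=\Phi(C(\partial_TO_i))$, as both are orthants at $\hat o$ with the same Tits boundary under $\varphi_T$, and no normalisation in $X$ is needed.
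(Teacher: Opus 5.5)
This is essentially the paper's proof. You pass to an asymptotic cone with fixed basepoint, where each $\hat O_i$ is an orthant at $\hat o$ and the ultralimit of the union is isometric to $C(\partial_T U)$ via $\Phi$; your final paragraph, which avoids normalising cone points in $X$, is exactly how the paper handles this. You then use the induced biLipschitz map to get $C(\partial_T U)$ biLipschitz to $\R^k$ and apply \cref{prop:Euclid_cone_bilip_Rn}.

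The one difference is that the paper first discards orthants of dimension other than $k$ by a volume-growth argument, whereas your route leaves this to \cref{prop:Euclid_cone_bilip_Rn}. Like the paper, you also take for granted that $\partial_T U$ carries a finite piecewise-spherical simplicial structure. Closed convexity of the pairwise intersections does not by itself guarantee that finitely many subdivisions suffice; you would need them to be polyhedral.
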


\begin{proof}
Let $Q = \cup_{i=1}^n O_i$ be a union of orthants at finite Hausdorff distance from $f(\mathbb R^k)$. By a volume-growth argument, $\dim O_i\leq k$ for every $i$. If there exists $i$ such that $\dim O_i < k$, then $O_i$ must be coarsely contained in the union of the others, for otherwise arbitrarily large balls of $\mathbb R^k$ would be mapped into a uniform neighbourhood of $O_i$, contradicting volume-growth again. This also implies that $\partial_T O_i \subseteq \cup_{j\neq i}\partial_T O_j$. Thus we may assume that $\dim O_i=k$ for all $i$, in which case every $\partial_T O_i$ is a spherical $(k-1)$--simplex.

Let $\hat X$ be an asymptotic cone of $X$ with respect to some fixed basepoint, and let $\hat Q$ be the ultralimit of $Q$. Let $o \in\hat X$ denote the ultralimit of the basepoint. Each ultralimit $\hat O_i$ is a $k$--orthant based at $o$, and $\hat Q=\bigcup_{i=1}^n\hat O_i$. In particular, $\partial_TQ$ and $\partial_T\hat Q$ are isometric, so $\hat Q$ is isometric to the Euclidean cone $C(\partial_TQ)$. Because $f$ induces a biLipschitz map of asymptotic cones, $\mathbb R^k$ and $\hat Q$ are biLipschitz equivalent. Using \Cref{prop:Euclid_cone_bilip_Rn}, we deduce that $\partial_TQ$ is homeomorphic to $\mathbb S^{k-1}$, and the conclusion follows because $\partial_T(f(\mathbb R^k)) = \partial_TQ$.
\end{proof}

\subsection{Approximating images of intersections of flats} \label{sec:coarse_intersection}

Here we prove the two main statements of the section, showing that the images of intersections of certain subsets of a CAT(0) cube complex under a quasiisometric embedding are Hausdorff-close to unions of orthants or flats. We start with a simple consequence of the Helly property.


\begin{lemma}\label{prop:coarse_intersection_subcomplexes_ccx}
Let $X$ be an $n$--dimensional CAT(0) cube complex. Let $F_1, \cdots, F_s$ be convex subcomplexes such that $H= \bigcap_{i=1}^s F_i$ is nonempty. For every $D \geq 0$ we have
\[
\bigcap_{i=1}^s F_i^{+D} \,\subseteq\, H^{+D \sqrt{n}}.
\]
\end{lemma}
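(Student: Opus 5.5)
The plan is to reduce the CAT(0) neighbourhoods to neighbourhoods in the $\ell^\infty$ metric, where cubical balls are convex subcomplexes, and then apply the Helly property for convex subcomplexes. Fix $x\in\bigcap_i F_i^{+D}$. The goal is to show $d(x,H)\le D\sqrt n$.

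\emph{Step 1: pass to $d_\infty$.} By \cref{rem:d_infty}, $d_\infty\le d_2$, so $x$ lies within $d_\infty$--distance $D$ of each $F_i$. For each $i$ choose $y_i\in F_i$ with $d_\infty(x,y_i)\le D$.

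\emph{Step 2: Helly.} The key claim is that the closed $\ell^\infty$--ball $B_\infty(x,D)$ agrees, up to the relevant intersection behaviour, with a convex set $C$ that meets every $F_i$. Then $C,F_1,\dots,F_s$ are pairwise intersecting convex sets. The $F_i$ pairwise intersect because $H\neq\emptyset$. By the Helly property for convex subcomplexes, or more generally for median-convex subsets of a median algebra, whose Helly number is $2$, they have a common point $z$. Then $z\in H$ and $d_\infty(x,z)\le D$, so $d_2(x,z)\le\sqrt n\,D$, again by \cref{rem:d_infty}.

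\emph{Main obstacle: convexity of the ball.} \cref{rem:d_infty} only asserts convexity of $B_\infty(x,m)$ when $x$ is a vertex and $m\in\N$, while our $x$ and $D$ are arbitrary. I would handle this by working directly in the median structure rather than with subcomplexes. The ball $B_\infty(x,D)$ is median-convex for all $x$ and $D$: in a finite-dimensional CAT(0) cube complex, the $\ell^\infty$ metric is injective, and its balls are intersections of "thickened halfspaces" in each coordinate direction. Equivalently, one can subdivide the cubes finely, with mesh $1/N$, so that $x$ is a vertex and $ND$ is an integer up to an arbitrarily small error. The subdivision is again a CAT(0) cube complex with the same $d_2$ and $d_\infty$ up to scaling, and its convex subcomplexes include the $F_i$. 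This yields the result with $D+\epsilon$ for every $\epsilon>0$.

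\emph{Closing the gap.} Since $H$ is closed, we conclude $d(x,H)\le\sqrt n(D+\epsilon)$ for all $\epsilon>0$, and hence $d(x,H)\le D\sqrt n$. The Helly step needs the sets $F_i$ and the ball to be convex subcomplexes of one common subdivided complex. This holds because subdividing preserves the property of being an intersection of halfspaces. Only finitely many sets are involved, so the ordinary finite Helly property suffices.
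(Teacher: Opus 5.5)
Your proposal is correct and follows the paper's proof. Both pass from $d_2$ to $d_\infty$, apply the Helly property to the pairwise-intersecting median-convex family $\{B_\infty(x,D),F_1,\dots,F_s\}$, and convert back using $d_2\le\sqrt n\,d_\infty$. The paper simply asserts that every $d_\infty$--ball is $d_1$--convex, even though \cref{rem:d_infty} only covers vertex centres and integer radii; your subdivision-and-approximation argument, together with the fact that the distance from $x$ to the closed convex set $H$ is attained, justifies that step.
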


\begin{proof}
Let $d_1$ denote the median metric on $X$, and let $d_\infty$ denote the injective metric on $X$, see \Cref{subsec:ccc} and \cref{rem:d_infty}. Since $X$ is $n$--dimensional, one has $d_\infty \leq d \leq \sqrt nd_\infty$. Every ball in $(X,d_\infty)$ is convex for $d_1$.

If $x\in\bigcap_{i=1}^sF_i^{+D}$, then for every $i$ the intersection $B_{d_\infty}(x,D)\cap F_i$ is nonempty. Hence $\{B_{d_\infty}\left(x,D\right), F_1, \cdots, F_s\}$ is a family of pairwise-intersecting convex subsets of the median space $(X,d_1)$. By the Helly property, their total intersection is nonempty; see \cite[Thm~2.2]{roller:poc}. In particular, $B_{d_\infty}\left(x,D\right) \cap H \ne \varnothing$. Therefore $x$ is in the $D$--neighbourhood of $H$ for the metric $d_\infty$. Since $d \leq \sqrt nd_\infty$, we conclude that $x \in H^{+D\sqrt n}$.
\end{proof}

\begin{proposition} \label{cor:distance_from_intersection}
Let $X$ be a finite-dimensional CAT(0) cube complex. Let $F_1, \cdots, F_s$ be convex subcomplexes such that $H= \bigcap_{i=1}^s F_i$ is nonempty. Let $Y$ be any metric space. If $f:X \to Y$ is a $q$--quasiisometric embedding, then for each $D\ge0$ there exists $T=T(q,D,\dim X)\geq 0$ such that
\[
f(H) \;\subseteq\; \bigcap_i f(F_i) \;\subseteq\; \bigcap_i f(F_i)^{+D} \;\subseteq\; f(H)^{+T}.
\]
If $f$ is biLipschitz and $D=0$, then we can take $T=0$.
\end{proposition}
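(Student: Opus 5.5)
The first two inclusions are formal: $H\subseteq F_i$ for every $i$ gives $f(H)\subseteq\bigcap_i f(F_i)$, and every set is contained in its own $D$--neighbourhood. All the work is in the last inclusion. The plan is to pull a point of $\bigcap_i f(F_i)^{+D}$ back to a point of $X$ lying in a uniform neighbourhood of every $F_i$, and then apply \cref{prop:coarse_intersection_subcomplexes_ccx}.

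So let $y\in\bigcap_i f(F_i)^{+D}$. For each $i$ choose $x_i\in F_i$ with $d(f(x_i),y)\le D$. Then for all $i,j$ we have $d(f(x_i),f(x_j))\le 2D$, and since $f$ is a $q$--quasiisometric embedding this gives $d(x_i,x_j)\le q(2D+q)=:R$. Set $x=x_1$. For each $i$, the point $x$ lies within $R$ of $x_i\in F_i$, so $x\in\bigcap_i F_i^{+R}$.

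Write $n=\dim X$. By \cref{prop:coarse_intersection_subcomplexes_ccx}, there is a point $h\in H$ with $d(x,h)\le R\sqrt n$. Hence
\[
d(y,f(h)) \,\le\, d(y,f(x_1)) + d(f(x_1),f(h)) \,\le\, D + qR\sqrt n + q .
\]
We may therefore take $T=D+q\bigl(q(2D+q)\sqrt n+1\bigr)$, which depends only on $q$, $D$ and $\dim X$. The constant does not depend on $s$, because \cref{prop:coarse_intersection_subcomplexes_ccx} already gives a bound independent of $s$. The only point requiring care is that \cref{prop:coarse_intersection_subcomplexes_ccx} is stated for an $n$--dimensional complex; since $X$ is finite-dimensional, we apply it with $n=\dim X$.

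In the biLipschitz case with $D=0$, a point $y\in\bigcap_i f(F_i)$ has the form $y=f(x_i)$ with $x_i\in F_i$ for each $i$. Injectivity of $f$, which follows from $f$ being biLipschitz, forces all the $x_i$ to coincide. That common point lies in $H$, so $y\in f(H)$ and we can take $T=0$. No step here is a genuine obstacle; the substance is carried entirely by the Helly-type estimate \cref{prop:coarse_intersection_subcomplexes_ccx}.
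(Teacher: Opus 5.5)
Your proof is correct and follows the paper's argument essentially verbatim. Like the paper, you choose $x_i\in F_i$ near $y$, bound $\dist(x_i,x_j)\le 2qD+q^2$, apply \cref{prop:coarse_intersection_subcomplexes_ccx} to $x_1$, and obtain the same constant $T=D+q(2qD+q^2)\sqrt{\dim X}+q$. The only difference is in the biLipschitz case with $D=0$: you argue directly via injectivity, whereas the paper observes that the constant collapses to $0$, and the two are equivalent.
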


\begin{proof}
The first two inclusions are immediate. For the last one, let $y \in \bigcap_i f(F_i)^{+D}$. For each $i$, choose $x_i \in F_i$ such that $d_Y(y,f(x_i)) \leq D$. Since $f$ is a $q$--quasiisometric embedding, we have $\dist(x_i,x_j)\le2qD+q^2$ for all $i,j$. In particular, $x_1 \in \bigcap_i F_i^{+2qD+q^2}$. It follows from \Cref{prop:coarse_intersection_subcomplexes_ccx} that $x_1 \in H^{+(2qD+q^2)\sqrt{\dim X}}$. Therefore, $f(x_1) \in f(H)^{+q(2qD+q^2)\sqrt{\dim X}+q}$. This suffices, because $d(f(x_1),y)\le D$. Note that if $f$ is bilipschitz and $D=0$, then the constant $T$ given by this argument is 0.
\end{proof}

Next we show that a coarse intersection of singular orthants is coarsely a singular orthant.

\begin{lemma}\label{lem:intersection_weyl_cones_is_cone}
Let $X$ be a finite-dimensional CAT(0) cube complex, and let $O_1,O_2\subseteq X$ be singular orthants of dimensions $p$ and $q$, respectively. For every $s,t \geq 0$, the coarse intersection $O_1^{+s} \cap O_2^{+t}$ is either empty, bounded, or at finite Hausdorff distance from a singular orthant of dimension $k \leq \min\{p,q\}$.
\end{lemma}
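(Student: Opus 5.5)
The plan is to replace everything by convex subcomplexes, where intersections are exact and convex, and then use the fact that a convex subcomplex of a standardly cubulated orthant is a product of intervals. Throughout, write $n=\dim X$. Coarse statements can be moved between $d$, $d_1$ and $d_\infty$ at a cost depending only on $n$, using $d_\infty\le d\le\sqrt n\,d_\infty$ from \cref{rem:d_infty} and the analogous bound for $d_1$.

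\emph{Step 1: replace the singular orthants by cubical orthants.} Let $O$ be a singular $p$--orthant. It is the image of an isometric embedding $([0,\infty)^p,d_1)\to(X,d_1)$, and it is median-convex. I claim that $\Hull(O)$ lies within distance $n$ of $O$. Indeed, every cube meeting $O$ is within distance $\sqrt n$ of it. Moreover, the hyperplanes crossing $O$ form $p$ families of pairwise non-crossing hyperplanes, with every hyperplane of one family crossing every hyperplane of the others. Hence $\Hull(O)$ is a convex subcomplex whose hyperplane structure is that of $[0,\infty)^p$ with its standard cubulation, up to finitely many extra hyperplanes near the cone point. So up to a bounded Hausdorff error I may assume that $O_1$ and $O_2$ are convex subcomplexes isomorphic, as cube complexes, to standardly cubulated orthants of dimensions $p$ and $q$. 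Finite Hausdorff perturbations only change $s,t$, so this is harmless.

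\emph{Step 2: make the neighbourhoods convex.} For a convex subcomplex $B$ and an integer $m$, the cubical (that is, $d_\infty$) $m$--neighbourhood $N_m(B)$ is again a convex subcomplex. This is the same fact used for balls in \cref{rem:d_infty}: $N_m(B)$ is the intersection of the halfspaces containing $B$, each pushed out across $m$ hyperplanes. Choosing integers $m_1\ge s$ and $m_2\ge t$, we get
\[
O_1^{+s}\cap O_2^{+t}\;\subseteq\;C:=N_{m_1}(O_1)\cap N_{m_2}(O_2),
\]
and $C$ is a convex subcomplex. The reverse coarse comparison is the delicate point, discussed below.

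\emph{Step 3: project to $O_1$ and identify the shape.} Let $\pi=\pi_{O_1}$ be the gate map of \cref{rem:ccc_convexity}. Since $C\subseteq N_{m_1}(O_1)$, the set $\pi(C)$ lies within bounded distance of $C$. The gate projection of a convex subcomplex is again a convex subcomplex, so $\pi(C)$ is a convex subcomplex of the cubulated orthant $O_1\cong[0,\infty)^p$. Any such subcomplex is an intersection of coordinate halfspaces, so it is a product $\prod_{j=1}^p I_j$ with each $I_j$ an interval or a ray. The cases then split cleanly:
\begin{itemize}
\item if $\pi(C)$ is empty, the coarse intersection is empty;
\item if all $I_j$ are bounded, it is bounded;
\item otherwise $\pi(C)$ is at finite Hausdorff distance from the coordinate suborthant spanned by the $k$ unbounded factors.
\end{itemize}
That suborthant is a subcomplex suborthant of a singular orthant, hence singular. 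We have $k\le p$, and $k\le q$ by the symmetric argument with $O_2$, or directly by volume growth.

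\emph{Main obstacle.} The difficulty is the comparison between $O_1^{+s}\cap O_2^{+t}$ and $C$ in the other direction. Rounding radii up can enlarge the intersection, and in principle a new unbounded direction could appear between radius $t$ and $t+\sqrt n$. My first attempt is to avoid rounding altogether: work with $d_\infty$--neighbourhoods of real radius, show directly that $\pi(O_1^{+s}\cap O_2^{+t})$ is a median-convex subset of $O_1$, and take its hull. The argument uses that $x\mapsto d_1(x,O_2)$ satisfies $d_1(z,O_2)\le\max\{d_1(x,O_2),d_1(y,O_2)\}$ for $z\in[x,y]$. The hull in $[0,\infty)^p$ of a median-convex set is a product of intervals within bounded distance of the set itself. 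If that fails, the fallback is to note that the set of coordinate directions that are unbounded in $O_1\cap N_m(O_2)$ is monotone in $m$ and takes only finitely many values. That gives the structure for all sufficiently large radii, and the remaining small radii need separate treatment.
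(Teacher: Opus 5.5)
\textbf{Verdict: there is a genuine gap.} It is the one you flag yourself as the main obstacle, and neither remedy you propose closes it.

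\emph{What Steps 2--3 actually prove.} They give only the containment $I:=O_1^{+s}\cap O_2^{+t}\subseteq C$, with $C$ coarsely a product of intervals. A subset of a set that is Hausdorff-close to an orthant need not itself be Hausdorff-close to an orthant. Since the lemma is about the given $s,t$, you cannot enlarge the radii. For example, take $O_1=\{(u,0):u\ge0\}$ and $O_2=\{(u,1):u\ge0\}$ in $\mathbb{R}^2$ with $s=t=0.4$: then $I=\varnothing$, but $C=[-1,\infty)\times[0,1]$ is unbounded.

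\emph{Why the remedies fail.}
\begin{itemize}
\item Your first remedy rests on a false inequality. Along median intervals one only has $d_1(z,O_2)\le d_1(x,O_2)+d_1(y,O_2)$, not the maximum. Take $O_2=\{(0,u):u\le0\}$, $x=(1,0)$, $y=(0,1)$. Then $z=(1,1)\in[x,y]$ but $d_1(z,O_2)=2$.
\item Moreover, the $d_2$--neighbourhoods in the statement are not median-convex. Passing to $d_\infty$--neighbourhoods changes the radii, which reintroduces exactly the problem above.
\item The fallback explicitly leaves small radii untreated. Even for large radii it describes $O_1\cap N_m(O_2)$, not $I$.
\end{itemize}

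\emph{The missing idea.} What is missing is a lower bound that never changes radii. The paper gets it from CAT(0) convexity along rays asymptotic to a common boundary point.
\begin{itemize}
\item Write $\sigma=\partial_TO_1$ and $\sigma'=\partial_TO_2$. If $I$ is unbounded, then $\sigma\cap\sigma'\ne\varnothing$, by taking diverging sequences in $O_1$ and $O_2$ at bounded distance.
\item Given $x\in I$, pick $z_0\in O_1$ and $z'\in O_2$ with $d(x,z_0)\le s$ and $d(x,z')\le t$.
\item For $\xi\in\sigma\cap\sigma'$, the rays $[z_0,\xi)$ and $[z',\xi)$ lie in $O_1$ and $O_2$ respectively.
\item The distance from $[x,\xi)$ to each of these rays is a convex, bounded, hence nonincreasing function of time. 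So $[x,\xi)\subseteq I$.
\item Thus $I$ coarsely contains the suborthant $Q\subseteq O_1$ determined by the face $\sigma\cap\sigma'$. In particular, once $I\ne\varnothing$, the directions present in $C$ are also present in $I$.
\end{itemize}

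\emph{With this, your Step 3 finishes the proof.}
\begin{itemize}
\item The suborthant $P$ that you find near $C$ lies near both $O_1$ and $O_2$, so $\partial_TP\subseteq\sigma\cap\sigma'$.
\item Conversely, $Q\subseteq I^{+R}\subseteq C^{+R}$ gives $\sigma\cap\sigma'\subseteq\partial_TP$.
\item Hence $P$ and $Q$ are Hausdorff-close, and $I$ is squeezed between them.
\end{itemize}
The paper instead proves the upper bound directly, using the median points $b_n$ and a boundary contradiction. Your gate-projection route is a valid alternative for that half.

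\emph{A smaller inaccuracy in Step 1.} The hull of a singular orthant need not be cubically a standard orthant. For instance, $\{(u,\tfrac12):u\ge0\}\subset\mathbb{R}^2$ has hull $[0,\infty)\times[0,1]$. The extra hyperplane runs along the whole ray, not only near the cone point. Convex subcomplexes of such hulls are still coarsely products of intervals, so this is repairable.
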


\begin{proof}
Let $O_1$ and $O_2$ be singular orthants, and assume that $O_1^{+s}\cap O_2^{+t}$ is unbounded. There are diverging sequences $y_n \in O_1$ and $y_n' \in O_2$ such that $d(y_n,y_n') \leq s+t$ for all $n$. Since $O_1$ and $O_2$ are proper, after passing to subsequences, we may assume that $(y_n)$ and $(y_n')$ converge at infinity. Because they stay at bounded distance, they converge to the same point of $\partial_TX$. Thus, if $\sigma = \partial_T O_1$ and $\sigma' = \partial_T O_2$, then $\sigma \cap \sigma' \ne \varnothing$.  

Let $x \in O_1^{+s} \cap O_2^{+t}$. Choose $z_0 \in O_1$ and $z' \in O_2$ such that $d(x,z_0) \leq s$ and $d(x,z') \leq t$, and let $z \in O_1$ be a vertex at distance at most $\sqrt{p}$ from $z_0$. Let $Q \subseteq O_1$ be the union of all geodesic rays $[z,\xi)$, where $\xi \in \sigma \cap \sigma'$. Since $O_1$ and $O_2$ are singular, $\sigma \cap \sigma'$ is a face of the simplex $\sigma$, so $Q$ is a singular suborthant of $O_1$. We claim that $O_1^{+s} \cap O_2^{+t}$ lies at finite Hausdorff distance from $Q$.

Given $\xi\in\sigma\cap\sigma'$ and a point $w\in X$, let $\gamma_w$ be the geodesic ray from $w$ to $\xi$. By convexity of the distance function, we have $\dist_{\mathrm{Haus}}(\gamma_{z_0},\gamma_x)\le s$ and $\dist_{\mathrm{Haus}}(\gamma_x,\gamma_{z'})\le t$. In particular, $\gamma_x\subset O_1^{+s}\cap O_2^{+t}$. We also have $\dist_{\mathrm{Haus}}(\gamma_x,\gamma_z)\le s+\sqrt p$. This shows that
\[
Q \,\subset\, \big(O_1^{+s}\cap O_2^{+t}\big)^{+s+\sqrt p}.
\]

We now wish to show that $O_1^{+s}\cap O_2^{+t}$ lies in a finite neighbourhood of $Q$. Since $Q$ is parallel to a face of $O_1$, there is a singular orthant $Q'\subset O_1$ based at $z$ such that $Q\times Q'\subset O_1$ is $r$--coarsely dense in $O_1$ for some $r$. If $O_1^{+s}\cap O_2^{+t}$ is not contained in any finite neighbourhood of $Q$, then there is a sequence $(a_n)\subset (Q\times Q')\cap O_2^{+s+t+r}$ such that $\dist(a_n,Q)>n$. 

Writing $z=(z_Q,z_{Q'})\in Q\times Q'$ and $a_n=(a_{n,Q},a_{n,Q'})\in Q\times Q'$, consider the point $b_n=(z_Q,a_{n,Q'})$. It satisfies $b_n=\mu(z,b_n,a_n)$, where $\mu$ denotes the median on $X$. Let $a'_n\in O_2$ be a point with $\dist(a_n,a'_n)\le s+t+r$. Since $\mu$ is 1--Lipschitz in each coordinate, we have 
\[
\dist(b_n,\mu(z',b_n,a'_n)) \,\le\, \dist(z,z')+\dist(a_n,a'_n) \,\le\, (s+t+\sqrt p)+(s+t+r).
\]
As $O_2$ is a convex subcomplex, we have $\mu(z',b_n,a'_n)\in O_2$, and so $b_n\in O_2^{+2s+2t+r+\sqrt p}$.

Because $Q\times Q'$ is proper, the sequence $(b_n)$ subconverges to a point $\zeta\in\partial_T(Q\times Q')=\sigma$. As the $a_n$ get arbitrarily far from $Q$, the choice of $b_n$ ensures that $\zeta\notin\partial_TQ=\sigma\cap\sigma'$. But $b_n\in O_1\cap O_2^{+2s+2t+r+\sqrt p}$ for all $n$, so $\zeta\in\sigma\cap\sigma'$. This is a contradiction.
\end{proof}

We can now prove the first of our results about the images of intersections under quasiisometric embeddings.

\begin{proposition}\label{prop:intersection_orthants}
Let $X$ and $Y$ be finite-dimensional CAT(0) cube complexes, and let $f \colon X \to Y$ be a $q$--quasiisometric embedding. Let $F_1, \dots, F_s \subseteq X$ be convex subcomplexes such that $H=\bigcap_{i=1}^s F_i$ is nonempty.

If $f(F_i)$ lies at finite Hausdorff distance from a finite union of singular orthants for all $i$, then $f(H)$ lies at finite Hausdorff distance from a finite union of singular orthants.
\end{proposition}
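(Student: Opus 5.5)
The plan is to combine the two slogans of this section: the image of the intersection is close to the intersection of the images (\cref{cor:distance_from_intersection}), and a coarse intersection of singular orthants is coarsely a singular orthant (\cref{lem:intersection_weyl_cones_is_cone}).

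\textbf{Step 1: choose the orthant unions and a tolerance.} For each $i$, fix a finite union $U_i=\bigcup_{j=1}^{m_i}O_{i,j}$ of singular orthants with $d_{\mathrm{Haus}}(f(F_i),U_i)\le D_i$. Set $D=\max_i D_i$.

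\textbf{Step 2: trap $f(H)$ between two sets.} By \cref{cor:distance_from_intersection}, there is $T=T(q,D,\dim X)$ with
\[
f(H)\subseteq \bigcap_i f(F_i)^{+D}\subseteq f(H)^{+T}.
\]
So $f(H)$ is at finite Hausdorff distance from $\bigcap_i f(F_i)^{+D}$. This set is sandwiched between
\[
\bigcap_i U_i^{+D}\qquad\text{and}\qquad \bigcap_i U_i^{+2D}.
\]
Hence both $\bigcap_i U_i^{+D}$ and $\bigcap_i U_i^{+2D}$ lie between $f(H)$ and $f(H)^{+T'}$ for a suitable $T'$, since $U_i^{+2D}\subseteq f(F_i)^{+3D}$ and \cref{cor:distance_from_intersection} can be applied again with $3D$.

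\textbf{Step 3: expand into multi-indexed pieces.} Distributing the intersection over the unions gives
\[
\bigcap_i U_i^{+D}=\bigcup_{(j_1,\dots,j_s)}\ \bigcap_i O_{i,j_i}^{+D},
\]
a finite union. Each piece should be empty, bounded, or at finite Hausdorff distance from a singular orthant. To see this, we iterate \cref{lem:intersection_weyl_cones_is_cone}. Given $O_1^{+D}\cap O_2^{+D}$ close to a singular orthant $Q$, the set $O_1^{+D}\cap O_2^{+D}\cap O_3^{+D}$ is contained in $Q^{+r}\cap O_3^{+D}$ for some $r$. That set is controlled by the lemma.

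\textbf{Step 4: bound the reverse inclusion (the main obstacle).} The containment in Step 3 only goes one way. We also need $Q^{+r}\cap O_3^{+D}$ to be Hausdorff-close to $O_1^{+D}\cap O_2^{+D}\cap O_3^{+D}$, rather than merely containing it. The problem is that enlarging neighbourhoods can enlarge coarse intersections.

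To handle this, I would exploit the sandwich from Step 2. Run the whole expansion with tolerance $D$ and again with a larger tolerance $D''$, chosen so that all the neighbourhood constants produced by the iteration fit inside $D''$. Both resulting sets lie within bounded distance of $f(H)$. So any set squeezed between them, in particular the finite union of singular orthants $Q_{(j)}$ obtained from the iteration, is Hausdorff-close to $f(H)$.

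More precisely, each orthant $Q_{(j)}$ produced from the $D$-version lies inside $\bigcap_i O_{i,j_i}^{+D''}\subseteq f(H)^{+T''}$. Conversely, every point of $f(H)$ lies in some $\bigcap_i O_{i,j_i}^{+D}$, which is coarsely contained in the corresponding $Q_{(j)}$ or in a bounded set.

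\textbf{Step 5: discard bounded pieces.} Finitely many bounded pieces can be absorbed into a neighbourhood of any nonempty piece. If all pieces are bounded, then $f(H)$ is bounded. In that case $f(H)$ is within finite Hausdorff distance of a single vertex, which is a $0$--orthant and hence a (degenerate) singular orthant. This yields the finite union of singular orthants.
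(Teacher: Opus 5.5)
Your proposal is correct and follows the paper's proof. The paper likewise traps $f(H)$ between $\bigcap_i f(F_i)$ and $\bigcap_i f(F_i)^{+2D}$ using \cref{cor:distance_from_intersection}, distributes the intersection over the multi-indexed pieces $\bigcap_i (O^i_{j_i})^{+D}$, and applies \cref{lem:intersection_weyl_cones_is_cone} to each piece. The paper does this directly for $s$-fold intersections, even though that lemma is stated for two orthants. Your Step~4 (iterating the lemma, then recovering the reverse inclusion from the preimage sandwich at a larger tolerance $D''$) and your treatment of bounded pieces in Step~5 fill in details the paper leaves implicit.
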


\begin{proof}
For each $i$, let $O_1^i,\dots,O_{p_i}^i$ be singular orthants of $Y$ and let $D\geq 0$ be such that $d_{\mathrm{Haus}} \left(f(F_i),\bigcup_{i=1}^{p_i} O_i\right)\le D$. It follows that
\begin{equation}
\begin{split}
   & f(H)  \,\subseteq\, \bigcap_{i=1}^s f(F_i) \,\subseteq\,
\bigcup_{j_1=1}^{p_1}\cdots  \ \bigcup_{j_s=1}^{p_s} \left( {(O_{j_1}^1)}^{+D} \cap \cdots \cap {(O_{j_s}^s)}^{+D} \right) \,\subseteq\, \bigcap_{i=1}^sf(F_i)^{+2D}.
\end{split}
\end{equation}
By \cref{cor:distance_from_intersection}, $f(H)$ and  $\bigcap_{i=1}^s f(F_i))$ are Hausdorff-close.
Therefore, $f(H)$ is Hausdorff-close to
$$
\bigcup_{j_1=1}^{p_1}\cdots  \ \bigcup_{j_s=1}^{p_s} \left( {(O_{j_1}^1)}^{+D} \cap \cdots \cap {(O_{j_s}^s)}^{+D} \right).
$$
By \cref{lem:intersection_weyl_cones_is_cone}, each term ${(O_{j_1}^1)}^{+D} \cap \cdots \cap {(O_{j_s}^s)}^{+D}$ is either empty, bounded, or Hausdorff-close to a singular orthant. Hence $f(H)$ lies at finite Hausdorff distance from a finite union of singular orthants.
\end{proof}

We conclude this section with a variant of \cref{prop:intersection_orthants} with flats in place of orthants.

\begin{proposition} \label{prop:intersection_semisingular}
Let $X$ and $Y$ be $n$-dimensional CAT(0) cube complexes, and let $f:X\to Y$ be a $q$--quasiisometric embedding. Let $F_1,\dots,F_s$ be singular flats whose intersection $H=\bigcap_{i=1}^sF_i$ is a $k$--flat. 

For each $D$ there exists $D'$ such that if $f(F_i)$ lies at Hausdorff distance at most $D$ from a singular flat $E_i\subset Y$ for all $i$, then $f(H)$ lies at Hausdorff distance at most $D'$ from a singular $k$--flat.
\end{proposition}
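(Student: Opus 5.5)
The plan is to intersect the singular flats $E_i$ in $Y$ and show that this intersection is a singular $k$--flat uniformly Hausdorff-close to $f(H)$. Since $Y$ is $n$--dimensional and the $E_i$ are singular, each $E_i$ is median-convex, i.e.\ a convex subcomplex (after a bounded perturbation, replacing $E_i$ by a singular flat in the cubical structure; singular flats are parallel to subcomplex flats at distance at most $\sqrt n$). Write $E=\bigcap_i E_i$, assuming for now that it is nonempty. Then $E$ is a convex subcomplex, and an intersection of singular flats in a median space is a median subalgebra of each $E_i\cong(\R^{d_i},\ell^1)$ that is convex there, so it is a product of intervals and lines, i.e.\ a singular ``box-flat''.

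Next I would compare $f(H)$ with $E$. By \cref{cor:distance_from_intersection}, $f(H)$ is $T(q,D,n)$--Hausdorff-close to $\bigcap_i f(F_i)^{+D}$, and $\bigcap_i f(F_i)\subseteq\bigcap_i E_i^{+D}$. Conversely, by \cref{prop:coarse_intersection_subcomplexes_ccx} applied in $Y$, $\bigcap_i E_i^{+D}\subseteq E^{+D\sqrt n}$, while $\bigcap_iE_i^{+D}\subseteq\bigcap_i f(F_i)^{+2D}\subseteq f(H)^{+T(q,2D,n)}$. Combining gives $d_{\mathrm{Haus}}(f(H),E')\le D_1(q,D,n)$ for the coarse intersection, and $E\subseteq E'$ with $E'\subseteq E^{+D\sqrt n}$. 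Hence $d_{\mathrm{Haus}}(f(H),E)$ is uniformly bounded.

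The main obstacle is nonemptiness of $E$ and the fact that $E$ is genuinely a $k$--flat rather than, say, a product of a flat with a bounded box. For nonemptiness, I would use the Helly property as in \cref{prop:coarse_intersection_subcomplexes_ccx}: the $E_i^{+D}$ pairwise intersect near $f(h)$ for $h\in H$, but Helly needs actual pairwise intersection; instead, pick a point $y=f(h)$ and the convex $d_\infty$--ball $B$ of radius $\lceil D\rceil$ around a nearby vertex, which meets all $E_i$, then apply Helly to $\{E_i\}$ only after noting that the $E_i$ themselves pairwise intersect. If they do not, I would replace $E_i$ by suitable parallel flats within the convex hull of $B\cup E_i$: more robustly, replace each $E_i$ by its projection-based translate, or simply work with the convex subcomplexes $\Hull(E_i\cap B')$ for larger balls. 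I expect to handle this by taking $E$ to be $\bigcap_i(\text{convex hull of }E_i\cup B)$ restricted appropriately; failing that, I would argue directly in the product structure of the parallel sets. Once $E$ is nonempty, it is a singular box-flat $\R^a\times\prod[\text{bounded intervals}]$, uniformly close to $f(H)$, which is a quasiflat of dimension $k$; by volume growth and \cref{lem:quasiisom_quasi_surj}, $a=k$, and the bounded factors have side length controlled by $D_1$ via quasi-surjectivity. Choosing any $\R^k$ slice of $E$ gives a singular $k$--flat within Hausdorff distance $D'=D'(q,D,n)$ of $f(H)$.
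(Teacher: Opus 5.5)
Your argument rests on $E=\bigcap_iE_i$ being nonempty, and that fails in general. This is a genuine gap, not a technicality. Take $X=Y=\mathbb{R}^2$, $f=\mathrm{id}$, $F_1=F_2$ the $x$--axis, $E_1$ the $x$--axis and $E_2$ the line $y=1$. Both $E_i$ are singular and lie within Hausdorff distance $1$ of $f(F_i)$, yet $E_1\cap E_2=\varnothing$. This is not just a matter of repeated flats: in $T\times\mathbb{R}^2$, with $\gamma_1,\gamma_2\subset T$ geodesics meeting in a single vertex, take $F_i=\gamma_i\times\mathbb{R}\times\{0\}$, $E_1=F_1$ and $E_2=\gamma_2\times\mathbb{R}\times\{1\}$. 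Since the $E_i$ need not pairwise intersect, Helly gives nothing, and \cref{prop:coarse_intersection_subcomplexes_ccx} cannot be applied to them. The remedies you list are never carried out. The hull-type ones, such as $\bigcap_i\Hull(E_i\cup B)$, would also destroy the structure you rely on afterwards, namely that the intersection sits inside a single singular flat and is therefore a product of intervals.

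The missing idea is to keep one genuine flat and thicken only the others. The paper takes $C=E_1\cap\big(\bigcap_{i\ge2}E_i^{+D}\big)^{+D}$. This set is automatically nonempty, since a point of $E_1$ within $D$ of $f(h)$, for $h\in H$, lies in it. It is convex in $E_1$, and it is uniformly Hausdorff-close to $f(H)$ by \cref{cor:distance_from_intersection}. The paper then extracts the flat through Tits boundaries. It shows, via \cref{prop:boundary_quasi_flat_sphere} and a $\pi$--convexity argument, that $\partial_TC$ is a round $(k-1)$--sphere subcomplex of $\partial_TE_1$. It then takes a singular $P\subset C$ with $\partial_TP=\partial_TC$ and finishes by quasi-surjectivity, as you do.

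Your box idea can be rescued in the same spirit, and would give a more elementary route that avoids \cref{prop:boundary_quasi_flat_sphere}. Set $N_i=\Hull(E_i^{+2D})$, which is uniformly close to $E_i$ by \cref{lem:coarsely_convex_coarsely_convex}. The family $E_1,N_2,\dots,N_s$ is pairwise intersecting near $f(H)$, so Helly makes $C'=E_1\cap\bigcap_{i\ge2}N_i$ nonempty. Moreover $C'$ is median-convex in $E_1$, hence a product of intervals. Be aware that such a product can contain ray factors, say $\mathbb{R}^a\times[0,\infty)^b\times(\text{box})$. Volume growth only gives $a+b=k$, so you need a topological input to force $b=0$. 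One option is that $\mathbb{R}^a\times[0,\infty)^b$, the asymptotic cone of $C'$, is not homeomorphic to $\mathbb{R}^k$ when $b\ge1$, by invariance of domain. Only then can you take a $k$--dimensional coordinate slice and apply quasi-surjectivity.
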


\begin{proof}
We can assume that $k\ge1$, for there is nothing to prove otherwise. Let $C\subset E_1$ be the convex subset $C=E_1\cap\Big(\bigcap_{i=2}^sE_i^{+D}\Big)^{+D}$. Note that we have
\[
\bigcap_{i=1}^sE_i^{+D} \,\subset\, C^{+D} \qquad\text{and}\qquad 
    C \,\subset\, \Big(\bigcap_{i=1}^sE_i^{+D}\Big)^{+D}.
\]
In other words, $C$ and $\bigcap_{i=1}^sE_i^{+D}$ are at Hausdorff distance at most $D$. By the choice of the flats $E_i$, we have that
\[
f(H) \,\subset\, \bigcap_{i=1}^sf(F_i) \,\subset\, \bigcap_{i=1}^sE_i^{+D} \,\subset\, \bigcap_{i=1}^sf(F_i)^{+2D}.
\]
Moreover, because the $F_i$ are singular, they are convex, and hence \cref{cor:distance_from_intersection} provides a constant $T=T(q,2D,\dim X)$ such that
\[
\bigcap_{i=1}^sf(F_i)^{+2D} \,\subset\, f(H)^{+T}.
\]
Combining these comparisons, we find that
\[
f(H) \,\subset\, \bigcap_{i=1}^sE_i^{+D} \,\subset\, C^{+D} \,\subset\, 
    \big(\bigcap_{i=1}^sE_i^{+D}\big)^{+2D} \,\subset\, f(H)^{+T+2D}.
\]
In particular, $f(H)$ and $C$ are at Hausdorff distance at most $T+2D$.

We will show that $\partial_TC$ is isometric to $\mathbb{S}^{k-1}$ for some $k$, deduce that it bounds a $k$--flat $P$, and show that $P$ lies at uniformly bounded Hausdorff distance from $f(H)$.

The set $C$ is a convex subspace of the $n$--flat $E_1$. As each $E_i$ is singular, its boundary is a round simplicial sphere (even semisingular suffices, see \cref{rem:boundary_semisingular}). Moreover, $C$ lies at Hausdorff distance at most $D$ from $\bigcap_{i=1}^sE_i^{+D}$, so its Tits boundary is a subcomplex of $\partial_TE_1$. This implies that $C$ lies at finite Hausdorff distance from a finite union of orthants of $E_1$, and hence so does $f(H)$. \cref{prop:boundary_quasi_flat_sphere} now tells us that $\partial_Tf(H)$ is homeomorphic to $\mathbb S^{k-1}$. Hence $\bigcap_{i=1}^s\partial_TE_i=\partial_T\bigcap_{i=1}^sE_i^{+D}$ is also homeomorphic to $\mathbb S^{k-1}$. Let us write $B=\bigcap_{i=1}^s\partial_TE_i$.

Let $S\subset\partial_TE_1$ be the smallest round sphere that contains $B$. The fact that the $E_i$ are semisingular implies that $B$ is a subcomplex of $\partial_TE_1$. Since they are round spheres, $B$ contains every geodesic between its pairs of points at distance less than $\pi$. In other words, $B$ is ``$\pi$-convex''. Hence either $B=S$ or $B$ is contained in a closed hemisphere $N\subset S$.

Suppose $B\subset N$ for some closed hemisphere $N$. We cannot have $B\subset\partial N$, for that would contradict minimality of $S$. Hence $B$ meets the interior of $N$. But now $\pi$--convexity of $B$ implies that it is contractible, which contradicts the fact that $B\cong\mathbb S^{k-1}$. Thus $B=S$. 

We have shown that $\partial_TC=B=S$ is a round sphere subcomplex of $\partial_TE_1$. Hence there exists a singular $k$--flat $P\subset C\subset E_1$ with $\partial_TP=S$.

It suffices to bound the Hausdorff distance between $P$ and $f(H)$. We already know that $P\subset C\subset f(H)^{+T+2D}$. Let $\pi:P\to f(H)$ denote a closest-point projection, and let $\bar f:f(H)\to H$ be a quasiinverse of $f$. The composition $\bar f\pi:P\to H$ is a $(q+2T+4D)$--quasiisometric embedding, so by \cref{lem:quasiisom_quasi_surj} it is $D''=D''(q,T,D,k)$ coarsely surjective. Thus $f(H)$ lies in a uniform neighbourhood of $P$.
\end{proof}

\begin{remark}
Note that in \cref{prop:intersection_semisingular}, since we are only considering finitely many flats $F_1,\dots,F_s$, if we assume that each $f(F_i)$ is at finite Hausdorff distance from a singular flat in $Y$, then there certainly exists \emph{some} constant $D$ such that each $f(F_i)$ is at Hausdorff distance at most $D$ from a singular flat in $Y$. The extra utility comes from situations where we have an external bound on the quantity $D$ that does not depend on the $F_i$. This will be the case in \cref{sec:fully_branching_theorems}.
\end{remark}

\section{Quasimedian maps and coarse convexity} \label{sec:qm_cc}

In this section, we develop some of the technical machinery that we need in order to handle CAT(0) cube complexes whose asymptotic rank is greater than their dimension. This involves treating such cube complexes as being ``coarsely $n$--dimensional''. Although it will not play an explicit role here, this is the perspective of \emph{coarse median spaces}, introduced by Bowditch in \cite{bowditch:coarse}. We reiterate that this section is not needed for proving our main results in the case that the dimension and rank agree.

Going forwards, the key statements from this section are Propositions~\ref{claim:convex_orthants} and \ref{prop:subflat_image_union_orthants}. The former states that the images of convex unions of orthants in certain quasiflats are ``coarsely median-convex'', in the sense of \cref{def:cmc}. The latter is an analogue of \cref{prop:intersection_orthants} that applies to non-singular flats.

Throughout this section, if $X$ is a CAT(0) cube complex, then we shall consider $X$ equipped with the median metric $d_1$. The following definition is from \cite[\S8]{bowditch:coarse}.

\begin{definition}[Quasimedian]
Let $M$ and $N$ be median metric spaces. A map $\phi:M\to N$ is \emph{$m$--quasimedian} if $\dist\big(\phi(\mu_M(a,b,c)),\,\mu_N(\phi(a),\phi(b),\phi(c))\big)\le m$ for all $a,b,c\in M$.
\end{definition}

The following provides an appropriate coarse notion of convexity in this coarse setting; compare \cref{rem:ccc_convexity}.

\begin{definition}[Coarsely median-convex] \label{def:cmc}
Let $X$ be a median metric space. A subset $A\subset X$ is \emph{$C$--coarsely median-convex} if $\mu(a,x,b)\in A^{+C}$ for every $a,b\in A$ and $x\in X$.
\end{definition}

The next lemma shows that coarsely median-convex subsets are close to actual convex subcomplexes. We refer to \cref{def:hull} for the definition of the hull of a subset of a CAT(0) cube complex.

\begin{lemma} \label{lem:coarsely_convex_coarsely_convex}
Let $X$ be an $n$--dimensional CAT(0) cube complex. If $A\subset X$ is $C$--coarsely median-convex, then $A$ is $2^nC$--coarsely dense in $\Hull(A)$.
\end{lemma}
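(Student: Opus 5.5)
The plan is to realise $\Hull(A)$ as an iterated median join of $A$, and to control how far the coarse convexity constant grows at each iteration. For $B\subset X$, write
\[
J(B) \,=\, \bigcup_{a,b\in B}[a,b] \,=\, \{\mu(a,b,x) \mid a,b\in B,\ x\in X\},
\]
where here $[a,b]$ denotes the median interval. Set $J^0(B)=B$ and $J^{k+1}(B)=J(J^k(B))$.

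The key structural input is a standard fact from the theory of median algebras of finite rank (see Bowditch's book \cite{bowditch:median:book}, and compare the Helly-type arguments of \cref{prop:coarse_intersection_subcomplexes_ccx}). In a median algebra of rank at most $n$, the $n$--fold iterated join $J^n(B)$ is already median-convex, and hence equals the median-convex hull of $B$. The space $(X,d_1)$ has rank $n$ because $X$ is $n$--dimensional, so this applies. I would quote this fact rather than reprove it. Its proof is an induction on rank: one shows that the failure of $J^k(B)$ to be convex produces a configuration median-isomorphic to $\{0,1\}^{k+1}$.

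Next comes the metric estimate. The median $\mu$ is $1$--Lipschitz in each coordinate. So if $a',b'$ lie within $r$ of $a,b\in A$, then $\mu(a',b',x)$ lies within $2r$ of $\mu(a,b,x)$, and the latter lies in $A^{+C}$ by coarse median-convexity. This gives
\[
J(A^{+r}) \,\subset\, A^{+C+2r}.
\]
Since $J$ is monotone, induction gives $J^k(A)\subset A^{+r_k}$, where $r_0=0$ and $r_{k+1}=C+2r_k$. Hence $r_k=(2^k-1)C$, and in particular $J^n(A)\subset A^{+(2^n-1)C}\subset A^{+2^nC}$.

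The step I expect to be the main obstacle is identifying $J^n(A)$ with $\Hull(A)$ as defined in \cref{def:hull}, namely as the intersection of the combinatorial halfspaces containing $A$. This subcomplex can be strictly larger than the median-convex hull of $A$ when $A$ contains non-vertex points. For example, a single interior point of an edge has median hull equal to itself, but its subcomplex hull is the whole edge. I would handle this in one of two ways.
\begin{itemize}
\item Reduce to the case where $A$ is a union of vertices or of closed cubes. Replacing $A$ by the union of the closed cubes meeting it costs an additive error comparable to the spare term $C$ in $2^nC-(2^n-1)C$. This is legitimate provided $C\ge$ the relevant cube diameter; otherwise the constant needs mild adjustment.
\item Alternatively, show directly that every vertex of $\Hull(A)$ lies in $J^n$ of the set of vertices adjacent to $A$, using \cref{rem:ccc_convexity}.
\end{itemize}
Once this identification is in place, the inclusion $\Hull(A)\subset A^{+2^nC}$ follows from the estimate above. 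The reverse inclusion $A\subset\Hull(A)$ is trivial, so $A$ is $2^nC$--coarsely dense in $\Hull(A)$.
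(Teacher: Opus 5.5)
Your proof is essentially the paper's. The paper cites \cite[Lem.~2.18]{haettelhodapetyt:coarse} to identify the $n$-fold iterated median join of $A$ with $\Hull(A)$, and then runs exactly your induction $A_i\subset A^{+(2^i-1)C}$, using that $\mu$ is $1$--Lipschitz in each coordinate.

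The median-hull versus subcomplex-hull subtlety you raise is genuine and is not addressed in the paper: a single interior point of an edge with $C=0$ breaks the literal statement. It is cleanest to handle it at the end rather than by enlarging $A$ at the start. Every point of the subcomplex hull lies within $d_1$--distance $n$ of the median hull, so the cube diameter is absorbed by the spare $C$ once $C\ge n$. Enlarging $A$ first would instead worsen the coarse-convexity constant, and the iteration would then amplify that error by roughly $2^n$.
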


\begin{proof}
According to \cite[Lem.~2.18]{haettelhodapetyt:coarse}, the hull of $A$ can be obtained by taking medians $n$ times, in the following sense. Let $A_0=A$. Given $A_i$, set $A_{i+1}=\{\mu(a_1,x,a_2)\,:\,a_1,a_2\in A_i,\,x\in X\}$. The cited lemma shows that $A_n=\Hull(A)$. 

Note that $A_i\subset A_{i+1}$ for all $i$, because $\mu(a,a,a)=a$. The assumption of the lemma is equivalent to saying that $A_1\subset A^{+C}$. Suppose that we know that $A_i\subset A^{(2^i-1)C}$. Given $a_1,a_2\in A_i$, we can let $a_1',a_2'\in A$ have $\dist(a_1,a'_1)\le (2^i-1)C$ and $\dist(a_2,a'_2)\le(2^i-1)C$. We then have that $\dist(\mu(a_1,x,a_2),\mu(a'_1,x,a'_2))\le2(2^i-1)C$ for all $x\in X$, because the median is 1--Lipschitz in each factor. Since $A$ is $C$--coarsely median-convex, there exists $a\in A$ with $\dist(a,\mu(a'_1,x,a'_2))\le C$. Consequently, $\dist(a,\mu(a_1,x,a_2))\le(2^{i+1}-1)C$. Taking $i=n$ proves the lemma.
\end{proof}

\subsection{Quasimedian embeddings of unions of orthants} \label{subsec:quasimedian_orthants}

Here we show \cref{claim:convex_orthants}, which states that if one has a convex union of orthants in $\R^n$ and applies a quasimedian, quasiisometric embedding in a CAT(0) cube complex of asymptotic rank $n$, then the image is coarsely median-convex. 

\begin{observation} \label{obs:quasi_median_observation}
Suppose that $\phi:(M,\mu)\to X$ is an $m$--quasimedian map from a median metric space to a CAT(0) cube complex $X$.
If $\mu(a,b,c)=b$, then as $\phi$ is $m$-quasimedian, $\mu(\phi(a),\phi(b),\phi(c))$ is at distance at most $m$ from $\phi(b)$. Hence all but at most $m$ hyperplanes that separate $\phi(a)$ from $\phi(b)$ also separate $\phi(a)$ from $\mu(\phi(a),\phi(b),\phi(c))$, so all but at most $m$ such hyperplanes separate $\phi(a)$ from $\{\phi(b),\phi(c)\}$.
\end{observation}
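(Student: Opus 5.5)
The statement is essentially a direct unpacking of the definition of $m$--quasimedian together with the hyperplane description of the median in a CAT(0) cube complex, so the plan is a short two-step verification.

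The first step is to set $p=\mu(\phi(a),\phi(b),\phi(c))$ and apply the quasimedian inequality. Since $\mu(a,b,c)=b$, this gives $\dist(p,\phi(b))=\dist\big(\phi(\mu(a,b,c)),\mu(\phi(a),\phi(b),\phi(c))\big)\le m$. Here I work with $d_1$, as fixed at the start of \cref{sec:qm_cc}. I treat points as vertices, or more generally count hyperplanes with weight, so that $d_1$ equals the number of hyperplanes separating two points. Consequently at most $m$ hyperplanes separate $p$ from $\phi(b)$.

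The second step compares hyperplane separation. Let $h$ be a hyperplane separating $\phi(a)$ from $\phi(b)$ but not separating $p$ from $\phi(b)$. Then $p$ lies on the $\phi(b)$ side of $h$, so $h$ separates $\phi(a)$ from $p$. Hence every hyperplane separating $\phi(a)$ from $\phi(b)$, except at most the $m$ that separate $p$ from $\phi(b)$, also separates $\phi(a)$ from $p$.

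To pass to separating $\phi(a)$ from $\{\phi(b),\phi(c)\}$, I use the standard fact that the median lies in the halfspace containing the majority of the three points. If such an $h$ separates $\phi(a)$ from $p$, then $p$ is on the opposite side from $\phi(a)$. By the majority property, at least two of the three points $\phi(a),\phi(b),\phi(c)$ lie on that opposite side, and these must be $\phi(b)$ and $\phi(c)$. Therefore $h$ separates $\phi(a)$ from both $\phi(b)$ and $\phi(c)$.

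The only potential subtlety is when the points are not vertices, since the exact hyperplane count then holds only up to additive constants. I would either restrict to vertices, which is harmless up to bounded error, or interpret ``at most $m$'' as hyperplane measure in the sense of median spaces. There is no real obstacle here.
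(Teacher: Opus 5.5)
Your argument is correct and follows the same route as the paper. You use the quasimedian inequality to get $d_1\bigl(\mu(\phi(a),\phi(b),\phi(c)),\phi(b)\bigr)\le m$, observe that any hyperplane separating $\phi(a)$ from $\phi(b)$ but not from the median must separate the median from $\phi(b)$, and then use that the median lies on the majority side of each hyperplane; your remark about non-vertex points makes explicit a detail the paper leaves implicit.
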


Informally, the following technical lemma says that the quasimedian image of a box is essentially a box.

\begin{lemma}[Box lemma] \label{lem:quasimedian_cube}
Let $\prod_{i=1}^n [0,a_i]\subseteq \mathbb{R}^n$ be a box, equipped with the standard median structure. Let $V_j$ denote the set of corner vertices of $\prod_{i=1}^n[0,a_i]$ whose $j^\mathrm{th}$ coordinate is zero. Let $e_j\in\R^n$ be the point whose $j^\mathrm{th}$ coordinate is $a_j$ and whose other coordinates are zero.

Let $X$ be a CAT(0) cube complex, and let $\phi:\prod_{i=1}^n [0,a_i]\to X$ be an $m$--quasimedian map.
For each $j$, let $W_j$ be the set of all hyperplanes in $X$ that separate $\phi(0)$ from $\phi(e_j)$. Let $W$ be the set of all hyperplanes in $X$ that separate $\phi(\bar 0)$ from $\phi((a_1,\dots, a_n))$.

\begin{enumerate}[label={\arabic*)}]
\item   For each $j$ there exists $W_j'\subseteq W_j$ such that $|W_j'|\geq |W_j|-2^nm$ and all hyperplanes in $W_j'$ separate $\phi(V_j)$ from $\phi(V_j+e_j)$.
In particular, for $i\neq j$, every element of $W_i'$ crosses every element of $W_j'$.

\item There exists $W'\subset W$ such that $|W'|\geq |W|-m(n-1)-\binom n22^nm$ and $W'\subseteq \bigcup_{j=1}^n W_j$.
\end{enumerate}
\end{lemma}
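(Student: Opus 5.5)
Throughout, we use \cref{obs:quasi_median_observation} on medians in the box $\prod_i[0,a_i]$, which has the product (coordinatewise) median. The driving identity is this. For a corner $v\in V_j$, the point $0$ lies in $[v,e_j]$ and $v+e_j$ lies in $[v,e_j]$. In fact, coordinatewise we have $\mu(v,0,e_j)=0$ and $\mu(v,v+e_j,e_j)=v+e_j$. Moreover $\mu(0,v,v+e_j)=v$ and $\mu(e_j,v+e_j,v)=v+e_j$.

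For part 1), fix $j$ and a pair $(v,v+e_j)$ with $v\in V_j$. We want all but boundedly many hyperplanes in $W_j$ to separate $\phi(v)$ from $\phi(v+e_j)$.

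We argue in two steps. First apply \cref{obs:quasi_median_observation} to $\mu(0,v,e_j)$, which equals $v$ projected to $0$ in the $j$th coordinate. More usefully, use $\mu(v,0,e_j)=0$: all but $m$ hyperplanes separating $\phi(0)$ from $\phi(e_j)$ separate $\phi(e_j)$ from $\{\phi(0),\phi(v)\}$. Symmetrically, $\mu(v+e_j,e_j,0)=e_j$, so all but $m$ of them separate $\phi(0)$ from $\{\phi(e_j),\phi(v+e_j)\}$.

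A hyperplane in both of these subsets has $\phi(v)$ on the $\phi(0)$ side and $\phi(v+e_j)$ on the $\phi(e_j)$ side, so it separates the pair. Each pair therefore costs at most $2m$ hyperplanes. There are $2^{n-1}$ pairs, so discarding at most $2^{n-1}\cdot 2m=2^nm$ elements leaves $W_j'$ separating $\phi(V_j)$ from $\phi(V_j+e_j)$.

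For the crossing claim, take $h\in W_i'$ and $k\in W_j'$ with $i\ne j$. The four corners $0,e_i,e_j,e_i+e_j$ realise all four sides. The hyperplane $h$ puts $\{0,e_j\}\subseteq V_i$ on one side and $\{e_i,e_i+e_j\}$ on the other. The hyperplane $k$ does the same with the roles of $i$ and $j$ swapped. Hence all four quadrants are nonempty, and $h$ and $k$ cross.

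For part 2), take a hyperplane $h\in W$ that lies in no $W_j$. We must bound the number of such $h$.

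Write the corners along a monotone path $0=c_0,c_1,\dots,c_n=(a_1,\dots,a_n)$, where $c_i=c_{i-1}+e_i$. Each $\mu(c_{i-1},c_i,c_{i+1})=c_i$, and more generally $\mu(0,c_i,c_n)=c_i$. Applying \cref{obs:quasi_median_observation} along the path, all but $m(n-1)$ hyperplanes of $W$ separate $\phi(c_{i-1})$ from $\phi(c_i)$ for exactly one $i$, since the path is a coarse geodesic in the hyperplane sense. So $h$ separates some consecutive pair $\phi(c_{i-1}),\phi(c_i)$, a pair differing in direction $i$.

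If $h\notin W_i$, then $h$ separates $\phi(c_{i-1})$ from $\phi(c_i)$ but does not separate $\phi(V_i)$ from $\phi(V_i+e_i)$ in the expected way. By part 1, all but $2^nm$ hyperplanes separating such a translated pair lie in $W_i$. To see this, run the argument of part 1 for the pair $(c_{i-1},c_i)$ against $(0,e_i)$ using $\mu(c_{i-1},e_i,0)$-type identities in the $2$-dimensional coordinate faces. This charges at most $2^nm$ exceptions to each pair of coordinate directions, giving the $\binom n2 2^nm$ term.

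The main obstacle is this accounting in part 2. The identities $\mu(0,c_{i-1},e_i)$ and $\mu(c_i,e_i,0)$ only transport hyperplanes through one coordinate at a time, so the exceptions have to be charged to pairs $\{i,l\}$. The risk is overcounting, and I would check that each charge is incurred at most once per unordered pair.
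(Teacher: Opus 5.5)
Your part 1 is the paper's argument. It uses the same two identities $\mu(v,\bar0,e_j)=\bar0$ and $\mu(v+e_j,e_j,\bar0)=e_j$, the same loss of $2m$ per $v\in V_j$, and the same four-quadrant crossing argument.

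\textbf{Part 2 has a gap as written, although your corner-path route works once repaired.} The step you need is \emph{all but boundedly many hyperplanes separating $\phi(c_{i-1})$ from $\phi(c_i)$ lie in $W_i$}. Part 1 gives the opposite inclusion: most of $W_i$ separates $\phi(c_{i-1})$ from $\phi(c_i)$. The direction you need follows from part 1 only after precomposing $\phi$ with the reflection of the box sending $c_{i-1}$ to $\bar0$, which you do not do. The identity you cite, $\mu(c_{i-1},e_i,\bar0)=\bar0$, is one of the two part-1 identities, so it again controls only elements of $W_i$.

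The bookkeeping is also off. In your argument the exceptions are indexed by the step $i$ of the path, not by unordered pairs $\{i,l\}$. Charging $2^nm$ to each of the $n$ steps on top of $m(n-1)$ gives $m(n-1)+n2^nm$, which exceeds the allowed loss when $n\le 2$.

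Finally, \cref{obs:quasi_median_observation} does not justify the claim that all but $m(n-1)$ elements of $W$ cross the path exactly once. The claim is also unnecessary. Every $h\in W$ separates \emph{some} consecutive pair $\phi(c_{i-1}),\phi(c_i)$, because its side must switch somewhere along $\phi(c_0),\dots,\phi(c_n)$.

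The repair uses identities from your own first paragraph, with $v=c_{i-1}\in V_i$: $\mu(\bar0,c_{i-1},c_i)=c_{i-1}$ and $\mu(e_i,c_i,c_{i-1})=c_i$.
\begin{itemize}
\item By \cref{obs:quasi_median_observation}, all but $m$ hyperplanes separating $\phi(c_{i-1})$ from $\phi(c_i)$ separate $\phi(c_i)$ from $\{\phi(c_{i-1}),\phi(\bar0)\}$.
\item Likewise, all but $m$ of them separate $\phi(c_{i-1})$ from $\{\phi(c_i),\phi(e_i)\}$.
\item A hyperplane doing both has $\phi(\bar0)$ and $\phi(e_i)$ on opposite sides, so it lies in $W_i$.
\item The step $i=1$ costs nothing, because $(c_0,c_1)=(\bar0,e_1)$.
\end{itemize}
Hence $W'=W\cap\bigcup_jW_j$ satisfies $|W'|\ge|W|-2(n-1)m\ge|W|-m(n-1)-\binom{n}{2}2^nm$.

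\textbf{Comparison with the paper.} The paper writes $c=\mu(e_n,c,\mu(e_{n-1},c,\dots\mu(e_2,c,e_1)\dots))$ using $n-1$ medians. So $\phi(c)$ is $(n-1)m$--close to a point of $\Hull\{\phi(\bar0),\phi(e_1),\dots,\phi(e_n)\}$, and all but $m(n-1)$ elements of $W$ must separate two of these $n+1$ points. It then uses part 1 and symmetry to move those separating $\phi(e_i)$ from $\phi(e_j)$ into $W_j$; this is the source of the $\binom{n}{2}2^nm$ term.

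Your path argument avoids the hull and the iterated-median estimate, and it gives a sharper constant. The $\binom{n}{2}2^nm$ term is only slack: even in the paper's route, a hyperplane separating $\phi(e_i)$ from $\phi(e_j)$ automatically separates $\phi(\bar0)$ from one of them. So there is no need to reproduce that term in your count.
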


\begin{proof}
$1)\quad$ By applying a median symmetry of $\mathbb{R}^n$, it is enough to consider $j=1$.

Fix $v\in V_1$. Note that $\mu(\bar 0, e_1, v+e_1)=e_1$ and $\mu(e_1,\bar0,v)=\bar 0$. Thus, by \cref{obs:quasi_median_observation}, all but at most $m$ of the hyperplanes that separate $\phi(\bar 0)$ from $\phi(e_1)$ actually separate $\phi(\bar0)$ from $\{\phi(e_1),\phi(v+e_1)\}$. Similarly, all but at most $m$ elements of $W_1$ separate $\phi(e_1)$ from $\{\phi(\bar0),\phi(v)\}$. 
Thus, all but at most $2m$ of the hyperplanes in $W_1$ separate $\phi(v)$ from $\phi(v+e_1)$.

By considering all $v\in V_1$, we get that all but at most $2m\cdot |V_1|=2m\cdot 2^{n-1}$ hyperplanes in $W_1$ separate $\phi(v)$ and $\phi(v+e_1)$ for all $v\in V_1$. We let $W'_1\subset W_1$ be the complement of those hyperplanes.

The ``in particular'' statement follows, because if $i\neq j$ then every $h\in W'_i$ separates $\phi(V_i)$ from $\phi(V_i+e_i)$ and every $h'\in W'_j$ separates $\phi(V_j)$ from $\phi(V_j+e_j)$, and hence each of the four points $\phi(\bar 0), \phi(e_i),\phi(e_j),\phi(e_i+e_j)$ lies in a different intersection of the halfspaces defined by $h$ and $h'$. 

\medskip

$2)\quad$ Let $c=(a_1,\dots,a_n)$. We can write $c$ as a sequence of medians as follows (\emph{cf.} \cite[Lem.~2.1]{petytzalloum:constructing}):
\[
c \,=\, \mu\big(e_n,c,\mu(e_{n-1},c,\mu(\dots,\mu(e_2,c,e_1)\dots\big).
\]
This expression involves only $n-1$ medians, so the fact that $\phi$ is $m$--quasimedian implies that $\phi(c)$ lies at distance at most $(n-1)m$ from a point $\beta$ obtained by taking medians similarly in $X$. By construction, the point $\beta$ lies in $\Hull\{\phi(\bar 0),\phi(e_1),\dots,\phi(e_n)\}$. Any hyperplane that separates $\phi(\bar 0)$ from $\phi(c)$ but does not separate any two elements of $\{\phi(\bar 0),\phi(e_1),\dots,\phi(e_n)\}$ must separate $\phi(c)$ from $\Hull\{\phi(\bar 0),\phi(e_1),\dots,\phi(e_n)\}$, and in particular from $\beta$. Hence there are at most $m(n-1)$ such hyperplanes. 

We have shown that there is a subset $W''\subset W$ with $|W''|\ge|W|-m(n-1)$ such that every element of $W''$ separates some pair of elements of $\{\phi(\bar0),\phi(e_1),\dots,\phi(e_n)\}$. By the first part of the lemma and symmetry of the cube, if $i\neq j$ then all but at most $2^nm$ hyperplanes that separate $\phi(e_i)$ from $\phi(e_j)$ also separate $\phi(\bar 0)$ from $\phi(e_j)$. Thus there exists $W'\subset W''$ with $|W'|\ge|W''|-{\binom n2}2^nm$ such that $W'\subset\bigcup_{j=1}^nW_j$, as desired.
\end{proof}

Before proving \cref{claim:convex_orthants}, we first control the images of singular geodesics.

\begin{lemma} \label{claim:convex_axes}
Let $X$ be a finite-dimensional CAT(0) cube complex of asymptotic rank $n$, and let $\mathcal{O}=\bigcup_{i\in I}O_i\subseteq \mathbb{R}^n$ be a convex union of orthants with the subspace median structure. Suppose that $\phi:\mathcal{O}\to X$ is a $q$--quasiisometric embedding that is $m$--quasimedian. There exists $D'=D'(m,q,n,X)\ge0$ such that if $\alpha\subset\mathcal{O}$ is a singular geodesic, then $\phi(\alpha)$ is $D'$--coarsely median-convex. 
\end{lemma}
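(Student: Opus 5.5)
Write $\alpha$ as a singular geodesic in direction $e_1$. It suffices to bound $\dist\big(\mu(\phi(a),x,\phi(b)),\phi(\alpha)\big)$ for $a,b\in\alpha$ and arbitrary $x\in X$. Since $\phi$ is $m$--quasimedian and $\alpha$ is median-convex in $\mathcal O$, the image $\phi([a,b])$ is coarsely a median path: for $a\le c\le b$ on $\alpha$ we have $\mu(\phi(a),\phi(c),\phi(b))$ within $m$ of $\phi(c)$. Hence, by \cref{obs:quasi_median_observation}, up to $O(m)$ errors the hyperplanes separating $\phi(a)$ from $\phi(b)$ are ordered along $\phi(\alpha)$. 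The point $p=\mu(\phi(a),x,\phi(b))$ lies in $[\phi(a),\phi(b)]$. Set $c$ to be the point of $[a,b]$ with $\dist_1(\phi(a),\phi(c))\approx\dist_1(\phi(a),p)$. The quantity $\dist(p,\phi(c))$ is then controlled by the number of hyperplanes separating $p$ from $\phi(c)$, which must come in two crossing families. One family separates $\phi(a)$ from $\phi(c)$ and the other separates $\phi(c)$ from $\phi(b)$, and every member of the first crosses every member of the second. The goal is to bound the size of such a crossing pair of families uniformly. This is where asymptotic rank enters.

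\textbf{Key step.} We argue by contradiction, passing to asymptotic cones. Suppose there are $a_k,b_k\in\alpha_k$, $x_k\in X$ with $\dist\big(\mu(\phi(a_k),x_k,\phi(b_k)),\phi(\alpha_k)\big)=\lambda_k\to\infty$. Rescale by $\lambda_k$, basing at a point of $\phi(\alpha_k)$ nearest to the median. In the cone $\hat X$, the map $\phi$ induces a biLipschitz median-preserving map $\hat\phi:\hat{\mathcal O}\to\hat X$, since quasimedian becomes median after rescaling. Here $\hat{\mathcal O}$ is a convex union of orthants in $\R^n$, i.e.\ a median-convex subset containing $\hat\alpha$ (or a subray/segment of it). The limit median point $\hat p$ is at distance $1$ from $\hat\phi(\hat\alpha)$, and it is a median of two points on $\hat\phi(\hat\alpha)$ and some $\hat x$. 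But $\hat\phi(\hat\alpha)$ is the median-preserving image of a median-convex line. I would then use that $\hat{\mathcal O}$ contains a full $n$--dimensional orthant or flat near $\hat\alpha$, so that $\hat\phi$ gives a median-embedded $n$--box through $\hat\alpha$. If $\hat\phi(\hat\alpha)$ were not median-convex in $\hat X$, the extra direction from $\hat p$ would be transverse to that box, producing a median-embedded $\{0,1\}^{n+1}$. This contradicts \cref{lem:rank_cone_CCC}, which bounds the median rank of $\hat X$ by $n$.

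Concretely, \cref{lem:quasimedian_cube} (applied at finite scale to boxes $\prod[0,a_i]$ in $\mathcal O$ with first side along $\alpha$) gives pairwise crossing families $W'_1,\dots,W'_n$. The family of hyperplanes separating $p$ from $\phi(\alpha)$ crosses $W'_1$ and, one needs to show, all other $W'_j$ as well. That yields $n+1$ pairwise crossing large families, i.e.\ a box $[0,L]^{n+1}$ up to errors, which is impossible for large $L$ by \cref{lem:rank_cone_CCC}.

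\textbf{Main obstacle.} The hardest point is the case where $\alpha$ lies on the boundary of $\mathcal O$, or where $\mathcal O$ is thin in some directions. There one must ensure that the hyperplanes separating $p$ from $\phi(\alpha)$ genuinely cross the images of the transverse directions rather than coinciding with them. I would handle this by choosing the box on the side of $\alpha$ inside $\mathcal O$, using convexity of $\mathcal O$ so that a full $n$--orthant is based on $\alpha$. I would then show that a hyperplane in the extra family that failed to cross some $W'_j$ would be separated from $\phi(\alpha)$ in a way contradicting the quasiisometric lower bound on $\phi$. The dependence of $D'$ on $X$ enters only through the rank bound and $\dim X$.
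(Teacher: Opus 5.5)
Your reduction is correct, but the key step has a gap.

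\textbf{The cone version does not close as written.} If $\lambda_k=o(|a_k-b_k|)$, then rescaling by $\lambda_k$ about a point near $p_k$ loses $\phi(a_k)$, $\phi(b_k)$, $x_k$ and any box built on $[a_k,b_k]$. So $\hat p$ is not exhibited as a median of two points of $\hat\phi(\hat\alpha)$. Rescaling by $|a_k-b_k|$ instead keeps those points but collapses $p_k$ onto $\hat\phi(\hat\alpha)$. Moving $a_k,b_k$ to within $O(\lambda_k)$ of $c_k$ while keeping the median far away would need a finite-scale estimate of exactly the kind you are trying to prove.

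\textbf{In both versions the ``extra'' hyperplanes are misplaced.} Let $c'=\mu(\phi(a),\phi(c),\phi(b))$. Both $p$ and $c'$ lie in $[\phi(a),\phi(b)]$, so every hyperplane separating $p$ from $c'$ separates $\phi(a)$ from $\phi(b)$. (Also, no hyperplane separates $p$ from all of $\phi(\alpha)$.) Hence, if the box has first side exactly $[a,b]$, your two crossing families $A$ and $B$ lie inside $W_1$. They are not transverse to $W_1'$, and the count ``extra family plus $W_1',\dots,W_n'$'' is wrong. The step you flag as the main obstacle, which you propose to handle by a vague appeal to the quasiisometric lower bound, is in fact automatic.

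\textbf{The missing observation.} Since $A,B\subseteq W_1$, after discarding at most $2^nm$ hyperplanes they lie in $W_1'$. Part~1 of \cref{lem:quasimedian_cube} then makes every element of $A\cup B$ cross every element of $W_2',\dots,W_n'$. The $n+1$ pairwise crossing families are therefore $A,B,W_2',\dots,W_n'$.

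Two hyperplanes separating the same pair of points are either nested or crossing, so antichains have size at most $\dim X$. Dilworth's theorem then extracts from each family a chain of length at least its size divided by $\dim X$. These chains give $[0,R]^{n+1}\subset X$, contradicting \cref{lem:rank_cone_CCC}, once $\min(|A|,|B|)>R\dim X+2^nm$ and the transverse sides are long enough.

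\textbf{Your boundary worry is moot.} A median-convex subset of $\R^n$ is a product of intervals, so a convex union of $n$--orthants is a product of unbounded intervals. Hence a box with first side $[a,b]$ and arbitrarily long other sides fits in $\mathcal O$.

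\textbf{Comparison with the paper.} The repaired step is precisely the paper's core argument: two crossing chains in $W_1'$, plus chains in $W_2',\dots,W_n'$, give $[0,R]^{n+1}$. With it in place your proof is complete, and its ending is more elementary than the paper's. The paper deduces only that $[\phi(\bar0),\phi(e_1)]$ contains no $[0,R+2^nm]^2$. It then uses uniform hyperbolicity of such intervals \cite{hagen:weak} and the approximation of $\phi(\gamma)$ by a $d_1$--isometrically embedded subcomplex \cite{hagenpetyt:projection}. It also takes transverse sides of length $|\gamma|$, which forces a separate short-segment case. Your identity $|A|-|B|=d(\phi(a),c')-d(\phi(a),p)$, combined with your choice of $c$, gives $d(p,\phi(c))\le 2\min(|A|,|B|)+O(q+m)$ directly. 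This replaces both cited inputs, and the short-segment case disappears.
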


It is enough to prove the statement for every finite singular geodesic segment $\gamma:[a,b]\to \mathcal{O}$. We uniformly approximate $\phi(\gamma)$ by a connected, $d_1$-isometrically embedded subcomplex $L\subset[\phi(\gamma(a)),\phi(\gamma(b))]$. By the definition of $\mathcal{O}$, the geodesic $\gamma$ is an edge of an $n$--cube in $\cal O$. Using this and the asymptotic rank assumption, we show that $[\phi(\gamma(a)),\phi(\gamma(b))]$ is uniformly hyperbolic, and hence $L$ lies at uniformly finite Hausdorff distance from $[\phi(\gamma(a)),\phi(\gamma(b))]$.

\begin{proof}[Proof of \cref{claim:convex_axes}]
To show that $\phi(\alpha)$ is coarsely median-convex, it suffices to show that $\phi(\gamma)$ is uniformly coarsely median-convex for every finite subsegment $\gamma\subset\alpha$. 

By the definition of asymptotic rank, there exists $R>0$ such that $X$ does not contain an isometric image of $[0,R]^{n+1}$.
Let $R'=Rq\dim X+q^2+2^nmq$. If $\gamma$ has length at most $R'$, then the diameter of $\phi(\gamma)$ is bounded by $qR'+q$, and hence is $(qR'+q)$--coarsely median-convex. So suppose that $\gamma$ has length $C> R'$.

Up to a median isometry of $\mathbb{R}^n$ (rotations by $\frac{\pi}{2}$ or $\pi$ about an axis, reflections in the hyperplanes normal to axes, and translations), we can assume the endpoints of $\gamma$ are $\bar 0$ and $(C,0,\dots,0)$, and that $\mathcal{O}$ contains $[0,\infty)^n$.
For $j\in\{1,\dots,n\}$, let $e_j\in \mathbb{R}^n$ be the point whose $j^\mathrm{th}$ coordinate is $C$ and whose other coordinates are 0. Let $\gamma_j$ be the singular geodesic between $0$ and $e_j$. In particular, $\gamma=\gamma_1$.
The hull in $\mathcal{O}$ of $\bigcup_{j=1}^n\gamma_j$ is a cube of side-length $C$. 


Each $\gamma_j$ is closed under the median operation in $\mathcal{O}$, so $\phi(\gamma_j)$ is $q$--coarsely connected and $m$--coarsely closed under the median operation of $X$. Thus \cite[Prop.~2.8]{hagenpetyt:projection} (which follows from either \cite[Prop.~4.1]{bowditch:convex} or \cite[Prop.~4.1]{fioravanti:coarse}) shows that there exists $D=D(m,q,\dim X)$ such that $\phi(\gamma_j)$ lies within Hausdorff distance $D$ of a CAT(0) cubical subcomplex $L_j\subseteq X$. Note that $L_j$ is isometrically embedded with respect to the cubical metric $d_1$, but may not be with respect to CAT(0) metric. 

Next we bound the Hausdorff distance between $L_1$ and $[\phi(\bar0),\phi(e_1)]$. 
For each $j$, let $W_j$ denote the set of hyperplanes of $X$ that separate $\phi(\bar0)$ from $\phi(e_j)$, and let $W'_j\subset W_j$ be as in \cref{lem:quasimedian_cube}. In particular, $|W_j\ssm W'_j|\le2^nm$, and if $j_1\ne j_2$, then every element of $W'_{j_1}$ crosses every element of $W'_{j_2}$.

Suppose that $[\phi(\bar0),\phi(e_1)]$ contains an isometric copy of $[0,R+2^nm]^2$. In this case, $W'_1$ must contain two chains $c_0,c_1$ of hyperplanes in $X$ such that every element of $c_0$ crosses every element of $c_1$ and both have length at least $R$. (Recall that a chain of hyperplanes is a sequence $h_1,\dots,h_t$ such that $h_i$ separates $h_{i-1}$ from $h_{i+1}$ for all $i$.) For each $j>1$, we have that $|W_j|\ge\dist_1(\phi(\bar0),\phi(e_j))\ge\frac{R'}q-q$. Hence, if $j>1$, then $W'_j$ contains a chain $c_j$ of hyperplanes of length at least $\frac1{\dim X}(\frac{R'}q-q-2^nm)=R$. But now the chains $c_0,\dots,c_n$ provide a copy of $[0,R]^{n+1}$ inside $X$, contrary to our assumptions. Thus $[\phi(\bar0),\phi(e_1)]$ does not contain an isometric copy of $[0,R+2^nm]^2$.

From the proof of \cite[Lem.~7.14]{hagen:weak}, the fact that $[\phi(\bar0),\phi(e_1)]$ does not contain an isometric copy of $[0,R+2^nm]^2$ implies that it is $(R+2^nm)$--hyperbolic in the metric $\dist_1$. Every point of $[\phi(\bar0),\phi(e_1)]$ lies on a $\dist_1$-geodesic from $\phi(\bar0)$ to $\phi(e_1)$, and the fact that $L_1$ is isometrically embedded implies that it contains a $\dist_1$-geodesic from $\phi(\bar0)$ to $\phi(e_1)$. Hyperbolicity thus implies that the Hausdorff distance between $L_1$ and $[\phi(\bar0),\phi(e_1)]$ is at most $R+2^nm$. It follows that the Hausdorff distance between $\phi(\gamma_j)$ and $[\phi(\bar0),\phi(e_1)]$ is at most $D+R+2^nm$.

We can now complete the proof of the lemma. Given $a,b\in\phi(\gamma)$, there exist $a',b'\in[\phi(\bar0),\phi(e_1)]$ with $\dist(a,a'),\dist(b,b')\le D+R+2^nm$. Because $\mu$ is 1--Lipschitz in each factor, for every $x\in X$ we have $\dist\big(\mu(a,x,b),\mu(a',x,b')\big)\le2D+2R+2^{n+1}m$. But $\mu(a',x,b')\in[\phi(\bar0),\phi(e_1)]$, so we conclude that $\mu(a,x,b)$ lies at distance at most $3D+3R+3\cdot2^nm$ from $\phi(\gamma)$. We have shown that $\phi(\gamma)$ is $D'$--coarsely median-convex, where $D'=\max\{qR'+q,3D+3R+3\cdot2^nm\}$. 
\end{proof}

Note that in the above proof we did not strictly need $\cal O$ to be a union of orthants, only that every point is the corner of a large $n$--cube with side-length depending on the space $X$. 

We now bootstrap \cref{claim:convex_axes} to get the same conclusion for median-convex subsets of $\mathcal{O}$.

\begin{proposition} \label{claim:convex_orthants}
Let $X$ be a finite-dimensional CAT(0) cube complex of asymptotic rank $n$, and let $\mathcal{O}=\bigcup_{i\in I}O_i\subseteq \mathbb{R}^n$ be a convex union of orthants with the subspace median structure. Suppose that $\phi:\mathcal{O}\to X$ is a $q$--quasiisometric embedding that is $m$--quasimedian. There exists $D=D(m,q,n,X)>0$ such that if $A\subset\mathcal{O}$ is median-convex, then $\phi(A)$ is $D$--coarsely median-convex.
\end{proposition}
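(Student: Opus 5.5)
The plan is to bootstrap from \cref{claim:convex_axes}, using the fact that a median-convex subset of $\mathcal{O}\subseteq\R^n$ is built from singular geodesic segments by a bounded number of median operations. Concretely, a median-convex subset $A$ of $\R^n$ (with the $\ell^1$ median) is a product of intervals intersected with $\mathcal O$. Since $\mathcal{O}$ is a convex union of orthants, $A$ is itself a box-like convex subset of the standard cubulation. Given $a,b\in A$, the interval $[a,b]\subset A$ is a box. So it suffices to show, uniformly in $a,b\in\mathcal O$, that $\phi([a,b])$ is coarsely median-convex in $X$ and lies uniformly close to $\Hull\{\phi(a),\phi(b)\}$. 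Indeed, for $x\in X$ the point $\mu(\phi(a),x,\phi(b))$ lies in $\Hull\{\phi(a),\phi(b)\}$, and we will want this hull to be coarsely contained in $\phi([a,b])\subset\phi(A)$.

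The key steps are as follows. First, I join $a$ to $b$ by a staircase of at most $n$ singular segments $\gamma_1,\dots,\gamma_n$ with corners $a=c_0,c_1,\dots,c_n=b$, one segment along each coordinate direction. By \cref{claim:convex_axes}, each $\phi(\gamma_j)$ is $D'$--coarsely median-convex. Combined with \cref{lem:coarsely_convex_coarsely_convex}, this means each $\phi(\gamma_j)$ is uniformly Hausdorff-close to the interval $[\phi(c_{j-1}),\phi(c_j)]$.

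Second, I apply the Box lemma (\cref{lem:quasimedian_cube}) to the box $[a,b]$. This shows that, up to $O(2^n m n^2)$ exceptions, the hyperplanes separating $\phi(a)$ from $\phi(b)$ split into families $W'_1,\dots,W'_n$. Each $W'_j$ coincides, up to boundedly many hyperplanes, with the hyperplanes crossing $\phi(\gamma_j)$, and hyperplanes in distinct families pairwise cross. This crossing structure identifies $\Hull\{\phi(a),\phi(b)\}$, up to bounded error, with a product-like region of the form $\prod_j[\phi(c_{j-1}),\phi(c_j)]$. More precisely, every point $y$ of the hull is determined, up to bounded error, by which hyperplanes of each $W'_j$ it lies beyond.

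Third, given such a $y$, I take points $p_j\in\gamma_j$ whose images realise the corresponding partial sets of hyperplanes; these exist by the first step. I then set $z\in[a,b]$ to be the point whose $j$th coordinate is that of $p_j$. Using the quasimedian property along the median expression for $z$ in terms of the $p_j$ and the corners, as in part 2 of the Box lemma (at most $n-1$ medians), I get that $\phi(z)$ lies within a uniform distance of $y$. This gives $\Hull\{\phi(a),\phi(b)\}\subset\phi([a,b])^{+D}$, and hence coarse median-convexity of $\phi(A)$ with $D$ depending only on $m,q,n,X$.

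I expect the third step to be the main obstacle. The difficulty is controlling the error uniformly: the hyperplane families are only approximately well behaved, with boundedly many exceptions, so one must check that the counting errors do not grow with the size of the box. I would handle this by expressing $z$ as an iterated median $\mu(p_n,b',\mu(p_{n-1},\dots))$ of boundedly many points, so that only $O(n)$ applications of the $m$--quasimedian inequality occur. The Box lemma exceptions then contribute an additive constant independent of $d(a,b)$.
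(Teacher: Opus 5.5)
Your proposal is correct and has the same skeleton as the paper's proof. Both reduce to showing $[\phi(a),\phi(b)]\subseteq\phi([a,b])^{+D}$. Both use \cref{claim:convex_axes} on $n$ coordinate segments spanning the box to approximate medians of a target point of that interval, assemble the corresponding point of $[a,b]$, and count hyperplanes with \cref{lem:quasimedian_cube}.

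You differ in the configuration of segments and in how the final comparison is made.
\begin{itemize}
\item \emph{The paper's route.} It normalises $a=\bar0$ and uses the star of segments $\gamma_j=[\bar0,e_j]$ rather than a staircase. It sets $x_j=\mu(\phi(\bar0),x,\phi(e_j))$, picks $x'_j\in\gamma_j$ with $\phi(x'_j)$ $D'$--close to $x_j$, and puts $x'=\sum_jx'_j$. It then uses the identity $\mu(\bar0,e_j,x')=x'_j$ in the \emph{projection} direction. If $y$ is the gate of $\phi(x')$ in $[\phi(\bar0),\phi(b)]$ (so $\dist(y,\phi(x'))\le m$), then $\mu(\phi(\bar0),\phi(e_j),y)$ is $(D'+2m)$--close to $x_j$. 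Every hyperplane of $W_j$ separating $x$ from $y$ must separate $x_j$ from that point, and part~2 of the Box lemma handles the remaining hyperplanes. This costs one quasimedian estimate per coordinate and needs neither the pairwise crossing of the families $W'_j$ nor a product description of the hull.
\item \emph{Your route.} You work in the \emph{assembly} direction: $z$ is an $(n-1)$--fold iterated median of the $p_j$ and corners, for instance $z^{(j)}=\mu(p_j,z^{(j-1)},b_je_j)$ after normalising $a=\bar0$. To compare with $y$, you need a majority-vote check on each $h\in W'_j$ showing the corresponding iterated median in $X$ lies on the same side as $y$. This does go through, but it is more bookkeeping.
\item \emph{Extra checks forced by the staircase.} You must choose $p_j$ near $\mu(\phi(c_{j-1}),y,\phi(c_j))$; ``realise the partial sets'' should be made precise this way. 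You must also verify that $\phi(c_{j-1})$ lies on the $\phi(a)$--side of every $h\in W'_j$. This holds because $c_{j-1}\in V_j$, but it uses the full strength of part~1 of the Box lemma.
\end{itemize}

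Your staircase admits the same shortcut as the paper, since $\mu(c_{j-1},z,c_j)=p_j$. Using that identity directly removes the error-accumulation issue you flagged as the main obstacle.
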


\begin{proof}
We must show that if we take $x\in X$ and $\phi(a),\phi(c)\in\phi(A)$, then $\mu(\phi(a),x,\phi(c))$ lies uniformly close to $\phi(A)$. Note that since $\mu\big(\phi(a),\,\mu(\phi(a),x,\phi(c)),\,\phi(c)\big)=\mu(\phi(a),x,\phi(c))$, there is no loss in assuming that $x$ lies in the cubical hull $[\phi(a),\phi(c)]$. 

Let $a,c\in A$ and let $x\in [\phi(a),\phi(c)]$. Up to a median isometry of $\R^n$ we can assume $a=\bar 0$ and $c$ is in the positive orthant of $\mathbb{R}^n$. Let $e_1,\dots,e_n$ be the projections of $c$ to the different coordinate axes and let $\gamma_j$ be the singular geodesic from $\bar 0$ to $e_j$.

By \cref{claim:convex_axes}, there exists $D'=D'(m,q,n,X)$ such that each $\phi(\gamma_j)$ is $D'$--coarsely median-convex. Thus, if we let $x_j=\mu(\phi(\bar 0),x,\phi(e_j))$, then there is some $x'_j\in\gamma_j$ such that $\dist(\phi(x'_j),x_j)\leq D'$. Let $x'=\sum_{j=1}^n x_j'\in[\bar 0,c]$. We shall bound $\dist(x,\phi(x'))$.

Since $\phi$ is $m$-quasimedian and $\mu(x',\bar0, c)=x'$, the projection of $\phi(x')$ to $[\phi(\bar0),\phi(c)]$ is at distance at most $m$ from $\phi(x')$. Let $y$ be that projection. It is enough to bound $\dist(x,y)$. Equivalently, it is enough to bound the number of hyperplanes that separate $x$ from $y$.

Since $x,y\in [\phi(\bar 0),\phi(c)]$, all hyperplanes of $X$ that separate $x$ from $y$ also separate $\phi(\bar0)$ from $\phi(c)$. 
By the second statement of \cref{lem:quasimedian_cube}, all but at most $m(n-1)+{\binom n2}2^nm$ hyperplanes separating $\phi(\bar 0)$ from $\phi(c)$ also separate $\phi(\bar0)$ from some $\phi(e_j)$. Thus it is enough to bound for each $j$ the number of hyperplanes that separate $\phi(\bar 0)$ from $\phi(e_j)$ and also $x$ from $y$.

No such hyperplane can separate $x$ from $x_j$, since, by definition, $x_j=\mu(x,\phi(\bar 0),\phi(e_j))$. Thus any such hyperplane must separate $x_j$ from $y$ and either $\phi(\bar 0)$ or $\phi(e_j)$. It therefore separates $x_j$ from $\mu(\phi(\bar0),\phi(e_j),y)$, so the number of such hyperplanes is bounded above by the distance from $x_j$ to $\mu(\phi(\bar0),\phi(e_j),y)$.

Because $\dist(y,\phi(x'))\le m$, the fact that $\mu$ is 1--Lipschitz in each factor implies that $\mu(\phi(\bar0),\phi(e_j),y)$ is $m$--close to $\mu(\phi(\bar0),\phi(e_j),\phi(x'))$. The map $\phi$ is $m$--quasimedian, so the latter point is $m$--close to $\phi\big(\mu(\bar0,e_j,x')\big)=\phi(x'_j)$. By the choice of $x'_j$, this shows that the distance from $x_j$ to $\mu(\phi(\bar0),\phi(e_j),y)$ is at most $D'+2m$.

In total, we have shown that 
\[
\dist(x,\phi(x')) \,\le\, m+\dist(x,y) \,\le\, m+m(n-1)+{\binom n2}2^nm+n(D'+2m).
\]
This proves the proposition with $D=3mn+{\binom n2}2^nm+nD'$.
\end{proof}

\subsection{Quasilines, quasiorthants, and images of intersections of flats}

In this subsection we prove \cref{prop:subflat_image_union_orthants}, which controls the images of intersections of top-rank flats under quasiisometric embeddings. Along the way, we show Propositions~\ref{prop:lines} and~\ref{prop:quasiorthant}, which may be of independent interest. The latter says that any CAT(0) cube complex quasimedian quasiisometric to an orthant contains a coarsely dense orthant. The former states that a CAT(0) cube complex is a quasiline, then it contains a biinfinite CAT(0) geodesic. 

It is noteworthy that these statements do not require properness of the CAT(0) cube complex. Although \cref{lem:sphere_in_ccc_boundary} gives a way to find flats in proper CAT(0) spaces, its conclusion can fail without properness. Indeed, the following two examples describe complete CAT(0) spaces $X$ and $Y$ that are quasiisometric to lines but: $X$ has empty boundary; $Y$ has exactly two boundary points and they are at angle $\pi$, yet there is no geodesic line in $Y$.

\begin{example} \label{eg:no_lines}
For an integer $n\ge0$, let $K_n$ denote the infinite simplex whose vertices are indexed by $\{n,n+1,\dots\}$, equipped with the natural CAT(0) metric. We construct a CAT(0) space $X$ as follows. For each integer $n\ge0$, take a copy $K'_n$ of $K_n$. For each $n>0$ we attach a prism $K_n\times[0,1]$ by gluing one of its ends to $K'_n$ and the other to $K'_{n-1}$ in the obvious way. This gives a CAT(0) space $X'$. Let $X$ be obtained from $X'$ by gluing two copies of $X'$ along their $K'_0$ simplices and taking the metric completion.

The CAT(0) space $X$ is a quasiline, because every unit ball disconnects the complement into two components. However one can see that no unbounded path $\gamma$ diverging from $x\in X$ can be a geodesic. Indeed, we can assume that $\gamma$ is contained in a single $X'$ and passes monotonically through the $K'_m$. Let $n$ be such that $x\in K'_n$. By considering the support in $\mathbb N$ of the coordinates of $x$ in $K'_n$, we see that $\gamma$ must eventually ``turn corners'', when it passes into $K'_m$ with $m-1$ in that support. 
\end{example}

\begin{example} \label{eg:no_lines_2}
The construction of $Y$ is similar to \cref{eg:no_lines}. Let $Y_1$ be constructed like $X'$, but if $n>0$ then $K'_n$ is the infinite simplex whose vertices are indexed by the complement of $\{2,4,\dots,2n\}$. Let $Y_2$ be constructed like $X'$, but if $n>0$ then $K'_n$ is indexed by the complement of $\{1,3,\dots,2n-1\}$. We then set $Y$ to be the space obtained by gluing $Y_1$ and $Y_2$ along their $K'_0$ simplices.

The boundary of $Y$ consists of two points. Indeed, let $x\in K'_0$ be the vertex indexed by 1. There is a geodesic ray through the copy of that vertex in each $K'_n\subseteq Y_1$. Similarly, if $x'\in K'_0$ is the vertex indexed by 2, then there is a geodesic ray through the copy of that vertex in each $K'_n\subseteq Y_2$. As a quasiline, $Y$ has at most two boundary points. But there is no geodesic line between the two boundary points of $Y$ for the same reason that $X$ has no geodesic rays.
\end{example}

The following shows that the degeneracies of the above examples do not occur in finite-dimensional CAT(0) cube complexes.

\begin{proposition} \label{prop:lines}
If $X$ is a finite-dimensional CAT(0) cube complex quasiisometric to a line, then $X$ contains a biinfinite CAT(0) geodesic.

Similarly, if $X$ is a finite-dimensional CAT(0) cube complex quasiisometric to $[0,\infty)\subset\R$, then $X$ contains a geodesic ray.
\end{proposition}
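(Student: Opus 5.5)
We use the cubical structure to produce a compactness substitute for properness, and then pass to a limit of geodesic segments.

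Let $X$ be a finite-dimensional CAT(0) cube complex, $\dim X=N$, with a quasiisometry $g:\R\to X$. Fix vertices $x_k$ close to $g(-k)$ and $y_k$ close to $g(k)$, and consider the cubical intervals $[x_k,y_k]=\Hull\{x_k,y_k\}$. The first step is to show that these intervals are uniformly locally finite and uniformly thin.

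\emph{Local finiteness.} Each interval is a finite convex subcomplex, since only $\dist_1(x_k,y_k)$ hyperplanes cross it.

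\emph{Thinness.} Since $X$ is a quasiline, it has asymptotic rank one. By \cref{lem:rank_cone_CCC}, there is an $R$ such that $X$ contains no copy of $[0,R]^2$. Hence each interval is $R$--hyperbolic for $\dist_1$, by the proof of \cite[Lem.~7.14]{hagen:weak} as used in \cref{claim:convex_axes}. Combining this with Dilworth's theorem, the hyperplanes crossing $[x_k,y_k]$ are partitioned into at most $N$ chains. Together with the absence of $[0,R]^2$, this should bound the number of vertices of $[x_k,y_k]$ within any ball of fixed radius. Concretely, every vertex of the interval lies within uniformly bounded $\dist_1$--distance of a fixed $\dist_1$--geodesic, which in turn lies near $g(\R)$.

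The key step is to take the CAT(0) geodesics $\sigma_k=[x_k,y_k]$, which lie in the convex subcomplex $[x_k,y_k]$, and extract a convergent subsequence. Fix a basepoint vertex $o$ near $g(0)$. Each $\sigma_k$ passes within bounded distance $C$ of $o$, by thinness together with the quasiisometry $g$. Consider the union $U_r$ over all $k$ of the vertices of $[x_k,y_k]$ lying in $B(o,r)$. This set should be finite.

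Proving finiteness of $U_r$ is the main obstacle, since $X$ itself need not be locally finite. My plan is as follows. Any vertex $v\in[x_k,y_k]$ with $\dist(v,o)\le r$ is determined by the finitely many hyperplanes separating it from $o$. Each such hyperplane separates $x_k$ from $y_k$, and therefore meets a bounded neighbourhood of $g(\R)$ near $o$, with the bound depending on $r$. We then need finitely many candidate hyperplanes overall. For this, I would argue that the hyperplanes separating $g(-k)$ from $g(k)$ and meeting $B(o,r')$ all separate $g(-K)$ from $g(K)$ for some fixed $K=K(r')$. This follows from the quasiline structure: such a hyperplane separates far-left points from far-right points. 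Consequently, all these vertices lie in the finite complex $[x_K,y_K]$, up to the hyperplane choices. If this fails, I would instead replace $X$ by the convex hull of a coarsely dense $\dist_1$--geodesic and argue that this hull is locally finite.

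Granting finiteness of $U_r$, the cubes through which the $\sigma_k$ pass near $o$ form a finite set. By a diagonal Arzelà--Ascoli argument over the finitely many cubes in each $B(o,r)$, the geodesics $\sigma_k$, parametrised from their point closest to $o$, subconverge uniformly on compacta to a biinfinite geodesic. The ray case is identical, taking $\sigma_k=[o,y_k]$.
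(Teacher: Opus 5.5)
\textbf{The gap.} The compactness step you flag as the main obstacle, finiteness of $U_r$ and hence subconvergence of the $\sigma_k$, is not just unproved. It is false for an arbitrary choice of vertices $x_k,y_k$ near $g(\mp k)$, and the justifications you sketch are also false.

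Let $T$ be the star with centre $c$ and infinitely many unit edges to leaves $\ell_1,\ell_2,\dots$, and set $X=\R\times T$. This is a $2$--dimensional CAT(0) square complex, quasiisometric to $\R$ via $g(t)=(t,c)$. The vertices $x_k=(-k,\ell_k)$ and $y_k=(k,\ell_k)$ lie within distance $1$ of $g(\mp k)$. Their interval is $[x_k,y_k]=[-k,k]\times\{\ell_k\}$, so $\sigma_k$ is that segment. Each $\sigma_k$ passes within $1$ of $o=(0,c)$, but $d(\sigma_k(t),\sigma_{k'}(t))=2$ for $k\ne k'$. So no subsequence converges, and $U_1\ni(0,\ell_k)$ for every $k$.

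Your two supporting claims fail in this same space:
\begin{itemize}
\item The horizontal hyperplane through the midpoint of $[c,\ell_k]$ separates $(0,\ell_k)\in[x_k,y_k]$ from $o$, but does not separate $x_k$ from $y_k$. This can happen because $o\notin[x_k,y_k]$.
\item Perturb $g$ so that $g(k)=(k,\ell_{2k})$ and $g(-k)=(-k,\ell_{2k+1})$. Then the hyperplane through the midpoint of $[c,\ell_{2k}]$ meets $B(o,1)$ and separates $g(-k)$ from $g(k)$, but separates $g(-K)$ from $g(K)$ for no $K\ne k$. So no $K(r')$ as you describe exists.
\end{itemize}
Your Dilworth argument does show that each single interval is uniformly locally finite, which is correct. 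But Arzel\`a--Ascoli needs local finiteness of the union of the intervals, and you have no control over that. The fallback is circular: the hull of a biinfinite $d_1$--geodesic would indeed be locally finite by Dilworth, but producing such a geodesic is the same limiting problem.

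\textbf{The missing idea.} You need one fixed, locally finite convex subcomplex in which to run the limiting argument, instead of a union of varying intervals. The paper obtains it by Tychonoff compactness of the space of hyperplane orientations.
\begin{itemize}
\item Take limit points $\xi,\zeta$ of the orientations $(\phi_{g(n)})$ and $(\phi_{g(-n)})$. These are ultrafilters, i.e.\ points of the Roller boundary.
\item Pass to the convex subcomplex $X'$ whose hyperplanes are exactly those on which $\xi$ and $\zeta$ disagree.
\item Order this single family of hyperplanes by inclusion of $\xi$--halfspaces. Dilworth partitions it into at most $\dim X$ chains, which makes $X'$ locally finite and hence proper.
\item The quasiline structure produces a biinfinite chain inside this family, so $X'$ is unbounded in both directions, and Arzel\`a--Ascoli then applies.
\end{itemize}
In the example above this procedure automatically selects $X'=\R\times\{c\}$. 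That is exactly the consistent choice of endpoints your argument has no way of making.
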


\begin{proof}
Suppose that $X$ is a finite-dimensional CAT(0) cube complex quasiisometric to a line. Let $H$ denote the set of hyperplanes of $X$, and let $\cal O=\prod_{h\in H}\{h^-,h^+\}$ denote the set of all possible orientations of the hyperplanes, where $h^\pm$ are the two halfspaces of the hyperplane $h\in H$. That is, an element $\phi\in\cal O$ is a choice $\phi(h)\in\{h^-,h^+\}$ for each $h\in H$. By Tychonoff's theorem, $\cal O$ is compact. Recall that $\phi\in\cal O$ is called an \emph{ultrafilter} if $\phi(h_1)\cap\phi(h_2)\ne\varnothing$ for every $h_1,h_2\in H$. Each vertex $x\in X$ has a corresponding ultrafilter $\phi_x\in\cal O$, where for each hyperplane we select the halfspace containing $x$.  

The definition of the Tychonoff topology implies that the set of ultrafilters is closed. Indeed, for each $h_1,h_2\in H$, let $I(h_1,h_2)\subset\{h_1^\pm\}\times\{h_2^\pm\}$ be the set of pairs with nonempty intersection. It is a nonempty subset of a set of cardinality four, hence closed. Let $\pi_{h_1,h_2}:\cal O\to\{h_1^\pm\}\times\{h_2^\pm\}$ be the projection map. The set of ultrafilters is precisely the intersection of all sets $\pi_{h_1,h_2}^{-1}I(h_1,h_2)$, and hence is an intersection of closed sets.

Let $f:\R\to X$ be a quasiisometry, and let $q\ge1$ be a quasiisometry constant. By compactness of $\cal O$, the sequences $(\phi_{f(n)})$ and $(\phi_{f(-n)})$ have convergent subnets $x_\bullet$ and $z_\bullet$, respectively. Let $\xi$ and $\zeta$ be respective limit points. They are ultrafilters because the set of ultrafilters is closed. By the definition of subnets, $x_\bullet$ and $z_\bullet$ escape every bounded subset of $X$, so $\xi$ and $\zeta$ are ``at infinity'', in the sense that they do not define points of $X$. (They define points in the \emph{Roller boundary} of $X$.)

Let $H'\subset H$ be the set of all hyperplanes $h$ such that $\xi(h)\ne\zeta(h)$, and let $X'$ be the CAT(0) cube complex dual to $H'$. Identifying the hyperplanes of $X'$ with $H'\subset H$ gives a canonical isometric embedding $X'\to X$, whose image is the convex subcomplex $\bigcap_{h\in H\ssm H'}\xi(h)$. It therefore suffices to find a biinfinite CAT(0) geodesic inside $X'$.

We can give $H'$ a partial order by declaring $h<h'$ whenever $\xi(h)\subset\xi(h')$. An antichain for this partial order is a set of pairwise-crossing hyperplanes, which has cardinality at most $\dim X$. According to Dilworth's theorem, we can therefore partition $H'$ into at most $\dim X$ chains - see also \cite[P.355]{chatterjiruane:some}. This partition gives a $d_1$-isometric embedding $X'\to\R^{\dim X}$. In particular, $X'$ is proper.

Now we show that $H'$ contains a biinfinite chain of hyperplanes, or in other words a sequence $(h_i)_{i\in\Z}$ of hyperplanes such that $h_i$ separates $h_{i-1}$ from $h_{i+1}$ for all $i$. 

Because $X$ is a $q$--quasiline, there is a number $r=r(q)$ such that, for each $p\in X$, the complement of each $r$--ball $B(p,r)$ in $(X,d_1)$ contains exactly two unbounded connected components, which we denote $C_{p}^+$ and $C_{p}^-$. If $a,b,\in C_{p}^+$, then any path from $a$ to $b$ that leaves $C_{p}^+$ must enter $B(p,r)$, and hence every $d_1$-geodesic from $a$ to $b$ is contained in the $2r$--neighbourhood of $C_{p}^+$ because its first and last points in $B(p,r)$ are at distance at most $2r$.
This shows that $\dist(\mu(a,b,y),C_{p}^+)\le2r$ for all $y\in X$. According to \cref{lem:coarsely_convex_coarsely_convex}, the convex subcomplex $\Hull(C_{p}^+)$ is contained in the $r'$--neighbourhood of $C_{p}^+$, where $r'=2^{1+\dim X}r$. The same argument applies to $C_{p}^-$.

Because of this, if $\dist(p,p')>2r'$ then, up to a change of signs, the convex subcomplexes $\Hull(C_{p}^-)$ and $\Hull(C_{p'}^+)$ are disjoint. Moreover, $\Hull(C_{p}^-)\subset\Hull(C_{p'}^-)$ and $\Hull(C_{p'}^+)\subset\Hull(C_{p}^+)$. 

Because $f$ is a $q$--quasiisometry, if for $i\in\Z$ we let $m_i=(2qr'+q^2+1)i$, then we have that $\dist(f(m_i),f(m_{i+1}))>2r'$. In particular, up to changing signs, the convex sets $\Hull(C_{f(m_i)}^-)$ and $\Hull(C_{f(m_{i+1})}^+)$ are disjoint. Since each is an intersection of halfspaces (see \cref{rem:ccc_convexity}), there must exist a hyperplane $h_i\in H$ that separates them. Because $\Hull(C_{f(m_{i+2})}^+)\subset\Hull(C_{f(m_{i+1})}^+)$ and $\Hull(C_{f(m_{i-1})}^-)\subset\Hull(C_{f(m_i)}^-)$, the hyperplane $h_i$ separates $h_{i-1}$ from $h_{i+1}$ as desired. 

We have found a biinfinite chain $(h_i)\subset H$. To see that it is contained in $H'$, note that for each $i$, every sufficiently large $k\in\R$ satisfies $f(k)\in C_{f(m_{i+1})}^+$ and $f(-k)\in C_{f(m_i)}^-$. Thus $h_i$ separates every accumulation point of $(\phi_{f(n)})$ in $\cal O$ from every accumulation point of $(\phi_{f(-n)})$ in $\cal O$, and in particular $\xi(h_n)\ne\zeta(h_n)$.

We have shown that $X'$ is a proper CAT(0) cube complex that is a convex subcomplex of the quasiline $X$ and has a biinfinite chain $(h_n)$ of hyperplanes. We can take two diverging sequences in $X'$ and use properness to obtain a pair of geodesic rays $\gamma_1$ and $\gamma_2$ representing two points in $\partial_TX'\subset\partial_TX$. If $\eta_n$ denotes the geodesic in $X'$ from $\gamma_1(n)$ to $\gamma_2(n)$, then since $X'$ is proper, we can use Arzelà--Ascoli to show that the sequence $(\eta_n)$ converges to a biinfinite geodesic $\eta$. This proves the first statement. The proof of the second statement is similar. 
\end{proof}

We now show a similar statement for orthants under the additional assumption that the quasiisometry is quasimedian. This assumption is necessary: consider a sector in $\R^2$ of angle $\frac\pi4$. On the other hand it is a natural addition compared to \cref{prop:lines}, because every quasiisometric embedding between hyperbolic spaces is quasimedian \cite[Lem.~2.9]{petyt:mapping}. Note that we cannot conclude that the orthant in $X$ is singular; see \cref{eg:corner_to_corner}.

\begin{proposition} \label{prop:quasiorthant}
If $X$ is a finite-dimensional CAT(0) cube complex with an $m$--quasimedian $q$--quasiisometry $f:O\to X$ for some orthant $O=[0,\infty)^n$ equipped with the standard median then $X$ contains a coarsely dense orthant.
\end{proposition}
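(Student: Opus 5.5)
Induct on $n$, reducing to the one-dimensional case (the second statement of \cref{prop:lines}) and building the orthant as a product via parallel sets, in the spirit of \cref{prop:singular_quasi_flats}. The first task is to find, for each coordinate ray $\rho_j=\{te_j\,:\,t\ge0\}$ of $O$, a geodesic ray of $X$ uniformly Hausdorff-close to $f(\rho_j)$. Each $\rho_j$ is median-convex in $O$. Although $O$ is a single orthant rather than a convex union of $2^n$ orthants, every point of $O$ is still a corner of arbitrarily large $n$--cubes contained in $O$, which is all the proof of \cref{claim:convex_axes} actually uses (as noted after it). The proof of \cref{claim:convex_orthants} then applies verbatim, since the box lemma \cref{lem:quasimedian_cube} only uses cubes. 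Hence $f(A)$ is uniformly coarsely median-convex for every median-convex $A\subset O$. Here we take the asymptotic rank of $X$ to be $n$, which holds because $X$ is quasiisometric to $O$.

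By \cref{lem:coarsely_convex_coarsely_convex}, each $f(\rho_j)$ is then uniformly coarsely dense in the convex subcomplex $\Hull(f(\rho_j))$. That hull is a finite-dimensional CAT(0) cube complex quasiisometric to $[0,\infty)$, so \cref{prop:lines} gives a geodesic ray $r_j\subset\Hull(f(\rho_j))$. To see that $r_j$ is coarsely dense in the hull, use that a geodesic ray inside a quasi-ray must travel to infinity along it; if needed, first pass to a parallel ray as in \cref{lem:parallel_geod_in_convex}.

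For the inductive step, let $O'=\{x_n=0\}\cong[0,\infty)^{n-1}$. It is median-convex, so $f(O')$ is coarsely median-convex and coarsely dense in the convex subcomplex $Z=\Hull(f(O'))$. Composing $f|_{O'}$ with closest-point projection to $Z$ gives a quasimedian quasiisometry $O'\to Z$. By induction, $Z$ contains a coarsely dense $(n-1)$--orthant $P$. The same holds for each translate $O'+te_n$, which gives orthants $P_t$. Each $P_t$ lies at finite Hausdorff distance from $P$, so they are asymptotic: they share the Tits boundary $\partial_TP$, and their cone points move along a quasi-ray coarsely tracking $r_n$. I would then argue as in \cref{prop:singular_quasi_flats}. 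First replace $r_n$ by a ray $r'_n$ in a suitable convex set containing $P$, chosen so that $\partial_Tr'_n$ is at angle $\frac\pi2$ from $\partial_TP$ and $P\cup r'_n$ spans a product $P\times r'_n$, an $n$--orthant. Finally show that $f(O)$ lies in a bounded neighbourhood of $P\times r'_n$, and conclude coarse density using coarse surjectivity of $f$ and volume growth (cf.~\cref{lem:quasiisom_quasi_surj}).

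The main obstacle is this last gluing step. Unlike flats, orthants do not have parallel sets that split off the full orthant. The cleaner route I would try first is to work with the parallel set of the $(n-1)$--flat obtained by extending $P$. That flat need not exist, however, so instead I would build the orthant hyperplane-theoretically, as in the end of the proof of \cref{lem:orthants_are_weyl_cones}. Choose $r'_n$ and $P$ with a common cone point minimising $d_1$. Then, using \cref{lem:quasimedian_cube}, show that all but boundedly many hyperplanes crossed by $r'_n$ cross all hyperplanes crossed by $P$. After discarding a bounded set, this produces the orthant as the cubical product spanned by these crossing families.
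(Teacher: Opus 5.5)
Your preliminary reductions are the same as the paper's: coarse median-convexity of $f(A)$ for median-convex $A\subset O$ (a single orthant is already a convex union of orthants, so \cref{claim:convex_orthants} applies as stated), density in hulls via \cref{lem:coarsely_convex_coarsely_convex}, rays from \cref{prop:lines}, and induction over faces. The gap is the step that actually builds the $n$--orthant. That step is the real content of the proposition, and you leave it as an acknowledged obstacle. You assert that $r'_n$ can be chosen at angle $\frac\pi2$ from $\partial_TP$ and spanning a product, but you give no mechanism for this, and the hyperplane sketch does not work as stated. The end of the proof of \cref{lem:orthants_are_weyl_cones} uses that the rays are singular and lie in the $1$--skeleton. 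There the crossed hyperplanes form chains, one can minimise $d_1$ over vertex base points of parallel copies, and pairwise crossing of two chains gives an orthant subcomplex. Your $P$ and $r'_n$ are only CAT(0) orthants and rays, and they cannot be assumed singular (\cref{eg:corner_to_corner}), so none of this is available. Nor does ``discarding a bounded set'' of hyperplanes by itself produce a subray and a suborthant with a common cone point. Moreover, \cref{lem:quasimedian_cube} controls exceptional hyperplanes only box by box, and the exceptional sets may change as the box grows. So it does not directly give a bounded exceptional set for an infinite ray and orthant. Finally, the inclusion $f(O)\subset(P\times r'_n)^{+K}$ is not addressed. Along your route via the $P_t$ it needs all translates controlled with one constant, which your inductive hypothesis (``contains a coarsely dense orthant'', with no uniformity) does not supply.

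What the paper uses here is an orthogonality estimate read off directly from the quasimedian property. For $x,y\in O'$ with $0\in[x,y]$ and $p\in\rho_n$ one has $\mu(x,p,y)=0$, so $\mu(f(x),f(p),f(y))$ is $m$--close to $f(0)$. Since $f(O')$ is $2^{\dim X}D$--dense in $Z=\Hull(f(O'))$ (with $D$ from \cref{claim:convex_orthants}), the projection of $f(\rho_n)$, and hence of $r_n$, to $Z$ stays uniformly close to $f(0)$. The paper runs the induction with constants depending only on $D,k,q,m,X$, uniformly over all parallels of faces, so all the $P_t$ are controlled at once. It then finds a uniformly dense suborthant $P'\subset P$ contained in the projection of every $P_t$, places a parallel $r'_n$ of $r_n$ with trivial $P'$--component, and takes the orthant $P'\times r'_n$, whose coarse density follows from the uniform constants. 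Your caution about parallel sets of orthants is fair. The splitting in \cref{def:parallel_set} is safe for flats, but parallelism of rays need not even be transitive: glue two copies of the upper half-plane along the quadrant $[0,\infty)^2$, and the translates of the diagonal ray by $s(-1,1)/\sqrt2$ in the two sheets are both parallel to it but not to each other. The paper does split the parallel set of $P'$ as $P'\times Y$ at this step, so that point needs care in any version. But however the product is assembled, the uniform orthogonality estimate and uniform inductive constants are what your proposal is missing.
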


\begin{proof}
Asymptotic rank is preserved by quasiisometry, so $X$ has asymptotic rank equal to $n$. \cref{claim:convex_orthants} therefore provides a constant $D=D(m,q,n,X)$ such that $f(Z)$ is $D$--coarsely median-convex for every convex subcomplex $Z\subset O$. \cref{lem:coarsely_convex_coarsely_convex} then tells us that $f(Z)$ is $2^{\dim X}D$--coarsely dense in $\Hull(f(Z))$.

Let $r_1,\dots,r_n$ denote the axis-rays of $O$, each of which is median-convex. We shall prove by induction on $k$ that if $A$ is parallel to a suborthant of $O$ spanned by $k$ elements of $\{r_1,\dots,r_n\}$, then $\Hull(f(A))$ contains a $D_k$--coarsely dense CAT(0) $k$--orthant, for some constant $D_k$ depending only on $D$, $k$, $q$, $m$, and $X$. The idea is similar to \cref{prop:singular_quasi_flats}. 

For $k=1$, we have that $\Hull(f(A))$ is a finite-dimensional CAT(0) cube complex quasiisometric to $[0,\infty)$, so the second statement of \cref{prop:lines} tells us that it contains a CAT(0) geodesic ray, which is $D_1$--coarsely dense for some $D_1$ depending only on $D$, $q$, and $X$.

Next, let $k>1$ and suppose that we have established all cases up to $k-1$. We shall think of $O$ as a subset of $\R^n$, allowing us to use vector notation. Let $A$ be a parallel of the suborthant of $O$ spanned by $k$ elements of $\{r_1,\dots,r_n\}$. After relabelling, we can assume that $A$ is spanned by parallels of the rays $r_1,\dots,r_k$. Let $a$ be the cone point of $A$. Let $B_0\subset A$ be the $(k-1)$--suborthant spanned by $a+r_1,\dots,a+r_{k-1}$, and let $B_1$ be a parallel of $B_0$ inside $A$. The cone point of $B_0$ is $a\in A$. Let $c$ denote the cone point of $B_1$. We have $c\in a+r_k$.

The projection of $a+r_k$ to $B_0$ is precisely the cone point $a\in B_0$. This can be characterised by saying that for every $x,y\in B_0$ such that $a\in[x,y]$ and for every $p\in a+r_k$, we have $\mu(x,p,y)=a$. Since $f$ is $m$--quasimedian, it follows that $\mu(f(x),f(p),f(y))$ lies at distance at most $m$ from $f(a)$. But $f(B_0)$ is $2^{\dim X}D$--coarsely dense in $\Hull(f(B_0))$ and the median operation is 1--Lipschitz, so the projection of $f(p)$ to $\Hull(f(B_0))$ lies at distance at most $m+2^{1+\dim X}D$ from $f(a)$, for all $p\in a+r_k$. This applies to $c\in a+r_k$ in particular.

Now, for $i\in\{0,1\}$ we have that $\Hull(f(B_i))$ contains a $D_{k-1}$--coarsely dense $(k-1)$--orthant $Q_i$, by the inductive assumption. The cone point $a'$ of $Q_0$ lies at distance at most $D_{k-1}$ from $f(a)$. The cone point $c'$ of $Q_1$ lies at distance at most $D_{k-1}$ from $f(c)$, and hence its projection to $\Hull(f(B_0))$ lies at distance at most $D_{k-1}+m+2^{1+\dim X}D$ from $f(a)$ by the previous paragraph. 

Because $f$ is a quasiisometry, the orthants $Q_0$ and $Q_1$ are at finite Hausdorff distance, and therefore they are almost parallel, in the sense that the projection from one to the other is coarsely onto. More precisely, the fact that the projection to $\Hull(f(B_0))$ of $c'$ lies at distance at most $2D_{k-1}+m+2^{1+\dim X}D$ from $a'$, regardless of the choice of parallel $B_1$, and the fact that $Q_0$ is $D_{k-1}$--coarsely dense in $\Hull(f(B_0))$ imply that there is a uniformly coarsely dense suborthant $Q'_0\subset Q_0$ such that, for every possible choice of $B_1$, the projection of $Q_1$ to $Q_0$ contains $Q'_0$. Note that $Q'_0$ is uniformly coarsely dense in $\Hull(f(B_0))$, where the constant depends only on $D$, $k$, $q$, $m$, and $X$.

Consider the parallel set $P(Q'_0)\subset\Hull(f(A))$ of $Q'_0$, which splits metrically as $Q'_0\times Y$ for some CAT(0) space $Y$. We shall prove that $P(Q'_0)$ contains a $k$--orthant $Q$. We know by induction that $\Hull(f(a+r_k))\subset\Hull(f(A))$ contains a geodesic ray $\gamma$. We also saw that the projection of $f(a+r_k)$ to $\Hull(f(B_0))$ has diameter at most $2m+2^{2+\dim X}D$, so since $f(a+r_k)$ is $2^{\dim X}D$--coarsely dense in $\Hull(f(a+r_k))$, it follows that the diameter of the projection of $\gamma$ to $\Hull(f(B_0))$ is uniformly bounded. Now consider the projection of $\gamma$ to $P(Q'_0)$. It lies at bounded Hausdorff distance from $\gamma$, and hence $\gamma$ has a parallel geodesic $\gamma'\subset P(Q'_0)$ at bounded Hausdorff distance by \cref{lem:parallel_geod_in_convex}. As a geodesic ray in a product, $\gamma'$ must have trivial projection to $Q'_0$, because it is parallel to $\gamma$, whose projection to $\Hull(f(B_0))\supset Q'_0$ is bounded. We let $Q=Q'_0\times\gamma'$.

It remains to show that $Q$ is uniformly coarsely dense in $\Hull(f(A))$, with the constant depending only on $D$, $k$, $q$, $m$, and $X$. For this, it suffices to show that $f(A)$ lies in a uniform neighbourhood of $Q$, because $f(A)$ is $2^{\dim X}D$--coarsely dense in $\Hull(f(A))$. Each point in $A$ can be written as $(x,p)$, where $x\in B_0$ and $p\in a+r_k$. We know that $\gamma$ is $D_1$--coarsely dense in $\Hull(f(a+r_k))$, and also that $Q'_0$ is uniformly coarsely dense in $\Hull(f(B_0))$. In particular, $\gamma$ is at uniformly bounded Hausdorff distance from $\gamma'$. Thus there exists a point of $Q$ that is uniformly close to $f(x,p)$. This completes the proof.
\end{proof}

The next lemma is a coarsification of \cref{lem:intersection_weyl_cones_is_cone}.

\begin{lemma} \label{lem:intersect_quasiorthants}
Let $X$ be a finite-dimensional CAT(0) cube complex of asymptotic rank $n$. Suppose that for all $i\in\{1,\dots,m\}$ we have quasimedian quasiisometric embeddings $f_i:O_i\to X$, where $O_i$ is an $n$--orthant equipped with the standard median. 

There is a constant $s_0$ such that if $s\ge s_0$, then $\bigcap_{i=1}^mf_i(O_i)^{+s}$ is at finite Hausdorff distance from the coarsely median-convex image of a quasimedian quasiisometric embedding of a $k$--orthant for some $k\in\{0,\dots,n\}$.
\end{lemma}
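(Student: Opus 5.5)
We argue by induction on $m$, so the heart of the matter is the case $m=2$, with the first input allowed to be a quasimedian quasiisometric embedding of a $k$--orthant rather than an $n$--orthant. The overall strategy is to replace each image $f_i(O_i)$ by an honest convex subcomplex, intersect those using the Helly property, and then recognise the intersection as a quasiorthant.

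The first step makes each image convex. By \cref{claim:convex_orthants}, applied with $\mathcal O=O_i$ and $A=O_i$, each $f_i(O_i)$ is $D$--coarsely median-convex. \cref{lem:coarsely_convex_coarsely_convex} then shows that $f_i(O_i)$ is $2^{\dim X}D$--coarsely dense in the convex subcomplex $C_i=\Hull(f_i(O_i))$. Consequently $\bigcap_i f_i(O_i)^{+s}$ is Hausdorff-close to $\bigcap_i C_i^{+s'}$. Arguing as in \cref{prop:coarse_intersection_subcomplexes_ccx}, but replacing $C_i^{+s'}$ by the convex $d_\infty$--neighbourhood of $C_i$ (which is again a convex subcomplex by \cref{rem:d_infty}), this is Hausdorff-close to a genuine intersection $C=\bigcap_i C_i(s')$ of convex subcomplexes. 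Once $s\ge s_0$ is large enough that this intersection is nonempty whenever the coarse intersection is, the Helly property makes $C$ uniformly Hausdorff-close to the coarse intersection. If $C$ is bounded we are in the case $k=0$.

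The second step identifies $C$ with a quasiorthant. We pull $C$ back to $O_1$ via a coarse inverse of $f_1$ composed with the median projection $\pi_{C}$. The set $A=\{x\in O_1 : f_1(x)\in C^{+t}\}$ is coarsely median-convex in $O_1$: if $x,y\in A$ and $z\in O_1$, then $f_1(\mu(x,z,y))$ is $m$--close to $\mu(f_1x,f_1z,f_1y)$, which lies near $C$ by convexity of $C$. A coarsely median-convex subset of the standard orthant $[0,\infty)^n$ is Hausdorff-close to a median-convex subset, namely a box product of intervals and rays. Being unbounded, it is Hausdorff-close to a parallel of a suborthant spanned by $k$ of the axis rays. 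Restricting $f_1$ to this $k$--suborthant gives a quasimedian quasiisometric embedding of a $k$--orthant whose image is Hausdorff-close to $C$. By \cref{claim:convex_orthants} applied to the median-convex subset $A\subset O_1$, that image is coarsely median-convex.

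The main obstacle is making the pullback argument quantitative. One has to check that $A$ is uniformly coarsely median-convex, and that coarsely convex subsets of $[0,\infty)^n$ are close to products of intervals. For the latter we would apply the hull-by-iterated-medians argument of \cref{lem:coarsely_convex_coarsely_convex} inside $O_1$ itself. The induction on $m$ then feeds the resulting $k$--quasiorthant back in as the new first factor. This requires redoing the pullback with a $k$--orthant domain, which works verbatim because median-convex subsets of $[0,\infty)^k$ are again products of intervals. The resulting constants depend on the maps, which is permitted since only finite Hausdorff distance is claimed.
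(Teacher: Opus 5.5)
Your proposal is correct and follows the paper's argument. Each $f_i(O_i)$ is coarsely median-convex by \cref{claim:convex_orthants} and \cref{lem:coarsely_convex_coarsely_convex}. The Helly property (\cref{prop:coarse_intersection_subcomplexes_ccx}) then replaces the coarse intersection by a genuine intersection of convex subcomplexes; the paper uses $\Hull(f_i(O_i)^{+s_1})$, which avoids needing convexity of $d_\infty$--neighbourhoods of convex subcomplexes, a fact \cref{rem:d_infty} only states for balls. Finally one pulls back into $O_1$, where a coarsely median-convex set is close to a convex subcomplex and hence to a parallel of a $k$--suborthant $O$.

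Two small corrections. The induction on $m$ is superfluous, since your first step already intersects all $m$ hulls at once, exactly as the paper does. And at the end, \cref{claim:convex_orthants} should be applied to the median-convex suborthant $O$, not to $A$, which is only coarsely median-convex.
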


\begin{proof}
Let $s_1$ be sufficiently large that $\bigcap_{i=1}^mf_i(O_i)^{+s_1}$ is nonempty. By \cref{claim:convex_orthants}, every $f_i(O_i)$ is coarsely median-convex, and hence so is every $f_i(O_i)^{+s_1}$, because the median is 1--Lipschitz in each factor. Hence, by \cref{lem:coarsely_convex_coarsely_convex}, there exists $D$ such that each $f_i(O_i)^{+s_1}$ lies at Hausdorff distance at most $D$ from $\Hull(f_i(O_i)^{+s_1})$. In particular, there exists $s_0>s_1$ such that if $s\ge s_0$, then $f_i(O_i)^{+s}$ contains $\Hull(f_i(O_i)^{+s_1})$, for each $i$. 

Let $Q=\bigcap_{i=1}^m\Hull(f_i(O_i)^{+s_1})$, which is nonempty. Because $\Hull(f_i(O_i)^{+s_1})$ is convex, \cref{prop:coarse_intersection_subcomplexes_ccx} tells us that $\bigcap_{i=1}^m\Hull(f_i(O_i)^{+s_1})^{+r}\subset Q^{+r\sqrt{\dim X}}$ for all $r$. In particular, for every $s\ge s_0$ we have 
\[
Q \,\subset\, \bigcap_{i=1}^mf_i(O_i)^{+s} \,\subset\, Q^{+s\sqrt{\dim X}}.
\]
That is, $\bigcap_{i=1}^sf_i(O_i)^{+s}$ lies at finite Hausdorff distance from $Q$.

Let $g$ be a quasiinverse of $f_1$, which is necessarily quasimedian. Every $x\in Q$ lies at distance at most $D+s_1$ from $f_1(O_1)$, so if $\pi:Q\to f_1(O_1)$ denotes a closest-point projection, then $g\pi$ is a quasimedian quasiisometric embedding. As $Q$ is convex, its intersection with $\Hull(f_1(O_1)^{+s_1})$ is convex. Since $f_1(O_1)$ is $(D+s_1)$--coarsely dense in $\Hull(f_1(O_1)^{+s_1})$, it follows that $g\pi(Q)$ is coarsely median-convex inside the orthant $O_1$. By \cref{lem:coarsely_convex_coarsely_convex}, $g\pi(Q)$ lies at finite Hausdorff distance from its hull. Being a convex subcomplex of an orthant, $\Hull(g\pi(Q))$ lies at finite Hausdorff distance from a suborthant $O\subset O_1$, of dimension $k\in\{0,\dots,n\}$. 

We have shown that $\bigcap_{i=1}^mf_i(O_i)^{+s}$ lies at finite Hausdorff distance from $Q$, which lies at finite Hausdorff distance from $f_1g\pi(Q)$, and hence from $f_1(O)$. The fact that $f_1(O)$ has coarsely median-convex image is given by \cref{claim:convex_orthants}.
\end{proof}

We need one more technical statement about the ultralimits of flats in CAT(0) spaces in asymptotic cones. Recall that a flat is always finite-dimensional in this paper. 

\begin{lemma} \label{lem:intersection_flats_cone}
Let $X$ be a CAT(0) space. Let $F_1,\dots,F_m\subset X$ be flats such that $H=\bigcap_{i=1}^mF_i$ is a flat. If $\hat X$ is an asymptotic cone of $X$ for which the ultralimit $\hat H$ exists, then $\hat H=\bigcap_{i=1}^m\hat F_i$, where $\hat F_i$ is the ultralimit of $F_i$.
\end{lemma}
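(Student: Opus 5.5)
The inclusion $\hat H\subseteq\bigcap_i\hat F_i$ is immediate. A point of $\hat H$ is represented by a sequence $(h_n)$ with $h_n\in H\subseteq F_i$ for every $i$, so it lies in every ultralimit $\hat F_i$. The work is in the reverse inclusion. There I would use a coarse-intersection estimate in $X$ whose constant is \emph{linear} in the coarseness parameter, so that it survives rescaling by $\lambda_n$.

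\textbf{Step 1: reduce to an estimate in $X$.} Let $\hat x\in\bigcap_i\hat F_i$, represented by sequences $(x^i_n)$ with $x^i_n\in F_i$, one for each $i$. Set $x_n=x^1_n$ and $\delta_n=\max_i d(x_n,x^i_n)$. Then $\lim_\omega\delta_n/\lambda_n=0$ and $x_n\in\bigcap_iF_i^{+\delta_n}$. It therefore suffices to prove the following claim: there is a constant $L$, depending only on the flats $F_1,\dots,F_m$ and on the base data, such that
\[
\textstyle\bigcap_iF_i^{+r}\cap B(p,R)\subseteq H^{+L(r+1)}
\]
for all $r\ge0$, uniformly in the relevant balls. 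Applying it to $x_n$ gives $d(x_n,H)\le L(\delta_n+1)$, hence $d(x_n,H)/\lambda_n\to_\omega0$, so $\hat x\in\hat H$. The existence of $\hat H$ guarantees that $H$ meets $B(o_n,C\lambda_n)$ for some $C$, which is what lets us localise near points of $H$.

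\textbf{Step 2: prove the linear estimate.} I would argue inside a single flat. Put $C_r=F_1\cap\bigcap_{i\ge2}F_i^{+r}$. Each $F_i^{+r}$ is convex, so $C_r$ is a closed convex subset of $F_1\cong\R^k$, and $C_0=H\cap F_1=H$ is an affine subspace.

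The function $g(y)=\max_{i\ge2}d(y,F_i)$ restricted to $F_1$ is convex and vanishes exactly on $H$. I would show it grows at least linearly in the distance to $H$, i.e. $g(y)\ge c\,d(y,H)$ for $y\in F_1$ with some $c>0$; then $C_r\subseteq H^{+r/c}$. The lower bound should come from the following facts:
\begin{itemize}
\item each $d(\cdot,F_i)|_{F_1}$ is convex;
\item it is invariant under translations of $F_1$ along $H$, since for $y\in F_1$ and $h\in H$ the distance to $F_i$ is controlled along directions parallel to $H$ by a flat-strip/sandwich argument;
\item it vanishes exactly on $F_1\cap F_i$.
\end{itemize}
The first two reduce the question to a convex function on the finite-dimensional orthogonal complement $H^\perp\subset F_1$. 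Such a function vanishes only at the origin, so it has positive minimum $c$ on the unit sphere of $H^\perp$, and convexity then gives linear growth. To pass from points of $\bigcap_iF_i^{+r}$ to points of $F_1$, project to $F_1$, which costs $r$. The same $c$ then serves for every $r$, giving $L=1+1/c$.

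\textbf{Main obstacle.} The obstacle is the translation-invariance along $H$ of $d(\cdot,F_i)$ on $F_1$, or more generally compactness of the relevant sphere, since without it $c$ could degenerate at infinity. Here $X$ is not proper, so compactness must come from the flats themselves. If invariance fails, I would instead use the Tits boundary: directions in $\partial_TF_1$ escaping from $H$ with sublinear growth of $g$ would give boundary points in $\bigcap_i\partial_TF_i\setminus\partial_TH$. I would rule those out by noting that convex sets $F_1\cap F_i^{+r}$ in $\R^k$ have recession cones independent of $r$, and that the recession cone of $C_r$ equals that of $C_0=H$. That equality bounds $C_r$ within a finite neighbourhood of $H$, and a scaling argument in $\R^k$ then upgrades this to the linear bound.
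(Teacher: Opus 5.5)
Your main argument is correct, but it takes a different route from the paper's.

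The paper argues by contradiction at infinity. From a point of $\bigcap_i\hat F_i\setminus\hat H$ it takes nearest points $x_{n,i}\in F_i$ and their projections to $H$, and builds rays in each $F_i$ orthogonal to $H$. Compactness of $\partial_TF_i$ and a diagonal subsequence then give a common limit point $\xi\in\bigcap_i\partial_TF_i$ at angle $\frac\pi2$ from $\partial_TH$. The ray from a point of $H$ to $\xi$ lies in every $F_i$, hence in $H$, which is a contradiction. This is essentially your recession-cone fallback.

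You instead work at finite scale inside the single flat $F_1$. Using compactness of the unit sphere of $H^\perp\subset F_1$, you prove the coarse estimate $\bigcap_iF_i^{+r}\subseteq H^{+L(r+1)}$ for all $r\ge0$, with $L$ depending only on the configuration $F_1,\dots,F_m$. This is strictly more than the lemma. It yields \cref{lem:image_of_intersection_of_flats_top_rank} directly, without an asymptotic cone: if $y=f(x_i)$ with $x_i\in F_i$, then $x_1\in\bigcap_iF_i^{+q^2}$, so $d(y,f(H))\le qL(q^2+1)+q$. It also sidesteps the paper's step of identifying limit directions that live in different flats $F_i$. The paper's version is shorter and stays within the boundary-at-infinity viewpoint used elsewhere.

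A few small repairs:
\begin{itemize}
\item The step you call the main obstacle is immediate. For $y\in F_1$ and $u$ parallel to $H$, the function $t\mapsto d(y+tu,F_i)$ is convex and bounded above by $d(y,H)$, because the parallel line through the projection of $y$ to $H$ lies in $H\subseteq F_i$. A convex function on $\R$ that is bounded above is constant, so translation invariance holds and no fallback is needed.
\item Let $c>0$ be the minimum of $g$ over points at distance $1$ from $H$. Convexity gives $g(y)\ge c\,d(y,H)$ only when $d(y,H)\ge1$; for $d(y,H)<1$ it only bounds $g$ from above. So the correct containment is $C_r\subseteq H^{+\max\{1,r/c\}}$.
\item Projecting a point of $\bigcap_iF_i^{+r}$ to $F_1$ lands in $C_{2r}$, not $C_r$. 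Together with the previous point, this only changes $L$ to something like $1+2/c$. In Step~1 you can skip the projection entirely, since $x_n=x^1_n\in F_1$ already gives $x_n\in C_{\delta_n}$.
\item The localisation to balls $B(p,R)$ is unnecessary, because the estimate is global.
\end{itemize}
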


\begin{proof}
Let $(\lambda_n)$ be the scaling sequence and $(o_n)$ be the sequence of basepoints associated with an asymptotic cone $\hat X$ in which $\hat H$ exists. Up to an isometry of asymptotic cones, we can assume that $o_n\in H$ for all $n$, because $\hat H$ exists. 

We clearly have $\hat H\subset\bigcap_{i=1}^m\hat F_i$. Assume towards a contradiction that there exists a point $x\in\bigcap_{i=1}^m\hat F_i$ that is not in $\hat H$. Let $(x_n)$ be a sequence in $X$ that represents $x$. For each $i$, let $x_{n,i}$ be the closest point in $F_i$ to $x_n$. Since $x\in\bigcap_{i=1}^mF_i$, for every $\eps>0$ there is a subset $N_\eps\in\omega$ such that $\dist(x_n,x_{n,i})\le\eps\lambda_n$ for all $n\in N$ and all $i$.

On the other hand, the fact that $x\notin\hat H$ implies that for every positive $\eps<\frac12\dist(x,\hat H)$ there is a subset $M_\eps\in\omega$ such that
\[
\dist(x_n,H)\,\in\,\big((\dist(x,\hat H)-\eps)\lambda_n,\,(\dist(x,\hat H)+\eps)\lambda_n\big) 
\]
for all $n\in M_\eps$.

For each $i$, let $p_{n,i}$ be the closest-point projection of $x_{n,i}$ to the subflat $H\subset F_i$. Given a positive number $\eps<\frac14\dist(x,\hat H)$, if $n\in N_\eps\cap M_\eps\in\omega$, then both of the above distance estimates hold. Since closest-point projection is 1--Lipschitz, it follows from the triangle inequality that
\[
\dist(x_{n,i},x_{n,j}) \,\le\, 2\eps\lambda_n \quad \text{and} \quad 
    \dist(x_{n_i},p_{n,i}) \,\ge\, (\dist(x,\hat H)-2\eps)\lambda_n
\]
for all $n\in N_\eps\cap M_\eps$ and all $i,j$.

For each $i$, consider the sequence of geodesic rays $\gamma_{n,i}$ based at $p_{n,i}$ and passing through $x_{n,i}$. Since $p_{n,i}\in H\subset F_i$ is the closest-point projection of $x_{n,i}\in F_i$ to $H$, the geodesic $\gamma_{n,i}$ is contained in $F_i$ and the point of $\partial_TF_i$ that it defines is at angle $\frac\pi2$ from $\partial_TH$. Since $\partial_TF_i$ is compact, after passing to a subsequence the endpoints of the $\gamma_{n,i}$ converge to a point $\xi_i\in\partial_TF_i$ at angle $\frac\pi2$ from $\partial_TH$.

We shall derive a contradiction by showing that $\xi_i\in\partial_TH$. For this, consider the above distance estimates for each term of the sequence $\eps_k=\frac1k$. By passing to a diagonal subsequence $(n_k)$, we can ensure that $\dist(x_{{n_k},i},x_{{n_k},j})\le\frac1k\lambda_{n_k}$ for all $k$. But now, for this subsequence we have that $\dist(x_{n_k,i},x_{n_k,j})$ grows sublinearly compared to $\dist(x_{n_k,i},p_{n_k},i)$. Consequently we have $\xi_i=\xi_j$ for all $i,j$. In other words, $\xi_1\in\bigcap_{i=1}^m\partial_TF_i$. 

But now let $p\in H$ and consider the geodesic ray $\gamma$ based at $p$ that represents $\xi_1$. Since $H\subset F_i$ for all $i$, we have $\gamma\subset F_i$ for all $i$, so $\gamma\subset H$. But this shows that $\xi_1\in\partial_TH$, contradicting the fact that it is at angle $\frac\pi2$ from $\partial_TH$.
\end{proof}

The next lemma is a version of \cref{cor:distance_from_intersection} the works for flats even when they are not subcomplexes. 

\begin{lemma} \label{lem:image_of_intersection_of_flats_top_rank}
Let $X$ and $Y$ be CAT(0) spaces and let $f:X\to Y$ be a $q$--quasiisometric embedding. Let $F_1,\dots,F_m\subset X$ be flats. If $H=\bigcap_{i=1}^mF_i$ is a flat, then $f(H)$ is Hausdorff-close to $\bigcap_{i=1}^mf(F_i)$.
\end{lemma}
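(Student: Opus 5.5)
The inclusion $f(H)\subseteq\bigcap_{i=1}^mf(F_i)$ is immediate. The content of the lemma is a converse: for each $D\ge0$ there should be $T$ with $\bigcap_{i=1}^mf(F_i)^{+D}\subseteq f(H)^{+T}$. Mirroring the proof of \cref{cor:distance_from_intersection}, I reduce this to a statement in $X$. If $y\in\bigcap_if(F_i)^{+D}$, choose $x_i\in F_i$ with $\dist(y,f(x_i))\le D$. Then $\dist(x_i,x_j)\le 2qD+q^2=:r$, so $x_1\in\bigcap_iF_i^{+r}$. It therefore suffices to prove the following coarse Helly-type statement for flats in a CAT(0) space: for every $r$ there exists $R$ with $\bigcap_iF_i^{+r}\subseteq H^{+R}$. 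Given this, $f(x_1)\in f(H)^{+qR+q}$ and $y\in f(H)^{+qR+q+D}$.

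I will argue this statement by contradiction. The cubical Helly argument is unavailable here, so I will use the Tits boundary in the same way as the proof of \cref{lem:intersection_flats_cone}. Suppose there are points $x_n\in\bigcap_iF_i^{+r}$ with $\dist(x_n,H)\to\infty$. Let $x_{n,i}\in F_i$ be within $r$ of $x_n$, and let $p_{n,i}$ be the projection of $x_{n,i}$ to $H\subset F_i$. Since projections to $H$ are $1$--Lipschitz, $\dist(x_{n,i},p_{n,i})\to\infty$, while $\dist(x_{n,i},x_{n,j})\le 2r$ and $\dist(p_{n,i},p_{n,j})\le 2r$.

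The ray in $F_i$ from $p_{n,i}$ through $x_{n,i}$ lies in the orthogonal complement of $H$ inside $F_i$. Its endpoint in $\partial_TF_i$ is at angle $\frac\pi2$ from $\partial_TH$. Translating inside $F_i$, I may use the ray from $p_{n,1}$ in the same direction, which moves the point $x_{n,i}$ by a bounded amount. By compactness of the spheres $\partial_TF_i$, I pass to a subsequence along which these directions converge to $\xi_i\in\partial_TF_i$, still at angle $\frac\pi2$ from $\partial_TH$.

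The key step, which I expect to be the main obstacle, is showing $\xi_i=\xi_j$. The two rays start within $2r$ of each other and pass within $2r$ of each other at distance $t_n\to\infty$ from their basepoints. By convexity of the distance function, the initial segments of length $t_n$ then stay uniformly $4r$--close. In the limit this gives rays towards $\xi_i$ and $\xi_j$ from a common base point that remain at bounded distance forever. Care is needed here, because geodesic rays in each flat $F_i$ converge uniformly on compacta when their directions converge, but the basepoints $p_{n,1}$ vary. I handle this by translating inside each $F_i$ so that everything is based at a fixed point $p\in H$. Translation along $H$ is available in every $F_i$ simultaneously, since the $F_i$ contain $H$, and it preserves the distances involved up to bounded error by the flat-strip structure of $H\subset F_i$.

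With $\xi=\xi_1=\dots=\xi_m$, the ray from $p\in H$ to $\xi$ lies in every $F_i$ by convexity and uniqueness of rays in flats. Hence it lies in $H$, so $\xi\in\partial_TH$, contradicting the angle $\frac\pi2$.
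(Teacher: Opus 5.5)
Your proof is correct, but it takes a different route from the paper's. The paper argues by contradiction in one step. It takes $x'_\ell\in\bigcap_if(F_i)$ with $\dist(x'_\ell,f(H))\ge\ell$ and lifts these to $x_{\ell,i}\in F_i$ with $\dist(x_{\ell,i},x_{\ell,j})\le q^2$. It then passes to the asymptotic cone of $X$ with basepoints $y_\ell\in H$ and scaling sequence $\dist(x_{\ell,1},y_\ell)$. There $(x_{\ell,1})$ is a point of $\bigcap_i\lim_\omega F_i$ that does not lie in $\lim_\omega H$, contradicting \cref{lem:intersection_flats_cone}.

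You instead reduce, as in \cref{cor:distance_from_intersection}, to the coarse Helly-type inclusion $\bigcap_iF_i^{+r}\subseteq H^{+R}$ in $X$. You then prove it directly by running the compactness-of-directions argument from the proof of \cref{lem:intersection_flats_cone} at the coarse level, with bounded rather than sublinear closeness and no ultrafilters. The paper's route is shorter because it reuses \cref{lem:intersection_flats_cone}, which it needs elsewhere anyway (e.g.\ \cref{lem:branching_singular}). Yours is self-contained. It also gives the thickened inclusion $\bigcap_if(F_i)^{+D}\subseteq f(H)^{+T}$ explicitly, which is the form actually invoked in the proof of \cref{prop:subflat_image_union_orthants}.

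The one step you should state precisely is the translation to a fixed basepoint $p\in H$. The translation lengths $\dist(p,p_{n,1})$ are unbounded, so an error that grows with them would break the argument; what you need is that the cross-distances are preserved exactly. Here is a clean justification. Let $a\in F_i$, $b\in F_j$, and let $v$ be a translation vector of $H$. Each of the lines $t\mapsto a+tv$ and $t\mapsto b+tv$ stays at constant distance from a line of $H$. Hence $t\mapsto\dist(a+tv,b+tv)$ is convex and bounded on $\R$, and therefore constant. Equivalently, translation in the $H$-factor of $P(H)=H\times Z$ is an isometry preserving every $F_i$. With this, the limiting rays from $p$ satisfy $\dist(\rho_i(s),\rho_j(s))\le C$ for all $s$, so they coincide and your contradiction goes through.

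This same base-point issue appears in the paper's proof of \cref{lem:intersection_flats_cone} at the step ``Consequently we have $\xi_i=\xi_j$''. Your translation argument is exactly what fills it in there too.
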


\begin{proof}
Obviously $f(H)\subseteq \bigcap_{i=1}^m f(H_i)$. Assume towards contradiction that they are not Hausdorff-close. That is, there is a sequence $(x'_\ell)\subset\bigcap_{i=1}^mf(F_i)$ such that $\dist(x'_\ell,f(H))\ge \ell$. For each $i$ there exists $x_{\ell,i}\in F_i$ such that $f(x_{\ell,i})=x'_\ell$. Since $f$ is a $q$--quasiisometric embedding we have $\dist(x_{\ell,i},x_{\ell,j})\le q^2$ for all $i,j$. In particular, $x_{\ell,j}\in F_i^{+q^2}$ for all $i,j$.

Let $y_\ell\in H$ be such that $f(y_\ell)$ is a closest point in $f(H)$ to $x'_\ell$, and consider the asymptotic cone $\hat X=\lim_\omega(X,(\dist(x_{\ell,1},y_\ell)),(y_\ell))$. For each $i$ we have $\lim_\omega(F_i)=\lim_\omega(F_i^{+q^2})$. Hence $(x_{\ell,1})_\omega\in\bigcap_{i=1}^m(\lim_\omega F_i)$. But by construction, $(x_{\ell,1})_\omega\notin\lim_\omega(H)$. This contradicts \cref{lem:intersection_flats_cone}. Thus, $f(H)$ and $\bigcap_{i=1}^mf(F_i)$ are Hausdorff-close.
\end{proof}


We can now prove \cref{prop:subflat_image_union_orthants}, which is an analogue of \cref{prop:intersection_orthants} for non-singular orthants. Note that \cref{prop:intersection_orthants} does not require $H$ or the $F_i$ to be flat, but requires their images to be at finite Hausdorff distance from a finite union of orthants, while \cref{prop:subflat_image_union_orthants} does not make assumptions about the images of the $F_i$.

\begin{proposition} \label{prop:subflat_image_union_orthants}
Let $X$ and $Y$ be finite-dimensional CAT(0) cube complexes of asymptotic rank $n$, and let $f:X\to Y$ be a $q$--quasiisometric embedding. If $F_1,\dots,F_m\subset X$ are $n$--flats and $H=\bigcap_{i=1}^mF_i$ is a $k$--flat, then $f(H)$ lies at finite Hausdorff distance from a finite union of $k$--orthants.
\end{proposition}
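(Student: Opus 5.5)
The plan is to combine Bowditch's quasiflats theorem with the coarse-intersection machinery of this section. Since $Y$ is a finite-dimensional CAT(0) cube complex of asymptotic rank $n$, each $f(F_i)$ is an $n$--quasiflat in $Y$. By Bowditch's quasiflats theorem \cite{bowditch:quasiflats} (the version used in \cref{lem:rank_cone_CCC}), I expect each $f(F_i)$ to lie at finite Hausdorff distance from a finite union $\bigcup_{j}\phi_{i,j}(O_{i,j})$. Here each $O_{i,j}$ is an $n$--orthant with its standard median, and each $\phi_{i,j}$ is a quasimedian quasiisometric embedding. So, after fixing $D$ large, $f(F_i)$ and $\bigcup_j\phi_{i,j}(O_{i,j})^{+D}$ are Hausdorff-close.

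First, by \cref{lem:image_of_intersection_of_flats_top_rank}, $f(H)$ is Hausdorff-close to $\bigcap_i f(F_i)$. Because the $F_i$ are flats and not subcomplexes, I will need the coarse version $\bigcap_i f(F_i)^{+D}$, as in the proof of \cref{prop:intersection_orthants}. Rerunning the asymptotic cone argument of \cref{lem:image_of_intersection_of_flats_top_rank} with $x_{\ell,i}$ only $D$--close to $f(F_i)$ gives $\dist(x_{\ell,i},x_{\ell,j})\le 2qD+q^2$. This still contradicts \cref{lem:intersection_flats_cone}, so $\bigcap_i f(F_i)^{+D}$ is Hausdorff-close to $f(H)$. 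Distributing intersections over unions then shows that $f(H)$ is Hausdorff-close to a finite union, over tuples $(j_1,\dots,j_m)$, of sets $\bigcap_i\phi_{i,j_i}(O_{i,j_i})^{+D'}$. By \cref{lem:intersect_quasiorthants}, after enlarging $D'$ beyond $s_0$, each such term is either empty, bounded, or Hausdorff-close to $g(P)$. Here $P$ is a $p$--orthant and $g$ is a quasimedian quasiisometric embedding with coarsely median-convex image. Discarding the empty and bounded terms, $f(H)$ is Hausdorff-close to a finite union of such quasiorthants $g_\ell(P_\ell)$.

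Next I convert the quasiorthants into genuine orthants. The set $\Hull(g_\ell(P_\ell))$ is a convex subcomplex. By \cref{lem:coarsely_convex_coarsely_convex}, it is at finite Hausdorff distance from $g_\ell(P_\ell)$, so $g_\ell$ composed with a closest-point map is an $m$--quasimedian quasiisometry from $P_\ell$ onto this finite-dimensional cube complex. \cref{prop:quasiorthant} then provides a coarsely dense CAT(0) $p_\ell$--orthant in $\Hull(g_\ell(P_\ell))$, hence an orthant of $Y$ Hausdorff-close to $g_\ell(P_\ell)$. So $f(H)$ is Hausdorff-close to a finite union of orthants of $Y$.

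Finally, I fix the dimensions with a volume-growth argument, as at the start of the proof of \cref{prop:boundary_quasi_flat_sphere}. Since $f(H)$ is a $k$--quasiflat, no orthant in the union can have dimension greater than $k$. Any orthant of dimension less than $k$ must be coarsely contained in the union of the others, since otherwise arbitrarily large $k$--balls would map near a lower-dimensional orthant. Dropping these orthants leaves a finite union of $k$--orthants. I expect the main obstacle to be the first step: checking carefully that the coarse intersection $\bigcap_i f(F_i)^{+D}$, and not merely the exact intersection, is Hausdorff-close to $f(H)$. I also need to check that the quasimedian hypotheses needed for \cref{lem:intersect_quasiorthants} and \cref{prop:quasiorthant} are exactly what Bowditch's theorem supplies.
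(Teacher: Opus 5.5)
Your proposal is correct and follows the paper's proof essentially step for step: Bowditch's quasiflats theorem for each $f(F_i)$, then \cref{lem:image_of_intersection_of_flats_top_rank} to compare $f(H)$ with the intersection, then distributing over tuples, and finally \cref{lem:intersect_quasiorthants}, \cref{lem:coarsely_convex_coarsely_convex} and \cref{prop:quasiorthant} to turn each term into a genuine orthant. Your two extra checks fill in points the paper leaves tacit: that the argument of \cref{lem:image_of_intersection_of_flats_top_rank} still works for the coarse intersection $\bigcap_i f(F_i)^{+D}$, which the paper's chain of inclusions uses implicitly, and the volume-growth step (dropping lower-dimensional orthants one at a time) that forces the surviving orthants to have dimension exactly $k$.
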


\begin{proof}
According to \cref{lem:rank_cone_CCC} and \cite[Rem.~3.3]{munropetyt:coarse}, $Y$ is a \emph{coarse median space of rank $n$}. By \cite[Thm~1.1]{bowditch:quasiflats}, for each $i$ there are finitely many $n$--orthants $O_{i,j}$, equipped with the standard median structure, and quasimedian quasiisometric embeddings $\phi_{i,j}:O_{i,j}\to Y$ such that $f(F_i)$ is Hausdorff-close to $\bigcup_j\phi_{i,j}(O_{i,j})$. Let $s_0\ge0$ be a constant given by \cref{lem:intersect_quasiorthants} for the $\phi_{i,j}$. 

By \cref{lem:image_of_intersection_of_flats_top_rank}, $f(H)$ is at finite Hausdorff distance from $\bigcap_{i=1}^mf(F_i)$, and so there exist constants $D_2\ge D_1\ge s_0$ such that
\begin{align}
f(H) \,\subset\, \bigcap_{i=1}^mf(F_i) 
	\,&\subset\, \bigcap_{i=1}^m\big(\bigcup_j\phi_{i,j}(O_{i,j})^{+D_1}\big) \nonumber \\
	&=\, \bigcup_{j_1}\cdots\bigcup_{j_m}\big(\phi_{1,j_1}(O_{1,j_1})^{+D_1}\cap\cdots\cap\phi_{m,j_m}(O_{m,j_m})^{+D_1}\big) \label{eq:big_union} \\
	&\subset\, \bigcap_{i=1}^mf(F_i)^{+2D_1} \,\subset\, f(H)^{+D_2}. \nonumber
\end{align}
According to \cref{lem:intersect_quasiorthants}, each of the terms in the union in \eqref{eq:big_union} is at finite Hausdorff distance from the coarsely median-convex image $I$ of a quasimedian quasiisometric embedding of a standard $k$--orthant for some $k\le n$. By \cref{lem:coarsely_convex_coarsely_convex}, $I$ is coarsely dense in $\Hull(I)$, and \cref{prop:quasiorthant} now tells us that $\Hull(I)$ contains a coarsely dense orthant.
\end{proof}

We conclude this section with the following theorem, which is of independent interest. It follows from Bowditch's quasiflats theorem and the results of this section.

\begin{theorem} \label{thm:huang_for_asymptotic_rank}
Let $Y$ be a finite-dimensional CAT(0) cube complex of asymptotic rank $n$. Each $n$--quasiflat in $Y$ is at finite Hausdorff distance from a finite union of semisingular $n$--orthants.
\end{theorem}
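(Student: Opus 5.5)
Fix an $n$--quasiflat $Q=g(\R^n)$ in $Y$, where $g:\R^n\to Y$ is a quasiisometric embedding. The plan is to combine Bowditch's quasiflats theorem with \cref{prop:quasiorthant}, and then use \cref{lem:top_rank_semisingular}-style reasoning to get semisingularity.

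\textbf{Step 1: Bowditch's theorem.} As in the proof of \cref{prop:subflat_image_union_orthants}, \cref{lem:rank_cone_CCC} together with \cite[Rem.~3.3]{munropetyt:coarse} shows that $Y$ is a coarse median space of rank $n$. Then \cite[Thm~1.1]{bowditch:quasiflats} provides finitely many standard $n$--orthants $O_j$ and quasimedian quasiisometric embeddings $\phi_j:O_j\to Y$ such that $Q$ is Hausdorff-close to $\bigcup_j\phi_j(O_j)$.

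\textbf{Step 2: genuine orthants.} For each $j$, \cref{claim:convex_orthants} (applied with $\mathcal O=O_j$) shows that $\phi_j(O_j)$ is coarsely median-convex. By \cref{lem:coarsely_convex_coarsely_convex} it is then coarsely dense in the convex subcomplex $Z_j=\Hull(\phi_j(O_j))$. The subcomplex $Z_j$ is itself a finite-dimensional CAT(0) cube complex. The map $\phi_j:O_j\to Z_j$ is a quasimedian quasiisometry, and the median of $Z_j$ is the restriction of the median of $Y$ by convexity. So \cref{prop:quasiorthant} gives a CAT(0) $n$--orthant $P_j\subset Z_j$ that is coarsely dense in $Z_j$. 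Hence $Q$ is Hausdorff-close to $\bigcup_jP_j$, a finite union of $n$--orthants of $Y$.

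\textbf{Step 3: semisingularity.} This is the main point. Take an asymptotic cone $\hat Y$ in which $\hat P_j$ exists. Since $P_j$ is Hausdorff-close to $\phi_j(O_j)$, we get $\hat P_j=\hat\phi_j(\hat O_j)$, where $\hat\phi_j$ is a biLipschitz embedding of the cone of $O_j$, which is again a standard orthant $[0,\infty)^n$. The quasimedian property of $\phi_j$ passes to the ultralimit, so $\hat\phi_j$ is an exact median homomorphism. It is also injective, so its image $\hat P_j$ is median-isomorphic to the standard orthant.

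The image is also median-convex in $\hat Y$: the coarse median-convexity from Step~2, with a uniform constant, becomes exact convexity after rescaling. A median-convex subset is $d_1$-isometrically embedded in the median metric space $\hat Y$.

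It remains to see that the induced $\ell^1$ structure on the CAT(0) orthant $\hat P_j$ agrees with the product structure coming from its Euclidean coordinates. The edge rays of $\hat P_j$ correspond to the axis rays of $O_j$. These axis rays are median-convex, so by \cref{claim:convex_axes} their images are singular geodesic rays in $\hat Y$. Arguing as in the final paragraph of the proof of \cref{prop:singular_quasi_flats}, via the product splitting of parallel sets, $\hat P_j$ is then spanned by singular rays and is therefore singular.

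Each $P_j$ is thus semisingular. I expect the main obstacle to be this identification of the axes of the CAT(0) orthant $\hat P_j$ with singular rays. If the direct argument fails, I would instead show directly that the edge rays of each $P_j$ are Hausdorff-close to the $\phi_j$-images of the axis rays, by rerunning the inductive construction in the proof of \cref{prop:quasiorthant}, which builds $P_j$ as a product of rays coarsely equal to images of axis rays.
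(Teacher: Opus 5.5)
Your proposal is correct and follows the paper's proof essentially step for step: Bowditch's quasiflats theorem, coarse median-convexity of each quasiorthant via \cref{claim:convex_orthants} and \cref{lem:coarsely_convex_coarsely_convex}, \cref{prop:quasiorthant} applied to each hull, and median-preservation in the asymptotic cone for semisingularity. The paper stops as soon as the limit orthant is a median-convex, median-preserving image of the standard orthant, which already makes it singular by the paper's characterisation of singular orthants as the median-convex ones, so the axis-alignment step you flag as the main obstacle is not needed.
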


\begin{proof}
Let $f:\R^n\to Y$ be a quasiflat. By \cite[Thm~1.2]{bowditch:quasiflats}, $f(\R^n)$ is at finite Hausdorff distance from $\bigcup_{i=1}^m P_i$, where each $P_i$ is the image of an orthant $[0,\infty)^n$ under a quasimedian quasiisometric embedding. By \cref{claim:convex_orthants}, each $P_i$ is coarsely median-convex and so is coarsely dense in $\Hull(P_i)$ by \cref{lem:coarsely_convex_coarsely_convex}. Hence, $\Hull(P_i)$ is a finite-dimensional CAT(0) cube complex with a quasimedian quasiisometry to $[0,\infty)^n$, for each $i$. By \cref{prop:quasiorthant}, each $\Hull(P_i)$ contains a coarsely dense orthant $O_i$. Let $Z=\bigcup O_i$. 

To see that $O_i$ is semisingular, consider an asymptotic cone $\hat Y$ of $Y$ for which the ultralimit $\hat O_i$ exists. The fact that $P_i$ is the image of a quasimedian map implies that the inclusion $\hat O_i\to\hat Y$ is median-preserving. Hence $\hat O_i$ is singular.
\end{proof}

\section{Coming back from the cone} \label{sec:back_from_cone}

In this section, we give three sets of conditions under which an $n$--quasiflat in a finite-dimensional CAT(0) cube complex lies Hausdorff-close to a flat. All three are phrased in terms of properties of the induced biLipschitz flat in the asymptotic cone.

The first two, \cref{prop:quasiflats-from-bilip-flats} and \cref{prop:quasiflat_from_biLipschitz:asymptotic_rank}, are variations of the following idea: for a CAT(0) cube complex $X$, if a $q$--quasiflat is an $n$--flat in some asymptotic cone, then it is uniformly Hausdorff-close to an $n$--flat. In \cref{prop:quasiflats-from-bilip-flats} we assume that $X$ is $n$--dimensional. In \cref{prop:quasiflat_from_biLipschitz:asymptotic_rank} we allow $X$ to only have asymptotic rank $n$, but we obtain constants that depend on the specific space $X$.

These two results essentially reduce the problem of showing that a quasiflat is Hausdorff close to a flat to the same problem for biLipschitz flats in the asymptotic cone. The latter will be considered in \cref{sec:flats_rigidity}.


The third main result of this section, \cref{thm:top_quasiflats_union_parallel_codim1_flats}, will not be used in this paper, but it fits the theme and will be needed in \cite{baderbensaidpetyt:quasiisometric:rigidity}. It assumes the existence of a codimension-1 subflat whose image is an $(n-1)$--flat in the asymptotic cone.

\subsection{Top-dimensional flats} \label{subsec:back_dim_n}

The proof of the following is based on \cite[Lem.~5.2]{huang:top}; we refer to \cite[\S3.1]{huang:top} for discussion of \emph{support sets} of quasiflats. To give the idea, let $X$ be a metric space that is $n$--dimensional in an appropriate sense, let $f:\R^n\to X$ be a quasiflat, and let $x\in f(\R^n)$. Roughly speaking, to determine whether $x$ is in the support set of $f$, one looks in a small neighbourhood $U$ of $x$ and considers how the sheets of $f^{-1}(U)$ map to $U$. If more map with one orientation than the opposite orientation, then $x$ is in the support set. More formally, the support set is defined by considering the \emph{proper homology} class obtained as the image of the fundamental class $[\R^n]$ under an induced map $f_*$ of proper homology.

\begin{proposition}\label{prop:quasiflats-from-bilip-flats}
For each $q$ there exists $D$ such that the following holds. Let $X$ be an $n$--dimensional CAT(0) cube complex, and let $f:\mathbb R^n \to X$ be a $q$--quasiisometric embedding. If the ultralimit $(f(\mathbb{R}^n))_{\omega}$ is an $n$--flat of $X_\omega$ for some asymptotic cone of $X$, then there exists an $n$--flat $F\subseteq X$ such that $\dist_{\mathrm{Haus}}(f(\mathbb{R}^n),F)\le D$.
\end{proposition}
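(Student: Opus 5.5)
We follow the outline of Step~3 in \cref{subsec:strat}, in the spirit of \cite[Lem.~5.2]{huang:top}. First apply Huang's quasiflats theorem \cite[Thm~1.1]{huang:top}. It says that $f(\R^n)$ lies at finite Hausdorff distance from a finite union $Q=\bigcup_{i=1}^N O_i$ of $n$--orthants; by \cref{lem:top_dim_singular} we may take these to be singular. More usefully, the proof in \cite{huang:top} gives a \emph{support set} $S=\operatorname{supp}(f_*[\R^n])$. This is a subcomplex at Hausdorff distance at most $D_0=D_0(q,n)$ from $f(\R^n)$, and it is a union of $n$--cubes that is a cycle in proper (locally finite) homology. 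It is also a coarse union of orthants with $\partial_TS=\partial_TQ$, and it satisfies the sharp upper density bound: for $x\in S$ and large $r$, $\mathcal H^n(S\cap B(x,r))\le C r^n$, where $C$ depends only on $q$ and $n$.

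Next, use the asymptotic cone hypothesis to pin down $\partial_TQ$. Take the cone with basepoint sequence in $f(\R^n)$, which we may move to the cone points of the orthants up to bounded error. The ultralimit of $Q$ is $\bigcup_i\hat O_i$, and this is isometric to $C(\partial_TQ)$ by \cref{lem:embedding-Titsboundary-into_link_and_Tits_boundary}. By hypothesis this set is an $n$--flat. Since $\Phi$ is an isometric embedding, $\partial_TQ$ is a round unit $(n-1)$--sphere. Moreover, each $\partial_TO_i$ is an all-right simplex, so $\partial_TQ$ is the boundary of the standard cubulation of $\R^n$, made of exactly $2^n$ simplices. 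In particular, the asymptotic Euclidean volume growth of $S$ equals that of $\R^n$: $\mathcal H^n(S\cap B(x,r))=\omega_nr^n+o(r^n)$.

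Then we show that $S$ is itself an $n$--flat. Pick a vertex $x\in S$ and consider the geodesic cone from $x$ to $\partial_TS$. Since $X$ is CAT(0) and $\partial_TS$ is a round sphere, the cone construction together with the monotonicity formula for minimizing cycles in CAT(0) spaces gives control of the density ratio. The ratio $\mathcal H^n(S\cap B(x,r))/(\omega_nr^n)$ is nondecreasing in $r$, it is at least $1$ near $x$ because $S$ is a cycle through $x$, and it tends to $1$ as $r\to\infty$ by the previous step. It is therefore identically $1$. The rigidity case of monotonicity then forces $S$ to be a cone over $\partial_TS$ from every one of its points, and hence a flat $F$. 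This is the step I expect to be the main obstacle. It needs the minimality of $S$ (supports of $f_*[\R^n]$ are area-minimizing in the $n$--dimensional complex, as in \cite{huang:top}), and it needs the equality case of monotonicity to be handled carefully.

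If the monotonicity route is awkward, the fallback is a purely cubical argument in the $n$--dimensional complex. The $2^n$ orthants can be chosen with a common cone point: consecutive boundary faces are parallel singular $(n-1)$--flats, and we can translate using \cref{lem:parallel_geod_in_convex}. Any discrepancy, i.e.\ extra cubes or orthants failing to meet, adds volume on the order of $r^{n-1}$ per unit of shift, and these contributions accumulate linearly to contradict the density bound. Finally, $d_{\mathrm{Haus}}(f(\R^n),F)\le D_0$, and \cref{lem:quasiisom_quasi_surj} converts this to a bound depending only on $q$ and $n$ (the latter being implicit in $X$).
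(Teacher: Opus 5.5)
Your skeleton is the paper's: Huang's quasiflat theorem, the cone hypothesis forcing $\partial_TQ$ to consist of exactly $2^n$ all-right simplices, replacing $f(\R^n)$ by its support set $S$ (at distance depending only on $q$ by \cite[Lem.~4.3]{bestvinakleinersageev:quasiflats}), and then a sharp volume bound plus a rigidity statement.

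The gap is the sentence ``In particular, $\mathcal H^n(S\cap B(x,r))=\omega_nr^n+o(r^n)$''. Nothing before it gives the upper half of this estimate. That $S$ is Hausdorff-close to $Q$, or that its ultralimit is a flat, constrains where $S$ lies, not its $n$--dimensional measure. The bound $\mathcal H^n(S\cap B(x,r))\le Cr^n$ with $C=C(q,n)$ from your first step is far from sharp. Yet this upper bound is the only input that makes your monotonicity step bite. Without it you know only that the density ratio is nondecreasing and at least $1$, which rules out nothing. Your cubical fallback presupposes the same sharp bound, so it does not get around the problem.

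The missing ingredient, and the heart of the paper's proof, is the cone property of support sets. By \cite[Lem.~3.1]{bestvinakleinersageev:quasiflats}, for each $x\in S$ we have $S\subset\log_xC(\partial_TS)$. Here $\log_x:C(\partial_TS)\to X$ sends $(\xi,t)$ to the point at distance $t$ along $[x,\xi)$, and it is $1$--Lipschitz. Since $\partial_TS=\partial_TQ$ is covered by $2^n$ all-right $(n-1)$--simplices, $\mathcal H^{n-1}(\partial_TS)\le\mathcal H^{n-1}(\mathbb S^{n-1})$. Therefore
\[
\mathcal H^n\big(S\cap B(x,r)\big)\,\le\,\mathcal H^n\big(B(o,r)\cap C(\partial_TS)\big)\,=\,\tfrac{r^n}{n}\,\mathcal H^{n-1}(\partial_TS)\,\le\,\omega_nr^n
\]
holds for every $x\in S$ and every $r>0$, not just asymptotically. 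The paper then concludes directly from \cite[Thm~3.10]{huang:top}, which asserts that a support set satisfying this bound is isometric to $\R^n$. Your monotonicity-and-equality argument is in effect a proof of that theorem. With the bound above in hand, you can cite it rather than treat it as the main obstacle.

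A minor point you share with the paper: moving the basepoints to the cone points is harmless only if they stay within $O(\lambda_m)$ of a fixed point. If the basepoints sit much deeper inside a single orthant, the ultralimit can be a flat even when $f(\R^n)$ is not close to any flat, e.g.\ for the twelve-orthant quasiflat of \cref{rem:non_uniform_stable_not_good_for_flats}. So the hypothesis must be read for cones with fixed basepoint, which is how it is used later.
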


\begin{proof}
By \cite[Thm~1.1]{huang:top}, there are $n$--dimensional orthants $O_1, \dots, O_p$ in $X$ such that 
\[
\dist_{\mathrm{Haus}}\Big(f(\mathbb{R}^n),\,\bigcup_{i=1}^p O_i\Big)< \infty.
\]
Since some ultralimit of $f(\R^n)$ is a flat, we must have $p=2^n$. Let $S$ be the support set of $f(\R^n)$. According to \cite[Lem.~4.3]{bestvinakleinersageev:quasiflats}, there is a constant $D$ depending only on $q$ such that $\dist_{\mathrm{Haus}}(S,f(\R^n))\le D$ (an alternative proof can be found in \cite[Lem.~7.3]{haettelhodapetyt:quasiflats}, using the fact that $X$ is an injective metric space when given the $\ell^\infty$ metric). It therefore suffices to show that $S$ is a flat. 

Since $p=2^n$, the boundary $\partial_TS$ is contained in a union of $2^n$ simplices of $\partial_TX$ of dimension $n-1$. Hence the Hausdorff measure $\HH^{n-1}(\partial_TS)$ is bounded above by the Hausdorff measure of the $(n-1)$--sphere $\mathbb S^{n-1}$. Fixing $x\in S$, there is a natural 1--Lipschitz map $\log_x$ from the euclidean cone $C(\partial_TS)$ over $\partial_TS$ to $X$, which sends the cone point $o$ to $x$. By \cite[Lem.~3.1]{bestvinakleinersageev:quasiflats}, we have $S\subset\log_xC(\partial_TS)$. We can therefore estimate Hausdorff measures of balls as follows:
\[
\frac{\HH^n(B(x,r)\cap S)}{r^n} \,\le\, \frac{\HH^n(B(x,r)\cap\log_xC(\partial_TS))}{r^n} 
    \,\le\, \frac{\HH^n(B(o,r)\cap C(\partial_TS))}{r^n}.
\]
Since $\HH^{n-1}(\partial_TS)\le\HH^{n-1}(\mathbb S^{n-1})$, it follows that $\frac{\HH^n(B(x,r)\cap S)}{r^n}$ is bounded above by the volume of the unit ball in $\R^n$. According to \cite[Thm~3.10]{huang:top}, this implies that $S$ is isometric to $\R^n$, completing the proof.
\end{proof}

\begin{remark} \label{rem:bad_orthants}
Note that it is not necessarily true that, in the notation of \cref{prop:quasiflats-from-bilip-flats}, if $A\subseteq\bbR^n$ is one of the $2^n$ orthants whose cone point is the origin and $(f(A))_\omega$ is an $n$--orthant in the $n$--flat $(f(\bbR^n))_\omega$ for all asymptotic cones, then $f(A)$ lies at finite Hausdorff distance from an $n$--orthant in $F$. For example, consider the map $f:\R^2\to\R^2$ defined with polar coordinates as follows:
\[
f(r,\theta) \,=\, 
\begin{cases}
(0,0) & r\le10; \\
(r,\theta(1+\frac1r)) & r>10,\,\theta\in[0,\frac\pi2]; \\
(r,\frac\pi r+\theta(1-\frac1r)) & r>10,\theta\in[\frac\pi2,\pi] \\
(r,\theta) & \text{otherwise}.
\end{cases}
\]
It is a quasiisometry that expands the first orthant by a sublinear amount, so on the level of asymptotic cones all orthants map to orthants, even though neither the first nor second orthant is mapped Hausdorff-close to an orthant by $f$.
\end{remark}

\subsection{Top-rank flats}

In this section, we prove \cref{prop:quasiflat_from_biLipschitz:asymptotic_rank}, which is a variation of \cref{prop:quasiflats-from-bilip-flats} assuming that the asymptotic rank of $X$ is $n$, but the dimension can be bigger.

The proof of this is more involved, for two essential reasons. Firstly, if $\dim X>n$, then we cannot use support sets inside $X$ to say anything about $n$--quasiflats, because their definition relies on the ambient space being $n$--dimensional. 
Secondly, although there is still a powerful quasiflats theorem for such cube complexes, due to Bowditch \cite[Thm~1.1]{bowditch:quasiflats}, it is weaker than Huang's theorem \cite[Thm~1.1]{huang:top} in one key way: instead of providing a \emph{subcomplex} formed by orthants, we only have a \emph{map} of a complex $\Omega$ formed by orthants into $X$, where the restriction to each orthant is quasimedian.

We will prove the proposition as follows. First, we use \cite[Thm~1.1]{bowditch:quasiflats} to find a map from a panel complex $\Omega$ to $X$ approximating our quasiflat that is quasimedian on each panel. That panel complex may not be a CAT(0) space, but it sits inside a bigger CAT(0) panel complex $\Psi$. By investigating the construction, and using the asymptotic cone assumption, we find a different panel complex $F\subseteq \Psi$ that: is a flat; is built out of $2^n$ orthants meeting at a point; and approximates our quasiflat via a map $\phi$ that is quasimedian on each orthant.

We show the image of $F$ is close to an $n$--flat using \cref{prop:singular_quasi_flats}. For that, we use \cref{lem:gluing_quasimedian_along_orthant} to show that $\phi$ is globally quasimedian, and then apply the results of \cref{subsec:quasimedian_orthants}. We prove \cref{lem:gluing_quasimedian_along_orthant} by an inductive application of \cref{claim:convex_orthants} and the following lemma. Recall that if $X$ is a CAT(0) cube complex, then $\mu:X^3\to X$ denotes the median operation on $X$. Throughout this section, we equip $\R^n$ with its standard median structure, which we also denote by $\mu$.

\begin{lemma} \label{lem:quasi_median_union}
Let $Y$ and $X$ be finite-dimensional CAT(0) cube complexes. Suppose that $A,B\subset Y$ are convex subcomplexes such that $A\cup B$ is convex.
For every $C,\lambda,q$ there exists $D=D(C,\lambda,q,\dim X)$ such that the following holds. 

If $\phi:Y\to X$ is a $q$--quasiisometric embedding such that $\phi|_A$ and $\phi|_B$ are $\lambda$--quasimedian, and both $\phi(A)$ and $\phi(B)$ are $C$--coarsely median-convex, then $\phi|_{A\cup B}$ is $D$--quasimedian. 
\end{lemma}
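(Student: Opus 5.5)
We aim to bound $\dist\big(\phi(\mu(a,b,c)),\mu(\phi(a),\phi(b),\phi(c))\big)$ for $a,b,c\in A\cup B$. If all three lie in $A$ or all in $B$, the hypothesis gives the bound $\lambda$. Up to relabelling, the remaining case is $a,b\in A$ and $c\in B$, with $c\notin A$. Write $m=\mu(a,b,c)$. Since $A$ is convex, $m\in A$. The idea is to replace $c$ by a point of $A\cap B$ that sees $a,b$ the same way, and then use the quasimedian property on $A$ and $B$ separately. Note $A\cap B\neq\varnothing$, because $A\cup B$ is convex and hence connected; it is also convex.

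\textbf{Step 1 (a gate point).} Let $c'=\pi_A(c)$ be the gate projection from \cref{rem:ccc_convexity}, so that $\mu(c,c',x)=c'$ for all $x\in A$. Since $A\cup B$ is convex and $c,a\in A\cup B$, the interval $[c,a]$ lies in $A\cup B$. I expect this to force $c'\in A\cap B$. Indeed, $c'$ is obtained from $c$ by crossing only hyperplanes separating $c$ from $A$. Hence the geodesic from $c$ to $c'$ stays in $B$ until it enters $A$, and $c'$ is a limit point of $B$ (with $B$ closed). The median identities then give $\mu(a,b,c)=\mu(a,b,c')$ and $\mu(c,c',a)=c'$, $\mu(c,c',b)=c'$.

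\textbf{Step 2 (transfer to the image).} Since $c,c'\in B$ and $a\in A$, we want $\mu(\phi(c),\phi(c'),\phi(a))$ to be close to $\phi(c')$. Here we cannot directly use the quasimedian property, because $a\notin B$. Instead we use coarse median-convexity of $\phi(B)$. The point $y=\mu(\phi(c),\phi(a),\phi(c'))$ lies in $[\phi(c),\phi(c')]$, so it is $C$--close to some $\phi(z)$ with $z\in B$. Symmetrically, using $\phi(A)$, the point $\mu(\phi(c'),\phi(a),\phi(c))$ relates to $A$. Swap roles by also considering $y'=\mu(\phi(a),\phi(c),\phi(c'))$, which is $C$--close to $\phi(A)$ because $\phi(c'),\phi(a)\in\phi(A)$. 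But $y=y'$, so this point is $C$--close to both $\phi(A)$ and $\phi(B)$. Applying \cref{cor:distance_from_intersection} (via the quasiinverse, with $A,B$ convex subcomplexes of $Y$), it is uniformly close to $\phi(w)$ for some $w\in A\cap B$. Next we check $w$ is close to $c'$ in $Y$. The point $w$ is quasi-between $c'$ and $c$ (using $\phi|_B$ quasimedian applied to $c,c',w$) and quasi-between $c'$ and $a$ (using $\phi|_A$). Hence $\mu(c,c',w)\approx w$ and $\mu(a,c',w)\approx w$. Gate properties give $\mu(c,c',w)=c'$ because $w\in A$, so $w\approx c'$. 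The upshot is that $\phi(c')$ is uniformly close to $\mu(\phi(c),\phi(c'),\phi(a))$, and likewise with $b$. In hyperplane language, as in \cref{obs:quasi_median_observation}, all but boundedly many hyperplanes separating $\phi(c)$ from $\phi(c')$ separate $\phi(c)$ from $\{\phi(a),\phi(b),\phi(c')\}$.

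\textbf{Step 3 (conclude).} By Step 2, $\mu(\phi(a),\phi(b),\phi(c))$ is uniformly close to $\mu(\phi(a),\phi(b),\phi(c'))$. The reason is that a hyperplane separating these two medians must separate $\phi(c)$ from $\phi(c')$ and not separate $\phi(c)$ from both $\phi(a),\phi(b)$, and there are boundedly many such hyperplanes. Since $a,b,c'\in A$, the point $\mu(\phi(a),\phi(b),\phi(c'))$ is $\lambda$--close to $\phi(\mu(a,b,c'))=\phi(m)$. This gives $D$ depending on $C,\lambda,q,\dim X$; dimension enters through converting $d_1$ and $d_2$ and through \cref{prop:coarse_intersection_subcomplexes_ccx}.

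The main obstacle is Step 2: showing that the image of the gate point $c'$ behaves as a coarse gate for $\phi(c)$ relative to $\phi(A)$, with constants uniform in the configuration. I would first try the intersection argument above. If the estimate $w\approx c'$ proves delicate, the fallback is to argue entirely by counting hyperplanes of $X$ separating $\phi(c)$ from $\phi(A)$ versus $\phi(B)$.
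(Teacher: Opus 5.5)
Your proof is correct, but the key step is organised differently from the paper's. Both arguments share the same ingredients:
\begin{itemize}
\item the gate $c'=\pi_A(c)\in A\cap B$ and the identity $\mu(a,b,c)=\mu(a,b,c')$;
\item the same core trick: take a point of $X$ close to both $\phi(A)$ and $\phi(B)$, pull it back to some $w\in A\cap B$, and apply the quasimedian property of $\phi|_B$ to $(c,c',w)$, for which $\mu(c,c',w)=c'$.
\end{itemize}

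The paper applies this trick once. It uses the gate $p$ of $\phi(c)$ onto the convex hull $Q_A$ of $\phi(A)$, which needs \cref{lem:coarsely_convex_coarsely_convex}. This gives the exact identity $\mu(\phi(a),\phi(b),\phi(c))=\mu(\phi(a),\phi(b),p)$. So once $p\in Q_A\cap Q_B$ and $p\approx\phi(c')$ are shown, the finish is a single Lipschitz estimate.

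You instead apply the trick separately to $y_a=\mu(\phi(c),\phi(c'),\phi(a))$ and $y_b$. These are close to both images directly, by coarse median-convexity. You then replace the exact gate identity by the wall count $\dist\big(\mu(\phi(a),\phi(b),\phi(c)),\mu(\phi(a),\phi(b),\phi(c'))\big)\le\dist(y_a,\phi(c'))+\dist(y_b,\phi(c'))$, which your Step~3 justifies correctly.

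The paper's route has a cleaner ending. Yours never passes to hulls, so the $2^{\dim X}$ factor from \cref{lem:coarsely_convex_coarsely_convex} never enters.

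Two simplifications are available:
\begin{itemize}
\item The paper's proof that $c'\in B$ is a one-liner. For any $z\in A\cap B$, $c'=\mu(z,c',c)$ with $z,c\in B$, so $c'\in B$.
\item In Step~2 you need not return to $Y$. $\phi(c')$ is $\lambda$--close to $\mu(\phi(c),\phi(c'),\phi(w))$, which is close to $\mu(\phi(c),\phi(c'),y_a)=y_a$. Your assertion that $w$ is quasi-between $c'$ and $a$ is never used.
\end{itemize}

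One point needs fixing. You obtain $w$ from \cref{cor:distance_from_intersection} with domain $Y$, so that constant is $T(q,C,\dim Y)$. The statement does not allow dependence on $\dim Y$. Do this step as the paper does:
\begin{itemize}
\item choose $x_A\in A$ and $x_B\in B$ with $\phi(x_A)$ and $\phi(x_B)$ within $C$ of $y_a$, so $\dist(x_A,x_B)\le 2qC+q^2$;
\item a geodesic from $x_A$ to $x_B$ lies in the convex set $A\cup B$;
\item since $A$ and $B$ are closed, this geodesic passes through a point $w\in A\cap B$ with $\dist(w,x_A)\le\dist(x_A,x_B)$.
\end{itemize}
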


\begin{proof}
Let $a,b,c\in A\cup B$. After relabelling, we can assume that $a,b\in A$. If $c\in A$, then there is nothing to do, because $\phi_A$ is $\lambda$--quasimedian. So assume that $c\in B$. 

The point $m=\mu(a,b,c)$ lies in $A$, because $A$ is median-convex. Let $\pi_A:Y\to A$ be the closest point projection map. That is, for any $y\in Y$, the point $\pi_A(y)$ differs from $y$ on exactly the hyperplanes that separate $y$ from $A$. In particular, $\mu(z,\pi_A(c),c)=\pi_A(c)$ for all $z\in A$.
Note that $\mu(a,b,\pi_A(c))=m$, and also that $\pi_A(c)\in B$. To see the latter, observe that if $z\in A\cap B$ then $\mu(z,\pi_A(c),c)=\pi_A(c)$, because $z\in A$, and it lies in $B$ because $z,c\in B$.

We aim to show that $\mu(\phi(a),\phi(b),\phi(c))$ lies uniformly close to $\phi(m)$, and our strategy will be to show that both lie close to $m'=\mu(\phi(a),\phi(b),\phi(\pi_A(c)))$. We have
\begin{equation}\label{eq:quasimedian_first}
\begin{split}
d\big(\mu(\phi(a),\phi(b),\,\phi(c)),\phi(m)\big) 
    \,&\leq\, d\big(\mu(\phi(a),\phi(b),\phi(c)),\,m'\big)+d\big(m',\phi(m)\big) \\& 
\leq\, d\big(\mu(\phi(a),\phi(b),\phi(c)),\,m'\big)+\lambda,
\end{split}
\end{equation}
where the second inequality holds because $\phi|_A$ is $\lambda$--quasimedian. We are left with bounding the distance from $m'$ to $\mu(\phi(a),\phi(b),\phi(c))$.

Because $\phi(A)$ and $\phi(B)$ are $C$--coarsely median-convex, it follows from \cref{lem:coarsely_convex_coarsely_convex} that they are $(2^{\dim X}C)$--coarsely dense in convex subcomplexes $Q_A$ and $Q_B$ of $X$, respectively.

Let $p=\pi_{Q_A}(\phi(c))$ be the closest-point projection of $\phi(c)$ to $Q_A$. Note that $\mu(\phi(c),y,p)=p$ for all $y\in Q_A$. Also, since $\phi(a),\phi(b)\in\phi(A)\subseteq Q_A$, we have that $\mu(\phi(a),\phi(b),\phi(c))=\mu(\phi(a),\phi(b),p)$. By the fact that the median is 1--Lipschitz in each coordinate, we have 
\begin{equation}\label{eq:quasimedian_second}
d\big(\mu(\phi(a),\phi(b),\phi(c)),\,m'\big)
\,=\,
d\big(\mu(\phi(a),\phi(b),p),\,m'\big)
\,\le\, d\big(p\,,\phi(\pi_A(c))\big).
\end{equation}
It therefore suffices to bound $\dist(p,\phi(\pi_A(c)))$.

First we show that $p$ lies at distance at most $D'=D'(q,C,\dim X)$ from $\phi(A\cap B)$, by finding points $\phi(x_A)\in \phi(A)$ and $\phi(x_B)\in \phi(B)$ at uniformly bounded distance from $p$.

As $p\in Q_A$ and $\phi(A)$ is $2^{\dim X}C$--coarsely dense in $Q_A$, there exists $x_A\in A$ such that $\dist(\phi(x_A),p)\le 2^{\dim X}C$. Next, note that if $z\in A\cap B$, then $\phi(z)\in \phi(A)\subseteq Q_A$, so by the definition of $p$ we have that $\mu(\phi(z),p,\phi(c))=p$. But $\phi(z),\phi(c)\in Q_B$, so we must have $p\in Q_B$ by convexity. Similarly to the existence of $x_A$, there exists $x_B\in B$ such that $\dist(\phi(x_B),p)\le 2^{\dim X}C$. By the triangle inequality, $\dist(\phi(x_A),\phi(x_B))\le2\cdot 2^{\dim X}C$.

Since $\phi$ is a $q$--quasiisometric embedding, $\dist(x_A,x_B)\le q\cdot 2\cdot2^{\dim X}C+q^2$. Hence, by considering the interval $[x_A,x_B]$, there is a point $x\in A\cap B$ at distance at most $2q\cdot 2^{\dim X}C+q^2$ from both $x_A$ and $x_B$. By the triangle inequality, we then have
\begin{equation*}
    \begin{split}
        \dist(p,\phi(x)) \,\le\, \dist(p,\phi(x_A))&+\dist(\phi(x_A),\phi(x)) \,\le\, 2^{\dim X}C+q\dist(x_A,x)+q 
    \,\\&\le\,  (1+2q^2)2^{\dim X}C+q^3+q =D'.
    \end{split}
\end{equation*}
Thus $p$ lies at distance at most $D'$ from $\phi(x)\in\phi(A\cap B)$. We shall use the point $x$ to help bound $\dist(p,\phi(\pi_A(c)))$. Indeed, since $x\in A$ we have $\mu(x,\pi_A(c),c)=\pi_A(c)$. Because all three of $x$, $\pi_A(c)$, and $c$ lie in $B$ and $\phi|_B$ is $\lambda$--quasimedian, we therefore have 
\[
\dist\big(\mu(\phi(x),\phi(\pi_A(c)),\phi(c)),\,\phi(\pi_A(c))\big) \,\le\, \lambda.
\]
Similarly, since $\phi(\pi_A(c))\in Q_A$ and $p=\pi_{Q_A}(\phi(c))$, we have $\mu(p,\phi(\pi_A(c)),\phi(c))=p$.
Since $\mu$ is 1--Lipschitz in each coordinate, we can combine these and the triangle inequality to compute
\begin{align}
\dist(p,\phi(\pi_A(c))) 
    \,&\le\, \dist\big(\mu(p,\phi(\pi_A(c)),\phi(c)),\,\mu(\phi(x),\phi(\pi_A(c)),\phi(c))\big)
        \,+\, \dist\big(\mu(\phi(x),\phi(\pi_A(c)),\phi(c)),\,\phi(\pi_A(c))\big) \nonumber \\
    &\le\, \dist(p,\phi(x))+\lambda \,\le\, D'+\lambda. \label{eq:quasimedian_third}
\end{align}




We conclude from \eqref{eq:quasimedian_first}, \eqref{eq:quasimedian_second}, and \eqref{eq:quasimedian_third} that 
\[
d\big(\mu(\phi(a),\phi(b),\phi(c)),\phi(m)\big)\le D'+2\lambda=(1+2q^2)2^{\dim X}C+q^3+q+2\lambda=D.
\] 
This proves that $\phi|_{A\cup B}$ is $D$--quasimedian.
\end{proof}

The following lemma uses an iterative application of \cref{lem:quasi_median_union} and \cref{claim:convex_orthants} to show how to upgrade piecewise information about a quasiflat to global information.

\begin{lemma}\label{lem:gluing_quasimedian_along_orthant}
Let $X$ be a finite-dimensional CAT(0) cube complex of asymptotic rank $n$. Write $\mathbb{R}^n=\bigcup_{i=1}^{2^n}O_i$ where $\{O_i\}_{i=1}^{2^n}$ are the singular $n$--orthants with cone point~0. For every $q\ge1$ and $m\ge0$ there exists $m'=m'(m,q,n,X)$ such that the following holds. 

If $\phi:\mathbb{R}^n\to X$ is a $q$--quasiisometric embedding such that $\phi|_{O_i}$ is $m$-quasimedian for each $i\in \{1,\dots,2^n\}$, then $\phi$ is $m'$--quasimedian. 
\end{lemma}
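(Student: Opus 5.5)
We will build up $\R^n$ from the $2^n$ orthants by gluing one coordinate direction at a time, using \cref{lem:quasi_median_union} at each stage. The convexity hypotheses of that lemma will come from \cref{claim:convex_orthants}. For $0\le k\le n$ and a choice of signs $\sigma\in\{\pm\}^{n-k}$, let $U_{k,\sigma}\subset\R^n$ be the union of those orthants $O_i$ whose last $n-k$ coordinate signs are given by $\sigma$. Thus $U_{k,\sigma}\cong\R^k\times[0,\infty)^{n-k}$ up to reflection. Each $U_{k,\sigma}$ is a convex union of orthants, hence median-convex in $\R^n$, and $U_{n,\varnothing}=\R^n$. Moreover, $U_{k,\sigma}=U_{k-1,(+,\sigma)}\cup U_{k-1,(-,\sigma)}$. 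Each of the two pieces is convex, their union is convex, and they intersect in the convex set $\{x_k=0\}\cap U_{k,\sigma}$.

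We will prove by induction on $k$ that $\phi|_{U_{k,\sigma}}$ is $m_k$--quasimedian, with $m_k=m_k(m,q,n,X)$. The base case $k=0$ is the hypothesis, with $m_0=m$. For the inductive step, set $A=U_{k-1,(+,\sigma)}$ and $B=U_{k-1,(-,\sigma)}$, viewed as convex subcomplexes of $Y=\R^n$ with its standard cubulation. We may take $\phi$ to be coarsely continuous; the $d_1$ versus $d_2$ discrepancy only costs constants depending on $n$ and $\dim X$. By induction, $\phi|_A$ and $\phi|_B$ are $m_{k-1}$--quasimedian. Since $A$ is itself a convex union of orthants in $\R^n$, we apply \cref{claim:convex_orthants} to the restriction $\phi|_A:A\to X$ with $A$ itself as the median-convex subset. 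This shows that $\phi(A)$ is $C_{k-1}$--coarsely median-convex with $C_{k-1}=C(m_{k-1},q,n,X)$, and likewise for $\phi(B)$. Now \cref{lem:quasi_median_union} gives that $\phi|_{A\cup B}$ is $m_k$--quasimedian with $m_k=D(C_{k-1},m_{k-1},q,\dim X)$. After $n$ steps we obtain $m'=m_n$.

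The step that needs care is the application of \cref{claim:convex_orthants}. That proposition is stated for a $q$--quasiisometric embedding of a convex union of orthants $\mathcal O\subset\R^n$ that is quasimedian on all of $\mathcal O$. This is exactly why we glue in the order above: at stage $k-1$ we already know quasimedianity on the whole of $A$, not merely orthant by orthant. We also need the restriction of a $q$--quasiisometric embedding to $A$ to remain a $q$--quasiisometric embedding for the subspace metric. This holds because $A$ is convex, so its induced and intrinsic metrics agree. With these checks, all constants depend only on $m,q,n,X$, as required.
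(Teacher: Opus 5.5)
Your proposal is correct and matches the paper's proof. The paper also glues the $2^n$ orthants one coordinate direction at a time: at each stage it uses \cref{claim:convex_orthants} to get coarse median-convexity of the images of the current pieces, and \cref{lem:quasi_median_union} to upgrade quasimedianity to their unions, reaching $\R^n$ after $n$ steps.
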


\begin{proof}
Every orthant $O_i$ is convex in $\Omega$, and $\phi|_{O_i}$ is $m$--quasimedian. By \cref{claim:convex_orthants}, there exists $D>0$ such that $\phi(O_i)$ is $D$--coarsely median-convex. Given $i\in\{1,\dots,2^n\}$, we can write $O_i=\prod_{j=1}^n\eps_j[0,\infty)$, where each $\eps_j\in\{-1,1\}$ and we interpret $-[0,\infty)$ as meaning $(-\infty,0]$. 

At the first stage of the process, we pair orthants that disagree only in the first coordinate, obtaining subcomplexes $P_i$ of the form $\R\times\prod_{j=2}^n\eps_j[0,\infty)$, for $i\in\{1,\dots,2^{n-1}\}$. By \cref{lem:quasi_median_union}, there exists $m_1$ such that $\phi|_{P_i}$ is $m_1$--quasimedian for every such $P_i$. \cref{claim:convex_orthants} now tells us that there exists $D_1$ such that $\phi(P_i)$ is $D_1$--coarsely median-convex. We can therefore repeat the argument with the $P_i$ in place of the $O_i$, pairing subcomplexes that differ only in the second coordinate and obtaining subcomplexes of the form $\R^2\times\prod_{i=3}^n\eps_j[0,\infty)$. After iterating this process $n$ times, we obtain $\mathbb{R}^n$ and and the quasimedian constant.
\end{proof}

We can now prove our analogue of \cref{prop:quasiflats-from-bilip-flats} for higher-dimensional cube complexes. We will need the following definition from \cite{bowditch:quasiflats}.

\begin{definition}[Panel complex] \label{def:panel}
A \emph{panel} is a subset of $\R^n$ that is a product of nontrivial, closed, connected, proper subsets of $\R$. A \emph{face} of a panel is obtained by restricting some of the factors to a point in their boundary in $\R$. A \emph{panel complex} is a complete geodesic space that is a finite union of subsets, called \emph{cells}, such that: with the induced metric, each cell is isometric to a panel; and if two cells intersect, then they intersect in a common face.
\end{definition}

\begin{theorem} \label{prop:quasiflat_from_biLipschitz:asymptotic_rank}
Let $X$ be a finite-dimensional CAT(0) cube complex of asymptotic rank $n$. For each $q$ there exists $K=K(n,q,X)$ such that the following holds. Suppose that $f:\R^n\to X$ is a $q$--quasiisometric embedding such that in some asymptotic cone $X_\omega$ the ultralimit $(f(\R^n))_\omega$ is an $n$--flat. There exists an $n$--flat $F\subseteq X$ at Hausdorff distance at most $K$ from $f(\R^n)$.
\end{theorem}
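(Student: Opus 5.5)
The plan follows the strategy sketched in Step~3 of \cref{subsec:strat}. First, by \cref{lem:rank_cone_CCC} and \cite[Rem.~3.3]{munropetyt:coarse}, $X$ is a coarse median space of rank $n$. Bowditch's quasiflats theorem \cite[Thm~1.1]{bowditch:quasiflats} then gives a panel complex $\Omega$ and a quasiisometry $\psi:\Omega\to f(\R^n)$. We need $\psi$ to be quasimedian on each panel and to have constants depending only on $n,q,X$. Following Bowditch's construction, $\Omega$ sits inside a CAT(0) panel complex $\Psi$ built from finitely many orthants, whose number is bounded in terms of $n,q,X$.

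The hypothesis that $(f(\R^n))_\omega$ is an $n$--flat should force the asymptotic cone of $\Omega$, which is the cone over its boundary, to be a round sphere cone. Using the volume-growth argument of \cref{prop:quasiflats-from-bilip-flats} inside the $n$--dimensional space $\Psi$ (support sets, \cite[Thm~3.10]{huang:top}), we would replace $\Omega$ by a genuine flat subcomplex $E\subseteq\Psi$. This $E$ should be the union of exactly $2^n$ orthants $O_1,\dots,O_{2^n}$ meeting at a cone point, together with a $q'$--quasiisometric embedding $\phi:E\cong\R^n\to X$. We want $\phi$ to be $m$--quasimedian on each $O_i$ and at Hausdorff distance $K_1=K_1(n,q,X)$ from $f(\R^n)$.

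Next we upgrade $\phi$. By \cref{lem:gluing_quasimedian_along_orthant}, $\phi$ is globally $m'$--quasimedian. Then \cref{claim:convex_orthants} shows that for each singular geodesic $\gamma\subset E$, the image $\phi(\gamma)$ is $D$--coarsely median-convex. By \cref{lem:coarsely_convex_coarsely_convex}, $\phi(\gamma)$ is therefore $2^{\dim X}D$--coarsely dense in the convex subcomplex $\Hull(\phi(\gamma))$. That subcomplex is a finite-dimensional CAT(0) cube complex quasiisometric to a line, so \cref{prop:lines} gives a biinfinite CAT(0) geodesic inside it.

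We must make sure this geodesic is uniformly close to $\phi(\gamma)$. Since $\Hull(\phi(\gamma))$ is a uniform quasiline, \cref{lem:quasiisom_quasi_surj} together with coarse density should give a uniform Hausdorff bound $D_1(n,q,X)$; this uniformity needs a short check. With every parallel of each coordinate geodesic mapped within $D_1$ of a geodesic, \cref{prop:singular_quasi_flats} produces an $n$--flat $F$ within $D'(q',n,D_1)$ of $\phi(E)$. Taking $K=K_1+D'$ completes the argument.

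The main obstacle is the first step: extracting from Bowditch's construction a flat $E\subset\Psi$ made of $2^n$ orthants with uniform constants. This needs the $n$--dimensional support-set argument inside $\Psi$ rather than $X$, plus control on how $\psi$ transfers. I would first try to show that $\partial_T\Omega$ has $(n-1)$--volume at most that of $\mathbb S^{n-1}$ by comparing with the cone flat, and then invoke \cite[Thm~3.10]{huang:top} in $\Psi$.
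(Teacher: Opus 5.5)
Your outline is the paper's proof. It uses Bowditch's panel complex $\Omega$ inside the CAT(0) panel complex $\Psi$, then a support-set and volume-growth argument in $\Psi$ via \cite[Thm~3.10]{huang:top} to get a flat made of $2^n$ orthants. From there it runs through \cref{lem:gluing_quasimedian_along_orthant}, coarse median-convexity of images of singular geodesics, hulls together with \cref{prop:lines}, and finally \cref{prop:singular_quasi_flats}. Your uniformity check for the geodesic $\alpha$ also works: apply \cref{lem:quasiisom_quasi_surj} inside the flat to $\bar\phi\circ\pi_{\phi(\gamma)}|_\alpha\colon\alpha\to\gamma$. The paper does the same thing directly with \cite[Lem.~10.84]{drutukapovich:geometric}.

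The gap is in the step you flag, and the mechanism you propose for it does not work as stated. A biLipschitz equivalence between $\Omega_\omega$ and an $n$--flat does not make $\Omega_\omega$ a round cone; it only shows that $\partial_T\Omega$ is a topological sphere (\cref{prop:Euclid_cone_bilip_Rn}). For example, five quarter-planes glued cyclically along their boundary rays form a cone biLipschitz to $\R^2$ whose Tits boundary has length $\frac{5\pi}{2}$. So no volume bound can come from comparing with the cone flat metrically.

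The missing input is the quasimedian property on panels, which you list at the start but never use. In the cone, $\phi_\omega$ is median-preserving on each panel and its image is the $n$--flat $(f(\R^n))_\omega$. This forces $\Omega_\omega$ to have exactly $2^n$ panels of dimension $n$, each a standard orthant. Hence $\Omega$ has exactly $2^n$ panels isometric to $[0,\infty)^n$, and every other panel has a bounded factor.

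The ``moreover'' clause of \cite[Thm~1.1]{bowditch:quasiflats} then puts the image of Bowditch's quasiisometry $\theta'\colon\R^n\to\Omega$ at finite Hausdorff distance from the union of these $2^n$ orthants. So the Tits boundary of the support set in $\Psi$ is covered by $2^n$ all-right simplices. That gives $\HH^{n-1}$ at most $\HH^{n-1}(\mathbb S^{n-1})$, which is exactly the bound \cite[Thm~3.10]{huang:top} needs.

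The same identification settles your concern about how the map transfers. The resulting flat is a union of $n$--panels of $\Psi$, split into $2^n$ orthants at a common cone point, and the median on $\Psi$ restricts to the standard one. The map is $m$--quasimedian on each orthant, and its quasiisometry constants stay uniform because $\theta'(\R^n)$ is uniformly close to this flat.
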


\begin{proof}
\cref{lem:rank_cone_CCC} and \cite[Rem.~3.3]{munropetyt:coarse} tell us that $X$ is a \emph{coarse median space of rank $n$}. Hence \cite[Thm~1.1]{bowditch:quasiflats} shows that there exist: constants $q'$, $m$, and $C$, each depending only on $X$, $q$, and $n$; a finite panel complex $\Omega=\bigcup_{i=1}^pP_i$ of dimension $n$; and a $q'$--quasiisometric embedding $\phi:\Omega\to X$ such that $\dist_{\mathrm{Haus}}(f(\R^n),\phi(\Omega))\leq C$ and the restrictions $\phi|_{P_i}$ are $m$--quasimedian. Moreover, the value of $p$ depends only on $X$, $n$, and $q$, and not on the specific map $f$. 

We shall use additional information given in the construction of $\Omega$, see \cite[p.48]{bowditch:quasiflats}, to find a panel complex $F$ that is an $n$--flat and satisfies the same list of properties as $\Omega$. 

The panel complex $\Omega$ is constructed in two steps. First, using the map $f:\R^n\to X$, Bowditch constructs an $n$--dimensional panel complex $\Psi$ and a quasiisometric embedding $\theta:\R^n\to\Psi$ with uniform constant. The complex $\Psi$ has the additional property that it is CAT(0) when each panel is equipped with the $\ell^2$ metric. From this, Bowditch applies \cite[Lem.~6.1]{bowditch:quasiflats}, which states that any $n$--quasiflat in a finite, $n$--dimensional, CAT(0) panel complex can be uniformly perturbed so that its image is a uniformly coarsely dense subset of a union of $n$--panels. This perturbed subcomplex is $\Omega$, and it can potentially fail to be CAT(0). In other words, by composing $\theta$ with a closest-point map, we obtain a $q''$--quasiisometry $\theta':\R^n\to\Omega$, where $q''=q''(n,q,X)$. Let $\bar\theta'$ be a quasiinverse of $\theta'$. The map $\phi$ is defined by setting $\phi=f\bar\theta'$. Bowditch uses the properties of his construction to show that $\dist_{\mathrm{Haus}}(f(\mathbb{R}^n),\phi(\Omega))\leq C$. 

Now, the map $\phi$ induces a biLipschitz map $\phi_\omega:\Omega_\omega\to X_\omega$ with image $(f(\R^n))_\omega$, and the restriction of $\phi_\omega$ to each panel of $\Omega_\omega$ is median-preserving. Since $(f(\R^n))_\omega\subseteq X_\omega$ is an $n$--flat, this implies that $\Omega_\omega$ has exactly $2^n$ panels of dimension $n$, each of which is a standard orthant in $\R^n$. Consequently, $\Omega$ has exactly $2^n$ panels isometric to $[0,\infty)^n$, and each of the finitely many other panels has a finite factor. After relabelling, we can assume that the panels $P_1,\dots,P_{2^n}$ are all isometric to $[0,\infty)^n$. 

By the ``moreover'' statement of \cite[Thm~1.1]{bowditch:quasiflats}, we therefore have that
\[
\dist_{\mathrm{Haus}}\big(\theta'(\R^n),\,\bigcup_{i=1}^{2^n}P_i\big) \,<\, \infty.
\]
Because of this, the same argument as in \cref{prop:quasiflats-from-bilip-flats} shows that $\theta'(\R^n)$ is at uniform Hausdorff distance from its support set $F\subset\Psi$, which is an $n$--flat. Since $\dim F=\dim\Psi$, it follows that $F$ is a union of $n$--dimensional panels of $\Psi$. 
The median structure on $\Psi$ restricts to the standard median on the flat $F$, so we can view $F$ as being equal to $\mathbb{R}^n$, written as the union of the $2^n$ orthants $O_1,\dots,O_{2^n}$ with cone point $0$, and $\phi|_{O_i}$ is $m$--quasimedian. 

The fact that $\theta'(\R^n)$ is at uniform Hausdorff distance from $F$ implies that there exists $q'=q'(n,q,X)$ such that $\phi|_F$ is a $q'$--quasiisometric embedding. Moreover, $f(\R^n)$ is within uniform Hausdorff distance of $\phi(F)$. It therefore suffices to show that $\phi(F)$ lies within uniform Hausdorff distance of an $n$--flat of $X$. 


By \cref{lem:gluing_quasimedian_along_orthant}, there exists $m'=m'(m,q',n,X)$ such that $\phi$ is $m'$--quasimedian.
We will show that there is a constant $K'=K'(m',q',n,X)$ such that $\phi$ sends each singular geodesic in $F$ within Hausdorff distance $K'$ of a geodesic in $X$. This, together with \cref{prop:singular_quasi_flats}, will show that there is a constant $K=K(q',n,K')$ such that $\phi(F)$ lies within Hausdorff distance $K$ of an $n$--flat, which will conclude the proof.

Let $\gamma\subseteq F$ be a singular geodesic line. By \cref{claim:convex_axes}, $\phi(\gamma)$ is $D$--coarsely median-convex, where $D=D(m',q',n,X)$. Therefore, by \cref{lem:coarsely_convex_coarsely_convex}, $\phi(\gamma)$ lies at Hausdorff distance at most $2^{\dim X}D$ from its hull $L$. Thus $\phi:\gamma\to L$ is a $(q',q'+2^{\dim X}D)$--quasiisometry. In particular, the finite-dimensional CAT(0) cube complex $L$ is a quasiline, so it contains a biinfinite CAT(0) geodesic $\alpha$, by \cref{prop:lines}.

If $\bar\phi$ is a $q'$--quasiinverse of $\phi$, then $\bar\phi|_{\alpha}:\alpha\to \gamma$ is a $q'$--quasiisometric embedding, and so there exists $M=M(q')$ such that it is $M$--coarsely surjective, by \cite[Lem.~10.84]{drutukapovich:geometric}. Given $y\in L$, there exists $x\in\gamma$ such that $\dist(y,\phi(x))\le 2^{\dim X}D$. There then exists $z\in\alpha$ such that $\dist(\bar\phi(z),x)\le M$, and we have
\[
\dist(y,z) \,\le\, \dist(y,\phi(x))+\dist(\phi(x),z) \,\le\, 2^{\dim X}D+\dist(\phi(x),\phi\bar\phi(z))+q' 
    \,\le\, 2^{\dim X}D+Mq'+2q'.
\]
In particular, $\phi(\gamma)$ lies at Hausdorff distance at most $K'=2^{1+\dim X}D+Mq'+2q'$ from the geodesic $\alpha$.
\end{proof}

\subsection{Codimension 1 assumption}

Here we give another variant of \cref{prop:quasiflats-from-bilip-flats}, for use in \cite{baderbensaidpetyt:quasiisometric:rigidity}. 

\begin{lemma}\label{lem:Tits_boundary_orthants_isom_boundary_asymp_cone}
Let $X$ be a finite-dimensional CAT(0) cube complex.
If $Z \subseteq X$ is a finite union of orthants, then its ultralimits $Z_\omega$ with respect to any constant basepoint $o\in Z$ are unions of the same finite number of orthants. Moreover, $\partial_T Z_\omega$ and $\partial_T Z$, equipped with the angle metrics induced by the ambient Tits boundaries, are isometric as finite spherical complexes.
\end{lemma}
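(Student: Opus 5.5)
Write $Z=\bigcup_{i=1}^p O_i$ with each $O_i$ an orthant of dimension $k_i$. The plan is to reduce everything to \cref{lem:embedding-Titsboundary-into_link_and_Tits_boundary}, applied with the constant basepoint $o$.

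\textbf{Step 1: ultralimits of single orthants.} Fix $i$ and let $c_i$ be the cone point of $O_i$. The ultralimit of $O_i$ with respect to $o$ agrees with the ultralimit of the parallel orthant based at $o$. To see this, note that $d(o,c_i)$ is a fixed constant, so it vanishes after rescaling. Then, for each $\xi\in\partial_TO_i$, the rays $[c_i,\xi)$ and $[o,\xi)$ are at Hausdorff distance at most $d(o,c_i)$, by convexity.

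The map $\Phi\colon C(\partial_TX)\to\hat X$ of \cref{lem:embedding-Titsboundary-into_link_and_Tits_boundary} is an isometric embedding. It sends $C(\partial_TO_i)$, which is a Euclidean cone over a spherical simplex (an all-right simplex), onto $\lim_\omega O_i$. Hence $\lim_\omega O_i=\Phi(C(\partial_TO_i))$ is a $k_i$-orthant with cone point $(o)$.

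\textbf{Step 2: ultralimit of the union.} The ultralimit of a finite union is the union of the ultralimits. Indeed, any sequence $(z_n)$ with $z_n\in Z$ lies, $\omega$-almost surely, in a single $O_i$, because the index set is finite. Therefore
\[
Z_\omega=\bigcup_i\Phi(C(\partial_TO_i))=\Phi(C(\partial_TZ)).
\]
It remains to check that distinct orthants of $Z$ give distinct orthants of $Z_\omega$, so that the number of orthants is preserved. Since $\Phi$ is injective and the $\partial_TO_i$ are distinct subsets of $\partial_TX$ (assuming we have discarded repeated parallel orthants), the images are distinct. If the count is meant to include orthants coarsely equal to one another, we first normalise $Z$ by merging orthants that are Hausdorff-close. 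This is the only bookkeeping subtlety.

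\textbf{Step 3: the Tits boundaries.} The map $\varphi_T\colon\partial_TX\to\partial_T\hat X$ is an isometric embedding for the angle metrics. We need $\varphi_T(\partial_TZ)=\partial_TZ_\omega$.

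The inclusion $\subseteq$ is clear: each $\hat r_\xi\subset Z_\omega$.

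The inclusion $\supseteq$ is where I expect the main obstacle. A ray in $Z_\omega$ need not a priori start at $(o)$. However, $Z_\omega$ is a finite union of closed cones over $(o)$, by Step 2. So every point of $\partial_TZ_\omega$ is represented by a ray from $(o)$ inside the cone $\Phi(C(\partial_TZ))$, and such rays are exactly the $\hat r_\xi$. Here we use that the asymptotic class of a ray in a finite union of Euclidean orthants sharing a cone point is realised by the radial ray in one orthant.

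It follows that $\varphi_T$ restricts to an isometry $\partial_TZ\to\partial_TZ_\omega$. This isometry maps each simplex $\partial_TO_i$ onto $\partial_T\lim_\omega O_i$, so it is an isomorphism of finite spherical complexes.
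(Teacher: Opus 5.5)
Your proof is correct and follows essentially the same route as the paper. Each $\lim_\omega O_i$ is an orthant based at $(o)$, the ultralimit of the finite union is the union of the ultralimits, and the isometric embedding $\varphi_T$ of \cref{lem:embedding-Titsboundary-into_link_and_Tits_boundary} maps $\partial_TZ$ onto $\partial_TZ_\omega$ because every ray in $\lim_\omega O_i$ is asymptotic to some $\hat r_\xi$ with $\xi\in\partial_TO_i$.

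The only difference is how you identify $\lim_\omega O_i$. The paper just uses that $O_i$ is a Euclidean cone, so its ultralimit based at $(c_i)=(o)$ is isometric to $O_i$; you go through $\Phi$. That works, with two small points to tighten. First, $X$ need not contain an orthant based at $o$ parallel to $O_i$; what you actually use is the union of the rays $[o,\xi)$ for $\xi\in\partial_TO_i$, which lies within $d(o,c_i)$ of $O_i$. Second, surjectivity of $\Phi$ onto $\lim_\omega O_i$ needs a short extra argument, for example compactness of $\partial_TO_i$, or simply the paper's direct identification.
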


\begin{proof}
Set $Z=\bigcup_{i=1}^m O_i$, where each $O_i$ is an orthant of $X$. Since each orthant is isometric to the Euclidean cone over its Tits boundary, the ultralimit $(O_i)_\omega$ is isometric to $O_i$, hence is again an orthant. Moreover, since we take ultralimits with respect to a fixed basepoint $o$, each $(O_i)_\omega$ is based at $(o)_\omega$. It follows that $Z_\omega=\bigcup_{i=1}^m (O_i)_\omega$ is a finite union of orthants. 

Let $
\varphi_T:(\partial_T X,\angle)\to (\partial_T X_\omega,\angle)
$
be the isometric embedding given by \Cref{lem:embedding-Titsboundary-into_link_and_Tits_boundary}. Since $Z_\omega$ is the ultralimit of $Z$, the restriction of $\varphi_T$ to $\partial_T Z$ yields an isometric embedding $(\partial_T Z,\angle)\to (\partial_T Z_\omega,\angle)$. 
Let $\xi\in \partial_T Z_\omega$. It is represented by a ray contained in some orthant $(O_i)_\omega$. Since every ray in $(O_i)_\omega$ is the ultralimit of a ray in $O_i$, we deduce that $\xi$ lies in the image of $\partial_T O_i\subseteq \partial_T Z$ under $\varphi_T$. Hence $\varphi_T:\partial_T Z\to \partial_T Z_\omega$ is surjective, and therefore an isometry.
\end{proof}


\begin{theorem}\label{thm:top_quasiflats_union_parallel_codim1_flats}
Let $n\ge2$, let $Y$ be a finite-dimensional, proper CAT(0) cube complex of asymptotic rank $n$, and let $E = \mathbb R^n$. Suppose that $f : E \to Y$ is a quasiisometric embedding. Let $E_\omega$ be an asymptotic cone of $E$ with respect to some fixed basepoint, and let $f_\omega : E_\omega \to Y_\omega$ be the induced biLipschitz embedding. Let $H\subset E$ be a singular $(n-1)$--flat.

If $f_\omega(H_\omega')$ is a singular $(n-1)$--flat in $Y_\omega$ for every $H_\omega' \subseteq E_\omega$ parallel to $H_\omega$, then $f(E)$ lies at finite Hausdorff distance from some $n$--flat in $Y$.
\end{theorem}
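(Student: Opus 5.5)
We will show that $f_\omega(E_\omega)$ is an $n$--flat of $Y_\omega$, and then conclude by \cref{prop:quasiflat_from_biLipschitz:asymptotic_rank}. Since $E=\R^n$ and the basepoint is fixed, $E_\omega$ is canonically $\R^n$ and $H_\omega$ is a singular $(n-1)$--flat through the base point. Its parallels foliate $E_\omega$ and are indexed by a transverse singular line $\ell_\omega\cong\R$.

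By hypothesis, each image $f_\omega(H'_\omega)$ is a singular $(n-1)$--flat $F_t$, where $t\in\R$ indexes the parallels. First we show that all the $F_t$ are parallel in $Y_\omega$. Any two of them, $F_s$ and $F_t$, lie at Hausdorff distance at most $L|s-t|$ from each other, because $f_\omega$ is $L$--biLipschitz and $H'_s,H'_t$ are at Hausdorff distance $|s-t|$. Flats at finite Hausdorff distance are parallel, so all $F_t$ lie in the parallel set $P(F_0)=F_0\times T$ for a complete CAT(0) space $T$. Let $\beta:\R\to T$ record the $T$--coordinate of $F_t$. Then $f_\omega(E_\omega)=F_0\times\beta(\R)$.

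Next we need $\beta(\R)$ to be a geodesic line in $T$. We know that $\beta$ is a biLipschitz embedding of $\R$. Indeed, $d(\beta(s),\beta(t))=d_{\mathrm{Haus}}(F_s,F_t)$, and this is comparable to $|s-t|$ because $f_\omega$ is biLipschitz and $d_{\mathrm{Haus}}(H'_s,H'_t)=|s-t|$. This step is the main obstacle, since a biLipschitz line in a CAT(0) space need not be a geodesic (compare the log-spiral). To handle it we use the median structure. Each $F_t$ is singular, so $F_0\times T$ is a median-convex product, and $T$ inherits a median structure from $Y_\omega$ of rank at most $n-(n-1)=1$, because $Y$ has asymptotic rank $n$ and \cref{lem:rank_cone_CCC} applies. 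A complete connected median space of rank one is an $\R$--tree. In an $\R$--tree, every topological embedding of $\R$ is, after reparametrisation, a geodesic line. Hence $\beta(\R)$ is a geodesic line, and by the biLipschitz bound it is unbounded in both directions.

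Therefore $f_\omega(E_\omega)=F_0\times\beta(\R)$ is an $n$--flat of $Y_\omega$. It is moreover singular, since $\beta$ is a singular line in a tree factor. Applying \cref{prop:quasiflat_from_biLipschitz:asymptotic_rank} to $f$ then gives an $n$--flat of $Y$ at finite Hausdorff distance from $f(E)$. If the rank-one claim for $T$ proves delicate, the fallback is to observe that $f_\omega(E_\omega)$ is biLipschitz to $\R^n$, so $\beta(\R)$ is a biLipschitz line. Coarsely, it is then the image of a quasiline in a hyperbolic factor, and we would argue via \cref{prop:lines} applied to hulls. The route above is the primary plan.
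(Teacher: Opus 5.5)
Everything up to $f_\omega(E_\omega)=F_0\times\beta(\R)$ with $\beta$ biLipschitz is correct. The endgame via \cref{prop:quasiflat_from_biLipschitz:asymptotic_rank} would also be legitimate, and would even dispense with properness. The gap is at the step you yourself flag as the main obstacle.

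You assert that because the slices $F_t$ are singular, $P(F_0)=F_0\times T$ is a median-convex product and $T$ inherits a median structure of rank at most $1$. Nothing supports this. The parallel set and its factor $T$ are defined purely through the CAT(0) metric, and the CAT(0) and median structures on $Y_\omega$ are different. You would need three things, none of which follows from singularity of the individual slices:
\begin{itemize}
\item $P(F_0)$ is median-convex;
\item the CAT(0) fibres $\{x\}\times T$ are median-convex;
\item CAT(0) projection to $F_0$ agrees with the median gate map.
\end{itemize}
CAT(0)-flat regions swept out by parallel singular flats need not be median-convex: in $\R^3$, the flat strip between the $x$--axis and its translate by $c(0,1,1)/\sqrt2$ is not. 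Without a median product structure, ``rank $\le n-(n-1)$'' has no meaning, and $\beta$ is merely a biLipschitz line in a CAT(0) space, which is exactly the log-spiral situation you mention. Even granting that $T$ is a median $\R$--tree, you would still have to pass to the CAT(0) metric on $T$; that part is fixable. The fallback does not help. \cref{prop:lines} concerns cube complexes quasiisometric to $\R$ and produces geodesics only up to bounded Hausdorff distance, so it says nothing exact about $\beta$ inside the cone.

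What is missing is a global input excluding a ``book'' $F_0\times(\rho^+\cup\rho^-)$ with $\rho^\pm$ two rays at angle strictly less than $\pi$. Such a book is biLipschitz to $\R^n$ and foliated by singular parallels of $F_0$, yet it is not a flat. The rank bound excludes it only once you know the picture is conical.

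The paper gets this from \cref{thm:huang_for_asymptotic_rank}. That theorem makes $f(E)$ Hausdorff-close to a finite union $Z$ of orthants, so $Z_\omega=f_\omega(E_\omega)$ is a finite union of orthants based at $o$. Its Tits boundary is isometric to $\partial_TZ\cong\mathbb S^{n-1}$ by \cref{prop:boundary_quasi_flat_sphere} and \cref{lem:Tits_boundary_orthants_isom_boundary_asymp_cone}. Inside $F_0\times A$, the rank bound and the pseudomanifold property force $\partial_TZ_\omega=\partial_TF_0*\{v^+,v^-\}$, which is a round sphere. \cref{lem:sphere_in_ccc_boundary}, which uses properness, then produces the flat.

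With the same conical input your route can be completed. Star-shapedness of $Z_\omega$ from $o$ forces $[\beta(0),\beta(t)]=\beta([0,t])$, so both halves of $\beta$ are geodesic rays. The angle at $o$ equals the Tits angle by \cref{lem:embedding-Titsboundary-into_link_and_Tits_boundary}. An angle in $(0,\pi)$ would then give a flat sector whose product with $F_0$ is an $(n+1)$--dimensional Euclidean region in $Y_\omega$, contradicting asymptotic rank $n$.
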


\begin{proof}
It follows from \cref{thm:huang_for_asymptotic_rank}, that $f(E)$ lies within finite Hausdorff distance from $Z=\bigcup_{i=1}^m O_i$, a finite union of semisingular $n$--orthants.

    
    Since $f(E)$ and $Z$ are at finite Hausdorff distance, we have $f(E)_\omega = Z_\omega$. Since the asymptotic cone is taken with respect to some fixed basepoint, all the $n$--orthants of $Z_\omega$ are based at the ultralimit of that point, which we denote by $o$. By \Cref{prop:boundary_quasi_flat_sphere}, $\partial_T Z$ is homeomorphic to $\mathbb S^{n-1}$, and by \Cref{lem:Tits_boundary_orthants_isom_boundary_asymp_cone}, $(\partial_T Z,\angle)$ is isometric to $(\partial_T Z_\omega,\angle)$. Hence $\partial_T Z_\omega$ is also a spherical complex homeomorphic to $\mathbb S^{n-1}$.

    Set $F_0 :=f_\omega(H_\omega)$. By assumption, for every singular $(n-1)$--flat $H_\omega' \subseteq E_\omega$, parallel to $H_\omega$, the image $f_\omega(H_\omega')$ is a singular $(n-1)$--flat. As $f$ is a quasiisometric embedding, $f_\omega(H_\omega')$ is at finite Hausdorff distance from $F_0$, and hence is parallel to $F_0$. Therefore, 
    $$Z_\omega \,\subseteq\, P(F_0) \,\cong\, F_0 \times A,$$ 
    where $A \subseteq Y_\omega$ is a closed convex subset, see \Cref{def:parallel_set}. 
    
    Note that $o \in F_0$. Without loss of generality, we can also suppose that $o \in A$. The product decomposition implies that
    $$
    \partial_T Z_\omega \,\subseteq\, \partial_T F_0 * \partial_T A.
    $$
    We will show that 
    \begin{equation} \label{eq:join_decomposition}
    \partial_T Z_\omega \,=\, \partial_T F_0 * (\partial_T Z_\omega\cap \partial_T A) \,=\, \partial_T F_0 * \{v^+,v^-\}.
    \end{equation}
This will imply that $\partial_T Z_\omega$ is isometric to an $(n-1)$--sphere. Hence $\partial_TZ$ is an $(n-1)$--sphere, which is the boundary of some $n$--flat, by \cref{lem:sphere_in_ccc_boundary}, and from that we deduce the statement, as explained after the claims.

To show \cref{eq:join_decomposition}, we will first show in Claim~\ref{cl:join_with_intersection} that $  \partial_T F_0 * (\partial_T Z_\omega\cap \partial_T A)\subseteq \partial_T Z_\omega$. Then, in Claim~\ref{cl:intersection_is_two}, we show that  $\partial_TA\cap \partial_TZ_\omega=\{v^+,v^-\}$, from which we deduce the equality.

\setcounter{claimcount}{0}
\begin{claim} \makeatletter\def\@currentlabel{\theclaimcount}\makeatother \label{cl:join_with_intersection}
We have $\partial_T F_0 * (\partial_T Z_\omega\cap \partial_T A)\subseteq \partial_T Z_\omega$.
 \end{claim}

 \begin{claimproof}
Let $v\in \partial_T Z_\omega\cap \partial_T A$. We have that $\partial_T F_0 * v \subseteq \partial_T Z_\omega$. Indeed, let $\rho \subseteq A$ be the geodesic ray based at $o$ representing $v$. Since $v \in \partial_T Z_\omega \cap \partial_T A$ and $o \in Z_\omega \cap A$, both $Z_\omega$ and $A$ contain $\rho$. Since $Z_\omega$ is a union of parallels of $F_0$, the product $F_0 \times \rho$ is contained in $Z_\omega$, and therefore
    $
    \partial_T F_0 * v \subseteq \partial_T Z_\omega
    $.
 \end{claimproof}

\begin{claim} \makeatletter\def\@currentlabel{\theclaimcount}\makeatother \label{cl:intersection_is_two}
$\partial_TA\cap \partial_TZ_\omega=\{v^+,v^-\}$ 
\end{claim}

\begin{claimproof}
    Since $Z_\omega$ is a union of orthants, and since $F_0$ is a singular flat, every singular geodesic ray contained in $Z_\omega$ has its endpoint either contained in $\partial_T F_0$ or in $\partial_T A$. In other words, every vertex of $\partial_T Z_\omega$ lies either in $\partial_T F_0$ or in $\partial_T A$. Let $V$ denote the vertices of $\partial_TZ_\omega$ contained in $\partial_T A$. 
    
As the asymptotic rank of $Y$ is $n$, the boundary of $Y$ is $(n-1)$-dimensional. Since $\partial_T Z_\omega$ is $(n-1)$-dimensional, $\partial_TA\cap \partial_TZ_\omega$ must be 0-dimensional and equal to $V$, by Claim~\ref{cl:join_with_intersection}. 

Therefore, every $(n-1)$--simplex $\sigma$ of $\partial_T Z_\omega$ has exactly $n-1$ vertices in $\partial_T F_0$ and one vertex in $\partial_T A$. Hence, by Claim~\ref{cl:join_with_intersection}, for a fixed $(n-2)$--simplex of $\partial_TF_0$, there is a one-to-one correspondence between $(n-1)$--simplices containing it and $V$.

As $\partial_T Z_\omega$ is homeomorphic to $\mathbb S^{n-1}$, each $(n-2)$--simplex is contained in exactly two $(n-1)$--simplices. Therefore, $V$ contains exactly two vertices.
\end{claimproof}

Since every $(n-1)$--simplex of $\partial_T Z_\omega$ contains either $v^+$ or $v^-$, it is either contained in $\partial_T F_0 * v^+$ or in $\partial_T F_0 * v^-$. Moreover, Claim~\ref{cl:join_with_intersection} showed that $\partial_T F_0 * v^{\pm} \subseteq \partial_T Z_\omega$. Therefore, 
    $$\partial_T Z_\omega=\partial_T F_0 * \{v^+,v^-\}.$$
We conclude that $\partial_TZ_\omega$ is a round $(n-1)$--sphere. Therefore, $\partial_T Z$ is also isometric to the round sphere $\mathbb S^{n-1}$. By \Cref{lem:sphere_in_ccc_boundary}, there exists an $n$--flat $F \subseteq Y$ such that $\partial_T F = \partial_T Z$. Since $Z$ is a finite union of orthants, it follows by the convexity of the distance function that $F$ and $Z$ are at finite Hausdorff distance. We conclude that $f(E)$ and $F$ are at finite Hausdorff distance.
\end{proof}

\section{Local separation of cubulated flats}\label{sec:top_arguments}

In \cref{sec:flats_rigidity} we will give conditions under which a quasiisometric embedding induces a biLipschitz map of asymptotic cones that sends singular geodesics to singular geodesics. 
This short section provides an important topological tool for those arguments, \cref{prop:top_argument}.
More concretely, we give conditions under which a topological embedding of the standard cubulation of a flat locally maps a singular geodesic to a singular geodesic. 
We shall need some terminology from \cite{bowditch:large:mapping}.

\begin{definition}[Cubulated] \label{def:cube_cubulation_in_median}
Let $(M,d)$ be a median metric space. A \emph{$k$--cube} in $M$ is a closed, median-convex subset isometric to a finite $\ell^1$–product of compact intervals $\prod_{i=1}^k [a_i,b_i]$.
A closed subset $C \subseteq M$ is said to be \emph{cubulated} if it is a locally finite union of cubes. 
\end{definition}

\begin{definition}[Singularity set]\label{def:regular_singular_sets_mediancase}
Let $M$ be a median metric space of rank $n$, and let $f : \mathbb R^n \to M$ be a biLipschitz embedding. A point $x \in \mathbb R^n$ is called \emph{flat} for $f$ if there exists $r>0$ such that $f(B(x,r))$ is contained in an $n$--cube in $M$. The \emph{singularity set} of $f$ is the set of non-flat points in $\R^n$.
\end{definition}

Let $\Sigma_0$ be the 0--sphere, and for each $n\in\mathbb{N}$ let $\Sigma_n$ denote the simplicial complex given by the $n$--fold suspension of $\Sigma_0$. Equivalently, $\Sigma_n$ is the link of a vertex in the standard cubical structure on $\mathbb{R}^{n+1}$. See \cref{fig:spherical_A_1} for $n=2$.


\begin{figure}[ht]
\includegraphics[height=2.5cm]{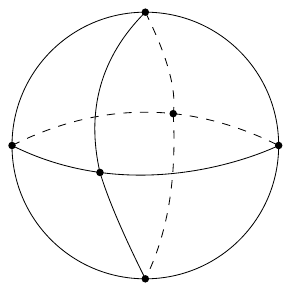}\centering
\caption{The simplicial complex $\Sigma_2$.} \label{fig:spherical_A_1}
\end{figure}

\begin{lemma}\label{lem:separation_sphere_antipodal}
Let $n\in\mathbb{N}$, and let $u$ and $v$ be vertices of $\Sigma_n$. Let $H\subseteq \Sigma_n$ be a subset contained in the $(n-1)$--skeleton such that $H \cap \{u,v\} = \varnothing$. If $u$ and $v$ lie in different connected components of $\Sigma_n \setminus H$, then they are antipodal.
\end{lemma}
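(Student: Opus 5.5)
We prove the contrapositive: if $u$ and $v$ are distinct and not antipodal, then they lie in the same component of $\Sigma_n\setminus H$.

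The key point is that every simplex of $\Sigma_n$ has an open top-dimensional cell that avoids $H$. Recall that $\Sigma_n$ is the join of $n+1$ copies of $\Sigma_0=\{e_i^+,e_i^-\}$. Its vertices are $e_i^\pm$ for $i\in\{0,\dots,n\}$, and its top $n$--simplices are spanned by choosing one sign for each $i$.

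\begin{itemize}
\item Since $H$ lies in the $(n-1)$--skeleton, $H$ misses the interior of every $n$--simplex.
\item The interior of an $n$--simplex is connected, so it lies in a single component of $\Sigma_n\setminus H$.
\end{itemize}

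\textbf{Step 1 (vertices attach to interiors).} Let $\sigma$ be an $n$--simplex containing a vertex $w\notin H$. The half-open segment from $w$ to the barycentre of $\sigma$ consists of $w$ together with interior points of $\sigma$. Hence it avoids $H$, and $w$ lies in the same component as $\operatorname{int}\sigma$.

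\textbf{Step 2 (two vertices sharing a top simplex).} Two distinct vertices of $\Sigma_n$ are non-antipodal exactly when they are $e_i^\pm$ and $e_j^\pm$ with $i\ne j$. Such a pair is adjacent, so some $n$--simplex $\sigma$ contains both $u$ and $v$; extend the pair by arbitrary signs in the remaining coordinates. By Step~1, $u$ and $v$ both lie in the component containing $\operatorname{int}\sigma$, so they are not separated. This proves the contrapositive.

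The case $u=v$ is trivial. For $n=0$, the complex $\Sigma_0$ consists of two antipodal points and $H$ is empty, so the statement holds vacuously. There is no real obstacle here; the one point needing care is that the segment in Step~1 does not meet $H$. This holds because only its endpoint $w$ lies on the boundary of $\sigma$, and $w\notin H$ by hypothesis.
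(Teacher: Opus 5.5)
Your proof is correct and follows essentially the paper's argument. Non-antipodal vertices share a top-dimensional simplex $\sigma$, and the interior of $\sigma$ avoids $H$. A path from $u$ to $v$ through that interior therefore joins them in $\Sigma_n\setminus H$; your route through the barycentre is just one concrete choice of the paper's path $\gamma$.
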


\begin{proof}
By the definition of $\Sigma_n$, if $u$ and $v$ are non-antipodal vertices, then they are neighbours and there is an $n$--simplex $\sigma$ containing both of them. Since $H$ is contained in the $(n-1)$--skeleton of $\Sigma_n$, it is disjoint from the interior $\mathring\sigma$ of $\sigma$. Choosing a path $\gamma\colon[0,1]\to \sigma$ from $u$ to $v$ such that $\gamma((0,1))\subseteq \mathring\sigma$, we see that $\gamma\subseteq \Sigma_n\setminus H$. Thus $u$ and $v$ lie in the same connected component of $\Sigma_n\setminus H$.
\end{proof}

\begin{proposition}\label{prop:top_argument}
Let $E=\mathbb{R}^n$, equipped with its median $\ell^1$ metric. Let $x \in E$, let $\gamma \subseteq E$ be a geodesic, and let $H \subseteq E$ be an $(n-1)$--flat such that $\gamma \cap H = \{x\}$. Let $U \subseteq E$ be an open neighbourhood of $x$, and let $f : U \to E$ be a topological embedding. 

If $f(\gamma \cap U)$ and $f(H \cap U)$ are both cubulated, then there exists a neighbourhood $V \subseteq U$ of~$x$ such that $f(\gamma \cap V)$ is contained in a single $1$--cube.
\end{proposition}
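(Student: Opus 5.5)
We reduce to a link computation at $f(x)$ and use the separation property of $H$ together with \cref{lem:separation_sphere_antipodal}.

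\emph{Local cone structure.} Since $f(\gamma\cap U)$ and $f(H\cap U)$ are cubulated in $E=\R^n$, they are locally finite unions of axis-parallel cubes. Hence there is $r>0$ such that, inside the $\ell^1$-ball $B=B(f(x),r)$:
\begin{itemize}
\item $f(\gamma\cap U)\cap B$ is a union of axis-parallel segments emanating from $f(x)$;
\item $f(H\cap U)\cap B$ is a union of axis-parallel cubes having $f(x)$ as a corner.
\end{itemize}
Both sets are then cones over $f(x)$ within $B$. The $(n-1)$--cubes give a subset $\Lambda_H$ of the link $\Sigma_{n-1}$ of $f(x)$ in the standard cubulation. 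The segments give a set $\Lambda_\gamma$ of vertices of $\Sigma_{n-1}$.

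\emph{Two branches, two directions.} Choose $V\subseteq U$, a small open ball around $x$ with $f(V)\subseteq B$. Then $\gamma\cap V\smallsetminus\{x\}$ has exactly two components $\gamma^\pm$. Since $f$ is a topological embedding, each $f(\gamma^\pm)$ is a connected arc in $B\smallsetminus\{f(x)\}$ lying in the union of the radial segments. An arc in a union of segments meeting only at $f(x)$ cannot pass from one segment to another without hitting $f(x)$. So, shrinking $V$ if needed, each $f(\gamma^\pm)$ lies in a single radial segment, with direction $u$ or $v\in\Lambda_\gamma$. Injectivity gives $u\ne v$, and also $u,v\notin\Lambda_H$, because $f(\gamma^\pm)\cap f(H)=\varnothing$. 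It now suffices to show that $u$ and $v$ are antipodal: then $f(\gamma\cap V)$ lies on one axis-parallel line through $f(x)$, which is a single $1$--cube.

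\emph{Separation.} This is the main step. Since $\gamma\cap H=\{x\}$ and $H$ is an $(n-1)$--flat, $H\cap V$ separates $V$ into two half-balls, with $\gamma^+$ and $\gamma^-$ in different halves. The plan is to transfer this separation through $f$. By invariance of domain, $f(V)$ is an open neighbourhood of $f(x)$ and $f|_V$ is a homeomorphism onto it. So $f(V)\smallsetminus f(H\cap V)$ has exactly two components, containing $f(\gamma^+)$ and $f(\gamma^-)$ respectively.

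\begin{itemize}
\item \emph{Local two-sidedness of $f(H)$.} Take a small $\ell^1$-sphere $S=S(f(x),\rho)$ inside $f(V)$. Then $f(H\cap V)\cap S$ is a copy of $\Lambda_H$, and it is an $(n-2)$--sphere by invariance of domain and Jordan--Brouwer applied to the embedded disc $f(H\cap V)$. By Jordan--Brouwer, $S\smallsetminus\Lambda_H$ has exactly two components.
\item \emph{Locating $u$ and $v$.} These two components must lie on opposite sides of $f(H)$. The points $u,v\in S$ lie in $f(\gamma^+)$ and $f(\gamma^-)$, so they are in different components of $S\smallsetminus\Lambda_H$.
\end{itemize}

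Here $S$ is identified with $\Sigma_{n-1}$, with $\Lambda_H$ contained in its $(n-2)$--skeleton. Applying \cref{lem:separation_sphere_antipodal} (in dimension $n-1$) shows that $u$ and $v$ are antipodal.

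The delicate point is the Jordan--Brouwer step on $S$. One must check that a component of $S\smallsetminus\Lambda_H$ cannot reach both sides of $f(H)$ in $f(V)$. This follows because the radial cone over that component avoids $f(H)$, as $f(H)$ is conical in $B$.
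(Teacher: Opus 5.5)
Your proposal is correct and is essentially the paper's argument. You localise at $f(x)$, where both images are cones, intersect with a small sphere identified with $\Sigma_{n-1}$, transfer the separation of $\gamma$ by $H$ through the local homeomorphism $f$, and finish with \cref{lem:separation_sphere_antipodal}; using the $\ell^1$--sphere makes the identification with $\Sigma_{n-1}$ literal, and you check $u,v\notin\Lambda_H$ explicitly where the paper leaves it implicit. The Jordan--Brouwer detour is unnecessary, and invariance of domain and Jordan--Brouwer do not actually show that $\Lambda_H$ is an $(n-2)$--sphere. Simply drop it: since $S\smallsetminus\Lambda_H\subseteq f(V)\smallsetminus f(H\cap V)$, any path from $u$ to $v$ in $S\smallsetminus\Lambda_H$ would pull back under $f^{-1}$ to a path in $V\smallsetminus H$ joining $\gamma^+$ to $\gamma^-$, and that is exactly what the lemma needs.
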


\begin{proof}
For $n\le1$ there is nothing to prove. Otherwise, set
$$
A=f(\gamma\cap U), \qquad B=f(H\cap U), \qquad p=f(x).
$$
Since $A$ and $B$ are cubulated, there exists a neighbourhood $W\subset E$ of $p$ such that each of $A \cap W$, and $B \cap W$ is a finite union of cubes containing $p$. Moreover, up to subdividing, we may assume that $p$ is a vertex of each cube. Since $A$ is homeomorphic to an interval near $p$, we may assume that it is the union of exactly two $1$-cubes containing $p$, and write $A\cap W=e_1\cup e_2$. Similarly, $B$ is homeomorphic to $\mathbb R^{n-1}$ near $p$, so we may assume that $B \cap W=\bigcup_{j=1}^m Q_j$, where each $Q_j$ has dimension at most $n-1$.

Since $f$ is a topological embedding, $f(U)$ is open. Hence there exists a ball $D$, with respect to the $\ell^2$ metric, centred on $p$ such that $D \subseteq W \cap f(U)$. Let $S= \partial D$, endowed with the standard spherical complex structure, so that $S$ is naturally identified with $\Sigma_{n-1}$.
    
Since $A\cap W=e_1\cup e_2$, the set $S \cap A$ consists of two vertices $\xi_1,\xi_2$ of $S$. Since each $Q_j$ contains $p$ and has dimension at most $n-1$, the set $S \cap B$ is contained in the $(n-2)$--skeleton of $S$. Moreover, $H$ separates $\gamma$ at $x$, and $f$ is a local homeomorphism at $x$, hence the subset $D \cap B$ separates $e_1$ and $e_2$ at $p$. Therefore, $\xi_1$ and $\xi_2$ lie in different connected components of $S \setminus (S \cap B)$. By \cref{lem:separation_sphere_antipodal}, the vertices $\xi_1,\xi_2$ are antipodal. We conclude that $(e_1\cup e_2)\cap D$ forms a single $1$--cube. Take $V=f^{-1}(D)$.
\end{proof}


\section{Branching flats and biLipschitz embeddings} \label{sec:flats_rigidity}

Any quasiisometric embedding between spaces induces a biLipschitz embedding between their asymptotic cones. In this section we give conditions that allow us to control the images of certain flats in the asymptotic cone under this biLipschitz embedding.

More precisely, in \cref{subsec:def_of_branching} we introduce the notion of a geodesic being \emph{branch-complemented}. In \cref{subsec:ultralimit_of_fully_branching_geod}, we show that this property and related properties for flats are preserved by passing to asymptotic cones. Our key result is then \cref{prop:fully_branching_implies_geod_in_cone}, which shows that the images of such flats under biLipschitz embeddings are again flats. We summarise this in \cref{cor:fully_branching_implies_flat_in_cone} for later use.
In \cref{sec:fully_branching_theorems} we will use these results about asymptotic cones to deduce similar statements about the original quasiisometry.

The argument is roughly as follows. In order to obtain some form of rigidity, we will use the structure of top-dimensional biLipschitz flats in the asymptotic cone, as given by \Cref{prop:Bowditch_bilipflat_cubulated}. From the results in \cref{sec:structure_quasi_flats}, we already know that we can pass some of their structure down to intersections, and in fact this works more smoothly in asymptotic cones because we are now dealing with fine properties rather than coarse ones. This leads to the notion of \emph{branching} flats. With more information about a branching geodesic, we can use the methods of \cref{sec:top_arguments} to show that a biLipschitz embedding must send it to a singular geodesic. This is formulated using the notion of \emph{branch-complemented} geodesics. We can then control the images of flats that are spanned by such geodesics. Again, the fine setting makes this easier than in \cref{sec:singular_quasiflats}.

\subsection{Branching and branch-complementing} \label{subsec:def_of_branching}

Although the definitions in this subsection are given in the general CAT(0) setting, it is worth keeping in mind that our main applications are for CAT(0) cube complexes, and especially right-angled Artin groups. 

\begin{definition}\label{def:branching_flats}
Let $X$ be a CAT(0) space of asymptotic rank $n$. We say that a flat $H \subseteq X$ is \emph{branching} if $H$ is the intersection of finitely many $n$--flats.
\end{definition}

In particular, every $n$--flat is branching. Note that branching geodesics can fail to be singular. For instance, let $X$ be the CAT(0) cube complex from \cref{eg:corner_to_corner}, let $T$ be the 3--regular tree, and consider $X\times T$. Each geodesic in $X\times T$ whose projection to $T$ is a vertex is branching but not singular. 

\begin{lemma} \label{lem:branching_singular}
Let $X$ be a finite-dimensional CAT(0) cube complex of asymptotic rank $n$. Every branching flat in $X$ is semisingular. If $\dim X=n$, then every branching flat is singular.
\end{lemma}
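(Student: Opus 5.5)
The plan is to reduce to the top-rank case and then pass to intersections, using the Tits boundary description from \cref{sec:singular}. Write the branching flat as $H=\bigcap_{i=1}^s F_i$, where each $F_i$ is an $n$--flat of $X$. By \cref{lem:top_rank_semisingular}, each $F_i$ is semisingular. If $\dim X=n$, \cref{lem:top_dim_singular} gives that each $F_i$ is singular.

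For the semisingular statement, fix an asymptotic cone $\hat X$ in which the ultralimit $\hat H$ exists. Then every $\hat F_i$ exists and is singular, since each $F_i$ is semisingular. By \cref{lem:intersection_flats_cone}, we have $\hat H=\bigcap_i \hat F_i$. Each $\hat F_i$ is singular, so it is median-convex: $\mu(a,b,x)\in\hat F_i$ whenever $a,b\in\hat F_i$ and $x\in\hat X$. Median-convexity is preserved under intersection, so $\hat H$ is median-convex, i.e.\ $\mu(a,b,x)\in\hat H$ for all $a,b\in\hat H$ and $x\in\hat X$. By the equivalent characterisation of singularity stated after \cref{def:ccc_singular}, the flat $\hat H$ is singular. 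Since the cone was arbitrary, $H$ is semisingular.

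In the case $\dim X=n$, the same argument works directly in $X$ without passing to a cone. Each $F_i$ is singular, hence median-convex in $(X,d_1)$, so $H=\bigcap F_i$ is a median-convex flat. By the same characterisation, $H$ is singular.

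The step that needs care is whether that characterisation applies as stated. It asserts that a $d_2$--flat is singular if and only if it is median-convex, and this is exactly what is being used. To be safe, I would double-check it as follows. Median-convexity of $H$ inside the singular flat $F_1\cong(\R^n,\ell^1)$ means that $H$ is a median-convex Euclidean subspace of $\R^n$ with its standard median. Such a subspace is an intersection of coordinate halfspaces, which forces it to be a coordinate-parallel affine subspace. That subspace is $\ell^1$--isometrically embedded in $F_1$, and $F_1$ is $\ell^1$--isometrically embedded in $X$, so $H$ is singular. Alternatively, \cref{lem:semisingular_subflat_boundary} handles the semisingular case once one knows that $\partial_TH=\bigcap\partial_TF_i$ is a subcomplex of $\partial_TF_1$. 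In either route, the main obstacle is justifying that the intersection inherits the coordinate structure; the median-convexity argument is the cleanest way to do this.
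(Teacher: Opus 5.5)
Your proof is correct and follows the paper's argument: you reduce to the $n$--flats via \cref{lem:top_dim_singular} and \cref{lem:top_rank_semisingular}, and in the cone you use \cref{lem:intersection_flats_cone} to identify $\hat H$ with $\bigcap_i\hat F_i$. The only difference is that you make explicit the step ``an intersection of singular flats that is a flat is singular'', which the paper leaves implicit, by noting that median-convexity is preserved under intersections.
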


\begin{proof}
In the case $\dim X=n$, \cref{lem:top_dim_singular} shows that every $n$--flat in $X$ is singular, so every flat that is an intersection of $n$--flats is singular.

In the general case, \cref{lem:top_rank_semisingular} shows that every $n$--flat in $X$ is semisingular. If $H$ is a flat that is an intersection of finitely many $n$--flats, then for every asymptotic cone $\hat X$ of $X$ for which the ultralimit $\hat H$ exists, \cref{lem:intersection_flats_cone} shows that $\hat H$ is the intersection of the ultralimits of those flats. Hence $H$ is semisingular.
\end{proof}

\begin{definition}[Branch-complemented]\label{def:fully_branching_CAT(0)}
Let $X$ be a CAT(0) space of asymptotic rank $n\geq 1$. For $r\ge0$, a geodesic $\gamma \subseteq X$ is \emph{$r$--branch-complemented} if there exists a pair $(F,H)$ satisfying the following:
\begin{itemize}
\item   $F\subseteq X$ is an $n$--flat containing $\gamma$;
\item   $H\subseteq F$ is a branching $(n-1)$--flat transverse to $\gamma$;
\item   every $x\in F$ lies at distance at most $r$ from a branching geodesic $\gamma'\subseteq F$ that is parallel to $\gamma$ and a branching $(n-1)$--flat $H'\subseteq F$ that is parallel to $H$.
\end{itemize}
We say that $\gamma$ is \emph{branch-complemented} if it is $r$--branch-complemented for some $r$.
\end{definition}

Note that if $\gamma$ is $r$--branch-complemented, then it lies at Hausdorff distance at most $r$ from a branching geodesic. The uniformity of $r$ ensures that ultralimits of branch-complemented geodesics are $0$--branch-complemented; see \Cref{lem:ultralimit_fully_branching_geod}.

\begin{remark} \label{rem:fully_branching_geod_in_raag}
When $X$ is the universal cover of a Salvetti complex, if a flat is branching, then all of its parallels at integer distance are branching too. Hence if a geodesic is $r$--branch-complemented, then it is 1--branch-complemented. More generally, if $X$ is a cobounded CAT(0) space and $F$ is a periodic flat, then to show that $\gamma$ is branch-complemented it suffices to show that it is branching and has a single transverse $(n-1)$--subflat of $F$ that is branching.
\end{remark}

We will be interested in flats that are comprised of branch-complemented geodesics.

\begin{definition}[Directionally branch-complemented]\label{def:fully_branching_flats}
Let $X$ be a CAT(0) space of asymptotic rank $n\geq 1$. For $r\ge0$, a $k$--flat $F\subseteq X$ is \emph{directionally $r$--branch-complemented} if it is spanned by geodesics $\gamma_1,\dots,\gamma_k$ such that $F$ is $r$--coarsely covered by $r$--branch-complemented parallels in $F$ of $\gamma_i$, for each $i$. 

We say that $F$ is \emph{directionally branch-complemented} if it is directionally $r$--branch-complemented for some $r$.
\end{definition}

\begin{lemma} \label{lem:dbc_semisingular}
If $X$ is a finite-dimensional CAT(0) cube complex, then every directionally branch-complemented flat in $X$ is semisingular. If the dimension and asymptotic rank of $X$ are equal, then every directionally branch-complemented flat in $X$ is singular.
\end{lemma}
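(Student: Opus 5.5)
The plan is to show that the ultralimit of a directionally branch-complemented flat is a singular flat, and then, when $\dim X=n$, to upgrade this to singularity in $X$ itself.

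Let $F\subset X$ be a directionally $r$--branch-complemented $k$--flat, spanned by $\gamma_1,\dots,\gamma_k$. Fix an asymptotic cone $\hat X$ in which the ultralimit $\hat F$ exists. For each $i$, every point of $F$ is within $r$ of an $r$--branch-complemented parallel of $\gamma_i$, and by the note after \cref{def:fully_branching_CAT(0)} that parallel is within Hausdorff distance $r$ of a branching geodesic. Hence $F$ is $2r$--coarsely covered by branching geodesics parallel to $\gamma_i$.

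By \cref{lem:branching_singular}, each such branching geodesic is semisingular. Its ultralimit is therefore a singular geodesic of $\hat X$, and it lies in $\hat F$ parallel to $\hat\gamma_i$, since the coarse constant $2r$ disappears after rescaling. In particular $\hat\gamma_i$ itself, or at least a parallel of it inside $\hat F$, is a singular geodesic. So $\hat F$ is a $k$--flat spanned by singular geodesics $\hat\gamma_1,\dots,\hat\gamma_k$, with all parallels in $\hat F$ of each $\hat\gamma_i$ singular.

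Next, apply \cref{prop:singular_quasi_flats} to the identity $\R^k\cong\hat F\to\hat X$, which is an isometric embedding, with $D=0$. Its singular clause says the resulting flat, namely $\hat F$ itself, is singular. This proves $F$ is semisingular.

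For the case $\dim X=n$, repeat the argument directly in $X$. By \cref{lem:branching_singular}, branching flats are now singular, so the branching geodesics near each $\gamma_i$ are singular geodesics. Since $F$ is a flat and $\gamma_i$ lies within Hausdorff distance $r$ of a singular geodesic parallel to it in $F$, \cref{lem:parallel_geod_in_convex} makes these parallel inside $F$. So $F$ is spanned by singular parallels of the $\gamma_i$, and all parallels of these, being translates within the same parallel family, are again within bounded distance of singular ones. Applying \cref{prop:singular_quasi_flats} to the inclusion $F\to X$ with $D=0$ then gives that $F$ is singular.

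The main obstacle is this last step. It is not clear a priori that every parallel of a singular geodesic inside $F$ is singular, yet \cref{prop:singular_quasi_flats} assumes all parallels lie within distance $D$ of geodesics, which is trivially true with $D=0$, since they are geodesics. Its singular clause only needs the spanning geodesics themselves to be singular. So I would choose the spanning geodesics to be the branching, hence singular, parallels, and apply the proposition to the identity map of $F$.
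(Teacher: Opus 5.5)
Your semisingular half follows the paper's argument: spanning geodesics that lie near branching ones are semisingular by \cref{lem:branching_singular}, so the flat they span is semisingular. You simply make the cone bookkeeping explicit via \cref{prop:singular_quasi_flats} with $D=0$.

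One small caveat applies there, and the paper's one-line proof shares it. In a cone where $\hat F$ exists, the basepoints may drift away from every \emph{fixed} branching parallel of $\gamma_i$, so that no such fixed geodesic has an ultralimit. Strictly, you need ultralimits of a \emph{sequence} of branching geodesics. That is handled by \cref{lem:uniform_branching} together with the sequence version of the argument in \cref{lem:top_rank_semisingular}, not by semisingularity of a single geodesic.

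The genuine gap is in the case $\dim X=n$. The branching geodesics supplied by \cref{def:fully_branching_CAT(0)} lie in the witnessing $n$--flat $F'$ of the $r$--branch-complemented parallel $\gamma_i''\subset F$. In general $F'\neq F$, as the paper stresses.

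\begin{itemize}
\item[(i)] You therefore cannot ``choose the spanning geodesics to be the branching parallels'': they need not lie in $F$.
\item[(ii)] \cref{lem:parallel_geod_in_convex} only gives you \emph{some} parallel inside $F$. Nothing you say makes that parallel singular, which is exactly the obstacle you flag and then sidestep.
\item[(iii)] Taking $D=r$ in \cref{prop:singular_quasi_flats} does not rescue it either. With $D>0$ the proposition only produces a singular flat at bounded Hausdorff distance from $F$, not $F$ itself.
\end{itemize}

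The missing step is to use the witnessing flat. The geodesic $\gamma_i''$ is \emph{contained} in $F'$, which is singular by \cref{lem:top_dim_singular}. Inside $F'$, $\gamma_i''$ is parallel to a branching geodesic, which is singular by \cref{lem:branching_singular}. In the singular flat $F'$ (a standard cubulated $\R^n$, median-convex in $X$) the singular geodesics are exactly the axis-parallel lines. So $\gamma_i''$ is axis-parallel, hence singular.

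Now $F$ is spanned by singular geodesics $\gamma_1'',\dots,\gamma_k''$ lying in $F$ itself. Your $D=0$ application of \cref{prop:singular_quasi_flats} to the inclusion of $F$ then shows that $F$ is singular. The paper's proof simply takes the spanning geodesics to be branching; this witnessing-flat observation is what makes that legitimate inside $F$.
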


\begin{proof}
Each of the geodesics $\gamma_1,\dots,\gamma_k$ from \cref{def:fully_branching_flats} is branching, and hence is (semi)singular by \cref{lem:branching_singular}, so the flat they span is also (semi)singular.
\end{proof}

Every branch-complemented geodesic is a directionally branch-complemented 1--flat, but it should be noted that directionally branch-complemented $k$--flats need not be branching when $k\ge2$. Even if $F$ is an $n$--dimensional directionally branch-complemented flat, it can happen that the $n$--flats $F_i$ witnessing that the $\gamma_i$ are branch-complemented are necessarily distinct from $F$. See \cref{fig:dbc_not_branching}.

\begin{figure}[ht]
\centering\begin{tikzpicture}[baseline=-.1cm, scale=1.3]
\draw[thick]
(-1.6,1.05) -- (-0.8,1.05)
(0.8,1.05) -- (1.6,1.05)
(-1.2,.35) -- (1.2,.35)
(-0.4,1.75) -- (0.4,1.75);
\draw[thick]
(-1.6,1.05) to (-1.2,.35) to (-.8,1.05) to (-.4,.35) to (0,1.05) to
(.4,.35) to (.8,1.05) to (1.2,.35) to (1.6,1.05);
\draw[thick]
(0,1.05) to (-0.4,1.75) to (0,2.45) to (0.4,1.75) to (0,1.05);
\fill (-1.6,1.05) circle(.06);
\fill (-1.2,.35) circle(.06);
\fill (-.8,1.05) circle(.06);
\fill[red] (-.4,.35) circle(.06);
\fill[red] (0,1.05) circle(.06);
\fill[red] (.4,.35) circle(.06);
\fill (.8,1.05) circle(.06);
\fill (1.2,.35) circle(.06);
\fill (1.6,1.05) circle(.06);
\fill (-0.4,1.75) circle(.06);
\fill (0,2.45) circle(.06);
\fill (0.4,1.75) circle(.06);
\end{tikzpicture}
\caption{In the right-angled Artin group given by this graph, the standard 2--flats from pairs of red vertices are directionally branch-complemented but not branching. The red 3--flats are branching and directionally branch-complemented, but do not witness the fact that their standard geodesics are branch-complemented.}
\label{fig:dbc_not_branching}
\end{figure}

As with branch-complemented geodesics, we will show in \Cref{lem:ultralimit_fully_branching_flat} that ultralimits of directionally branch-complemented $k$--flats are directionally $0$--branch-complemented.

\begin{example} \label{ex:fully_branching}
Here are some examples of branch-complemented geodesics and directionally branch-complemented flats. A tree is said to be \emph{bushy} if every vertex is at uniformly bounded distance from a point with three unbounded complementary components.
\begin{itemize}
\item   In a product of bushy trees, every singular $k$--flat in the $k$--skeleton is branching. Hence every singular geodesic is branch-complemented and every singular flat is directionally branch-complemented. 
\item   More generally, in a thick Euclidean building or a symmetric space, every singular flat is branching, so every singular geodesic is $r$--branch-complemented, where $r$ is the diameter of a chamber in the Euclidean building case and $r=0$ in the symmetric space case.
\item   In right-angled Artin groups, a singular geodesic can be branching without being a coset of a cyclic subgroup (e.g.\ in $F_2 \times F_2$) and vice versa (e.g.\ in $\mathbb{Z}^2$).
\item   In a $2$--dimensional right-angled Artin group $A_\Gamma$, a coset of a cyclic subgroup corresponding to a standard generator is a branching geodesic if and only if the corresponding vertex in $\Gamma$ has degree at least two.
\item   If $F$ is directionally branch-complemented in $X$ and $T$ is a bushy tree, then every flat of the form $F\times \gamma$, with $\gamma\subseteq T$ a singular geodesic, is directionally branch-complemented in $X\times T$.
\end{itemize}
\end{example}

Note that the conclusion of the following can fail for more general CAT(0) spaces, because $H$ may fail to ``line up'' with any of the subflats of $F$ spanned by geodesics witnessing that $F$ is directionally branch-complemented.

\begin{lemma} \label{lem:subflats_of_fully_branching_ccc}
Let $X$ be a finite-dimensional CAT(0) cube complex of asymptotic rank $n$, and let $F\subseteq X$ be a directionally branch-complemented flat. If $H\subseteq F$ is a branching flat, then $H$ is directionally branch-complemented.
\end{lemma}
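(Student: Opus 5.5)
Fix $r$ such that $F$ is directionally $r$--branch-complemented, spanned by geodesics $\gamma_1,\dots,\gamma_k$, with $F$ coarsely covered by $r$--branch-complemented parallels of each $\gamma_i$. The plan is to show that $H$ is spanned by a subset of the directions $\gamma_1,\dots,\gamma_k$. Once that is established the conclusion is nearly immediate. Suppose $H$ is spanned by parallels of $\gamma_{i_1},\dots,\gamma_{i_p}$. For any $x\in H$ and any $j$, there is an $r$--branch-complemented parallel $\gamma'$ of $\gamma_{i_j}$ in $F$ with $d(x,\gamma')\le r$. Since $\gamma_{i_j}$ is parallel to a line in $H$, the closest-point projection of $\gamma'$ to $H$ is a parallel $\gamma''\subset H$ at distance at most $r$ from $\gamma'$. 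We then need $\gamma''$ itself to be branch-complemented with a uniform constant. It has the same witnessing pair $(F',H')$ up to translation, since $\gamma''\subset F'$ holds whenever $\gamma''$ lies in the flat spanned by the witnessing data. Failing that, we use the remark after \cref{def:fully_branching_CAT(0)}: a geodesic at Hausdorff distance $r$ from an $r$--branch-complemented geodesic is $2r$--branch-complemented, provided we may take it inside $F'$. I would arrange this by choosing $\gamma'$ as a branching parallel inside its witnessing flat and using that $H\cap F'$ contains a parallel, which holds because $H$ and $F'$ are both intersections of $n$--flats through nearby points. This step needs care, and I would first try to phrase the argument in terms of parallels inside $F$ only.

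The key step, and the main obstacle, is the alignment claim. By \cref{lem:dbc_semisingular} and \cref{lem:branching_singular}, $F$ and $H$ are semisingular. Pass to an asymptotic cone with fixed basepoint, or work with the simplicial structure on $\partial_TF$ from \cref{rem:boundary_semisingular}. The $\gamma_i$ are branching geodesics, hence semisingular, so their endpoints are vertices of the cubical sphere $\partial_TF\cong\partial_T\R^k$, and together they exhaust the $2k$ vertices. Thus the coordinate directions of the cubical structure on $F$ coincide with the directions of the $\gamma_i$. Since $H$ is semisingular, in the cone its ultralimit is a singular subflat of a singular flat $\R^k$. A singular subflat of $(\R^k,\ell^1)$ is median-convex, so it is a coordinate subspace, spanned by a subset of the coordinate axes. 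Hence $\partial_TH$ is a subcomplex whose vertices are endpoints of some of the $\gamma_i$, and so $H$ is spanned by parallels of those $\gamma_i$.

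The remaining subtlety is to carry this from the cone back to $X$. Here I would use \cref{lem:embedding-Titsboundary-into_link_and_Tits_boundary}: it gives an isometric embedding $\partial_TF\to\partial_T\hat F$, under which the endpoints of lines in $H$ are the endpoints of the corresponding axes. It follows that $\partial_T H$ is the round subsphere spanned by the relevant $\partial\gamma_i$, and therefore $H$ is spanned by parallels of those $\gamma_i$. This is the point where the cube complex structure is used, and it is exactly what the preceding remark says can fail for general CAT(0) spaces.
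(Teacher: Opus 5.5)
\paragraph{Verdict.} Your alignment step is correct and is essentially the paper's argument. The covering step is a genuine gap, and in fact cannot be closed as stated.

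\paragraph{The alignment step.} You use semisingularity of $F$ and $H$, pass to a cone with fixed basepoint, and use that a singular subflat of the standard $\R^m$ is a coordinate subspace. This shows that $H$ contains parallels of some of the $\gamma_i$. One minor point: the $\gamma_i$ need not themselves be branching. They are only Hausdorff-close to branching geodesics, but that still gives semisingularity.

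\paragraph{The covering step.} The paper's proof also disposes of this step in one sentence. Fix $x\in H$ and an $r$--branch-complemented parallel $\gamma'\subset F$ of $\gamma_i$ near $x$. The witnessing pair $(F',H')$ of $\gamma'$ certifies the parallel $\gamma''\subset H$ through $x$ only if $\gamma''\subset F'$. You justify this by saying $H\cap F'$ contains a parallel ``because both are intersections of $n$--flats through nearby points''. That has no basis: $F'$ may meet $F$ in $\gamma'$ alone. The remark after \cref{def:fully_branching_CAT(0)} does not help either. It only says a branch-complemented geodesic is close to a branching one. It does not say that close parallels of branch-complemented geodesics are branch-complemented.

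\paragraph{A counterexample in $X$.} Here is why I think no argument in $X$ can work.
\begin{itemize}
\item Let $F=\R^3$ with its standard cubulation.
\item Along each axis-parallel line of $F$ through a point of $(2\Z)^3$, glue a copy $W$ of $\R^3$ meeting $F$ exactly in that line.
\item Make that line $1$--branch-complemented inside $W$: glue further copies of $\R^3$ along its parallels in $W$ at integer positions, and glue a half-space along each transverse integer coordinate plane of $W$.
\item Finally glue a copy $F_1$ of $\R^3$ to $F$ along $H=\{(t,1,0)\}$.
\end{itemize}
All gluings are along convex subcomplexes, so this is a $3$--dimensional CAT(0) cube complex of asymptotic rank $3$. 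In it, $F$ is directionally $2$--branch-complemented and $H=F\cap F_1$ is branching.

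Everything attached to $F$ or $F_1$ is attached along lines or points. Hence any $3$--flat meeting $F$ (resp.\ $F_1$) in a set of dimension at least $2$ equals it. A $3$--flat containing $H$ agrees with $F$ or $F_1$ near a generic point of $H$, since near such a point the space is $F\cup_H F_1$. So the only $3$--flats containing $H$ are $F$ and $F_1$, and neither contains a branching $2$--flat. Thus $H$ is not branch-complemented, hence not directionally branch-complemented.

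\paragraph{What survives.} The cone version of the statement holds, and it is all that \cref{thm:subflat_branching_in_fully_branching} actually uses. By \cref{lem:ultralimit_fully_branching_flat}, every parallel in $\hat F$ of each $\hat\gamma_i$ is $0$--branch-complemented. Your alignment step shows that $\hat H$ is spanned by parallels in $\hat F$ of some of the $\hat\gamma_i$. Therefore $\hat H$ is directionally $0$--branch-complemented, and \cref{prop:fully_branching_implies_geod_in_cone} applies to it directly.
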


\begin{proof}
By \cref{lem:branching_singular}, every branching geodesic in $X$ is semisingular, and $H$ is also semisingular. Moreover, \cref{lem:dbc_semisingular} shows that $F$ is semisingular. Let $m=\dim F$ and $k=\dim H$.

Let $\gamma_1,\dots,\gamma_m\subset F$ be geodesics as in \cref{def:fully_branching_flats}, demonstrating that $F$ is directionally branch-complemented. Let $\hat X$ be an asymptotic cone of $X$ with respect to a fixed basepoint. Let $\hat H$ and $\hat F$ be the ultralimits of $H$ and $F$, respectively. Since $H$ and $F$ are semisingular, after relabelling the $\gamma_i$ we have that $\hat H$ is equal to the span of the ultralimits $\hat\gamma_1,\dots,\hat\gamma_k$. Since $H$ and $F$ are flats and the $\gamma_i$ are geodesics in $F$, this implies that $\gamma_1,\dots,\gamma_k$ have parallels $\gamma'_1,\dots,\gamma'_k$ that lie in $H$.

Since $\gamma_1,\dots,\gamma_m$ satisfy \cref{def:fully_branching_flats} for $F$, the geodesics $\gamma'_1,\dots,\gamma'_k$ satisfy \cref{def:fully_branching_flats} for $H$.
\end{proof}

We will show in \cref{thm:fully_branching_general} that quasiisometric embeddings map directionally branch-complemented top-dimensional flats uniformly Hausdorff-close to singular flats. From this, we can deduce a similar result for intersections of such flats. We will therefore be interested in geodesics that are intersections of branch-complemented top-dimensional flats, and in the flats that such geodesics span.

\begin{definition}[Strong branching]\label{def:strong_fully_branching}
Let $X$ be a CAT(0) space of asymptotic rank $n\geq 1$. A geodesic is \emph{strongly $r$--branch-complemented} if it is at Hausdorff distance at most $r$ from an intersection of directionally $r$--branch-complemented $n$--flats. It is \emph{strongly branch-complemented} if it is strongly $r$--branch-complemented for some $r$.

A flat is \emph{directionally strongly $r$--branch-complemented} if it is directionally $r$--branch-complemented and the $r$--branch-complemented geodesics witnessing that fact are themselves strongly $r$--branch-complemented. It is \emph{directionally strongly branch-complemented} if it is directionally strongly $r$--branch-complemented for some $r$.
\end{definition}

\begin{figure}[ht]
\centering\begin{tikzpicture}[baseline=-.1cm, scale=1.3]
\draw[thick]
(-1.6,1.05) -- (-0.8,1.05)
(0.8,1.05) -- (1.6,1.05)
(-1.2,.35) -- (1.2,.35);
\draw[thick]
(-1.6,1.05) to (-1.2,.35) to (-.8,1.05) to (-.4,.35) to (0,1.05) to
(.4,.35) to (.8,1.05) to (1.2,.35) to (1.6,1.05);
\fill (-1.6,1.05) circle(.06);
\fill (-1.2,.35) circle(.06);
\fill (-.8,1.05) circle(.06);
\fill[red] (-.4,.35) circle(.06);
\fill (0,1.05) circle(.06);
\fill[red] (.4,.35) circle(.06);
\fill (.8,1.05) circle(.06);
\fill (1.2,.35) circle(.06);
\fill (1.6,1.05) circle(.06);
\end{tikzpicture}
\caption{In the right-angled Artin group given by this graph, the standard geodesics corresponding to red vertices are branch-complemented but not strongly branch-complemented.}
\label{fig:dbc_not_strongly_branching}
\end{figure}

\begin{remark} \label{rem:not_nec_branching}
It is important to note that a directionally (strongly) branch-complemented flat of dimension greater than $1$ need not be at finite Hausdorff distance from any branching flat, nor even be contained in a top-dimensional flat; see for example the standard flats associated to the edges that are not contained in any 3--clique in Figure~\ref{fig:fully_branching_subgraphs_3D}. 
\end{remark}

\begin{figure}[ht]
\centering
\begin{tikzpicture}[baseline=-.1cm, scale=1.3]

\coordinate (tL) at (-.85,1);
\coordinate (bL) at (-.85,-1);

\coordinate (v1) at (-1.35,.08);
\coordinate (v2) at (-1.10,-.28);
\coordinate (v3) at (-.60,-.28);
\coordinate (v4) at (-.35,.08);
\coordinate (v5) at (-.85,.35);

\coordinate (a1) at (.6,-.3);
\coordinate (a2) at (1.4,-.3);
\coordinate (a3) at (1.8,.3);
\coordinate (a4) at (1,.3);
\coordinate (tR) at (1.2,1);
\coordinate (bR) at (1.2,-1);

\draw[thick]
(v1) -- (v2) -- (v3) -- (v4);
\draw[thick,dotted]
(v4) -- (v5) -- (v1);

\draw[thick]
(tL) -- (v1) (tL) -- (v2) (tL) -- (v3) (tL) -- (v4)
(bL) -- (v1) (bL) -- (v2) (bL) -- (v3) (bL) -- (v4);
\draw[thick,dotted]
(tL) -- (v5)
(bL) -- (v5);

\draw[thick]
(a1) -- (a2) -- (a3);
\draw[thick,dotted]
(a1) -- (a4) -- (a3);

\draw[thick]
(a1) -- (tR)
(a2) -- (tR)
(a3) -- (tR)
(a1) -- (bR)
(a2) -- (bR)
(a3) -- (bR);
\draw[thick,dotted]
(a4) -- (tR)
(a4) -- (bR);

\draw[thick]
(tL) -- (tR)
(bL) -- (bR)

(v4) -- (a1);

\fill[red] (v1) circle(.06);
\fill[red] (v2) circle(.06);
\fill[red] (v3) circle(.06);
\fill[red] (v4) circle(.06);
\fill[red] (v5) circle(.06);
\fill[red] (tL) circle(.06);
\fill[red] (bL) circle(.06);

\fill[red] (a1) circle(.06);
\fill[red] (a2) circle(.06);
\fill[red] (a3) circle(.06);
\fill[red] (a4) circle(.06);
\fill[red] (tR) circle(.06);
\fill[red] (bR) circle(.06);
\end{tikzpicture}
\caption{In the right-angled Artin group associated with this graph, all standard geodesics are strongly branch-complemented, hence all standard flats are directionally strongly branch-complemented. Nonetheless, the standard $2$-flats associated to the three middle edges are not contained in any 3--flat.}
    \label{fig:fully_branching_subgraphs_3D}
\end{figure}

\begin{example}
As in \cref{ex:fully_branching}, if $X$ is a symmetric space or thick Euclidean building, then every singular flat in $X$ is directionally strongly branch-complemented. 
\end{example}


\subsection{Branching properties and ultralimits} \label{subsec:ultralimit_of_fully_branching_geod}

In this subsection we show that directionally $r$--branch-complemented flats give directionally $0$--branch-complemented flats in the asymptotic cone. 

We start with a general result showing that intersections of flats are close to intersections of finitely many flats, which lets us show in \cref{lem:uniform_branching} that branching is preserved under ultralimits. We then prove the 1--dimensional case, in \cref{lem:ultralimit_fully_branching_geod}, and from that we derive the general case in \cref{lem:ultralimit_fully_branching_flat}.

\begin{lemma} \label{lem:almost_intersection_finitely_many}
For every $m$ there exists an integer $M=M(m)$ such that the following holds for all $\eps>0$ and all $k\in\mathbb N$. 

Let $X$ be a complete CAT(0) space, and let $H$ be a $k$--flat that is the intersection of a family $\cal F=\{F_i\}$ of $(k+m)$--flats. There is a subset $\cal F'\subset\cal F$ of cardinality at most $M$ such that the intersection of the elements of $\cal F'$ lies at Hausdorff distance at most $\eps$ from $H$.
\end{lemma}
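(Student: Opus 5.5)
The plan is to reduce the problem to a single flat, split off the $H$ direction, and then use Helly's theorem on the faces of a small cube. This should give $M(m)=2m^2+1$.

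\emph{Reduction to Euclidean space.} Fix one element $F_1\in\mathcal F$ and identify $F_1$ isometrically with $\R^{k+m}$, so that $H$ becomes an affine $k$--plane. For each $F_i\in\mathcal F$, the set $C_i':=F_i\cap F_1$ is a closed convex subset of $\R^{k+m}$. It is closed because flats are closed in $X$, and convex because flats are convex. It contains $H$. In Euclidean space, a closed convex set containing an affine plane $H$ is a union of translates of $H$: if $x\in C_i'$ and $h,h'\in H$, then the points $x+t(h'-h)$ lie in $C_i'$ for all $t$, by taking limits of convex combinations of $x$ with far-away points of $H$. Writing $\R^{k+m}=H\times\R^m$, with $0\in\R^m$ corresponding to $H$, we therefore get $C_i'=H\times C_i$, where $C_i\subset\R^m$ is closed, convex and contains $0$. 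The hypothesis $\bigcap\mathcal F=H$ becomes $\bigcap_i C_i=\{0\}$. It then suffices to find at most $2m^2$ indices $i$ such that the corresponding $C_i$ intersect inside the open ball $B(0,\eps)$. Adding $F_1$ to these, the subfamily $\mathcal F'$ has intersection $H\times C'$ with $C'\subset B(0,\eps)$, and $H\times C'$ lies at Hausdorff distance at most $\eps$ from $H$.

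\emph{Cube faces and Helly.} Set $\delta=\eps/\sqrt m$ and let $Q=[-\delta,\delta]^m$, so that $Q\subset B(0,\eps)$. The boundary $\partial Q$ is the union of the $2m$ facets $P_j^\pm=\{x\in Q:\pm x_j=\delta\}$. Each facet is a compact convex set lying in an affine hyperplane of dimension $m-1$. Fix a facet $P$ and consider the family $\{C_i\cap P\}_i$ of compact convex subsets of that $(m-1)$--dimensional hyperplane. Its total intersection is empty, since $\bigcap_i C_i=\{0\}$ and $0\notin P$. By compactness, some finite subfamily already has empty intersection. Helly's theorem in dimension $m-1$ then yields at most $m$ indices $i$ with $\bigcap_i (C_i\cap P)=\varnothing$. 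Doing this for all $2m$ facets gives at most $2m\cdot m$ indices. Let $C'$ be the intersection of the corresponding $C_i$.

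\emph{Conclusion.} By construction $C'\cap\partial Q=\varnothing$. Moreover $C'$ is convex and contains $0\in\operatorname{int}Q$. If some $y\in C'$ lay outside $Q$, the segment $[0,y]\subset C'$ would meet $\partial Q$, which is impossible. Hence $C'\subset Q\subset B(0,\eps)$, and the bound $M(m)=2m^2+1$ is independent of $\eps$ and $k$.

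\emph{Main obstacle.} The step needing the most care is getting a bound on the number of flats that does not depend on $\eps$. A naive compactness argument on the sphere of radius $\eps$ only gives finiteness, and examples such as convex sets $C_i$ tangent at $0$ show that the count can otherwise grow as $\eps\to0$. Applying Helly to the facets of the cube $Q$, rather than to the sphere, is what makes the count uniform. The remaining points are routine: the splitting $C_i'=H\times C_i$, and the passage from an infinite family with empty intersection to a finite one, which uses compactness of the sets $C_i\cap P$.
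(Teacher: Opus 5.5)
Your proof is correct. Its first step is the same as the paper's: fix one flat, note that each $F_i\cap F_0$ is closed, convex and contains $H$, hence splits as $H\times B_i$ with $B_i\subset\R^m$ closed, convex and containing $0$, and reduce to finding boundedly many $B_i$ whose intersection lies in the $\eps$--ball.

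You differ from the paper in the finiteness step. The paper works on spheres. For every $x$ on the $\frac\eps2$--sphere some $B_i$ misses $x$, and a supporting halfspace then shows that $S'\ssm B_i$ contains a spherical cap of polar angle at least $\frac\pi3$ on the $\eps$--sphere $S'$. The paper then asserts that the resulting cover of $S'$ by such caps has a subcover whose size depends only on $m$. Uniformity in $\eps$ comes from the scale-invariant angle $\frac\pi3$. However, the bounded-subcover step is stated without further argument, and it is not automatic: for closed caps of this size even a finite subcover can fail to exist.

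Your route replaces this covering argument by Helly's theorem on the $2m$ facets of an inscribed cube. This gives the explicit bound $M(m)=2m^2+1$ and passes from an arbitrary family to a finite one via compactness of the facets. It also makes the independence from $\eps$ and $k$ transparent.

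Two cosmetic points. First, with $\delta=\eps/\sqrt m$ the cube $Q$ lies in the closed $\eps$--ball, since its corners have norm exactly $\eps$, not in the open one. This is harmless, because only Hausdorff distance at most $\eps$ is required. Second, the trivial case $m=0$, where every $F_i$ equals $H$, should be set aside before dividing by $\sqrt m$.
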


\begin{proof}
Fix $F_0\in\cal F$. For each $F_i$ we have that $C_i=F_i\cap F_0$ is a closed convex subspace of the $(m+k)$--flat $F_0$ that contains $H$. If $x\in C_i$, then since $C_i$ is convex, it contains the geodesic $[x,y]$ for every $y\in H$. Hence $C_i$ contains the parallel of $H$ through $x$, because it is closed. Thus $C_i$ splits metrically as a product $C_i=H\times B_i$, where $B_i$ is a closed convex subset of $\R^{m}$ containing the origin. 

From the assumptions of the lemma, the intersection of all the $B_i$ is exactly the origin of $\R^{m}$. The lemma therefore reduces to showing that a uniformly finite subset of the $B_i$ have intersection contained in the $\eps$--ball of $\R^{m}$. 

Let $I$ be the set of all $i$ such that $B_i$ does not contain the sphere $S=S_{\R^{m}}(0,\frac\eps2)$.
Note that for every $x\in S$ there is some $B_i$ that does not contain it, with $i\in I$.

If $x\in S$ lies outside $B_i$, then because $B_i$ is convex it is contained in an affine halfspace whose boundary contains $x$. Thus $S'\ssm B_i$ contains a spherical cap of polar angle at least $\frac\pi3$, where $S'=S_{\R^{m}}(0,\eps)$ is the $\eps$--sphere in $\R^{m}$. By letting $x$ vary over $S$, we obtain a covering of $S'$ by spherical caps of polar angle at least $\frac\pi3$. We can therefore pass to a subcover of cardinality depending only on $m$. The corresponding elements of $I$, together with $F_0$, give us the desired collection $\cal F'$.
\end{proof}

\begin{corollary} \label{lem:uniform_branching}
Let $X$ be a complete CAT(0) space of asymptotic rank $n$, and let $r\ge0$. Suppose that $(H_i)$ is a sequence of $k$--flats such that each $H_i$ lies at Hausdorff distance at most $r$ from a branching $k$--flat $H'_i$. For every asymptotic cone $X_\omega$ of $X$, the ultralimit $(H_i)_\omega$ is branching.
\end{corollary}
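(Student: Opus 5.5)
First, reduce to the case $H_i$ branching: since $d_{\mathrm{Haus}}(H_i,H'_i)\le r$, the ultralimits of $(H_i)$ and $(H'_i)$ coincide in any asymptotic cone, because the scaling factors diverge. So assume each $H_i$ is itself an intersection of finitely many $n$--flats $\{F_{i,j}\}_j$. If $k=n$ there is nothing to show: the ultralimit of a sequence of $n$--flats is an $n$--flat, hence branching. So assume $k<n$.

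The key input is \cref{lem:almost_intersection_finitely_many}, applied with $m=n-k$ and $M=M(n-k)$, to obtain a uniform bound on the number of flats needed. For each $i$, the lemma gives a subfamily $\mathcal F_i'$ of at most $M$ of the $F_{i,j}$ whose intersection $C_i$ lies within Hausdorff distance $\eps=1$ of $H_i$. By padding with repeats, we may take exactly $M$ flats $F_{i,1},\dots,F_{i,M}$ for every $i$. Passing to the cone, each sequence $(F_{i,j})_i$ has ultralimit an $n$--flat $\hat F_j$ in $X_\omega$; it exists because it contains $(H_i)_\omega$, which we assume exists. Since $d_{\mathrm{Haus}}(C_i,H_i)\le1$, we have $(C_i)_\omega=(H_i)_\omega$.

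It then remains to show that $\bigcap_{j=1}^M\hat F_j=(H_i)_\omega$. The inclusion $\supseteq$ is clear. For $\subseteq$, the plan is to adapt the proof of \cref{lem:intersection_flats_cone}. Take $x$ in the intersection, represented by $(x_i)$, with $d(x_i,F_{i,j})=o(\lambda_i)$ for all $j$ along the ultrafilter. The aim is to show $d(x_i,C_i)=o(\lambda_i)$. This is the main obstacle, since one needs a coarse Helly-type estimate for $M$ flats in a general CAT(0) space. The approach is to project to $F_{i,1}$, where everything becomes convex subsets $C_{i,j}=F_{i,1}\cap F_{i,j}$ of $\R^n$. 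As in the lemma, each of these splits as $H_i\times B_{i,j}$ with $B_{i,j}$ convex in $\R^{n-k}$, and $\bigcap_j B_{i,j}$ is contained in the unit ball.

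The difficulty is that $x_i$ being sublinearly close to each $F_{i,j}$ does not immediately give closeness to $F_{i,1}\cap F_{i,j}$. The argument first uses the reasoning of \cref{lem:intersection_flats_cone}: a rescaled direction transverse to $H_i$ would produce a limiting Tits boundary point of $F_{i,1}$ common to all the $F_{i,j}$ but orthogonal to $\partial_TH_i$. A compactness argument in $\partial_T$ of the flats, using the finite number $M$ and a diagonal subsequence, should then yield a contradiction as in that proof. If this limit argument degenerates, the fallback is to take the $\hat F_j$ and directly verify that $(H_i)_\omega$ is the intersection by a sublinear-distance argument in the Euclidean factor $\R^{n-k}$. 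In either case, $(H_i)_\omega$ is the intersection of the finitely many $n$--flats $\hat F_1,\dots,\hat F_M$, and is therefore branching.
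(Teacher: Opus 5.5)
Your reduction to branching $H_i$, and your use of \cref{lem:almost_intersection_finitely_many} to get at most $M=M(n-k)$ flats $F_{i,1},\dots,F_{i,M}$ whose intersection $C_i$ is $1$--Hausdorff-close to $H_i$, are exactly the paper's argument. The paper then asserts in one line that $(C_i)_\omega=\bigcap_j\hat F_j$. You correctly isolate this as the crux, but your justification does not work, and the equality is in fact false in this generality.

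The contradiction in \cref{lem:intersection_flats_cone} relies on the flats being \emph{fixed}. There, the limiting direction $\xi_1$ lies in $\bigcap_i\partial_TF_i$ for one fixed family, so the ray from $p\in H$ towards $\xi_1$ lies in every $F_i$, hence in $H$. When the flats vary with $i$, the limiting direction is a boundary point of no fixed flat, and nothing is contradicted. Your fallback fails for the same reason: the splitting $F_{i,1}\cap F_{i,j}=H_i\times B_{i,j}$ only sees exact intersections. What you need is a coarse Helly-type bound $\bigcap_jF_{i,j}^{+D}\subseteq C_i^{+cD+c'}$ with $c,c'$ independent of $i$. Without it, two $n$--flats can meet exactly in $H_i$ while staying $1$--close on a region of diameter comparable to $\lambda_i$, so their ultralimits share a whole neighbourhood of $(H_i)_\omega$.

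Here is a concrete instance. For $L\in\N$, let $R_L=[-L,L]\times[0,1]\subset\R^2$, whose diagonals meet only at $p_L=(0,\frac12)$. Attach a geodesic ray at each corner, so that the diagonals extend to geodesic lines $\gamma_{L,1},\gamma_{L,2}$ with $\gamma_{L,1}\cap\gamma_{L,2}=\{p_L\}$. Wedge all these spaces together at points at distance at least $10L$ from $p_L$. This gives a complete CAT(0) space $Z$ (even a square complex) quasiisometric to a tree, and $X=Z\times\R$ has asymptotic rank $2$. Neither $Z$ nor the $\R$--tree $Z_\omega$ contains a $2$--flat, so the $2$--flats of $X$ and of $X_\omega=Z_\omega\times\R$ are exactly the products $\ell\times\R$ with $\ell$ a line. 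Hence $H_L=\{p_L\}\times\R=(\gamma_{L,1}\times\R)\cap(\gamma_{L,2}\times\R)$ is branching.

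Now take the cone with basepoints $(p_L,0)$ and scaling $L$. The rectangles $R_L$ converge to a segment $[-1,1]$, which is the whole closed unit ball about $o$ in $Z_\omega$. So every line of $Z_\omega$ through $o$ contains $[-1,1]$, and every intersection of $2$--flats containing $(H_L)_\omega=\{o\}\times\R$ contains $[-1,1]\times\R$. Thus $(H_L)_\omega$ is not branching: the statement itself fails as written.

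The missing ingredient, in both your proposal and the paper's one-line proof, is the uniform coarse intersection bound above. It does hold, for instance, for convex subcomplexes of an $N$--dimensional CAT(0) cube complex, where \cref{prop:coarse_intersection_subcomplexes_ccx} gives it with $c=\sqrt N$ and $c'=0$.
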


\begin{proof}
We have $(H_i)_\omega=(H'_i)\omega$. By \cref{lem:almost_intersection_finitely_many}, for each $i$ there exist at most $M=M(n-k)$ flats $F_{i,j}$ of dimension $n$ such that $H'_i$ lies at Hausdorff distance at most one from $I_i=\bigcap_jF_{i,j}$. It follows that $(H_i)_\omega=(I_i)_\omega$ is equal to the intersection of the $n$--flats $\big((F_{i,j})_i\big)_\omega$, of which there are at most $M$.
\end{proof}

\begin{lemma}\label{lem:ultralimit_fully_branching_geod}
Let $X$ be a complete CAT(0) space of asymptotic rank $n$. Let $(\gamma_m)$ be a sequence of geodesics in $X$, and let $\hat X$ be an asymptotic cone of $X$ such that the ultralimit $\hat\gamma=\lim_\omega(\gamma_m)$ exists.
If there exists $r\ge0$ such that every $\gamma_m$ is $r$--branch-complemented, then $\hat\gamma\subseteq \hat X$ is a $0$--branch-complemented geodesic.
\end{lemma}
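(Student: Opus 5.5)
For each $m$, fix a witnessing pair $(F_m,H_m)$ for $\gamma_m$ as in \cref{def:fully_branching_CAT(0)}. The plan is to take ultralimits of everything and check the three conditions with $r=0$.

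First, since $\gamma_m\subseteq F_m$ and $\hat\gamma$ exists, the ultralimit $\hat F=\lim_\omega(F_m)$ exists. It is an $n$--flat of $\hat X$ containing $\hat\gamma$, being an ultralimit of isometric copies of $\R^n$.

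Next, transversality has to be preserved, and uniformly. The angle between $\gamma_m$ and $H_m$ inside $F_m$ may degenerate as $m$ varies, so I will use the covering condition to control it. Every point of $F_m$ lies within $r$ of a parallel $H'$ of $H_m$. Hence the parallels of $H_m$ in $F_m$ are spaced with gaps at most $2r$ along any geodesic. In particular, along $\gamma_m$, consecutive intersection points with parallels of $H_m$ are at most distance roughly $2r$ apart, and this only gives a lower bound on the angle.

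To get a genuinely uniform statement, I will instead work directly in the limit. Set $\hat H=\lim_\omega(H'_m)$, where $H'_m$ is the branching parallel of $H_m$ passing within $r$ of the basepoint. Then $\hat H$ is an $(n-1)$--subflat of $\hat F$, and it is branching by \cref{lem:uniform_branching}. I then need $\hat H$ transverse to $\hat\gamma$, meaning $\hat\gamma\not\subseteq P(\hat H)$ direction-wise. If $\hat\gamma$ were parallel to $\hat H$, then the angle between the direction of $\gamma_m$ and the hyperplane direction of $H_m$ would tend to $0$ along $\omega$.

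The obstacle is that $r$--branch-complementedness alone does not bound this angle below, since the covering condition holds for any transverse $H_m$. I would handle this by re-choosing $H_m$. Replace the witnessing parallels if necessary; if transversality still fails in the limit, then $\hat\gamma$ lies in a parallel of $\hat H$, which is impossible because branching parallels of $\gamma_m$ within $r$ of every point would force parallels of $\hat\gamma$ through every point of $\hat F$, all lying in parallels of $\hat H$. That would give $\hat F\subseteq P(\hat H)$ with a trivial normal factor, contradicting that $\hat H$ has codimension one. I expect this step, ensuring a uniform transversality angle, to be the main point requiring care. If necessary, I would strengthen the choice of $H_m$ within the definition to be orthogonal to the parallel class of $\gamma_m$ in $F_m$, which is allowed after the same covering argument.

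Finally, take any $\hat x\in\hat F$, represented by $(x_m)$ with $x_m\in F_m$. Choose branching parallels $\gamma'_m\ni$ points within $r$ of $x_m$, and $H''_m$ likewise. Their ultralimits pass through $\hat x$, since $r/\lambda_m\to0$. They are parallel to $\hat\gamma$ and $\hat H$ respectively, because parallelism in $F_m$ passes to the limit. They are branching by \cref{lem:uniform_branching}, applied with Hausdorff distance $0$. This verifies the third condition with $r=0$.
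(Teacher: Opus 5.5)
Your last paragraph is the paper's proof: take ultralimits of branching parallels of $\gamma_m$ and of $H_m$ passing within $r$ of $x_m$, and note they are branching by \cref{lem:uniform_branching}. Your choice of $\hat H$ as the limit of a branching parallel of $H_m$ near a point of $\gamma_m$ close to the basepoint is slightly more careful than the paper's $\lim_\omega H_m$, which need not exist. The paper then simply asserts that $\hat H$ is transverse to $\hat\gamma$. You are right that this needs an argument, but the one you give does not work.

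Let $\theta_m$ be the angle between $\gamma_m$ and $H_m$ in $F_m$. The covering condition bounds only the \emph{perpendicular} spacing of the branching parallels of $H_m$ by $2r$. Consecutive crossings along $\gamma_m$ can be $2r/\sin\theta_m$ apart, so it gives no information on $\theta_m$, and if $\lim_\omega\theta_m=0$ then $\hat H$ is parallel to $\hat\gamma$. Your contradiction in that case is not one. The inclusion $\hat F\subseteq P(\hat H)$ always holds, with normal factor a line, and ``every parallel of $\hat\gamma$ lies in a parallel of $\hat H$'' merely restates the degeneration. Nor may you re-choose $H_m$ orthogonal to $\gamma_m$: being branching (an intersection of finitely many $n$--flats) is not preserved by rotating $H_m$ inside $F_m$, and the covering condition produces no new branching flats.

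In fact nothing in \cref{def:fully_branching_CAT(0)} bounds $\theta_m$ below. Glue half-planes to $\R^2$ along the lines $y\in\Z$ and along the lines through the points $(k,0)$, $k\in\Z$, at angle $\theta$ to the horizontal. The horizontal lines are then $1$--branch-complemented, and their only witnessing pairs are ($\R^2$, a slanted line). Now wedge together such pieces with $\theta=\theta_m\to0$, each attached at a point whose distance from its horizontal axis $\gamma_m$ is large compared with the scaling sequence. An asymptotic cone based on the $\gamma_m$ is then $\R$ times an $\R$--tree, in which every branching line is parallel to $\hat\gamma$, so $\hat\gamma$ is not $0$--branch-complemented.

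What is really needed is a uniform bound $\theta_m\ge\theta_0>0$; with it, $\hat H$ meets $\hat\gamma$ at angle $\lim_\omega\theta_m\ge\theta_0$ and the rest of your argument goes through. This bound is automatic in the cube-complex setting where the paper's main theorems use the lemma. If $X$ is a finite-dimensional CAT(0) cube complex of asymptotic rank $n$, then $F_m$, $H_m$, $\gamma_m$ are semisingular by \cref{lem:top_rank_semisingular,lem:branching_singular}. In an asymptotic cone of $X$ with constant basepoint $\gamma_m\cap H_m$, their ultralimits are singular and meet at the same angle. A singular line transverse to a singular hyperplane of the standard $\R^n$ is orthogonal to it, so $\theta_m=\pi/2$. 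For a general complete CAT(0) space the bound has to be assumed, for instance by reading \emph{transverse} as \emph{orthogonal}.
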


\begin{proof}
For each $m$, consider a pair $(F_m,H_m)$ as in \Cref{def:fully_branching_CAT(0)}. The ultralimit $\hat F=\lim_\omega F_m$ is an $n$--flat containing $\hat\gamma$, and $\hat H=\lim_\omega H_m$ is an $(n-1)$--flat in $\hat F$ transverse to $\hat\gamma$. Given $\hat x\in \hat F$, let $(x_m)\subset F_m$ be a sequence that represents $\hat x$. By the choice of $(F_m,H_m)$, there is a branching $(n-1)$--flat $K_m\subseteq F_m$ parallel to $H_m$ and a branching geodesic $\ell_m\subseteq F_m$ parallel to $\gamma_m$ with $d(x_m,K_m)\le r$ and $d(x_m,\ell_m)\le r$. Let $\hat K=\lim_\omega(K_m)$ and $\hat\ell=\lim_\omega(\ell_m)$. The former is a subflat of $\hat F$ parallel to $\hat H$, and the latter is a geodesic in $\hat F$ parallel to $\hat\gamma$. Moreover, $\hat x\in\hat K\cap\hat\ell$. Finally, by \cref{lem:uniform_branching}, both $\hat K$ and $\hat\ell$ are branching.
\end{proof}

We now turn to the $k$--dimensional case.

\begin{proposition}\label{lem:ultralimit_fully_branching_flat}
Let $X$ be a complete CAT(0) space of asymptotic rank $n$. Let $(F_m)$ be a sequence of $k$--flats in $X$, and let $\hat X$ be an asymptotic cone of $X$ such that the ultralimit $\hat F=\lim_\omega(F_m)$ exists.
If there exists $r\ge0$ such that every $F_m$ is directionally $r$--branch-complemented, then $\hat F \subseteq \hat X$ is a directionally $0$--branch-complemented $k$--flat.
\end{proposition}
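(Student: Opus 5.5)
The plan is to reduce to the one-dimensional case, \cref{lem:ultralimit_fully_branching_geod}, applied to sequences of parallels. For each $m$, fix geodesics $\gamma_{m,1},\dots,\gamma_{m,k}$ spanning $F_m$ that witness directional $r$--branch-complementedness. We normalise them so that they all pass through a common point $p_m\in F_m$, with $p_m$ representing a point of $\hat F$; this is possible because the family of parallels of $\gamma_{m,i}$ in $F_m$ is all of $F_m$ up to translation. The spanning directions could degenerate in the limit, since the angles between $\gamma_{m,i}$ and $\gamma_{m,j}$ might tend to $0$. To handle this, replace the tuple by ultralimits of the unit direction vectors in $F_m\cong\R^k$. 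If the limiting directions fail to span, we need a substitute. First, though, note that in the cube complex setting (the one actually used later) the witnessing geodesics are semisingular, and their directions lie in a finite set, so no degeneration occurs. In the general CAT(0) case the statement presumably implicitly allows this. I would first try to argue that the angle function is preserved under ultralimits of flats: $\hat F=\lim_\omega F_m$ is isometric to $\R^k$ via the ultralimit of the identifications $F_m\cong\R^k$. The directions of $\hat\gamma_i=\lim_\omega\gamma_{m,i}$ are therefore the $\omega$-limits of the directions, and the spanning property holds provided the limiting directions are independent. If needed, I would pass to a subcollection, using the fact that the branch-complemented parallels of the $\gamma_{m,i}$ still cover.

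Next, show that $\hat F$ is $0$--coarsely covered by $0$--branch-complemented parallels of each $\hat\gamma_i$. Given $\hat x\in\hat F$ represented by $(x_m)$ with $x_m\in F_m$, pick for each $m$ an $r$--branch-complemented parallel $\ell_m\subset F_m$ of $\gamma_{m,i}$ with $d(x_m,\ell_m)\le r$. The ultralimit $\hat\ell=\lim_\omega\ell_m$ exists, contains $\hat x$, and is parallel to $\hat\gamma_i$ inside $\hat F$. It is $0$--branch-complemented by \cref{lem:ultralimit_fully_branching_geod}, applied to the sequence $(\ell_m)$ with uniform constant $r$. This gives the covering with constant $0$.

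The main obstacle is the first step, namely ensuring that the ultralimits $\hat\gamma_1,\dots,\hat\gamma_k$ still span $\hat F$. I would try to choose the witnesses so that their angles are bounded away from $0$. If they are not, I would replace a degenerate pair by an independent direction using the fact that each family of branch-complemented parallels coarsely covers $F_m$. In the cube complex setting I would sidestep the issue entirely by the finiteness of semisingular directions noted above, via \cref{rem:boundary_semisingular}.
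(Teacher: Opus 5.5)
Your second step is the paper's argument essentially verbatim. The paper fixes an arbitrary parallel $\hat\beta$ of $\hat\gamma_i$ rather than an arbitrary point $\hat x$, which is equivalent.

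The paper's proof only arranges that the ultralimits $\hat\gamma_i$ exist. It never checks that they still span $\hat F$, so the issue you flag is real. However, your general repair does not work:
\begin{itemize}
\item Passing to a subcollection cannot yield $k$ spanning directions.
\item Knowing that the branch-complemented parallels of one $\gamma_{m,i}$ cover $F_m$ says nothing about any other direction, and there may be none.
\end{itemize}

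In fact the statement is false for general complete CAT(0) spaces. Let $Z_m$ be a Euclidean plane $E_m$ with a half-plane glued along each line $\{y=j\}$ and along each line $\{y\cos\frac1m-x\sin\frac1m=j\}$, for $j\in\mathbb Z$. Let $X$ be the wedge of the $Z_m$.
\begin{itemize}
\item $X$ is complete and CAT(0), of asymptotic rank $2$: its cones are unions of planes and half-planes meeting along lines, so they contain no normed $3$--balls.
\item The only branching lines in $E_m$ are the glued ones. So $E_m$ is directionally $1$--branch-complemented, but every witnessing pair of directions makes angle $\frac1m$.
\item Take $F_m=E_m$ and basepoints $o_m\in E_m$ with $d(o_m,w)/\lambda_m\to\infty$, where $w$ is the wedge point. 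The ultralimit of the $Z_m$ is then $\R\times T$ for an $\R$--tree $T$.
\item The $2$--flats of $\R\times T$ are the sets $\R\times\ell$, so its only branching lines are horizontal. Hence $\hat F$ contains no $0$--branch-complemented geodesic at all.
\end{itemize}
The same example breaks the transversality of $\lim_\omega H_m$ asserted in the proof of \cref{lem:ultralimit_fully_branching_geod}. Your second step uses that lemma as a black box.

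What is missing is a lower bound, uniform in $m$, on two angles: between the witnessing directions of $F_m$, and between each $\gamma$ and its $H$ in \cref{def:fully_branching_CAT(0)}. Your cube-complex observation supplies this bound:
\begin{itemize}
\item $F_m$ is semisingular by \cref{lem:dbc_semisingular}.
\item The witnesses have branching, hence semisingular, parallels by \cref{lem:branching_singular}.
\item So by \cref{rem:boundary_semisingular}, their endpoints are vertices of the standard structure on $\partial_TF_m$, and distinct witness directions are orthogonal.
\item Similarly, a branching $(n-1)$--flat transverse to a semisingular geodesic in a top-rank flat is orthogonal to it.
\end{itemize}

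The bound also holds trivially for a constant sequence $F_m=F$. That is the only case invoked in \cref{thm:fully_branching_general} and the later theorems. So your remark that cube complexes are ``the setting actually used later'' is not quite right: \cref{thm:fully_branching_general} allows an arbitrary complete CAT(0) domain, and what makes it safe is that the sequence is constant. Under either hypothesis your proof goes through; without one, neither it nor the statement holds.
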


\begin{proof}
Since $F_m$ is directionally $r$--branch-complemented, it is spanned by geodesics $\gamma_{m,1},\dots,\gamma_{m,k}\subseteq F_m$ as in \Cref{def:fully_branching_flats}. The properties of the $\gamma_{m,i}$ imply that, after replacing each by a parallel, we can assume that the ultralimit $\hat\gamma_i=\lim_\omega(\gamma_{m,i})$ exists for all $i$.

Fix $i$ and let $\hat\beta\subseteq \hat F$ be any geodesic parallel to $\hat\gamma_i$. Represent $\hat\beta$ by a sequence of geodesics $\beta_m\subseteq F_m$ parallel to $\gamma_{m,i}$. Because $F_m$ is directionally $r$--branch-complemented, each $\beta_m$ lies at Hausdorff distance at most $r$ from some $r$--branch-complemented geodesic $\delta_m\subseteq F_m$. Hence $\hat\delta=\lim_\omega \delta_m$ also exists, and by Lemma~\ref{lem:ultralimit_fully_branching_geod} it is a $0$--branch-complemented geodesic. The bound $d_{\mathrm{Haus}}(\beta_m,\delta_m)\le r$ implies that $\hat\beta=\hat\delta$, and $\hat \beta$ is therefore $0$--branch-complemented. This holds for every $\hat\beta$ parallel to $\hat\gamma_i$, for every $i$, and hence $\hat F$ is directionally $0$--branch-complemented.
\end{proof}

\subsection{BiLipschitz embeddings} \label{subsec:applications_to_CCC}

In this section we show that every biLipschitz embedding between asymptotic cones of CAT(0) cube complexes sends every directionally $0$--branching-complemented $k$--flat to a $k$--flat. 

We will require some tools from geometric measure theory.
We denote the $n$--dimensional Hausdorff measure by $\cal H^n$. The following definition appears as \cite[Def.~3.2.14]{federer:geometric}.

\begin{definition}[Countably rectifiable] \label{def:rectifiable}
Let $X$ be a metric space. A subset $E\subseteq X$ is called \emph{countably $\mathcal{H}^{n}$--rectifiable} if there is a countable family of Lipschitz maps $f_i : A_i \to X$, where $A_i \subseteq \mathbb{R}^n$ is $\mathcal{L}^n$--measurable, such that $\mathcal{H}^n(E \ssm \bigcup_i f_i(A_i)) =0$. 
\end{definition}

Note that a countably $\mathcal{H}^{0}$--rectifiable set is just a countable set.

\begin{theorem}[{\cite[Thm~3.2.22]{federer:geometric}}] \label{thm:rectifiable-level-sets}
Suppose that $W \subseteq \mathbb{R}^d$ is $\mathcal{H}^{n}$--measurable and countably $\mathcal{H}^{n}$--rectifiable. If $f : W \to \mathbb{R}^m$ is Lipschitz, with $m\leq n$, then for $\mathcal{H}^{m}$--almost-every $y \in \mathbb{R}^m$, the fibre $f^{-1}(y)$ is $\mathcal{H}^{n-m}$--measurable and countably $\mathcal{H}^{n-m}$--rectifiable.
\end{theorem}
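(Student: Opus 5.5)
This statement is Federer's structure theorem for level sets of Lipschitz maps on rectifiable sets, and it is quoted rather than reproved here. If I were to prove it, I would use the coarea formula together with a reduction to $C^1$ pieces. Write $W \subseteq N_0 \cup \bigcup_i g_i(A_i)$, where $\mathcal{H}^n(N_0)=0$ and each $g_i : A_i \to \mathbb{R}^d$ is Lipschitz on a measurable set $A_i\subseteq\mathbb{R}^n$. By Kirszbraun, each $g_i$ extends to a Lipschitz map $\mathbb{R}^n\to\mathbb{R}^d$. Combining Rademacher's theorem with the Lusin-type $C^1$ approximation of Lipschitz maps (Federer 3.1.16) and discarding a null set, I may assume the following. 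Each piece $g_i(A_i)$ is the image of a $C^1$ embedding of an open set $\Omega_i\subseteq\mathbb{R}^n$ with injective differential. Equivalently, $W$ is covered, up to an $\mathcal{H}^n$--null set $N$, by countably many embedded $C^1$ $n$--submanifolds $M_i$. The sets $W\cap M_i$ are $\mathcal{H}^n$--measurable.

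\emph{Step 1: the null set.} I would apply the Eilenberg (coarea) inequality
\[
\int^*_{\mathbb{R}^m}\mathcal{H}^{n-m}\big(N\cap f^{-1}(y)\big)\,d\mathcal{H}^m(y)\,\le\, C(n,m)\,\mathrm{Lip}(f)^m\,\mathcal{H}^n(N)=0.
\]
It shows that for $\mathcal{H}^m$--a.e.\ $y$ the fibre meets $N$ in an $\mathcal{H}^{n-m}$--null set. Such a set is trivially measurable and rectifiable.

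\emph{Step 2: the pieces.} On each $M_i$ I would consider the composite $\varphi_i=f\circ\psi_i$, where $\psi_i$ parametrises $M_i$. Again after a Lusin-type reduction, I may take $\varphi_i$ to be $C^1$ on a closed set of nearly full measure, extended to all of $\Omega_i$. I would then split $\Omega_i$ into two parts: the region $R_i$ where $D\varphi_i$ has rank $m$, and the critical region $Z_i$ where the rank is less than $m$.

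On $R_i$, the implicit function theorem shows that each fibre $\varphi_i^{-1}(y)\cap R_i$ is an embedded $C^1$ $(n-m)$--manifold. Its image under $\psi_i$ is therefore countably $\mathcal{H}^{n-m}$--rectifiable, and measurability is clear.

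On $Z_i$ the Jacobian $J\varphi_i$ vanishes. The coarea formula
\[
\int_{\mathbb{R}^m}\mathcal{H}^{n-m}\big(Z_i\cap\varphi_i^{-1}(y)\big)\,dy=\int_{Z_i}J\varphi_i\,d\mathcal{H}^n=0
\]
then shows that almost every fibre meets $Z_i$ in a null set. The coarea formula also yields the $\mathcal{H}^{n-m}$--measurability of the fibres.

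\emph{Step 3: assembling.} Finally I would take the union over the countably many $i$ of the exceptional $\mathcal{H}^m$--null sets of $y$ from Steps 1 and 2. For every $y$ outside this union, $f^{-1}(y)$ is a countable union of rectifiable pieces plus an $\mathcal{H}^{n-m}$--null set. Hence it is countably $\mathcal{H}^{n-m}$--rectifiable and measurable.

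The main obstacle is the reduction to $C^1$ pieces. Rademacher only gives differentiability almost everywhere, and $f$ is defined only on $W$, not on an open set. This is exactly where the careful Lusin/Whitney approximation and the coarea formula for Lipschitz maps (Federer 3.2.12 and 3.2.22) are needed. In practice I would invoke these from \cite{federer:geometric} rather than redo them.
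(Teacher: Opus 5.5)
The paper gives no proof of this statement. It is imported verbatim from Federer and used only through \cref{lem:rectifiable_parallel_flats}, so there is no argument in the paper to compare yours against. Your outline is the standard proof and is sound as a sketch:
\begin{itemize}
\item cover $W$, up to an $\mathcal{H}^n$--null set, by $C^1$ $n$--submanifolds;
\item dispose of the null part with Eilenberg's inequality;
\item on each chart, treat the rank-$m$ region with the implicit function theorem and the critical region with the coarea formula, using that the Jacobian vanishes there.
\end{itemize}

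Two small corrections, neither a genuine gap.

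First, your closing paragraph lists 3.2.22 among the results you would invoke. That is circular, since 3.2.22 is the statement being proved. What your argument actually needs is the $C^1$ approximation (3.1.16), the Euclidean coarea formula for Lipschitz maps (3.2.12), Eilenberg's inequality, and the covering of countably rectifiable sets by $C^1$ submanifolds. You also need to extend $f$ from $W$ to $\mathbb{R}^d$ by Kirszbraun before forming $f\circ\psi_i$ on all of $\Omega_i$.

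Second, measurability on the rank-$m$ region is not ``clear'' for every $y$. The set $W\cap M_i$ is only $\mathcal{H}^n$--measurable, and an $\mathcal{H}^n$--null piece lying in a single fibre can ruin that fibre. For example, take $W=\{(x,t):x\neq 0\}\cup(\{0\}\times Z)\subset\mathbb{R}^2$ with $Z$ non-Lebesgue-measurable and $f(x,t)=x$; then $f^{-1}(0)$ is not $\mathcal{H}^1$--measurable. The fix is to write $W\cap M_i$ as a Borel set together with an $\mathcal{H}^n$--null set and apply your Step~1 to the null set. This gives measurability for $\mathcal{H}^m$--almost every $y$, which is all the theorem claims, and is what your later remark about the coarea formula effectively supplies.
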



We shall only make use of this theorem in the following form.

\begin{corollary}\label{lem:rectifiable_parallel_flats}
Let $n\ge2$, let $k\in\{1,\dots,n\}$, and let $S\subseteq \mathbb R^n$ be an $\mathcal H^{k}$--measurable and countably $\mathcal H^{k}$--rectifiable subset. Let $H$ and $F$ be $p$-- and $q$--dimensional affine subspaces of $\mathbb R^n$, respectively, with $H\subsetneq F$.

If $q-p\leq k$, then for almost every $p$--dimensional affine subspace $H'\subseteq F$ parallel to $H$, the intersection $H'\cap S$ is $\mathcal H^{k-(q-p)}$--measurable and countably $\mathcal H^{k-(q-p)}$--rectifiable. If $q-p>k$, then for almost every $p$-dimensional affine subspace $H'\subseteq F$ parallel to $H$, the intersection $H'\cap S$ is empty.
\end{corollary}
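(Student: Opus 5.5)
The plan is to parametrise the family of parallels of $H$ in $F$ by a Lipschitz map and then apply \cref{thm:rectifiable-level-sets}. After an affine change of coordinates in $\mathbb R^n$, which is biLipschitz and so preserves measurability and countable rectifiability, we may assume the following. We take $F=\mathbb R^q\times\{0\}$ and $H=\mathbb R^p\times\{0\}$. Then the $p$--dimensional affine subspaces of $F$ parallel to $H$ are exactly the sets $H'_y=\mathbb R^p\times\{y\}\times\{0\}$ for $y\in\mathbb R^{q-p}$, and ``almost every'' refers to $\mathcal H^{q-p}$--almost every $y$.

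First we replace $S$ by $W=S\cap F$. This is $\mathcal H^k$--measurable, being the intersection of $S$ with a closed set. It is also countably $\mathcal H^k$--rectifiable, since it is a subset of a countably rectifiable set: restrict the domains $A_i$ to the measurable sets $f_i^{-1}(F)$. We then define $\pi:W\to\mathbb R^{q-p}$ as the projection onto the coordinates $p+1,\dots,q$. This map is $1$--Lipschitz, and $\pi^{-1}(y)=H'_y\cap S$. For the main case $q-p\le k$, \cref{thm:rectifiable-level-sets} applies with $m=q-p$ and $n$ replaced by $k$. It gives that for $\mathcal H^{q-p}$--almost every $y$ the fibre $H'_y\cap S$ is $\mathcal H^{k-(q-p)}$--measurable and countably $\mathcal H^{k-(q-p)}$--rectifiable, which is exactly the claim.

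For the case $q-p>k$, the theorem as stated requires $m\le n$, so we argue directly instead. A countably $\mathcal H^k$--rectifiable set has $\sigma$--finite $\mathcal H^k$--measure: it is a countable union of Lipschitz images of subsets of $\mathbb R^k$ together with an $\mathcal H^k$--null set. Hence $\mathcal H^{k'}(W)=0$ for every $k'>k$, and in particular $\mathcal H^{q-p}(W)=0$. Since $\pi$ is $1$--Lipschitz, we get $\mathcal H^{q-p}(\pi(W))\le\mathcal H^{q-p}(W)=0$. So for almost every $y$ we have $y\notin\pi(W)$, which means $H'_y\cap S=\varnothing$.

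The only delicate points are bookkeeping: checking that Federer's hypotheses (measurability and rectifiability of $W$ inside the ambient space) are met, and that the parameter measure on parallels agrees with $\mathcal H^{q-p}$ on $\mathbb R^{q-p}$. The step that needs the most care is the vanishing of $\mathcal H^{q-p}(W)$ in the second case, including the $\mathcal H^k$--null exceptional set. This follows because $\mathcal H^k(N)=0$ implies $\mathcal H^{k'}(N)=0$ for $k'>k$, and because Lipschitz images of subsets of $\mathbb R^k$ have $\mathcal H^{k'}$--measure zero.
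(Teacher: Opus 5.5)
Your proof is correct. For $q-p\le k$ it matches the paper's argument: project $F$ onto a $(q-p)$--dimensional complement of $H$ and apply \cref{thm:rectifiable-level-sets}. Replacing $S$ by $W=S\cap F$ is slightly cleaner bookkeeping, since the paper applies $\varphi|_S$ with $\varphi$ defined only on $F$. Restricting the domains $A_i$ is unnecessary, though, because any subset of a countably $\mathcal H^k$--rectifiable set is countably $\mathcal H^k$--rectifiable via the same maps $f_i$.

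For $q-p>k$ you take a genuinely different route. The paper reduces this case to the first one. It chooses a $(q-k)$--dimensional $P$ with $H\subseteq P\subseteq F$, and the first case gives that $P'\cap S$ is countable for almost every parallel $P'$ of $P$. So inside such a $P'$, all but countably many parallels of $H$ avoid $S$, and a Fubini-type argument over the parallels $P'$ finishes. You instead note that $W$ has $\sigma$--finite $\mathcal H^k$--measure, so $\mathcal H^{q-p}(W)=0$. The $1$--Lipschitz projection cannot increase $\mathcal H^{q-p}$, so the exceptional parameter set $\pi(W)=\{y : H'_y\cap S\neq\varnothing\}$ is null.

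Your route is shorter and more elementary. It uses no coarea-type theorem and does not need $S$ to be measurable, because it bounds the outer measure of the exceptional set directly. The paper's last step, by contrast, implicitly relies on Fubini, which needs the exceptional set to be measurable; that holds here, but the paper does not address it. The paper's approach has the modest advantage of relying on the single black box \cref{thm:rectifiable-level-sets} for both cases.
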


\begin{proof}
First assume that $q-p\leq k$. Let $\varphi : F \to \mathbb R^{q-p}$ be the projection onto a $(q-p)$--dimensional affine subspace of $F$ that is orthogonal to $H$.
Then $\varphi$ is Lipschitz, and its fibres are precisely the $p$--dimensional affine subspaces of $F$ that are parallel to $H$.
Since $S$ is $\mathcal H^{k}$--measurable and countably $\mathcal H^{k}$--rectifiable, it follows from \Cref{thm:rectifiable-level-sets} applied to $\varphi|_S$, that the intersection $H'\cap S$ is $\mathcal H^{k-(q-p)}$--measurable and countably $\mathcal H^{k-(q-p)}$--rectifiable, for almost every $p$--dimensional affine subspace $H'\subseteq F$ parallel to $H$.

If instead $(q-p)>k$, then, in particular, $q>k$. Let $P$ be a $(q-k)$-dimensional affine subspace such that  $H\subseteq P \subseteq F$. Since $q-(q-k)\leq k$, we get from the previous case that $P'\cap S$ is countably $\mathcal{H}^{0}$-rectifiable, hence countable, for almost every $P'$ which is parallel to $P$ in $F$.
Therefore, all but countably many parallels of $H$ inside such a parallel do not intersect $S$. As almost every parallel of $H$ in $F$ is inside such a $P'$ and almost all of its parallel inside $P'$ do not intersect $S$, we get that almost every parallel of $H$ inside $F$ does not intersect $S$.
\end{proof}

We shall also use the corollary together with the following result of Bowditch to prove \cref{prop:fully_branching_implies_geod_in_cone}. Specifically, to show that in the setting of the proposition, almost every parallel of a geodesic inside a flat does not intersect the singular set of the map $f$.

\begin{proposition}[{\cite[Prop.~4.3, Lem.~3.5]{bowditch:large:mapping}}] \label{prop:Bowditch_bilipflat_cubulated}
Let $M$ be a complete median metric space of rank $n$. If $f : \mathbb R^n \to M$ is a biLipschitz embedding, then $f(\mathbb R^n)$ is cubulated. Moreover, there is a cubulated subset $L \subseteq f(\mathbb R^n)$ of dimension at most $n-2$ such that the singularity set of $f$ is $f^{-1}(L)$.
\end{proposition}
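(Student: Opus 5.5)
The plan is to prove the two assertions separately, and to work locally throughout. Fix $x\in\R^n$ and a small closed ball $B$ around $x$. I would study $K=f(B)$, a compact subset of $M$ homeomorphic to an $n$--disc, and set $J=\Hull(K)$ for its median-convex hull. Since $f$ is biLipschitz and $M$ is complete of rank $n$, $J$ is a compact median algebra of rank at most $n$. Following \cite{bowditch:large:mapping}, I would then use the halfspace (wall) structure of $J$.

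\textbf{Step 1 (the image is cubulated).} The aim is to show that $K$ lies in a finite union of $n$--cubes. I would proceed in three stages.
\begin{itemize}
\item Pull walls back. Each wall of $J$ that crosses $K$ pulls back under $f$ to a closed subset of $B$ that separates $B$.
\item Sort walls by the dimension of what they cut off. A rank bound together with a topological dimension count should show that only ``codimension-one'' walls carry positive measure in the wall space; these are the walls whose traces on $K$ are $(n-1)$--dimensional.
\item Group the codimension-one walls into finitely many families of pairwise-nested walls. This should use a Helly/Dilworth-type argument, in the spirit of the proof of \cref{prop:lines}, relying on the fact that an antichain of pairwise-crossing walls has size at most $n$. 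Each family parametrises an interval, and $n$ pairwise-transverse families give a median-convex $\ell^1$--box.
\end{itemize}
Compactness then lets me cover $K$ by finitely many such boxes. Since each box is homeomorphic to an $n$--disc and $K$ is an $n$--manifold near each point, the boxes meeting $K$ in an open set must be $n$--cubes. Local finiteness of the resulting union then follows by compactness.

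\textbf{Step 2 (structure of the singular set).} Given the cubulation, let $L$ be the union of the faces of dimension at most $n-2$ of the cubes in this locally finite decomposition of $f(\R^n)$. Points of $f(\R^n)$ lying in the interior of an $n$--cube are plainly flat. The remaining case is a point $y$ in the relative interior of an $(n-1)$--face $\sigma$ that does not lie in $L$. Because $f(\R^n)$ is a topological $n$--manifold near $y$, exactly two $n$--cubes $Q_1,Q_2$ of the decomposition contain $\sigma$.

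The key claim is that $Q_1\cup Q_2$ is a single $n$--cube near $y$. If not, the median of points on the two sides, taken with a point of $\sigma$, produces a square transverse to $\sigma$. Together with the $n-1$ directions of $\sigma$, this gives a median-embedded copy of $\{0,1\}^{n+1}$, which contradicts rank $n$. It follows that $Q_1\cup Q_2$ is locally the median-convex $\ell^1$--product of $\sigma$ with an interval, so $y$ is flat. This shows that the singularity set is contained in $f^{-1}(L)$.

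The reverse inclusion needs a little care, since a point of $L$ could still be flat if the chosen decomposition was too fine. To handle this, I would coarsen the decomposition: remove any face of dimension at most $n-2$ near which $f$ is flat. The resulting set $L$ is still cubulated, has dimension at most $n-2$, and equals $f(\text{singularity set})$, which gives the equality.

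\textbf{Main obstacle.} I expect Step 1 to be the main difficulty, and specifically showing that only finitely many transverse wall-families are needed near $x$. This is exactly the point where finite rank must be converted into finiteness of the cube decomposition. My first attempt would be Bowditch's separation-dimension argument (compare \cite[Thm~6.9]{bowditch:large:mapping}), which bounds the number of such families by the rank.
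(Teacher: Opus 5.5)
The paper gives no proof of this statement; it is quoted from \cite[Prop.~4.3, Lem.~3.5]{bowditch:large:mapping}. Taken on its own terms, your Step~2 is a reasonable outline of the second assertion. Step~1, however, carries essentially all of the content, and it has a genuine gap. The existence of finitely many $n$--cubes covering $f(B)$ is not proved: the bullets say what ``should'' happen, and the finiteness is deferred to Bowditch. Moreover, the mechanism you propose fails.

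The Dilworth argument in \cref{prop:lines} bounds the number of chains by $\dim X$ only because every hyperplane there separates the two fixed ultrafilters $\xi,\zeta$. This forces non-crossing hyperplanes to be nested, so that antichains are pairwise crossing. Walls meeting $K$ have no such common pair: two non-crossing walls may \emph{face} each other, and a pairwise-facing triple cannot be made into a chain by any choice of orientations.

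This already happens in cubulated examples. Take the universal cover $X_{C_6}$ of the Salvetti complex of $A_{C_6}$ with its $\ell^1$ metric, a complete median space of rank $2$. Let $g$ be a vertex, and take the union of the six quadrants spanned by consecutive rays $g\langle s_i\rangle^+$, $g\langle s_{i+1}\rangle^+$ (indices mod $6$). This is a biLipschitz $2$--flat, intrinsically a Euclidean cone of angle $3\pi$. Every neighbourhood of $g$ in it meets six families of walls, one for each initial edge $[g,gs_i]$. Consecutive families cross, and the families through $[g,gs_0]$, $[g,gs_2]$, $[g,gs_4]$ (likewise $1,3,5$) are pairwise facing. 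Hence any chain meets at most two families, and at least three chains are needed near $g$ whatever orientations you choose. So the width is not bounded by the rank.

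Even finitely many chains would not give you boxes, since distinct chains need not cross. What actually has to be proved is twofold: near each point only finitely many wall families occur, and $K$ splits into pieces on each of which exactly $n$ families occur and pairwise cross. This is where rank $n$ must be combined with $K$ being an $n$--manifold, and it is precisely what your sketch leaves open.

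Some smaller points in Step~1:
\begin{itemize}
\item ``Only codimension-one walls carry positive measure'' has no content as stated, since individual walls in a connected median space are null.
\item Compactness of $J$ does not follow from $f$ being biLipschitz. It is the theorem that hulls of compact sets in complete finite-rank median spaces are compact, and needs to be cited.
\item A finite cover of $K$ by $n$--cubes only shows that $f(\R^n)$ is a \emph{union} of cubes after you subdivide to a genuine complex. You then need invariance of domain to see that $f(\R^n)$ contains or misses each open $n$--cell.
\end{itemize}

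In Step~2 the key claim needs rephrasing. If $Q_1\cap Q_2=\sigma$, then for $a\in Q_1$, $b\in Q_2$, $y\in\sigma$, the median $\mu(a,b,y)$ lies in $[a,y]\cap[y,b]\subseteq\sigma$, so it cannot produce a transverse square. What it does show is that no wall separates both $a$ and $b$ from $\sigma$. The correct argument is then as follows. If $Q_1\cup Q_2$ is not a cube near $y$, some wall separating $a$ from $\sigma$ crosses some wall separating $b$ from $\sigma$. These two walls, together with $n-1$ pairwise-crossing walls in the directions of $\sigma$, give $n+1$ pairwise-crossing walls, contradicting rank $n$. Finally, you need flatness to be constant along relatively open faces in order for your coarsened $L$ to be a union of closed faces.
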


We now return to the setting of complete CAT(0) spaces.

\begin{proposition}\label{prop:fully_branching_implies_geod_in_cone}
Let $X$ be a complete CAT(0) space, and let $Y$ be a finite-dimensional CAT(0) cube complex. Assume that $X$ and $Y$ both have asymptotic rank $n$. Let $\hat X$ and $\hat Y$ be asymptotic cones of $X$ and $Y$, respectively, and let $f : \hat X\to \hat Y$ be a biLipschitz embedding.  

If $\hat F\subseteq \hat X$ is a directionally $0$--branch-complemented $k$--flat, then $f(\hat F)$ is a singular $k$--flat.
\end{proposition}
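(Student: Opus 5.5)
The plan follows Step~1 and Step~2 of \cref{subsec:strat}. First I reduce to geodesics. Let $\gamma_1,\dots,\gamma_k$ be the geodesics spanning $\hat F$ that witness \cref{def:fully_branching_flats} with $r=0$; thus every parallel of each $\gamma_i$ in $\hat F$ is $0$--branch-complemented. If I show that $f$ sends every $0$--branch-complemented geodesic of $\hat X$ to a singular geodesic of $\hat Y$, then the biLipschitz case ($D=0$) of \cref{prop:singular_quasi_flats}, applied to $f|_{\hat F}$ with $Y=\hat Y$, gives that $f(\hat F)$ is a $k$--flat. The singular clause of the same proposition, which applies because $\hat Y$ is an asymptotic cone of a finite-dimensional CAT(0) cube complex, then gives that this flat is singular. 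So it suffices to treat $k=1$.

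Now fix a $0$--branch-complemented geodesic $\gamma$, with pair $(F,H)$: here $F$ is an $n$--flat, and every point of $F$ lies on a branching parallel of $\gamma$ and on a branching parallel of $H$. The space $\hat Y$ is a complete median space of rank at most $n$ by \cref{lem:rank_cone_CCC}, and it is biLipschitz to its CAT(0) metric. Applying \cref{prop:Bowditch_bilipflat_cubulated} to $f|_F$, I get that $f(F)$ is cubulated and that the singularity set is $S=f^{-1}(L)$ with $\dim L\le n-2$. Since $f$ is biLipschitz, $S$ is countably $\mathcal H^{n-2}$--rectifiable.

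For branching subflats I would use \cref{cor:distance_from_intersection} with $T=0$ in the cone, or more simply the fact that $f$ is injective. This gives $f(\bigcap F_j)=\bigcap f(F_j)$, an intersection of finitely many cubulated top-dimensional biLipschitz flats. I expect such an intersection to be cubulated, because locally it is a finite intersection of median-convex cubes. Hence $f$ of every branching parallel of $\gamma$ or $H$ is cubulated.

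By \cref{lem:rectifiable_parallel_flats} with $p=1$, $q=n$ and $k=n-2<n-1$, almost every parallel $\gamma'$ of $\gamma$ misses $S$. Take such a $\gamma'$ and a point $x\in\gamma'$, and let $H'$ be the parallel of $H$ through $x$. Both $\gamma'$ and $H'$ are branching, so their images are cubulated. Since $x$ is a flat point, $f$ maps a neighbourhood of $x$ into a single $n$--cube, which I identify with a piece of $(\R^n,\ell^1)$; there $f$ is a topological embedding. \Cref{prop:top_argument} then shows that $f(\gamma')$ is locally a single $1$--cube near $f(x)$, that is, an axis-parallel segment. An axis-parallel path that is locally straight at every point is a singular geodesic line, so $f(\gamma')$ is a singular geodesic.

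Finally, the parallels $\gamma'$ for which this holds are dense in $F$. I pass to the limit using continuity of $f$: a limit of parallel singular geodesics at controlled distance is again a singular geodesic, since the property is closed under uniform convergence on compacta. This gives the conclusion for $\gamma$ itself. The main obstacle is the cubulation of images of branching lower-dimensional flats. If the local-intersection argument fails, I would instead extract the cubulation directly from Bowditch's structure theorem applied to each $n$--flat $F_j$ containing $H'$, and then argue that the intersection of their local cube decompositions near $f(x)$ is a finite union of cubes.
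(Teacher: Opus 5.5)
Your proposal is correct and follows the paper's proof essentially step for step. The paper likewise reduces to $k=1$ via the biLipschitz case of \cref{prop:singular_quasi_flats}, uses \cref{prop:Bowditch_bilipflat_cubulated} for the cubulation and the singularity set, applies \cref{lem:rectifiable_parallel_flats} to find parallels missing $S$, uses \cref{prop:top_argument} at each point, and concludes by local-to-global and continuity. The step you flag is handled in the paper by the one-line remark that an intersection of cubulated sets is cubulated, which holds because a closed median-convex subset of an $\ell^1$ box is a sub-box. One small point you gloss: Bowditch's result needs the rank of $\hat Y$ to be exactly $n$, not just at most $n$, and this follows because $\hat Y$ contains the biLipschitz image of an $n$--flat.
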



\begin{proof}
The proposition will follow from the case $k=1$ using \cref{prop:singular_quasi_flats} for biLipschitz maps where $D=0$, by the definition of directionally $0$-branch-complemented flats. Thus, it is enough to show that if $\hat \gamma_0\subseteq \hat X$ is a 0--branch-complemented geodesic, then $f(\hat\gamma_0)$ is a singular geodesic.

By \cref{lem:rank_cone_CCC}, the asymptotic cone $\hat Y$ is a median algebra of rank at most $n$. Since $\hat X$ admits a biLipschitz embedding in $\hat Y$, the rank of $\hat Y$ must be equal to $n$. Therefore, by \Cref{prop:Bowditch_bilipflat_cubulated}, every biLipschitz $n$--flat in $\hat Y$ is cubulated. Hence, the image under $f$ of every branching flat is cubulated, as an intersection of cubulated subsets.

Let $\hat \gamma_0 \subseteq \hat X$ be a $0$--branch-complemented geodesic. By definition, there exists an $n$--flat $\hat F$ and a branching $(n-1)$--flat $\hat H_0 \subseteq \hat F$ that is transverse to $\hat \gamma_0$ and such that all parallels of $\hat \gamma_0$ and $\hat H_0$ inside $\hat F$ are branching. 

Let $S \subseteq \hat F$ be the singularity set of $f|_{\hat F}$: if $x\in \hat F \setminus S$, then $x$ is flat for $f|_{\hat F}$ (see \Cref{def:regular_singular_sets_mediancase}). By \Cref{prop:Bowditch_bilipflat_cubulated}, $S$ is $\mathcal{H}^{n-2}$--measurable and countably $\mathcal{H}^{n-2}$--rectifiable. Consequently, by \cref{lem:rectifiable_parallel_flats}, almost every parallel of $\hat\gamma_0$ in $\hat F$ does not intersect $S$. By continuity of $f$, it therefore suffices to show that if $\hat \gamma \subseteq \hat F$ is a parallel of $ \hat \gamma_0$ that does not intersect $S$, then $f (\hat \gamma)$ is a geodesic. 
Let $\hat \gamma \subseteq \hat F$ be a parallel of $ \hat \gamma_0$ that does not intersect $S$,
and let $x \in \hat \gamma$. It is flat for $f|_{\hat F}$, because $\hat \gamma \cap S = \varnothing$. Thus, there exists $r>0$ such that $f (B_{\hat F}(x,r))$ is a contained in a single real $n$--cube of $\hat Y$.
Let $\hat H$ be the parallel of $\hat H_0$ that contains $x$.
Since both $\hat \gamma$ and $\hat H$ are branching, their images $f(\hat \gamma)$ and $f(\hat H)$ are cubulated. In particular, there exists $U$, an open neighbourhood of $x$ in $\hat F$, such that $U \subseteq B_{\hat F}(x,r)$ and both $f(\hat \gamma \cap U)$ and $f(\hat H \cap U)$ are cubulated. 

Since $f(U)$ is contained in a single $n$--cube, one can identify $f$ with a local topological embedding of $\mathbb R^n$ equipped with the median $\ell^1$ metric into itself. It follows from \Cref{prop:top_argument} that there exists a neighbourhood $V \subseteq U$ of $x$ in $\hat F$ such that $f (V \cap \hat\gamma)$ is contained in a single $1$--cube of $\hat Y$. 

Since $x \in \hat \gamma$ was arbitrary, this shows that $f(\hat \gamma)$ is a local singular geodesic in $\hat Y$. Since $\hat Y$ is CAT(0), we conclude that $f(\hat \gamma)$ is a singular geodesic.
%
%
\end{proof}

\begin{corollary} \label{cor:fully_branching_implies_flat_in_cone}
Let $X$ be a complete CAT(0) space, and let $Y$ be a finite-dimensional CAT(0) cube complex. Assume that $X$ and $Y$ both have asymptotic rank $n$. Let $\hat X$ and $\hat Y$ be asymptotic cones of $X$ and $Y$, respectively, and let $f : \hat X\to \hat Y$ be a biLipschitz embedding.

If $\hat F\subseteq \hat X$ is the ultralimit of a sequence of directionally $r$--branch-complemented $k$--flats in~$X$, for some $r \geq 0$, then $f(\hat F)$ is a singular $k$--flat in $\hat Y$.
\end{corollary}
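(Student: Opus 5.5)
This corollary should be an immediate combination of \cref{lem:ultralimit_fully_branching_flat} and \cref{prop:fully_branching_implies_geod_in_cone}. The plan is to upgrade the ultralimit $\hat F$ to a directionally $0$--branch-complemented $k$--flat of $\hat X$. After that, the proposition can be applied verbatim to the biLipschitz embedding $f$.

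Concretely, write $\hat F=\lim_\omega(F_m)$, where each $F_m\subseteq X$ is a directionally $r$--branch-complemented $k$--flat with the same $r$. Before invoking \cref{lem:ultralimit_fully_branching_flat}, I need $X$ to be complete. This is part of the hypotheses of the corollary, and the asymptotic rank of $X$ is $n$ by assumption. Then \cref{lem:ultralimit_fully_branching_flat} says directly that $\hat F$ is a directionally $0$--branch-complemented $k$--flat in $\hat X$.

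There is one subtlety: \cref{prop:fully_branching_implies_geod_in_cone} is phrased for directionally $0$--branch-complemented flats in $\hat X$ relative to the notion of branching in $\hat X$. That notion refers to intersections of $n$--flats of $\hat X$, so the asymptotic rank of $\hat X$ is implicitly taken to be $n$. I would check that this matches what \cref{lem:ultralimit_fully_branching_flat} produces. In that lemma, the relevant $n$--flats of $\hat X$ arise as ultralimits of $n$--flats of $X$, via \cref{lem:uniform_branching}. The proof of \cref{prop:fully_branching_implies_geod_in_cone} uses exactly these $n$--flats, pushing them forward by $f$ and applying \cref{prop:Bowditch_bilipflat_cubulated}. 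That step only needs $\hat Y$ to have median rank $n$ and these flats to be biLipschitz $n$--flats of $\hat Y$, so the conventions are compatible.

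With this in place, \cref{prop:fully_branching_implies_geod_in_cone} applies to $f:\hat X\to\hat Y$ and $\hat F$, and gives that $f(\hat F)$ is a singular $k$--flat of $\hat Y$. The only real obstacle is this bookkeeping: making sure the rank-$n$ branching notion is interpreted consistently in the cone. No new geometric argument is needed.
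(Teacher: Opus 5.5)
Your proposal is correct and matches the paper's proof exactly: \cref{lem:ultralimit_fully_branching_flat} shows that $\hat F$ is a directionally $0$--branch-complemented $k$--flat, and then \cref{prop:fully_branching_implies_geod_in_cone} shows that $f(\hat F)$ is a singular $k$--flat. Your extra check on how ``branching'' is read in $\hat X$ (via ultralimits of $n$--flats of $X$, as produced by \cref{lem:uniform_branching}) is consistent with how the paper uses these notions, although the paper leaves this implicit.
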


\begin{proof}
By \Cref{lem:ultralimit_fully_branching_flat}, $\hat F$ is directionally 0--branch-complemented, so the statement follows from \cref{prop:fully_branching_implies_geod_in_cone}.
\end{proof}

\section{The branching theorems} \label{sec:fully_branching_theorems}

Here we prove our main general results stated in the introduction about quasiisometric embeddings of directionally (strongly) branch-complemented flats into CAT(0) cube complexes.


Namely, in \cref{thm:fully_branching_general} we show that quasiisometric embeddings map each directionally branch-complemented flat $F$ of top rank Hausdorff-close to a flat. We then split the discussion according to whether the cube complexes have the same dimension as $F$. \cref{subsec:main_dim} handles the $n$--dimensional case, where the arguments need less machinery than in general. The results in \cref{subsec:main_general} are more general, but the conclusions are slightly weaker. In either case, we show that certain subflats of $F$ are mapped at finite, but uncontrolled, distance from (semi)singular flats (Theorems~\ref{thm:subflat_branching_in_fully_branching} and~\ref{thm:subflat_branching_asymptotic_rank}). We can also handle lower-rank flats that do not lie in any $n$--flat provided that they are directionally \emph{strongly} branch-complemented (Theorems~\ref{thm:strong_fully_branching} and~\ref{thm:strong_branching_asymptotic_rank}), provided that the domain is $n$--dimensional. When the codomain is also $n$--dimensional, we even obtain controlled distance bounds. We conclude by interpreting these results on the level of boundaries in \cref{subsec:boundaries}.

\begin{theorem}\label{thm:fully_branching_general}
Let $X$ be a complete CAT(0) space and let $Y$ be finite-dimensional CAT(0) cube complex. Assume that $X$ and $Y$ both have asymptotic rank~$n$. For each $q\geq1$ there exists $D=D(q,Y)$ such that the following holds for every $q$--quasiisometric embedding $f:X \to Y$. 

For each directionally branch-complemented $n$--flat $F \subseteq X$, the image $f(F)$ lies within Hausdorff distance at most $D$ of some $n$--flat $F' \subseteq Y$. 

Moreover, if $\dim Y=n$, then $D=D(q)$.
\end{theorem}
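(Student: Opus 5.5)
The plan is to pass to an asymptotic cone, use the branching hypothesis there to see that the image of $F$ is a genuine flat, and then return to $Y$ using the results of \cref{sec:back_from_cone}.

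Fix a $q$--quasiisometric embedding $f:X\to Y$ and a directionally branch-complemented $n$--flat $F\subseteq X$. Choose an isometric parametrisation $\iota:\R^n\to F$ and set $g=f\circ\iota:\R^n\to Y$. This is a $q$--quasiisometric embedding of $\R^n$, with constants independent of $F$. The aim is to show that in some asymptotic cone $\hat Y$ the ultralimit $(g(\R^n))_\omega$ is an $n$--flat. Once that is done:
\begin{itemize}
\item if $\dim Y=n$, \cref{prop:quasiflats-from-bilip-flats} gives an $n$--flat $F'$ with $\dist_{\mathrm{Haus}}(f(F),F')\le D(q)$;
\item in general, \cref{prop:quasiflat_from_biLipschitz:asymptotic_rank} gives the same conclusion with $D=K(n,q,Y)$, and since $n$ is determined by $Y$ this is $D(q,Y)$.
\end{itemize}
The constant does not depend on the branch-complementation constant $r$ of $F$. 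That is consistent with the statement, because both propositions only require the cone image to be a flat in a single asymptotic cone.

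For the cone step, fix a basepoint $x_0\in F$, a nonprincipal ultrafilter $\omega$ and a scaling sequence $(\lambda_m)$. Take the cones $\hat X=\lim_\omega(X,(\lambda_m),(x_0))$ and $\hat Y=\lim_\omega(Y,(\lambda_m),(f(x_0)))$, and let $\hat f:\hat X\to\hat Y$ be the induced biLipschitz embedding. The ultralimit $\hat F$ of the constant sequence $(F)$ exists and is an $n$--flat. Since $F$ is directionally $r$--branch-complemented for some $r$, \cref{cor:fully_branching_implies_flat_in_cone} applies to this constant sequence and shows that $\hat f(\hat F)$ is a singular $n$--flat of $\hat Y$. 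Finally, $\hat f(\hat F)$ equals $(f(F))_\omega=(g(\R^n))_\omega$, which is exactly the hypothesis needed by the two propositions above.

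The step I expect to need the most care is checking that the hypotheses of \cref{cor:fully_branching_implies_flat_in_cone} hold here. That corollary relies on \cref{lem:rank_cone_CCC} to guarantee that $\hat Y$ has median rank exactly $n$. This holds because $\hat X$ contains $\hat F\cong\R^n$, which embeds biLipschitz into $\hat Y$, while the rank of $\hat Y$ is bounded above by $n$. With that in place, Bowditch's cubulation result (\cref{prop:Bowditch_bilipflat_cubulated}) applies.

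The rest is bookkeeping: noting that $X$ need only be complete, since $F$ and its parallels are flats in $X$, and tracking that the constants from Huang's support-set argument depend only on $q$ when $\dim Y=n$, and only on $(n,q,Y)$ in general.
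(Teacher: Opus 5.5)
Your proposal is correct and is essentially the paper's proof. You pass to asymptotic cones with fixed basepoints, apply \cref{cor:fully_branching_implies_flat_in_cone} to the (constant sequence) ultralimit of $F$ to see that its image is an $n$--flat, and then return to $Y$ via \cref{prop:quasiflats-from-bilip-flats} when $\dim Y=n$, or via \cref{prop:quasiflat_from_biLipschitz:asymptotic_rank} in general; your median-rank check is already carried out inside the proof of \cref{prop:fully_branching_implies_geod_in_cone}.
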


\begin{proof}
Let $F\subseteq X$ be a directionally branch-complemented $n$--flat. Let $\hat f:\hat X\to\hat Y$ be a biLipschitz map between asymptotic cones that is induced by $f$, where the asymptotic cones are taken with fixed basepoints. Let $\hat F\subset\hat X$ be the ultralimit of $F$. By \cref{cor:fully_branching_implies_flat_in_cone}, its image $\hat f(\hat F)\subset\hat Y$ is an $n$--flat. The existence of $F'$ follows from \cref{prop:quasiflats-from-bilip-flats} in the case $Y$ is $n$--dimensional, or from \cref{prop:quasiflat_from_biLipschitz:asymptotic_rank} in the general case.
\end{proof}

\subsection{Dimension and asymptotic rank agree} \label{subsec:main_dim}

Here we consider quasiisometric embeddings between $n$--dimensional CAT(0) cube complexes. The following adds to \cref{thm:fully_branching_general} by giving more information about subflats. 

\begin{theorem} \label{thm:subflat_branching_in_fully_branching}
Let $X$ and $Y$ be $n$--dimensional CAT(0) cube complexes, and let $f:X\to Y$ be a quasiisometric embedding. If $F\subseteq X$ is a directionally branch-complemented $n$--flat and $H \subseteq F$ is a branching flat, then $f(H)$ lies within finite Hausdorff distance of a singular flat of $Y$. 

In particular, if $\gamma\subseteq F$ is a singular geodesic, then $f(\gamma)$ lies within finite Hausdorff-distance of a singular geodesic of $Y$.
\end{theorem}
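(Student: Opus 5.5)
The plan is to reduce to the intersection results of \cref{sec:structure_quasi_flats}, with \cref{thm:fully_branching_general} supplying control over the top-dimensional flats. Since $H$ is branching, write $H=\bigcap_{i=1}^s F_i$ for some $n$--flats $F_i\subset X$. Because $\dim X=n$, \cref{lem:top_dim_singular} makes each $F_i$ singular, hence a convex subcomplex, and $H$ is singular as well by \cref{lem:branching_singular}.

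\textbf{Step 1.} Control the images of the $F_i$. The flats $F_i$ need not be directionally branch-complemented, so \cref{thm:fully_branching_general} does not apply to them directly. Instead, Huang's quasiflats theorem \cite[Thm~1.1]{huang:top} applies to every $n$--quasiflat in the $n$--dimensional complex $Y$: each $f(F_i)$ lies at finite Hausdorff distance from a finite union of $n$--orthants of $Y$, and these are singular by \cref{lem:top_dim_singular}. \cref{prop:intersection_orthants} then gives that $f(H)$ lies at finite Hausdorff distance from a finite union $\mathcal O$ of singular orthants.

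\textbf{Step 2.} Identify $\mathcal O$ with a flat. By \cref{prop:boundary_quasi_flat_sphere}, $\partial_T\mathcal O=\partial_T f(H)$ is homeomorphic to $\mathbb S^{k-1}$, where $k=\dim H$. Since $H\subseteq F$, the image $f(H)$ lies in $f(F)$. By \cref{thm:fully_branching_general}, $f(F)$ lies at finite Hausdorff distance from an $n$--flat $F'$, which is singular by \cref{lem:top_dim_singular}. It follows that $\partial_T\mathcal O\subseteq\partial_TF'$. Each orthant of $\mathcal O$ is singular, so its boundary is a simplex, and the union $\partial_T\mathcal O$ is a subcomplex of the simplicial sphere $\partial_TF'$ that is homeomorphic to $\mathbb S^{k-1}$.

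Repeating the argument in the proof of \cref{prop:intersection_semisingular}, I would show that this subcomplex is a round great sphere. The key input is that $\partial_T\mathcal O$ is $\pi$--convex. To see this, write it as an intersection of the subcomplexes $\partial_T$ of the coarse intersections of the $f(F_i)$ with $F'$, and use \cref{lem:intersection_weyl_cones_is_cone} to see that these intersections are faces. Once $\pi$--convexity is in hand, the hemisphere argument excludes every possibility except a great sphere.

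\textbf{Step 3.} Produce the flat. Choose a singular $k$--subflat $P\subseteq F'$ with $\partial_TP=\partial_T\mathcal O$. Since $\mathcal O$ and $P$ are finite unions of orthants with the same Tits boundary inside the flat $F'$, convexity of the distance function makes them Hausdorff-close. Alternatively, \cref{lem:semisingular_subflat_boundary} together with the coarse-surjectivity argument of \cref{lem:quasiisom_quasi_surj} gives the same conclusion.

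The ``in particular'' statement follows because a singular geodesic of the singular flat $F$ is branching. Indeed, $F$ is directionally branch-complemented and $n$--dimensional, so its singular directions are spanned by branching geodesics, and branching is inherited by parallels at bounded distance; alternatively one reduces to the previous case via \cref{lem:subflats_of_fully_branching_ccc}.

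The main obstacle is Step 2, specifically justifying that $\partial_T\mathcal O$ is $\pi$--convex inside $\partial_TF'$. The difficulty is that the $f(F_i)$ are not known to be flats, so the fallback is to intersect each orthant union with the neighbourhood of $F'$ and argue face-by-face.
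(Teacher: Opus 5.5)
There is a genuine gap in Step~2, at exactly the point you flag.

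\textbf{Why $\pi$--convexity fails.} Applying \cref{lem:intersection_weyl_cones_is_cone} orthant by orthant only shows that $\partial_Tf(F_i)\cap\partial_TF'$ is a \emph{union} of faces of $\partial_TF'$. It cannot be a single face: then $\partial_T\mathcal O$ would be an intersection of faces, hence a face, hence contractible rather than $\mathbb S^{k-1}$. Unions of faces are not $\pi$--convex in general. For instance, take the twelve-orthant quasiflat of \cref{rem:non_uniform_stable_not_good_for_flats} and the standard flat through the identity spanned by $s_0,s_1$. Their Tits boundaries meet in the two arcs $[s_0^+,s_1^+]$ and $[s_0^-,s_1^-]$. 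This set contains $s_0^+$ and $s_1^-$, which are at angle $\frac\pi2$, but not the arc between them.

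Since the $F_i$ are arbitrary $n$--flats, nothing controls the shape of $\partial_Tf(F_i)$. So the hemisphere argument of \cref{prop:intersection_semisingular}, which relies on the $E_i$ being genuine flats, has no input here. Nor can topology alone do the work. The boundary of an octant in a $3$--flat is a $2$--quasiflat whose Tits boundary is a subcomplex of $\partial_T\R^3$ homeomorphic to $\mathbb S^1$, yet of length $\frac{3\pi}2$.

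\textbf{The missing idea.} You need the branch-complemented structure that $H$ inherits from $F$, used via the asymptotic cone; your proposal never uses it. By \cref{lem:subflats_of_fully_branching_ccc}, $H$ is directionally branch-complemented. Then \cref{cor:fully_branching_implies_flat_in_cone} shows that $\hat f(\hat H)$ is a singular $k$--flat, in cones taken with fixed basepoint $f(x)$ for some $x\in H$. This set is the ultralimit of $\mathcal O$, which is the Euclidean cone over $\partial_T\mathcal O$. So by \cref{lem:embedding-Titsboundary-into_link_and_Tits_boundary}, $\partial_T\mathcal O$ is isometric to the link of a $k$--flat at the basepoint, hence is a round sphere. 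With that, your Step~3 finishes the proof.

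\textbf{A smaller point.} Parallels of branching geodesics need not be branching; this is why \cref{def:fully_branching_CAT(0)} only asks for coarsely dense branching parallels. For the final statement, argue instead that every singular geodesic of $F$ lies at finite Hausdorff distance from a branch-complemented one.
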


\begin{proof}
Let $H \subseteq F$ be a directionally branch-complemented subflat of dimension $k$. By definition, $H$ is the intersection of finitely many $n$--flats $F_1,\dots,F_m$. According to \cite[Thm~1.1]{huang:top}, for each $i$ the image $f(F_i)$ is at finite Hausdorff distance from a finite union of orthants. By \cref{lem:top_dim_singular}, each $F_i$ is singular, hence convex. It therefore follows from \Cref{prop:intersection_orthants} that $f(H)$ is at finite Hausdorff distance from a finite union of singular orthants  $\bigcup_{j=1}^t O_j$. Recall from \cref{prop:boundary_quasi_flat_sphere} that we denote by $\partial_Tf(H)$ the union of $\partial_T O_j$. \cref{prop:boundary_quasi_flat_sphere} tells us that $\partial_T f(H)$ is homeomorphic to $\mathbb S^{k-1}$. 
    
Let $\hat f:\hat X\to \hat Y$ be an induced biLipschitz map of asymptotic cones, taken with respect to fixed basepoints $x$ and $y=f(x)$, respectively. Let $\hat H\subseteq \hat X$ be the ultralimit of $H$. Note that $\hat f(\hat H)\subset\hat Y$ is equal to the ultralimit of $f(H)$.

On the one hand, $H$ is directionally branch-complemented, by \cref{lem:subflats_of_fully_branching_ccc}, so \Cref{cor:fully_branching_implies_flat_in_cone} tells us that $\hat f(\hat H)\subseteq \hat Y$ is a singular $k$--flat. In particular, $\Sigma_{(y)}\hat f(\hat H)$ is a round $(k-1)$--sphere. On the other hand, $\partial_T f(H)$ is isometric to the subset $\Sigma_{(y)}\hat f(\hat H)$ of the link $\Sigma_{(y)} \hat Y$, by \Cref{lem:embedding-Titsboundary-into_link_and_Tits_boundary}. Hence $\partial_Tf(H)$ is a round $(k-1)$--sphere.
    
Let $F'\subseteq Y$ be an $n$--flat at finite Hausdorff distance from $f(F)$, as given by \cref{thm:fully_branching_general}. It is singular by \cref{lem:top_dim_singular}. We have that
\[
\partial_T f(H) \,\subset\, \partial_T f(F) \,=\, \partial_T F' \,=\, \mathbb{S}^{n-1}
\]
is a subcomplex that is a round $(k-1)$--sphere. Therefore, there exists a singular $k$--flat $H' \subseteq F'$ such that $\partial_T H'=\partial_T f(H)$. Since $H'$ and $\bigcup_{j=1}^t O_j$ have the same compact Tits boundary, convexity of the distance function implies that they are at finite Hausdorff distance from one another. We conclude that $f(H)$ and $H'$ are at finite Hausdorff distance.

The second statement holds because every singular geodesic in $F$ lies at finite Hausdorff-distance from a branch-complemented geodesic, by the assumption that $F$ is directionally branch-complemented.
\end{proof}


\cref{thm:subflat_branching_in_fully_branching} does not give a uniform bound on Hausdorff distances. If we wish to obtain a version with uniform bounds, then by \cref{prop:singular_quasi_flats} it is enough to do so for spanning geodesics. This suggests using \cref{thm:fully_branching_general} and \cref{prop:intersection_semisingular}, which would give us the result for intersections of directionally branch-complemented flats. Having a spanning set of geodesics this way amounts to assuming that the flat is directionally strongly branch-complemented in the sense of \cref{def:strong_fully_branching}. 

\begin{theorem}\label{thm:strong_fully_branching}
Let $X$ and $Y$ be $n$--dimensional CAT(0) cube complexes. For every $q\geq 1$ and $r\geq 0$, there exists a constant $D=D(n,q,r)$ such that the following holds for every $q$--quasiisometric embedding $f:X\to Y$. 

If $H\subseteq X$ is a directionally strongly $r$--branch-complemented $k$--flat, for some $k\le n$, then $f(H)$ lies at Hausdorff distance at most $D$ from a singular $k$--flat in $Y$.
\end{theorem}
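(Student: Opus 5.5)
We reduce to spanning geodesics and use \cref{prop:singular_quasi_flats} to assemble the flat. By definition, $H$ is spanned by geodesics $\gamma_1,\dots,\gamma_k$ such that $H$ is $r$--coarsely covered by parallels of each $\gamma_i$. Each such parallel is $r$--branch-complemented and strongly $r$--branch-complemented. Fix a parallel $\gamma$ of some $\gamma_i$. It lies within distance $r$ of a witnessing parallel $\gamma'$, and $\gamma'$ lies at Hausdorff distance at most $r$ from a geodesic $\ell=\bigcap_{j}F_j$, where each $F_j$ is a directionally $r$--branch-complemented $n$--flat. (Strictly, the intersection could be larger than a geodesic, but it lies Hausdorff-close to $\ell$; we treat it as a geodesic.) The aim is a uniform constant $D_1=D_1(n,q,r)$ such that $f(\gamma)$ lies at Hausdorff distance at most $D_1$ from a singular geodesic of $Y$.

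The key step is as follows. By \cref{thm:fully_branching_general} with $\dim Y=n$, each $f(F_j)$ lies at Hausdorff distance at most $D_0=D_0(q)$ from an $n$--flat $E_j$, which is singular by \cref{lem:top_dim_singular}. The $F_j$ are singular by \cref{lem:dbc_semisingular}, so \cref{prop:intersection_semisingular} applies with $s$ flats. This gives that $f(\ell)$ lies at Hausdorff distance at most $D'$ from a singular geodesic.

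The main obstacle is uniformity. The constant in \cref{prop:intersection_semisingular} comes from \cref{cor:distance_from_intersection}, which depends only on $q$, $D_0$ and $\dim X$, and from \cref{lem:quasiisom_quasi_surj}. Its statement does not visibly depend on $s$, but to be safe I would first invoke \cref{lem:almost_intersection_finitely_many}. This replaces the family $\{F_j\}$ by at most $M(n-1)$ of its members, whose intersection is within Hausdorff distance $1$ of $\ell$. The intersection of finitely many singular flats is a convex subset that is Hausdorff-close to a geodesic. In a cube complex one may need to take the singular geodesic it contains, but the singular case makes this a parallel within bounded distance.

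I would then check that the proof of \cref{prop:intersection_semisingular} yields $D'$ depending only on $n,q,D_0$. The Helly argument (\cref{prop:coarse_intersection_subcomplexes_ccx}) has a constant independent of $s$. The Tits-boundary sphere argument is qualitative. The final bound uses \cref{lem:quasiisom_quasi_surj} with $k=1$. So $D'=D'(n,q)$.

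Adding the errors $2r$ coming from $\gamma\to\gamma'\to\ell$, each multiplied by $q$, gives $D_1=D'+2qr+q$. Here a Hausdorff-close singular geodesic can be taken, since Hausdorff-close geodesics are parallel.

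Finally, the hypothesis of \cref{prop:singular_quasi_flats} needs every parallel of each $\gamma_i$, not only the witnessing ones. But every parallel is within $r$ of a witnessing parallel, which absorbs another $qr+q$. Applying \cref{prop:singular_quasi_flats} to $f|_H$, a $q$--quasiisometric embedding of $\R^k$, gives a $k$--flat at Hausdorff distance at most $D(n,q,r)$ from $f(H)$. Since each $f(\gamma_i)$ is near a singular geodesic, the ``singular'' clause of that proposition makes this flat singular.
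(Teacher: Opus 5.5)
Your proposal is correct and follows essentially the same route as the paper. The paper treats strongly $r$--branch-complemented geodesics via \cref{thm:fully_branching_general}, \cref{lem:top_dim_singular} and \cref{prop:intersection_semisingular}, absorbs the $r$--errors through the quasiisometry constants, and then assembles the singular $k$--flat with \cref{prop:singular_quasi_flats}, as you do. Your detour through \cref{lem:almost_intersection_finitely_many} is unnecessary, and it would leave you with an intersection that is only Hausdorff-close to a geodesic, so \cref{prop:intersection_semisingular} would not apply verbatim; as you observe yourself, that proposition's constant does not depend on the number of flats, and the paper simply takes the intersection to be of finitely many $n$--flats.
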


\begin{proof}
We first consider the case $k=1$. Let $H$ be a strongly $r$--branch-complemented geodesic.
By definition, $H$ lies at Hausdorff distance at most $r$ from the intersection $\gamma$ of finitely many directionally $r$--branch-complemented $n$--flats $F_1,\dots, F_s$. By \cref{lem:top_dim_singular}, each $F_i$ is singular.

\cref{thm:fully_branching_general} states that each $f(F_i)$ lies at Hausdorff distance at most $D'$ from an $n$--flat of $Y$, where $D'=D'(q)$, and that $n$--flat is singular by \cref{lem:top_dim_singular}. \cref{prop:intersection_semisingular} thus provides a constant $D''=D''(D',q)$ such that $f(\gamma)$ lies at Hausdorff distance at most $D''$ from a singular geodesic of $Y$. Hence $f(H)$ lies at Hausdorff-distance at most $D''+qr+q$ from a singular geodesic of~$Y$. 

Now let $H$ be a directionally strongly $r$--branch-complemented $k$--flat, for some $k>1$. It is singular by \cref{lem:dbc_semisingular}. By definition, it is spanned by geodesics $\gamma_1,\dots,\gamma_k$ such that, for each $i$, there are strongly $r$--branch-complemented parallels in $H$ of $\gamma_i$ that together $r$--coarsely cover $H$. 

Let $\gamma'_i$ be one such parallel. From the above arguments, we know that $f(\gamma'_i)$ lies at Hausdorff distance at most $D''+qr+q$ from a singular geodesic of $Y$. Since every parallel in $H$ of $\gamma_i$ lies at Hausdorff distance at most $r$ from some such $\gamma'_i$, this shows that the $f$--image of each parallel in $H$ of $\gamma_i$ lies at Hausdorff distance at most $D''+2qr+q$ from a singular geodesic of $Y$. Thus \cref{prop:singular_quasi_flats} applies, and it gives a constant $D=D(q,k,r,D'')$ such that $f(H)$ lies at Hausdorff distance at most $D$ from a singular $k$--flat in $Y$.
%
\end{proof}


    

    





We now describe conditions under which we can additionally control the images of orthants under quasiisometric embeddings. This will enable us to induce maps of certain boundaries.

\begin{corollary}\label{cor:embedding_orthant_strong_fullybranching}
Let $X$ and $Y$ be $n$--dimensional CAT(0) cube complexes. If $f:X\to Y$ is a quasiisometric embedding, then for each $r\ge0$ there exists $D\ge0$ such that $f$ maps every singular orthant in every directionally strongly $r$--branch-complemented flat within Hausdorff distance at most $D$ of a singular orthant of $Y$.
\end{corollary}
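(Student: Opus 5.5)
The plan is to apply \cref{lem:orthants_to_semisingular_orthants} in its ``singular'' reading, taking $E$ to be the ambient flat. Fix $r\ge0$ and let $q$ be a quasiisometry constant for $f$. Let $E\subseteq X$ be a directionally strongly $r$--branch-complemented $k$--flat, and let $O\subseteq E$ be a singular $p$--orthant. By \cref{lem:dbc_semisingular}, $E$ is singular, since $\dim X$ equals the asymptotic rank $n$. To invoke the lemma, I need a uniform constant $D_0$, depending only on $n$, $q$, and $r$, such that for every singular geodesic $\gamma\subseteq E$ the image $f(\gamma)$ lies within Hausdorff distance $D_0$ of a singular geodesic of $Y$. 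Once I have this, the lemma gives $D'=D'(q,k,p,D_0,n)$ such that $f(O)$ is within $D'$ of a singular $p$--orthant of $Y$. Taking the maximum over $1\le p\le k\le n$ yields the constant $D$.

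To get $D_0$, I first handle strongly $r$--branch-complemented geodesics. By the first step of the proof of \cref{thm:strong_fully_branching} (the case $k=1$), each such geodesic $\gamma'$ has $f(\gamma')$ within distance $D''+qr+q$ of a singular geodesic of $Y$, where $D''$ depends only on $n$, $q$, and $r$.

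Next, let $\gamma_1,\dots,\gamma_k$ be the spanning geodesics of $E$ from \cref{def:strong_fully_branching}. Every parallel of $\gamma_i$ in $E$ is within Hausdorff distance $r$ of a strongly $r$--branch-complemented parallel. So its image is within $D''+2qr+q$ of a singular geodesic.

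The main obstacle is the remaining case. A singular geodesic $\gamma\subseteq E$ need not be parallel to any $\gamma_i$: the $\gamma_i$ span $E$, but I still have to check that the singular directions of $E$ are exactly the directions of the $\gamma_i$. I would argue as follows. Each $\gamma_i$ is branching, up to bounded distance, and hence singular by \cref{lem:branching_singular}, so its direction is singular in $E$. A singular $k$--flat has exactly $k$ singular directions, namely its axis directions. The $k$ pairwise independent directions of the $\gamma_i$ must therefore exhaust them. Consequently every singular geodesic of $E$ is parallel to some $\gamma_i$, and we may take $D_0=D''+2qr+q$.

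Alternatively, I could bypass this step by first applying \cref{thm:strong_fully_branching} to get a singular $k$--flat $E'$ close to $f(E)$, and then running the coarse-separation argument of \cref{lem:orthants_to_semisingular_orthants} directly inside $E'$. The lemma as stated already packages exactly this, so I would use it.
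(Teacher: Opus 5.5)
Your proposal is correct and follows the paper's proof: obtain a uniform bound on the images of singular geodesics of the flat from \cref{thm:strong_fully_branching}, then feed that bound into \cref{lem:orthants_to_semisingular_orthants}. You also fill in two details the paper's short argument leaves implicit: the extra $r$ of slack between a singular geodesic and a strongly $r$--branch-complemented parallel, and the check that the singular directions of the flat are exactly those of the spanning geodesics $\gamma_i$ (the paper states the latter explicitly only in the proof of \cref{cor:orthants_asymptotic_rank}).
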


\begin{proof}
Let $H\subseteq X$ be a directionally strongly $r$--branch-complemented flat. Its singular geodesics are strongly $r$--branch-complemented. Hence, by \cref{thm:strong_fully_branching}, there exists $D'\ge0$ such that every singular geodesic in $H$ is mapped within Hausdorff distance $D'$ of some singular geodesic of $Y$. We can thus apply \cref{lem:orthants_to_semisingular_orthants}, which provides a constant $D$ such that every singular orthant of $H$ is mapped within Hausdorff distance $D$ of a singular orthant of~$Y$.
\end{proof}



If one does not assume that $H$ is directionally strongly branch-complemented, then one can no longer conclude that its orthants are mapped within finite Hausdorff distance of orthants. 
Nevertheless, one can still show that the axis rays of certain singular $2$--orthants are mapped Hausdorff-close to the axis rays of singular $2$--orthants. 
Note that the assumption on $H$ in the following is weaker than requiring it to be directionally strongly branch-complemented. Indeed, a branch-complemented geodesic that belongs to a directionally branch-complemented $n$--flat need not be strongly branch-complemented.

\begin{theorem}\label{thm:fully_branching_2_flat} 
Let $X$ and $Y$ be $n$--dimensional CAT(0) cube complexes, and let $f:X \to Y$ be a quasiisometric embedding. Let $H \subseteq X$ be a directionally branch-complemented $2$--flat such that each branch-complemented geodesic of $H$ belongs to some directionally branch-complemented $n$--flat. 

If $Q\subseteq H$ is a singular 2--orthant, then there exists a singular 2--orthant $Q'\subseteq Y$ such that the two axis rays of $Q$ are mapped by $f$ within finite Hausdorff distance of the two axis rays of $Q'$.
\end{theorem}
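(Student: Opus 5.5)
The plan is to reduce to \cref{prop:2_orthant_in_target}, applied to the restriction $f|_H:H\to Y$, where $H$ carries its standard cubulation. First, $H$ is singular by \cref{lem:dbc_semisingular}, since $\dim X$ equals the asymptotic rank $n$. So identifying $H$ with $E=\R^2$ makes singular geodesics of $H$ correspond to singular geodesics of $E$, and singular 2--orthants of $H$ to those of $E$. It then remains to verify the two hypotheses of \cref{prop:2_orthant_in_target}.

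For hypothesis (1), let $\gamma\subseteq H$ be a singular geodesic. Since $H$ is directionally $r$--branch-complemented and singular, the spanning geodesics $\gamma_1,\gamma_2$ of \cref{def:fully_branching_flats} are singular and hence axis-parallel. So $\gamma$ is parallel to some $\gamma_i$ and lies within Hausdorff distance $r$ of an $r$--branch-complemented parallel $\gamma'$. By assumption $\gamma'$ lies in some directionally branch-complemented $n$--flat $F$. Moreover $\gamma'$ is branching, being within Hausdorff distance $r$ of a branching geodesic; if needed, replace $\gamma'$ by that branching geodesic, which is still in $F$ since \cref{def:fully_branching_CAT(0)} produces it inside the witnessing flat. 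I expect this membership point to be the main obstacle: one needs a branching geodesic of $F$ at finite distance from $\gamma$, and I would handle it by reading the hypothesis as applying to the branching geodesic $\gamma'$ itself. Granting this, \cref{thm:subflat_branching_in_fully_branching} applied to the branching subflat $\gamma'\subseteq F$ gives that $f(\gamma')$, and hence $f(\gamma)$, is at finite Hausdorff distance from a singular geodesic of $Y$. These bounds are non-uniform, but \cref{prop:2_orthant_in_target} only requires finiteness.

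For hypothesis (2), take an asymptotic cone $\hat Y$ with fixed basepoint $f(v)$, and the corresponding cone $\hat X$ with basepoint $v\in H$. The ultralimit $\hat H$ is directionally $0$--branch-complemented by \Cref{lem:ultralimit_fully_branching_flat}. Then \cref{cor:fully_branching_implies_flat_in_cone} gives that $\hat f(\hat H)=(f(H))_\omega$ is a singular 2--flat. This holds for any choice of fixed basepoint, so it holds for the cone used in \cref{prop:2_orthant_in_target}.

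With both hypotheses in hand, \cref{prop:2_orthant_in_target} applies to $f|_H$ (a quasiisometric embedding $\R^2\to Y$) and to the given singular 2--orthant $Q$. It produces a singular 2--orthant $Q'\subseteq Y$ whose axis rays are at finite Hausdorff distance from the $f$--images of the axis rays of $Q$, which is the claim.
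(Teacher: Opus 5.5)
Your proposal is the paper's proof: restrict $f$ to $H$, verify hypothesis (1) of \cref{prop:2_orthant_in_target} via \cref{thm:subflat_branching_in_fully_branching}, verify hypothesis (2) via \cref{cor:fully_branching_implies_flat_in_cone}, and conclude. The one repair is in your handling of hypothesis (1).

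The step ``$\gamma'$ is branching, being within Hausdorff distance $r$ of a branching geodesic'' is false. Being close to a branching geodesic does not make a geodesic branching. Your proposed fix also fails: the branching geodesic supplied by \cref{def:fully_branching_CAT(0)} lies in the witnessing flat for $\gamma'$, which need not be $F$. Re-reading the hypothesis would then change the statement.

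You don't need any of this. The second statement of \cref{thm:subflat_branching_in_fully_branching} applies to every singular geodesic of a directionally branch-complemented $n$--flat. Here $\gamma'$ is an axis-parallel geodesic of the singular flat $H$, hence singular, and $\gamma'\subseteq F$ by hypothesis. So that statement gives $f(\gamma')$, and hence $f(\gamma)$, at finite Hausdorff distance from a singular geodesic of $Y$, with no branching needed for $\gamma'$. This is exactly how the paper's one-line argument for hypothesis (1) works.
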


\begin{proof}
The second statement of \cref{thm:subflat_branching_in_fully_branching} tells us that every singular geodesic of $H$ is mapped by $f$ within finite Hausdorff distance of a singular geodesic of $Y$. Moreover, \Cref{cor:fully_branching_implies_flat_in_cone} implies that the ultralimit $f(H)_\omega$ of $f(H)$ inside an asymptotic cone of $Y$, taken with respect to a fixed basepoint, is a $2$--flat, because $H$ is directionally branch-complemented. The result therefore follows from \Cref{prop:2_orthant_in_target}.
\end{proof}
%
%

\subsection{The general case} \label{subsec:main_general}

Here we prove analogues of \cref{thm:subflat_branching_in_fully_branching}, \cref{thm:strong_fully_branching}, and \cref{cor:embedding_orthant_strong_fullybranching} without requiring the dimension to match the asymptotic rank. The cost of this greater generality is that semisingularity replaces singularity in the conclusions, and there is also less uniformity in the constants. These are both necessary, as can be seen from \cref{eg:corner_to_corner}.

\begin{theorem} \label{thm:subflat_branching_asymptotic_rank}
Let $X$ and $Y$ be finite-dimensional CAT(0) cube complexes of asymptotic rank $n$, and let $f:X\to Y$ be a quasiisometric embedding. If $F\subset X$ is a directionally branch-complemented $n$--flat and $H\subset F$ is a branching subflat, then $f(H)$ lies within finite Hausdorff distance of a semisingular flat of $Y$.
\end{theorem}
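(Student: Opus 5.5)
The plan is to adapt the proof of \cref{thm:subflat_branching_in_fully_branching}, replacing Huang's theorem and \cref{prop:intersection_orthants} by their asymptotic-rank analogues from \cref{sec:qm_cc}, and replacing singularity by semisingularity via \cref{lem:semisingular_subflat_boundary}. Let $k=\dim H$. Since $H$ is branching, it is an intersection $H=\bigcap_{i=1}^mF_i$ of $n$--flats. By \cref{prop:subflat_image_union_orthants}, $f(H)$ lies at finite Hausdorff distance from a finite union $Z=\bigcup_jO_j$ of $k$--orthants of $Y$. \Cref{prop:boundary_quasi_flat_sphere} then shows that $\partial_Tf(H)=\bigcup_j\partial_TO_j$ is homeomorphic to $\mathbb S^{k-1}$.

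Next I would pass to asymptotic cones $\hat f:\hat X\to\hat Y$ taken with fixed basepoints $x\in H$ and $y=f(x)$. By \cref{lem:subflats_of_fully_branching_ccc}, $H$ is directionally branch-complemented, so \cref{cor:fully_branching_implies_flat_in_cone} gives that $\hat f(\hat H)$ is a singular $k$--flat. This flat is the ultralimit of $Z$, hence equals $\bigcup_j\hat O_j$ with all orthants based at $(y)$. By \cref{lem:embedding-Titsboundary-into_link_and_Tits_boundary} (or \cref{lem:Tits_boundary_orthants_isom_boundary_asymp_cone}), $\partial_TZ$ is isometric to $\partial_T\hat f(\hat H)$, so $\partial_Tf(H)$ is a round $(k-1)$--sphere.

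By \cref{thm:fully_branching_general}, $f(F)$ is Hausdorff-close to an $n$--flat $F'\subseteq Y$, which is semisingular by \cref{lem:top_rank_semisingular}. Then $\partial_Tf(H)\subseteq\partial_Tf(F)=\partial_TF'$ is a round $(k-1)$--subsphere of the round sphere $\partial_TF'$. Consequently there is a $k$--subflat $H'\subseteq F'$ with $\partial_TH'=\partial_Tf(H)$: take the span in $F'$ of rays through a point representing that great subsphere. Since $H'$ and $Z$ are finite unions of orthants (after subdividing $H'$) with the same compact Tits boundary, convexity of the distance function shows that they are at finite Hausdorff distance, and hence so are $f(H)$ and $H'$.

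The step I expect to be the main obstacle is proving that $H'$ is semisingular. By \cref{lem:semisingular_subflat_boundary}, it suffices to show that $\partial_TH'$ is a subcomplex of the simplicial structure on $\partial_TF'$ described in \cref{rem:boundary_semisingular}. I would argue this in the cone. There $\hat{F'}=\hat f(\hat F)$ is identified with $\R^n$ carrying its standard median, and $\hat{H'}=\hat f(\hat H)$ is a singular subflat of it, since \cref{cor:fully_branching_implies_flat_in_cone} gives that $\hat f(\hat H)$ is singular. Its Tits boundary is therefore a subcomplex of $\partial_T\hat{F'}$. Transporting this back through the isometry $\varphi_T$, which defines the structure on $\partial_TF'$, gives the claim. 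One point needs care: the ultralimit of $H'$ must coincide with $\hat f(\hat H)$ inside $\hat{F'}$, which follows from finite Hausdorff distance.

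Alternatively, one could observe that each $O_j$ is semisingular, since its ultralimit is a singular orthant in the singular flat $\hat f(\hat H)$. Then $\partial_TH'$ is a union of cells, which again yields semisingularity of $H'$.
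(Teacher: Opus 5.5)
Your proposal is correct and follows the paper's proof essentially step for step: \cref{prop:subflat_image_union_orthants}, then \cref{prop:boundary_quasi_flat_sphere}, then the round-sphere argument of \cref{thm:subflat_branching_in_fully_branching}, with $F'$ semisingular by \cref{lem:top_rank_semisingular} and $H'$ semisingular by \cref{lem:semisingular_subflat_boundary}; your cone argument that $\partial_TH'$ is a subcomplex of $\partial_TF'$ spells out a point the paper leaves implicit. The closing ``alternative'' is not justified as stated, since an orthant inside a singular flat need not itself be singular (a rotated quadrant, for example) and semisingularity has to hold in every asymptotic cone, but your main argument does not depend on it.
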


\begin{proof}
Let $H\subset F$ be a branching $k$--flat. By definition, there are $n$--flats $F_1,\dots,F_m$ such that $H=\bigcap_{i=1}^mF_i$. According to \cref{prop:subflat_image_union_orthants}, there is a finite union of orthants at finite Hausdorff distance from $f(H)$. By \cref{prop:boundary_quasi_flat_sphere}, this implies that $\partial_Tf(H)\cong\mathbb S^{k-1}$.

We know argue as in \cref{thm:subflat_branching_in_fully_branching}, where $F'$ is semisingular by \cref{lem:top_rank_semisingular} and the resulting $H'$ is semisingular by \cref{lem:semisingular_subflat_boundary}.
\end{proof}


\begin{theorem} \label{thm:strong_branching_asymptotic_rank}
Let $X$ be an $n$--dimensional CAT(0) cube complex, and let $Y$ be finite-dimensional CAT(0) cube complex of asymptotic rank $n$. For every $q\ge1$ and $r\ge0$ there exists $D=D(n,q,r,Y)$ such that the following holds for every $q$--quasiisometric embedding $f:X\to Y$.

If $H\subset X$ is a directionally strongly $r$--branch-complemented $k$--flat for some $k\le n$, then $f(H)$ lies at Hausdorff distance at most $D$ from a semisingular $k$--flat of $Y$.
\end{theorem}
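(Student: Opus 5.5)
The plan is to mirror the proof of \cref{thm:strong_fully_branching}. The changes are that uniform bounds for top-rank flats in $Y$ come from \cref{thm:fully_branching_general} with $D=D(q,Y)$, and that ``singular'' becomes ``semisingular'' in $Y$. We first treat $k=1$. Let $H$ be strongly $r$--branch-complemented. Then $H$ lies at Hausdorff distance at most $r$ from a geodesic $\gamma=\bigcap_{i=1}^sF_i$, where the $F_i$ are directionally $r$--branch-complemented $n$--flats in $X$. Since $\dim X=n$, each $F_i$ is singular by \cref{lem:top_dim_singular}, hence a convex subcomplex. By \cref{thm:fully_branching_general}, each $f(F_i)$ lies within $D'=D'(q,Y)$ of an $n$--flat $E_i\subset Y$, and $E_i$ is semisingular by \cref{lem:top_rank_semisingular}.

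Next we need an analogue of \cref{prop:intersection_semisingular} with $Y$ only of asymptotic rank $n$ and with semisingular $E_i$. Its proof goes through almost verbatim. The comparison $f(\gamma)\subset\bigcap E_i^{+D'}\subset f(\gamma)^{+T}$ uses \cref{cor:distance_from_intersection}, which only needs $X$ finite-dimensional and the $F_i$ convex, so $T=T(q,D',n)$. We set $C=E_1\cap(\bigcap_{i\ge2}E_i^{+D'})^{+D'}$, a convex subset of $E_1$ within uniform Hausdorff distance of $f(\gamma)$. Its Tits boundary is $B=\bigcap\partial_TE_i$, which is a subcomplex of the simplicial structure on $\partial_TE_1$ from \cref{rem:boundary_semisingular}, since all the $E_i$ carry compatible structures in the cone. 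Because $C$ is quasiisometric to a line, $B$ consists of two points; the $\pi$--convexity argument there then shows that $B$ is a round $0$--sphere. Here I would avoid \cref{prop:boundary_quasi_flat_sphere} and argue directly with the quasiline. Hence $C\subset E_1$ contains a geodesic $\gamma'$ with $\partial_T\gamma'=B$, and $\gamma'$ is semisingular by \cref{lem:semisingular_subflat_boundary}. The final step of that proof, using \cref{lem:quasiisom_quasi_surj}, bounds $d_{\mathrm{Haus}}(f(\gamma),\gamma')$ by a constant depending only on $q,r,n,Y$. Adding $qr+q$ handles $H$ itself.

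For $k>1$, $H$ is spanned by $\gamma_1,\dots,\gamma_k$ whose strongly $r$--branch-complemented parallels $r$--coarsely cover $H$. Every parallel of each $\gamma_i$ in $H$ is therefore within $r$ of such a geodesic, and so its image lies within a uniform $D''(n,q,r,Y)$ of a semisingular geodesic of $Y$. We then apply \cref{prop:singular_quasi_flats} to $f|_H$, using $H\cong\R^k$ via the singular structure from \cref{lem:dbc_semisingular}. The semisingular clause of that proposition gives a semisingular $k$--flat within $D(n,q,r,Y)$ of $f(H)$.

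I expect the main obstacle to be the uniformity in the $k=1$ step. In \cref{prop:intersection_semisingular} the last bound on $d_{\mathrm{Haus}}(P,f(H))$ is uniform, but we must make sure that no constant depends on the particular flats. In particular, locating $\gamma'$ inside $C$ must use only boundary data, which is uniform-free. The bound from \cref{lem:quasiisom_quasi_surj} needs $P\subset f(\gamma)^{+T+2D'}$, and this holds because $P\subset C$.
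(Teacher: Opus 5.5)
Your proposal is correct and follows the paper's route. The paper proves this in one line: it reruns the proof of \cref{thm:strong_fully_branching} with \cref{lem:top_rank_semisingular} in place of \cref{lem:top_dim_singular} and with the $Y$-dependent constant from \cref{thm:fully_branching_general}. Your explicit check that \cref{prop:intersection_semisingular} still holds for semisingular $E_i$ in $Y$ of asymptotic rank $n$ fills in what the paper leaves implicit: uniformity comes from \cref{cor:distance_from_intersection}, which needs only that $X$ is $n$-dimensional, and you replace \cref{prop:boundary_quasi_flat_sphere} by a direct quasi-line argument when $k=1$.
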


\begin{proof}
The argument is the same as in \cref{thm:strong_fully_branching}, with the exception that we use \cref{lem:top_rank_semisingular} in place of \cref{lem:top_dim_singular} and the constant given by \cref{thm:fully_branching_general} now depends on $Y$.
\end{proof}

\begin{remark} \label{rem:no_asrk}
An attempt at an analogous proof of \cref{thm:strong_fully_branching} fails for the case where both domain and codomain are finite-dimensional and of asymptotic rank $n$. This is because the analogue of \cref{prop:intersection_semisingular} in this setting does not have uniformity. 

Indeed, we cannot get uniform constants in \cref{lem:image_of_intersection_of_flats_top_rank}, even if we assume that the flats are semisingular in a CAT(0) cube complex. Essentially, this is because there is no way to lower-bound the \emph{angle} at which semisingular flats begin to diverge, even though asymptotically they diverge orthogonally. Consider pairs of geodesics in (a CAT(0) square complex quasiisometric to) the hyperbolic plane, for instance: they can fellow-travel for an arbitrarily long time.

For the same reason, one could build a non-homogeneous $X$ and a quasiisometric embedding $X\to X$ showing that an analogue of \cref{prop:intersection_semisingular} for the case where the rank and the dimension do not agree does not hold with uniform bounds.

However, it is possible that one could leverage the additional branching properties in the assumptions of \cref{thm:strong_fully_branching,thm:strong_branching_asymptotic_rank} to obtain a uniform analogue of \cref{prop:intersection_semisingular} and hence of \cref{thm:strong_fully_branching} in this setting.
\end{remark}


\begin{corollary} \label{cor:orthants_asymptotic_rank}
Let $X$ be an $n$--dimensional CAT(0) cube complex, and let $Y$ be finite-dimensional CAT(0) cube complex with asymptotic rank $n$. If $f:X\to Y$ is a quasiisometric embedding, then there exists $D\ge0$ such that $f$ maps every semisingular orthant in every directionally strongly $r$--branch-complemented flat within Hausdorff distance at most $D$ of a semisingular orthant of $Y$.
\end{corollary}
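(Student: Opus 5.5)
This will be proved by the same route as \cref{cor:embedding_orthant_strong_fullybranching}, with \cref{thm:strong_branching_asymptotic_rank} replacing \cref{thm:strong_fully_branching} and the semisingular reading of \cref{lem:orthants_to_semisingular_orthants}. Fix $r\ge0$ and a quasiisometric embedding $f:X\to Y$, say a $q$--quasiisometric embedding. Let $H\subset X$ be a directionally strongly $r$--branch-complemented $k$--flat. Since $X$ is $n$--dimensional, \cref{lem:dbc_semisingular} shows that $H$ is singular. In an $n$--dimensional complex the semisingular orthants of $H$ are then exactly its singular orthants, so it suffices to handle singular orthants of $H$.

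\textbf{Step 1.} I will show that every singular geodesic of $H$ is mapped uniformly close to a semisingular geodesic of $Y$. Every singular geodesic $\gamma\subset H$ is parallel to one of the spanning directions $\gamma_i$. By \cref{def:strong_fully_branching}, $\gamma$ lies within Hausdorff distance $r$ of a strongly $r$--branch-complemented geodesic $\gamma'$. Applying \cref{thm:strong_branching_asymptotic_rank} with $k=1$ gives a semisingular geodesic within Hausdorff distance $D_1=D(n,q,r,Y)$ of $f(\gamma')$. Hence $f(\gamma)$ lies within Hausdorff distance $D_1+qr+q$ of a semisingular geodesic. This bound does not depend on $H$ or $\gamma$.

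\textbf{Step 2.} I will apply \cref{lem:orthants_to_semisingular_orthants} in its semisingular reading, with $E=H$. The hypotheses of that lemma are stated for semisingular geodesics $\gamma\subset E$, and these coincide here with the singular geodesics of $H$. The lemma then yields $D=D'(q,k,p,D_1+qr+q,\dim Y)$ such that every semisingular $p$--orthant of $H$ has image within $D$ of a semisingular $p$--orthant of $Y$. Maximising over the finitely many pairs $(k,p)$ with $p\le k\le n$ gives a single constant $D$ that works for every such $H$.

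\textbf{Main obstacle.} The delicate point is the inductive separation step inside \cref{lem:orthants_to_semisingular_orthants} in the semisingular case. There the target flat $\Omega'$ is only semisingular, and one uses the simplicial structure on $\partial_T\Omega'$ from \cref{rem:boundary_semisingular} to choose the orthant $O'$. The uniformity of that step depends on the constant in \cref{prop:singular_quasi_flats}, which relies only on $q$, $p$, and $D$. I would check that this remains true in the semisingular setting, and that the cone-point adjustment $D'\dim Y$ is still valid when $O'$ is merely semisingular. If it is, the corollary follows directly from Steps~1 and~2.
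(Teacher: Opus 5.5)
Your proposal is correct and follows the paper's proof essentially verbatim. The paper's steps are: $H$ is singular by \cref{lem:dbc_semisingular}, so its (semi)singular geodesics are exactly the parallels of the $\gamma_i$; \cref{thm:strong_branching_asymptotic_rank} sends these uniformly close to semisingular geodesics; and the semisingular reading of \cref{lem:orthants_to_semisingular_orthants} finishes. Your explicit $r$--coarse-covering step (the extra $qr+q$) and the maximisation over $(k,p)$ just make precise what the paper leaves implicit, and the worry in your last paragraph is unnecessary because that lemma is already stated and proved for both readings.
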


\begin{proof}
Given a directionally strongly $r$--branch-complemented $k$--flat $H$, let $\gamma_1,\dots,\gamma_k\subset H$ be geodesics as in \cref{def:strong_fully_branching}. According to \cref{lem:dbc_semisingular}, every $\gamma_i$ is singular, and so is $H$. Consequently, the singular geodesics in $H$ are precisely the parallels of the $\gamma_i$ inside $H$.

By \cref{thm:strong_branching_asymptotic_rank}, there exists $D\ge0$ such that every parallel of every $\gamma_i$ is mapped within Hausdorff distance at most $D$ of a semisingular geodesic of $Y$. \cref{lem:orthants_to_semisingular_orthants} now provides a constant $D'$ such that $f$ maps every semisingular orthant in $H$ within Hausdorff distance at most $D'$ of some semisingular orthant of $Y$.
\end{proof}

In the following, we describe a generalisation of the above results that includes products of certain hyperbolic spaces. This should be compared to the main results of \cite{bowditch:quasiisometric}.

\begin{remark} \label{rem:cor_for_hyperbolic_spaces}
It was shown in \cite[Thm~1.8]{haglundwise:combination}, using \cite[Thm~1.1]{bonkschramm:embeddings}, that every hyperbolic group is quasiisometric to a proper CAT(0) cube complex. More generally, a hyperbolic space is quasiisometric to a finite-dimensional CAT(0) cube complex if and only if it has finite \emph{asymptotic dimension} \cite[Thm~4.8]{petyt:mapping}. Such CAT(0) cube complexes have asymptotic rank one.

\cref{thm:fully_branching_general} and the results of \cref{subsec:main_general} can therefore be stated in greater generality: when $X$ or $Y$ can have dimension greater than their asymptotic rank, we can replace them with $X=X_1\times\cdots\times X_k\times X'$ and $Y=Y_1\times \cdots\times Y_m\times Y'$ respectively, where the $X_i$ and $Y_i$ are hyperbolic spaces of finite asymptotic dimension, and $X'$ and $Y'$ are finite-dimensional CAT(0) cube complexes of asymptotic rank $n-k$ and $n-m$, respectively.

To see this, let $X''=\prod_{i=1}^kX_i$ and $Y''=\prod_{i=1}^kY_i$. Let $f_X:X''\to Q_X$ and $f_Y:Y''\to Q_Y$ be factor-wise quasiisometries to products of CAT(0) cube complexes. By the Morse lemma there exists $D$ such that $f_X$ and $f_Y$ both send geodesics in the factors of $X''$ and $Y''$, respectively, within Hausdorff-distance at most $D$ of geodesics in the corresponding factors of $Q_X$ and $Q_Y$, respectively. Hence $f_X$ and $f_Y$ send $k$--flats uniformly Hausdorff-close to $k$--flats. Moreover, note that for every $r\geq 0$, there exists $r'\geq 0$ such that $f_X$ maps directionally $r$--branch-complemented $p$--flats to directionally $r'$--branch-complemented $p$--flats for every $p\leq k$.


Lastly, we note that when $X'$ is trivial and the $X_i$ are non-elementary hyperbolic groups (or more generally, \emph{bushy} hyperbolic spaces, see \cite{bowditch:quasiisometric}), there exists an $r\geq 0$ such that every top-rank flat in $X$ is directionally $r$--branch-complemented and every flat is branching inside a top-rank flat. Hence, \cref{thm:subflat_branching_asymptotic_rank} shows that every quasiisometry $\phi:X\to Y$ maps every flat Hausdorff-close to a flat. Compare \cite[Thm~1.1]{bowditch:quasiisometric}. 
\end{remark}

\subsection{Induced maps of boundaries} \label{subsec:boundaries}

We conclude by describing how the results of this section can be used to induce maps between certain subsets of Tits boundaries of CAT(0) cube complexes. Recall from \cref{lem:embedding-Titsboundary-into_link_and_Tits_boundary} that if $X$ is a CAT(0) space and $\hat X$ is an asymptotic cone of $X$ with respect to a fixed basepoint, then there is a natural isometric embedding $\varphi_T:\partial_TX\to\partial_T\hat X$.

\begin{definition}[Singular boundary] \label{def:singular_boundary_graph}
Let $X$ be a CAT(0) cube complex. The \emph{singular boundary graph} of $X$, denoted $\partial_{\mathrm{sing}}X$, is the following graph. The vertices of $\partial_{\mathrm{sing}}X$ are the endpoints in $\partial_T X$ of singular geodesic rays in $X$. Two vertices of $\partial_{\mathrm{sing}}X$ are joined by an edge whenever they admit singular geodesic representatives that span a singular 2--orthant of $X$.

The \emph{semisingular boundary graph} $\partial_{\mathrm{ss}}X$ has a vertex for each endpoint of a semisingular geodesic ray. Two vertices are joined by an edge whenever their $\varphi_T$-images in $\partial_T\hat X$ admit representatives that span a singular 2--orthant in $\hat X$.
\end{definition}

Note that, in view of \cref{lem:orthants_are_weyl_cones} and \cref{lem:asymptotic_Weyl_cone}, the edge sets of $\partial_{\mathrm{sing}}X$ and $\partial_{\mathrm{ss}}X$ are exactly the sets of pairs of vertices at angle $\frac\pi2$. In other words, if we give the edges of $\partial_{\mathrm{sing}}X$ or $\partial_{\mathrm{ss}}X$ length $\frac\pi2$, then it is isometrically embedded in $\partial_TX$.

If $X$ is the universal cover of a Salvetti complex, then $\partial_TX$ has a natural cell structure with respect to which this embedding maps into the 1--skeleton. However, the Tits boundary of a general CAT(0) cube complex has no such structure.

\begin{example} \label{eg:singular_boundary}
We illustrate the differences between different notions of boundary.
\begin{itemize}
\item   If $X$ is the CAT(0) square complex from \cref{eg:corner_to_corner}, then $\partial_TX=\partial_{\mathrm{ss}}X$ is a pair of points, but $\partial_{\mathrm{sing}}X=\varnothing$.
\item   If $X$ is the ``staircase'' square complex, obtained from the orthant $[0,\infty)^2$ by deleting all squares containing a point $(x,y)$ with $y>x$, then $\partial_TX$ is an arc of length $\frac\pi4$, whereas $\partial_{\mathrm{sing}}X=\partial_{\mathrm{ss}}X$ is a single point. This also distinguishes $\partial_{\mathrm{sing}}$ from other combinatorial notions of boundary for CAT(0) cube complexes, such as the Roller boundary \cite{roller:poc}, simplicial boundary \cite{hagen:simplicial}, and simplicial Roller boundary \cite{guralnik:coarse,genevois:contracting}, which are all 1--simplices for this particular square complex $X$.
\item   Let $\alpha_2>\alpha_1>0$. If $X$ is the subcomplex of $[0,\infty)^2$ consisting of all squares whose points $(x,y)$ all satisfy $\alpha_1 x\le y\le\alpha_2x$, then $\partial_TX$ is an arc but $\partial_{\mathrm{sing}}X=\partial_{\mathrm{ss}}X=\varnothing$.
\item   Let $X$ be the subcomplex of $[1,\infty)^2$ obtained by deleting, for each $n$, the subcomplex $[(2^{2^n},0),(2^{2^{n+1}},2^{n+1}-1)]$ for all $n$. This is modelled on removing the region below the $\log x$ graph. The Tits boundary of $X$ is an arc: one endpoint corresponds to the vertical geodesic, and the other corresponds to the piecewise-linear geodesic $\gamma$ through the points $(2^{2^n},2^n)$. Note that the ultralimit of $\gamma$ is singular in every asymptotic cone where it exists. The semisingular boundary graph is thus an edge. On the other hand, the singular boundary graph of $X$ is a single point, corresponding to the vertical geodesic.
\end{itemize}
\end{example}

A geodesic ray is said to be \emph{branch-complemented} if it is contained in a branch-complemented geodesic. Note that this implies that it is semisingular, by \cref{lem:branching_singular}.

\begin{definition}[Branch-complemented boundary]\label{def:fully_branching_boundary}
Let $X$ be a CAT(0) cube complex. The \emph{(strongly) branch-complemented boundary graph} of $X$, denoted $\partial_{\mathrm{sbc}}X$ or $\partial_{\mathrm{bc}}X$ accordingly, is the following subgraph of $\partial_{\mathrm{ss}}X$. It has a vertex for each endpoint in $\partial_T X$ of (strongly) branch-complemented geodesic rays that lie in directionally branch-complemented $n$--flats. Vertices $\xi$ and $\eta$ are joined by an edge whenever there exists a directionally (strongly) branch-complemented $2$--flat $H\subseteq X$ such that $\xi,\eta\in\partial_T H$.
\end{definition}

Note that the condition that the rays representing points of $\partial_{\mathrm{sbc}}X$ lie in directionally branch-complemented $n$--flats is an empty condition: see \cref{def:strong_fully_branching}. It is only a restriction in the case of $\partial_{\mathrm{bc}}X$.

For universal covers of Salvetti complexes, the branch-complemented boundary is an induced subgraph of the singular boundary graph, since the existence of a directionally branch-complemented $2$--flat is automatic. Indeed, if $Q$ is a $2$--orthant whose axis rays are asymptotic to branch-complemented geodesic rays, then, up to replacing $Q$ by a suborthant, we may assume that its axis rays have the same labels as the branch-complemented rays by \cite[Cor.~2.6]{bestvinakleinersageev:asymptotic}. Therefore, the axes can be extended, using the same labels, to branch-complemented geodesics, and hence span a directionally branch-complemented $2$--flat by \cref{rem:fully_branching_geod_in_raag}.

Already \cref{thm:subflat_branching_in_fully_branching} (respectively \cref{thm:subflat_branching_asymptotic_rank}) shows that quasiisometries between CAT(0) cube complexes of dimension $n$ (respectively asymptotic rank $n$) map the vertex set of $\partial_{\mathrm{bc}}X$ into the vertex set of $\partial_{\mathrm{sing}}Y$ (respectively $\partial_{\mathrm{ss}}Y$). Understanding what happens to edges is more subtle. 
The following two corollaries summarise the implications of our results on the level of boundaries.

\begin{corollary}\label{cor:embedding_fully_branching_boundary}
Let $X$ and $Y$ be $n$--dimensional CAT(0) cube complexes, and let $f:X\to Y$ be a quasiisometric embedding.
\begin{itemize}
\item   $f$ induces a graph embedding $\partial_{\mathrm{sbc}}X \to \partial_{\mathrm{sing}}Y$.
\item   If every branch-complemented geodesic of $X$ is contained in some directionally branch-complemented $n$--flat, then $f$ induces a graph embedding $\partial_{\mathrm{bc}}X \to \partial_{\mathrm{sing}}Y$.
\end{itemize}
\end{corollary}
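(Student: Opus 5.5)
The plan is to build the vertex map from the control on singular geodesics, and the edge map from the control on singular orthants, then check injectivity and adjacency through the Tits boundary.

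\emph{First bullet, vertex map.} A vertex $\xi\in\partial_{\mathrm{sbc}}X$ is represented by a ray $\rho$ contained in a strongly $r$--branch-complemented geodesic $\gamma$. By \cref{thm:strong_fully_branching} (with $k=1$), $f(\gamma)$ lies within Hausdorff distance $D$ of a singular geodesic $\gamma'$ of $Y$. As in the base case of \cref{lem:orthants_to_semisingular_orthants}, $f(\rho)$ is then Hausdorff-close to a subray of $\gamma'$. We define $f_\partial(\xi)$ to be the endpoint of that subray, which is a vertex of $\partial_{\mathrm{sing}}Y$. This is well defined: Hausdorff-close rays in $X$ have Hausdorff-close images, so they give the same endpoint.

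\emph{Injectivity.} If $f_\partial(\xi)=f_\partial(\eta)$, then the images of the two rays are Hausdorff-close. Here I would use that $f$ is a quasiisometric embedding: restricted to a ray, it has a coarse inverse on its image. Two rays whose images are Hausdorff-close must then be at finite Hausdorff distance from each other. This gives $\xi=\eta$.

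\emph{First bullet, edges.} Suppose $\xi,\eta$ are joined by an edge, witnessed by a directionally strongly $r$--branch-complemented $2$--flat $H$ with $\xi,\eta\in\partial_TH$. By \cref{lem:dbc_semisingular}, $H$ is singular. Since $\xi\ne\eta$ are vertices of $\partial_TH$ represented by singular rays, \cref{lem:orthants_are_weyl_cones} shows they are at angle $\frac\pi2$, so they bound a singular $2$--orthant $Q\subseteq H$. Its axis rays represent $\xi$ and $\eta$, and each lies in a strongly branch-complemented parallel up to bounded distance. \cref{cor:embedding_orthant_strong_fullybranching} puts $f(Q)$ within bounded Hausdorff distance of a singular orthant $Q'$ of $Y$.

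The step I expect to be the main obstacle comes next: checking that the axis rays of $Q'$ are exactly $f_\partial(\xi)$ and $f_\partial(\eta)$. The proof of \cref{lem:orthants_to_semisingular_orthants} constructs $Q'$ with bounding $1$--orthants Hausdorff-close to $f$ of the bounding rays, and that should settle it. This makes $f_\partial(\xi)$ and $f_\partial(\eta)$ adjacent in $\partial_{\mathrm{sing}}Y$. Adjacency requires distinct endpoints, which holds by injectivity.

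\emph{Second bullet.} Here vertices are endpoints of branch-complemented rays lying in directionally branch-complemented $n$--flats $F$. The second statement of \cref{thm:subflat_branching_in_fully_branching} sends singular geodesics of $F$ within finite (non-uniform) distance of singular geodesics, which gives the vertex map and injectivity exactly as before. For an edge witnessed by a directionally branch-complemented $2$--flat $H$, the hypothesis gives exactly the setting of \cref{thm:fully_branching_2_flat}. Its singular $2$--orthant $Q\subseteq H$ with axes representing $\xi,\eta$ yields a singular orthant $Q'$ whose axis rays are Hausdorff-close to the images. So $f_\partial(\xi)$ and $f_\partial(\eta)$ are adjacent, and no uniformity is needed here.
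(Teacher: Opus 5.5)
Your proposal is correct and follows the paper's proof. The vertex map comes from control of the images of singular geodesics: for the first bullet you cite the uniform \cref{thm:strong_fully_branching}, where the paper cites \cref{thm:subflat_branching_in_fully_branching}, and either works. The edges come from \cref{cor:embedding_orthant_strong_fullybranching} or \cref{thm:fully_branching_2_flat}. Your well-definedness, injectivity, and axis-identification checks spell out details the paper leaves implicit.

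One small correction concerns the edge step. \cref{lem:orthants_are_weyl_cones} only gives $\angle(\xi,\eta)\ge\frac\pi2$, so two vertices of $\partial_TH$ could a priori be antipodal, at angle $\pi$. The singular $2$--orthant $Q\subseteq H$ exists because $\partial_{\mathrm{sbc}}X$ is a subgraph of $\partial_{\mathrm{ss}}X$, whose edges join only vertices at angle $\frac\pi2$. The paper's proof relies on the same implicit reading.
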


\begin{proof}
As noted above, \cref{thm:subflat_branching_in_fully_branching} shows that $f$ induces a map from the vertex set of $\partial_{\mathrm{bc}}X$ to the vertex set of $\partial_{\mathrm{sing}}Y$.

Let $\xi_1$ and $\xi_2$ be two adjacent vertices of the (strongly) branch-complemented boundary graph of $X$. There is a directionally (strongly) branch-complemented 2--flat $H$ such that $\xi_1,\xi_2\in\partial_TH$, and there is a singular orthant $O\subset H$ whose two axis rays represent $\xi_1$ and $\xi_2$. By applying either \cref{cor:embedding_orthant_strong_fullybranching} or \cref{thm:fully_branching_2_flat}, according to which case we are in, we see that there is a singular orthant of $Y$ at finite Hausdorff distance from $f(O)$, so the images of $\xi_1$ and $\xi_2$ are adjacent in $\partial_{\mathrm{sing}}Y$.
\end{proof}


\begin{corollary} \label{cor:boundary_map_asymptotic_rank}
Let $X$ be an $n$--dimensional CAT(0) cube complex, and let $Y$ be finite-dimensional CAT(0) cube complex with asymptotic rank $n$. Every quasiisometric embedding $f:X\to Y$ induces a graph embedding $\partial_{\mathrm{sbc}}X\to\partial_{\mathrm{ss}}Y$.
\end{corollary}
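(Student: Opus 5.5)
The plan is to run the proof of \cref{cor:embedding_fully_branching_boundary} in the present setting, substituting the asymptotic-rank results of \cref{subsec:main_general} for their $n$--dimensional counterparts.

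\emph{Vertices.} Let $\xi$ be a vertex of $\partial_{\mathrm{sbc}}X$, represented by a strongly $r$--branch-complemented ray $\rho$ contained in a strongly $r$--branch-complemented geodesic $\gamma$. Regarded as a $1$--flat, $\gamma$ is directionally strongly branch-complemented. Hence \cref{thm:strong_branching_asymptotic_rank} provides a semisingular geodesic $\gamma'\subset Y$ at Hausdorff distance at most $D$ from $f(\gamma)$. The ray $f(\rho)$ is then Hausdorff-close to a subray of $\gamma'$, which is semisingular because its ultralimit is contained in the singular ultralimit of $\gamma'$. We define the image of $\xi$ to be the endpoint of that subray.

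\emph{Well-definedness and injectivity.} If $\rho_1,\rho_2$ are asymptotic representatives of $\xi$, then $f(\rho_1)$ and $f(\rho_2)$ are Hausdorff-close, so the resulting semisingular rays have the same endpoint. For injectivity, suppose $\xi_1\ne\xi_2$. Since $f$ is a quasiisometric embedding, $\rho_1$ and $\rho_2$ are at infinite Hausdorff distance, and therefore so are their images. Thus the image rays determine distinct boundary points, because two rays with the same endpoint are Hausdorff-close.

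\emph{Edges.} Suppose $\xi_1,\xi_2$ are adjacent in $\partial_{\mathrm{sbc}}X$. Then there is a directionally strongly branch-complemented $2$--flat $H$ with $\xi_1,\xi_2\in\partial_TH$. Since $X$ is $n$--dimensional, $H$ is singular by \cref{lem:dbc_semisingular}, and we may choose a singular $2$--orthant $O\subset H$ whose axis rays represent $\xi_1$ and $\xi_2$; the map is well defined on vertices, so these representatives may be used. \cref{cor:orthants_asymptotic_rank} then places $f(O)$ within finite Hausdorff distance of a semisingular $2$--orthant $O'\subset Y$.

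The main obstacle is showing that the axis rays of $O'$ are Hausdorff-close to the images of the axis rays of $O$. I would handle this using the inductive construction in the proof of \cref{lem:orthants_to_semisingular_orthants}, which builds $O'$ so that its bounding $1$--orthants are close to $f$ of the bounding rays of $O$. Consequently $\partial_TO'$ is an arc from $f_*\xi_1$ to $f_*\xi_2$.

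Passing to an asymptotic cone $\hat Y$ with fixed basepoint, the ultralimit of $O'$ is a singular $2$--orthant. Its boundary is $\varphi_T(\partial_TO')$, which spans the $\varphi_T$-images of the two endpoints. This is exactly the adjacency condition in $\partial_{\mathrm{ss}}Y$ from \cref{def:singular_boundary_graph}. Therefore the vertex map sends edges to edges and is a graph embedding.
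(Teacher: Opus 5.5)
Your proposal is correct and follows the paper's proof: a branching theorem gives the vertex map, and \cref{cor:orthants_asymptotic_rank} applied to a singular $2$--orthant $O\subset H$ gives adjacency. You differ only in taking the vertex map from (the $k=1$ case of) \cref{thm:strong_branching_asymptotic_rank} rather than \cref{thm:subflat_branching_asymptotic_rank}, and in spelling out injectivity and the identification of $\partial_TO'$, both of which the paper leaves implicit. For that last step you do not need to reopen the proof of \cref{lem:orthants_to_semisingular_orthants}: the two image vertices are distinct points of the arc $\partial_TO'$ of length $\frac\pi2$ (they lie there because $f(O)$ is Hausdorff-close to $O'$), and both are represented by semisingular rays, so \cref{lem:asymptotic_Weyl_cone} puts them at angle at least $\frac\pi2$, which forces them to be the endpoints of that arc.
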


\begin{proof}
\cref{thm:subflat_branching_asymptotic_rank} shows that $f$ induces a map from the vertex set of $\partial_{\mathrm{sbc}}X$ to the vertex set of $\partial_{\mathrm{ss}}Y$. Let $\xi_1,\xi_2\in\partial_{\mathrm{sbc}}X$ be adjacent vertices. There is a directionally strongly branch-complemented 2--flat $H$ with $\xi_1,\xi_2\in\partial_TH$, which is semisingular by \cref{lem:dbc_semisingular}. Thus there is a semisingular orthant $O\subset H$ whose axis rays represent $\xi_1$ and $\xi_2$. \cref{cor:orthants_asymptotic_rank} now shows that $f(O)$ lies at finite Hausdorff distance from a semisingular orthant of $Y$, so the images of $\xi_1$ and $\xi_2$ are adjacent in $\partial_{\mathrm{ss}}Y$.
\end{proof}

In \cite{baderbensaidpetyt:quasiisometric:rigidity}, we shall use \cref{cor:embedding_fully_branching_boundary} to study rigidity properties of quasiisometric embeddings between right-angled Artin groups. The following is a small sample of this. Recall that $C_n$ denotes the cycle graph with $n$ vertices, and that $A_{C_n}$ denotes the corresponding right-angled Artin group.

\begin{corollary}\label{eg:c5}
Let $Y_1$ and $Y_2$ be hyperbolic spaces of finite asymptotic dimension. If $n>1$ is odd, then there is no quasiisometric embedding $A_{C_n}\to Y_1\times Y_2$.
\end{corollary}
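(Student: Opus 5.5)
We argue by contradiction, using the graph embedding $\partial_{\mathrm{sbc}}X\to\partial_{\mathrm{sing}}Y$ supplied by \cref{cor:embedding_fully_branching_boundary}. The first step is to put both spaces in the setting of that corollary. Let $X$ be the universal cover of the Salvetti complex of $A_{C_n}$. For $n\ge4$ the graph $C_n$ is triangle-free, so $X$ is $2$--dimensional of asymptotic rank $2$. For $n=3$ we have $A_{C_3}=\Z^3$, which has asymptotic rank $3$. A product of two hyperbolic spaces has asymptotic rank at most $2$, and asymptotic rank is a quasiisometry invariant. So in that case there is no quasiisometric embedding, and we may assume $n\ge5$.

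By \cite{petyt:mapping} (see \cref{rem:cor_for_hyperbolic_spaces}), each $Y_i$ is quasiisometric to a finite-dimensional CAT(0) cube complex $Q_i$ of asymptotic rank one. The trouble is that $Q_1\times Q_2$ need not be $2$--dimensional. To fix this, we would like to replace each $Q_i$ by a tree: a finite-asymptotic-dimension hyperbolic space quasiisometrically embeds into a product of finitely many trees, but that loses rank control. Instead we use the version of the corollary allowing higher dimension in the codomain, \cref{cor:boundary_map_asymptotic_rank}. This gives a graph embedding $\partial_{\mathrm{sbc}}X\to\partial_{\mathrm{ss}}(Q_1\times Q_2)$, with $X$ $2$--dimensional as required.

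The second step is to understand both graphs. In $X$, every vertex of $C_n$ has degree two, so it is not a leaf, and it has a non-leaf neighbour. Hence every standard geodesic is branch-complemented. By the discussion after \cref{def:boundary_intro}, every standard flat is then directionally strongly branch-complemented, and $C_n\subseteq\Gamma^{\ext}\subseteq\partial_{\mathrm{sbc}}X$. In particular $\partial_{\mathrm{sbc}}X$ contains an odd cycle of length $n$. We should check that the vertices of this cycle are distinct and that consecutive ones are adjacent. Here we take the endpoints of the standard rays $\sgen{v}^+$ based at the identity; consecutive generators span a standard $2$--flat, and $\sgen{u}^+\ne\sgen{v}^+$ in $\partial_TX$ for $u\ne v$.

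In $Q=Q_1\times Q_2$, a semisingular ray has singular ultralimits, and in the cone $\hat Q=\hat Q_1\times\hat Q_2$, a product of $\R$--trees, the singular rays are those lying in a single factor. This gives a map from the vertices of $\partial_{\mathrm{ss}}Q$ to $\{1,2\}$. An edge means the $\varphi_T$-images span a singular $2$--orthant in $\hat Q$. Such an orthant in a product of two $\R$--trees has its two axis rays in different factors, since two rays in the same $\R$--tree factor cannot span a square. So $\partial_{\mathrm{ss}}Q$ is bipartite.

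The conclusion is then immediate: a graph embedding would carry the odd cycle $C_n$ into a bipartite graph, which is impossible. The main obstacle is the factor-assignment step. We must verify that a semisingular ray of $Q$ has ultralimit lying in a single factor, and that this factor is independent of the choice of asymptotic cone. The first part should follow because a singular geodesic in $\hat Q_1\times\hat Q_2$ crosses hyperplanes of only one factor. For the second part, we would fix a single cone with fixed basepoint, as in the definition of $\partial_{\mathrm{ss}}$; bipartiteness is then checked in that one cone via $\varphi_T$.
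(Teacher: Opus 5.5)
Your proposal is correct and matches the paper's proof: you dispose of $n=3$ via $A_{C_3}=\Z^3$, exhibit the odd cycle $C_n\subseteq\partial_{\mathrm{sbc}}X$ from the standard rays, replace $Y_1\times Y_2$ by $Q_1\times Q_2$, and apply \cref{cor:boundary_map_asymptotic_rank}, which is impossible because $\partial_{\mathrm{ss}}(Q_1\times Q_2)$ is bipartite. Your factor-assignment argument spells out what the paper asserts in one line: singular rays in a product of two $\R$--trees lie in a single factor, and two rays in the same tree factor cannot meet at angle $\frac\pi2$. For $n=3$, the fact you actually need is that asymptotic rank cannot increase under a quasiisometric embedding, which follows from quasiisometry invariance applied to the image subset.
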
 

\begin{proof}
If $n=3$ then $A_{C_3}=\Z^3$ and the result is clear. Otherwise, identify $A_{C_n}$ with the universal cover of its Salvetti complex. The geodesic rays based at the identity (or at any other vertex) corresponding to positive powers of the generators are strongly 1--branch-complemented, so they induce a $C_n$-subgraph of $\partial_{\mathrm{sbc}}(A_{C_n})$. 

On the other hand, $Y_1$ and $Y_2$ are quasiisometric to finite-dimensional CAT(0) cube complexes $Q_1$ and $Q_2$, by \cite[Thm~4.8]{petyt:mapping}. The semisingular boundary graph of $Q_1\times Q_2$ is a complete bipartite graph. In particular, it contains no odd cycles. Thus there is no graph embedding $\partial_{\mathrm{sbc}}A_{C_n}\to\partial_{\mathrm{sing}}(Q_1\times Q_2)$. The result follows from \cref{cor:boundary_map_asymptotic_rank} and \cref{rem:cor_for_hyperbolic_spaces}.
\end{proof}

In particular, $A_{C_n}$ cannot be quasiisometrically embedded in a product of two trees when $n$ is odd.

\appendix
\section{Appendix: Symmetric spaces and buildings of the same type}\label{sec:appendix}

\subsection{Type \texorpdfstring{$A_1^n$}{A1n}}\label{appendix:type_A_1}

In this short discussion we will explain that the results of \cref{sec:fully_branching_theorems} hold for symmetric spaces and Euclidean buildings of type $A_1^n$ without any thickness assumption.

Euclidean buildings of spherical type $A_1^n$ are products of $n$ metric trees. Each is quasiisometric to a simplicial tree. The quasiisometry sends geodesics to geodesics and the product of these quasiisometries sends branching $k$--flats to branching $k$--flats. Hence, all the theorems in \cref{sec:fully_branching_theorems} follow by replacing ``$n$-dimensional CAT(0) cube complex'' with ``Euclidean building of spherical type $A_1^n$'' by composing with this quasiisometry or its inverse.

\subsection{General spherical types} \label{sec:general_spherical_type}

The results of \cref{sec:fully_branching_theorems} are also true for general Euclidean buildings and symmetric spaces. These results were already known, see \cite{fisherwhyte:quasiisometric} and \cite{nguyen:quasiisometric}. Nonetheless, we will explain here how they can be derived from our work.

We aim to get analogues of \cref{thm:fully_branching_general,thm:strong_fully_branching,cor:embedding_orthant_strong_fullybranching}, namely \cref{thm:fully_branching_most_general} and \cref{thm:strong_fully_branching_general_case}, for symmetric spaces of non-compact type and thick Euclidean buildings. We will amend the proof of \cref{thm:fully_branching_general}, and derive the analogues of \cref{thm:strong_fully_branching} and \cref{cor:embedding_fully_branching_boundary} directly from it, using work of Kleiner and Leeb, namely \cite[Thm~3.1]{kleinerleeb:rigidity:invariant}.

Figure~\ref{fig:logic_diagram_main_theorems} shows the reliance between the statements in the paper leading to \cref{thm:fully_branching_general}. In black, statements for general complete CAT(0) spaces or proofs that hold verbatim in our case. In blue, we replace \cref{prop:quasiflats-from-bilip-flats} with \cite[Prop.~7.1.1]{kleinerleeb:rigidity}, in pink, a statement for which the proof holds for symmetric spaces of non-compact type and thick Euclidean buildings with replacing ``orthants'' with ``Weyl cones''. In red we have statements whose proofs need amendments that would be described in this appendix.

\begin{figure}[ht]
\centering
\begin{tikzcd} 
	{\cref{lem:rectifiable_parallel_flats}} & \color{olive}{{\cref{prop:singular_quasi_flats}}} & \color{red}{{\cref{prop:top_argument}}} \\
	{\cref{lem:ultralimit_fully_branching_flat}} & \color{red}{{\cref{prop:fully_branching_implies_geod_in_cone}}} \\
	\color{blue}{{\cref{prop:quasiflats-from-bilip-flats}}} & {\cref{cor:fully_branching_implies_flat_in_cone}} \\
	\\
	& {\cref{thm:fully_branching_general}} 
	\arrow[from=1-1, to=2-2]
	\arrow[from=1-2, to=2-2]
	\arrow[from=1-3, to=2-2]
	\arrow[from=2-1, to=3-2]
	\arrow[from=2-2, to=3-2]
	\arrow[from=3-1, to=5-2]
	\arrow[from=3-2, to=5-2]
\end{tikzcd}
\caption{Logic diagram of \cref{thm:fully_branching_general}} \label{fig:logic_diagram_main_theorems}

\end{figure}

In more detail:
in
\cref{prop:singular_quasi_flats}, similarly to the CAT(0) cube complex case, if we assume $Y$ is a symmetric space or building and we assume the geodesics are mapped within Hausdorff distance $D$ from singular geodesics we can take the resulting $k$--flat to be singular as well. 
\cref{prop:2_orthant_in_target} can be adapted to symmetric spaces and buildings as well, but is not needed for the proof of the main theorems, since all singular flats in thick Euclidean buildings and symmetric spaces are directionally strongly branch-complemented.

\cref{prop:quasiflats-from-bilip-flats} can be replaced with \cite[Prop~7.1.1]{kleinerleeb:rigidity} and that is the only relevant result from \cref{sec:back_from_cone}.

\cref{sec:top_arguments} is used to get \cref{prop:fully_branching_implies_geod_in_cone}, we will explain in detail the small necessary changes in both. \cref{thm:fully_branching_most_general} will then follow.



\subsubsection{Local separation of singular flats}

The following is an analogue of \Cref{prop:top_argument} that applies to Euclidean Coxeter complexes of arbitrary spherical type. The assumptions are stronger than in the $A_1^n$ type, but they will be satisfied in the cases of symmetric spaces and thick Euclidean buildings.

\begin{proposition}[Analogue of \cref{prop:top_argument}]\label{prop:top_argument_walls}
Let $E$ be a Euclidean Coxeter complex of dimension $n$. Let $x \in E$, let $r>0$, and let $f : B(x,r) \to E$ be a topological embedding. 

If that $f(H \cap B(x,r))$ is contained in a finite union of singular $(n-1)$--flats for every singular $(n-1)$--flat $H$ containing~$x$, then there exists $r' \leq r$ such that $f(H \cap B(x,r'))$ is contained in a single singular $(n-1)$--flat for every singular $(n-1)$--flat $H$ containing $x$. Moreover, $f(\gamma \cap B(x,r'))$ is contained in a single singular geodesic for every singular geodesic $\gamma$ containing $x$.
\end{proposition}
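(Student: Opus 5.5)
The plan is to mimic the proof of \cref{prop:top_argument}, working in a small round sphere around $p=f(x)$ and using the walls of the Coxeter complex through $p$ in place of the coordinate cubulation. First I choose $r'$. The walls of $E$ are locally finite, so there is a ball $D$ centred at $p$ that meets only walls through $p$. By hypothesis, for each of the finitely many singular $(n-1)$--flats $H\ni x$ (the walls through $x$), the set $f(H\cap B(x,r))$ lies in a finite union of walls. Shrinking $D$, I may assume that near $p$ each such set lies in a finite union of walls through $p$. Since $f$ is an embedding and $f(B(x,r))$ is open, I take $D\subseteq f(B(x,r))$ and let $r'$ be such that $B(x,r')\subseteq f^{-1}(D)$. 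By invariance of domain, $f(H\cap B(x,r'))$ is then a topological $(n-1)$--disc containing $p$ and contained in the union $\mathcal W$ of the walls through $p$.

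Next I pass to links. Let $S=\partial D$; it carries the spherical Coxeter structure $\Sigma_p E$, and the walls through $p$ cut out great spheres forming its codimension-one skeleton. Each $H$ gives a closed subset $S_H:=f(H)\cap S$ in this skeleton which is homeomorphic to $\mathbb S^{n-2}$, being the boundary sphere of a disc through $p$ (after shrinking so that $f(H)$ meets $S$ transversally in the radial sense). The key step is a rigidity claim: a subset of the codimension-one skeleton of a spherical Coxeter complex that is homeomorphic to $\mathbb S^{n-2}$ and separates $S$ is a union of closed $(n-2)$--cells of the chamber structure, and, because each wall meets it in a closed set, I expect it to be a single great sphere. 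To prove this I would argue by a local-manifold/degree argument. At a point interior to a panel, an embedded $(n-2)$--sphere inside a union of panels must contain the whole open panel, by invariance of domain. At codimension-two faces, the link is a circle subdivided by the dihedral Coxeter arrangement, and $S_H$ meets that circle in exactly two points. If those two points are not antipodal, they lie on different walls. Here I would use the full family of walls through $x$: their images must intersect in the same combinatorial pattern as the walls through $x$, since $f$ is a homeomorphism and so preserves the pattern of complementary components. Counting chambers, the $N$ walls through $x$ cut a neighbourhood into exactly as many components as there are chambers at $x$, and the images cut $D$ into the same number of components. If some $f(H)$ bent at a codimension-two face, the images would produce the chamber count of a different arrangement. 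I expect this combinatorial step to be the main obstacle, and would first try an induction on $n$ via links of faces, as in \cref{lem:Bjorner}. This yields that each $f(H\cap B(x,r'))$ lies in a single wall through $p$.

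Finally, for a singular geodesic $\gamma\ni x$: $\gamma$ is an intersection of walls $H_1,\dots,H_{n-1}$ through $x$, so $f(\gamma\cap B(x,r'))\subseteq\bigcap_i W_i$, where $W_i$ is the wall containing $f(H_i\cap B(x,r'))$. Since $f$ is injective and these intersections have the right dimension, $\bigcap_i W_i$ is a singular line. Alternatively, I can reuse the separation argument of \cref{prop:top_argument}: a wall through $x$ transverse to $\gamma$ separates the two sides of $\gamma$, and its image is a single wall, so the two rays of $f(\gamma)$ leave $p$ in antipodal directions of $S$, as in \cref{lem:separation_sphere_antipodal}.
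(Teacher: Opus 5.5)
\textbf{There is a genuine gap} exactly where you locate the main obstacle: nothing in the proposal shows that $f(H\cap B(x,r'))$ lies in a single wall.

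Your starting rigidity claim is false. The boundary of one chamber of $S$ is an $(n-2)$--sphere in the codimension-one skeleton that separates $S$, but it is not a great sphere once at least two walls pass through $p$.

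The chamber count does not repair this. It only sees the union $\bigcup_H f(H)\cap S$, not how that union is distributed among the individual $H$. Take type $A_1\times A_1$ at both $x$ and $p$. Suppose $f(H_1)$ is the union of the rays from $p$ at angles $0,\frac\pi2$, and $f(H_2)$ is the union of the rays at angles $\pi,\frac{3\pi}2$. This configuration has the same union, and hence the same four complementary components, as the straight one. What excludes it is that $H_2\ssm\{x\}$ meets both sides of $H_1$, whereas $f(H_2)\ssm\{p\}$ would lie on one side of $f(H_1)$. So the argument has to use this \emph{separation} data, not a count of components.

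Any argument must also use that $x$ and $p$ have the same local type. This holds in the paper's application to apartments of $\mathbb R$--buildings, but not in general. In a $\tilde G_2$ complex, take $x$ of type $A_1\times A_1$ and $p$ of type $G_2$. A radial homeomorphism sending the two walls through $x$ to the ray pairs at angles $\{0,\frac\pi2\}$ and $\{\frac\pi3,\pi\}$ satisfies the hypothesis but not the conclusion.

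The paper organises the separation data through the vertices of the links.
\begin{itemize}
\item From the hypothesis, using that singular geodesics are intersections of walls, it chooses $r'$ so that each $f(\gamma\cap B(x,r'))$ lies in at most two singular geodesics through $p$. This gives a bijection $\tilde f$ between the vertex sets of $S(x,r')$ and $S(p,\delta)$.
\item If $u,v$ are non-adjacent, some wall $s$ separates them. The image of the corresponding singular $(n-1)$--flat $H\ni x$ meets $S(p,\delta)$ inside the $(n-2)$--skeleton and separates $\tilde f(u)$ from $\tilde f(v)$. Adjacent vertices, by contrast, can be joined through an open chamber, as in \cref{lem:separation_sphere_antipodal}. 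So $\tilde f(u)$ and $\tilde f(v)$ are non-adjacent.
\item The two $1$--skeleta are isomorphic finite graphs, so $\tilde f$ maps non-edges bijectively onto non-edges. Hence it is a graph isomorphism, it carries walls to walls, and both conclusions follow.
\end{itemize}

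Your last paragraph on geodesics is fine once the wall statement is known. Your claim that $S_H$ is an $(n-2)$--sphere is better justified by invariance of domain than by a transversality assumption: near $p$, $f(H)$ is a union of closed $(n-1)$--dimensional cones of the arrangement.
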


\begin{proof}
Since every singular geodesic is an intersection of singular $(n-1)$--flats, and only finitely many singular geodesics contain $x$, there exists $r'\le r$ such that, for every singular geodesic $\gamma$ containing $x$, the path $f(\gamma\cap B(x,r'))$ is contained in at most two singular geodesics. 

Choose $\delta>0$ such that $B(f(x),\delta)\subseteq f\bigl(B(x,r')\bigr)$. Consider the $(n-1)$--spheres $S:=S(x,r')$ and $S':=S(f(x),\delta)$. Endow $S$ and $S'$ with the spherical Coxeter complex structures inherited from $\partial_TE$. The choice of $r'$ ensures that $f$ induces a well-defined bijection $\tilde f: V(S)\to V(S')$ between their vertex sets.

Let show that if $u,v\in V(S)$ are non-adjacent, then $\tilde f(u)$ and $\tilde f(v)$ are also non-adjacent.  Indeed, since $S$ is a spherical Coxeter complex, if $u$ and $v$ are not adjacent then there exists a wall $s\subseteq S$ separating $u$ from $v$. Let $H\subseteq E$ be the corresponding singular $(n-1)$--flat containing $x$. By assumption, $f\bigl(H\cap B(x,r')\bigr)$ is contained in a finite union of singular $(n-1)$--flats, hence $f\bigl(H\cap B(x,r')\bigr)\cap S'$ is contained in the $(n-2)$--skeleton of $S'$. Furthermore, this set separates $\tilde f(u)$ and $\tilde f(v)$, so $\tilde f(u)$ and $\tilde f(v)$ cannot be adjacent.

As the 1-skeletons of $S$ and $S'$ are finite isomorphic graphs, they have the same number of non-edges. Hence if $v$ is a neighbour of $u$, then $f(v)$ must be a neighbour of $f(u)$. This shows that $\tilde f$ is a graph isomorphism. Consequently, $\tilde f$ sends walls to walls. Therefore, for every singular $(n-1)$--flat $H$ containing $x$, the image $f(H \cap B(x,r'))$ is contained in a single singular $(n-1)$--flat. The conclusion for singular geodesics containing $x$ follows as they are intersections of singular $(n-1)$--flats
\end{proof}

\subsubsection{The analogue of \cref{prop:fully_branching_implies_geod_in_cone}}





\cref{thm:fully_branching_general} will follow from an analogue of \cref{cor:fully_branching_implies_flat_in_cone}. This will follow immediately from an analogue of \cref{prop:fully_branching_implies_geod_in_cone}.

We note that asymptotic cones of symmetric spaces of non-compact type or thick Euclidean buildings are $\mathbb{R}$--buildings (by that we mean, branching everywhere) of the same spherical type. Every singular flat in such spaces is directionally $0$--branch-complemented.

\begin{proposition}[Analogue of \cref{prop:fully_branching_implies_geod_in_cone}]\label{prop:fully_branching_implies_geod_in_cone_general_case}
Let $\hat X$ and $\hat Y$ be $\mathbb{R}$-buildings of the same spherical type and let $f : \hat X\to \hat Y$ be a biLipschitz embedding. If $\hat F \subseteq \hat X$ is a singular flat in $\hat X$, then $f(\hat F)$ is a singular flat in $\hat Y$.
\end{proposition}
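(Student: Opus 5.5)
The plan is to follow the proof of \cref{prop:fully_branching_implies_geod_in_cone}, using \cref{prop:top_argument_walls} in place of \cref{prop:top_argument}, and the structure theory of top-dimensional biLipschitz flats in $\mathbb{R}$--buildings in place of \Cref{prop:Bowditch_bilipflat_cubulated}. By the biLipschitz case ($D=0$) of the building version of \cref{prop:singular_quasi_flats}, it suffices to treat singular geodesics: I will show that $f$ sends every singular geodesic of $\hat X$ onto a singular geodesic of $\hat Y$.

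First I reduce to a maximal flat. Every singular geodesic $\hat\gamma_0$ lies in a maximal flat $\hat F$. In an $\mathbb{R}$--building that branches everywhere, every singular flat, and in particular every wall of $\hat F$, is an intersection of finitely many maximal flats. Kleiner--Leeb's analysis of top-dimensional biLipschitz flats in $\mathbb{R}$--buildings (\cite{kleinerleeb:rigidity}, the statement underlying \cite[Prop.~7.1.1]{kleinerleeb:rigidity}) shows that $f(\hat F)$ is a finite-near-each-point union of Weyl chambers. More precisely, there is a singular set $S\subset\hat F$ of codimension at least $2$ (countably $\mathcal H^{n-2}$--rectifiable) away from which $f|_{\hat F}$ locally maps into a single apartment. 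Intersecting, the image of each wall of $\hat F$ is then locally contained in a finite union of singular $(n-1)$--flats of that apartment.

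Next I apply \cref{lem:rectifiable_parallel_flats}: almost every parallel $\hat\gamma$ of $\hat\gamma_0$ in $\hat F$ misses $S$. By continuity of $f$, and because limits of singular geodesics are singular geodesics, it is enough to treat such a $\hat\gamma$. Take $x\in\hat\gamma$. There is a ball $B(x,r)\subset\hat F$ whose image lies in one apartment $E'$. Identifying $\hat F$ and $E'$ with the model Euclidean Coxeter complex, $f|_{B(x,r)}$ becomes a topological embedding. Every singular $(n-1)$--flat $H\ni x$ is a wall, hence a branching flat, so its image is contained in a finite union of singular $(n-1)$--flats. Here the hypothesis that the two buildings have the same spherical type is used: the Coxeter complex structures on domain and target match. \cref{prop:top_argument_walls} then shows that near $x$ the image $f(\hat\gamma)$ lies in one singular geodesic. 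So $f(\hat\gamma)$ is a local geodesic in the CAT(0) space $\hat Y$, hence a global one, and it is singular.

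The main obstacle is the input replacing Bowditch's cubulation theorem. One must check two things for $\mathbb{R}$--buildings:
\begin{itemize}
\item the biLipschitz image of an apartment is locally a finite union of Weyl chambers away from a codimension-$2$ set;
\item intersections of such images, that is, images of walls, are locally finite unions of singular $(n-1)$--flats.
\end{itemize}
I would try to extract both from Kleiner--Leeb's local-conicality results for top-dimensional biLipschitz flats. The second point also needs the fact that an intersection of finitely many maximal flats is locally a union of cells.
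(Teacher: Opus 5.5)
Your proposal is correct and follows the paper's proof essentially step for step. The shared steps are: reduction to singular geodesics via the $D=0$ case of \cref{prop:singular_quasi_flats}; a codimension-$2$ rectifiable singular set that almost every parallel avoids by \cref{lem:rectifiable_parallel_flats}; local finiteness of images of walls, as intersections of images of maximal flats; and then \cref{prop:top_argument_walls} together with local-to-global for geodesics in CAT(0) spaces. The only difference is the source of the structural input you flag as the main obstacle: the paper takes it directly from \cite[Lem.~3.1]{fisherwhyte:quasiisometric}, which gives both the $\mathcal H^{n-2}$--rectifiability of the singular set and the fact that biLipschitz $n$--flats lie locally in finite unions of $n$--flats, rather than extracting it from Kleiner--Leeb's local conicality.
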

The proof is very similar to the proof of \cref{prop:fully_branching_implies_geod_in_cone}, where we use \cite[Lem.~3.1]{fisherwhyte:quasiisometric} instead of \cref{prop:Bowditch_bilipflat_cubulated} and we show that the conditions of \cref{prop:top_argument_walls} hold, instead of \cref{prop:top_argument}. These conditions are stronger than the conditions in the cube complex analogue, but the assumption on our spaces (branching everywhere) are stronger as well.

We note that the usage of \cref{prop:singular_quasi_flats}, is done similarly to the CAT(0) cube complex case, where the resulting flat is singular, as explained in the beginning of \cref{sec:general_spherical_type}

\begin{proof}
The proposition will follow from the case $k=1$ using \cref{prop:singular_quasi_flats} for biLipschitz maps where $D=0$, as singular flats are spanned by singular geodesics. Thus, it is enough to show that if $\hat \gamma_0\subseteq \hat X$ is a singular geodesic, then $f(\hat\gamma_0)$ is a singular geodesic.

    Let $\hat \gamma_0 \subseteq \hat X$ be a singular geodesic. Let $\hat F$ be an $n$--flat containing $\hat\gamma_0$.
    
     Let $S \subseteq \hat F$ be the singular set of $f|_{\hat F}$, that is, for every $x\in \hat F \setminus S$, there exists a neighbourhood $U\subseteq \hat F$ such that $f(U)$ is contained in a single $n$--flat. By \cite[Lem.~3.1]{fisherwhyte:quasiisometric}, $S$ is $\mathcal{H}^{n-2}$-measurable and countably $\mathcal{H}^{n-2}$-rectifiable. By \cref{lem:rectifiable_parallel_flats}, almost every parallel of $\hat\gamma_0$ in $\hat F$ does not intersect $S$. By continuity of $f$, it therefore suffices to show that if $\hat \gamma \subseteq \hat F$ is a parallel of $ \hat \gamma_0$ that does not intersect $S$, then $f (\hat \gamma)$ is a geodesic. 

Let $\hat \gamma \subseteq \hat F$ be a parallel of $ \hat \gamma_0$ that does not intersect $S$, and let $x \in \hat \gamma$. 
     
It is flat for $f|_{\hat F}$, because $\hat \gamma \cap S = \varnothing$. Thus, there exists $r>0$ such that $f (B_{\hat F}(x,r))$ is a contained in a unique $n$--flat $\hat F' \subseteq \hat Y$.
Let $\hat H$ be a singular $(n-1)$--flat transverse to $\hat \gamma$ and containing $x$. Both $\hat\gamma$ and $\hat H$ are branching, as $\hat X$ is an $\mathbb{R}$-building.

By \cite[Lem.~3.1]{fisherwhyte:quasiisometric}, every bi-Lipschitz $n$--flat is contained in a finite union of $n$--flats. Therefore, $f(\hat H)$ is a finite intersection of such and so there is an $r>0$ such that $f(\hat H \cap B(x,r))$ is in a finite union of singular $(n-1)$--flats. Similarly, $f(\hat\gamma\cap B(x,r))$ is in a finite union of geodesics. As $f(\hat\gamma)$ is homeomorphic to a real interval, up to decreasing $r$, $f(\hat\gamma\cap B(x,r))$ is in the union of two singular geodesics meeting at $f(x)$.

As there are only finitely many singular $(n-1)$--flats through $x$, we can assume $r$ satisfies the property above for all of them. It follows from \Cref{prop:top_argument_walls} that there exists $0<r' \leq r$ such that $f (\hat \gamma \cap B_{\hat F}(x,r'))$ is contained in a singular geodesic of $\hat F'$.

Since $x \in \hat \gamma$ was arbitrary, this shows that $f(\hat \gamma)$ is a local geodesic in $\hat Y$. Since $\hat Y$ is CAT(0), we conclude that $f(\hat \gamma)$ is a geodesic.
    \end{proof}

\begin{remark}
\cref{prop:top_argument} used the specific structure of $\Sigma_n$, the simplicial $n$--fold suspension of the $0$--sphere, and so does not hold for general type, while \cref{prop:top_argument_walls} does hold, but the assumptions are too strong.    
    In the CAT(0) cube complex case, the assumptions of \cref{prop:top_argument_walls} are not necessarily satisfied, as not all singular $(n-1)$--flats have to be branching.
\end{remark}
Now the analogue of \cref{cor:fully_branching_implies_flat_in_cone} follows verbatim. Using the fact that for symmetric spaces of non-compact type and thick Euclidean buildings there exists an $r$ such that every singular flat is directionally $r$--branch-complemented (see \cref{ex:fully_branching}), we obtain the following.

\begin{corollary}[Analogue of \cref{cor:fully_branching_implies_flat_in_cone}]\label{cor:fully_branching_implies_flat_in_cone_general_case}
    Let $X$ and $Y$ each be a symmetric space of non-compact type, or a thick Euclidean building and assume that they have the same spherical type. Let $\hat X$ and $\hat Y$ be asymptotic cones of $X$ and $Y$, respectively, and let $f : \hat X\to \hat Y$ be a biLipschitz embedding. If $\hat F\subseteq \hat X$ is the ultralimit of a sequence of singular $k$--flats in $X$, then $f(\hat F)$ is a singular $k$--flat in $\hat Y$.
\end{corollary}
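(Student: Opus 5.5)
The corollary follows by combining \cref{lem:ultralimit_fully_branching_flat} with \cref{prop:fully_branching_implies_geod_in_cone_general_case}, in the same way that \cref{cor:fully_branching_implies_flat_in_cone} follows from \cref{lem:ultralimit_fully_branching_flat} and \cref{prop:fully_branching_implies_geod_in_cone}.

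Let $(F_m)$ be a sequence of singular $k$--flats in $X$ whose ultralimit is $\hat F$. By \cref{ex:fully_branching}, there is a uniform $r$, depending only on $X$ (the chamber diameter for a thick building, and $r=0$ for a symmetric space), such that every singular flat of $X$ is directionally $r$--branch-complemented. The spanning geodesics can be taken to be singular geodesics in the Weyl directions. Hence \cref{lem:ultralimit_fully_branching_flat}, which is stated for arbitrary complete CAT(0) spaces, shows that $\hat F$ is a directionally $0$--branch-complemented $k$--flat in $\hat X$.

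It remains to see that $\hat F$ is a singular flat of the $\mathbb R$--building $\hat X$, so that \cref{prop:fully_branching_implies_geod_in_cone_general_case} applies. Singularity of $F_m$ means that $\partial_T F_m$ is a subcomplex of the spherical building $\partial_T X$. We pass to the cone using the isometric embedding $\varphi_T$ of \cref{lem:embedding-Titsboundary-into_link_and_Tits_boundary}, after recentering: up to isometry of cones, the basepoints can be taken on the $F_m$. This embedding respects the Weyl-type structure, since asymptotic cones of symmetric spaces and buildings are $\mathbb R$--buildings of the same type whose apartments are ultralimits of apartments. Concretely, $\hat F$ lies in the ultralimit of apartments $A_m \supseteq F_m$, and within that ultralimit it is the ultralimit of singular subspaces of a fixed Coxeter complex, hence is itself singular.

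With this in hand, \cref{prop:fully_branching_implies_geod_in_cone_general_case} gives that $f(\hat F)$ is a singular $k$--flat of $\hat Y$; here $\hat X$ and $\hat Y$ are $\mathbb R$--buildings of the same spherical type, as required.

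The main obstacle is this middle step, showing that singularity survives the ultralimit. If an elementary argument is needed, the plan is to use the fact that the singular $k$--flats of an apartment belong to finitely many parallelism classes: $\omega$--almost surely, $F_m$ lies in a fixed class relative to a chosen identification of $A_m$ with the model Coxeter complex $E$. The limit is then a singular flat in the limiting apartment.
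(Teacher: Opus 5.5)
Your proposal is correct and follows the paper's route, which simply says that the proof of \cref{cor:fully_branching_implies_flat_in_cone} carries over verbatim: uniform $r$ from \cref{ex:fully_branching}, then \cref{lem:ultralimit_fully_branching_flat}, then \cref{prop:fully_branching_implies_geod_in_cone_general_case}. Your extra check that $\hat F$ is singular in $\hat X$ (via ultralimits of apartments, with $F_m$ lying $\omega$--almost surely in one of the finitely many parallelism classes of singular subspaces of the model Coxeter complex) is a worthwhile addition, since that proposition is stated for singular flats and the paper leaves this step implicit; note that it is this chart argument that does the work, not the appeal to $\varphi_T$, which only transports the boundary of a single fixed flat and not the ultralimit of a varying sequence $(F_m)$.
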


\subsubsection{The main theorems}

The proof of \cref{thm:fully_branching_general} now follows word for word with replacing \cref{prop:quasiflats-from-bilip-flats} with \cite[Prop~7.1.1]{kleinerleeb:rigidity} and \cref{cor:fully_branching_implies_flat_in_cone} with \cref{cor:fully_branching_implies_flat_in_cone_general_case}. We get the following, as every $n$--flat is directionally $r$--branch-complemented. In what follows, the symmetric spaces and Euclidean buildings under consideration are not assumed to be irreducible.

\begin{theorem}\label{thm:fully_branching_most_general}
Let $X$ and $Y$ each be a symmetric space of non-compact type, or a thick Euclidean building and assume that they  both have rank $n$ and the same spherical type. Let $f:X \to Y$ be a $q$-quasiisometric embedding. There exists a constant $D = D(q,n, \text{spherical type})$ such that $f(F)$ lies within Hausdorff distance at most $D$ of some $n$--flat $F' \subseteq Y$ for every $n$--flat $F \subseteq X$. 

\end{theorem}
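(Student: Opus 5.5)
The proof follows the same three-step pattern as \cref{thm:fully_branching_general}, with the cubical inputs replaced by their building-theoretic analogues. Fix an $n$--flat $F\subseteq X$ and a $q$--quasiisometric embedding $f:X\to Y$. Pass to asymptotic cones $\hat X$ and $\hat Y$ taken with fixed basepoints, and let $\hat f:\hat X\to\hat Y$ be the induced biLipschitz embedding. By Kleiner--Leeb, $\hat X$ and $\hat Y$ are $\mathbb R$--buildings of the common spherical type, and they are branching everywhere. The ultralimit $\hat F$ of the constant sequence $(F)$ is an $n$--flat, and every $n$--flat is singular.

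The first step is to show that $F$ is directionally $r$--branch-complemented, with $r$ depending only on the spherical type (and on the chamber size in the building case). For this I use \cref{ex:fully_branching}: in a thick building or a symmetric space every singular flat is branching. The singular geodesics of $F$ in a chosen spanning set of Weyl directions are therefore branch-complemented, witnessed by $F$ itself together with a singular wall $H\subseteq F$ transverse to them. Since all parallels of these geodesics and walls at distance at most $r$ are again singular, they are branching.

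The second step applies \cref{cor:fully_branching_implies_flat_in_cone_general_case} to the sequence $(F)$. This shows that $\hat f(\hat F)$ is a singular $n$--flat of $\hat Y$. It is also the ultralimit of $f(F)$, so the same holds for every asymptotic cone of this kind.

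The third step comes back from the cone. Here I replace \cref{prop:quasiflats-from-bilip-flats} with \cite[Prop.~7.1.1]{kleinerleeb:rigidity}, whose output is a flat at uniformly bounded distance. Concretely, take the Kleiner--Leeb quasiflat theorem, under which $f(F)$ lies within finite distance of a finite union of Weyl cones. The cone condition forces the boundary of this union to be a single apartment. Uniformity of $D$ then comes from their argument by contradiction over sequences: if the Hausdorff distances $D_m$ were unbounded, rescale by $D_m$ and obtain a cone in which the limit of the $f(F_m)$ fails to be a flat. This contradicts the second step, which holds uniformly because $r$ and $q$ are fixed.

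I expect the main obstacle to be this last uniformity, namely checking that the cited Kleiner--Leeb result really gives a constant depending only on $q$, $n$ and the type. If it does not, I would run that contradiction argument explicitly, using the rescaled cones together with the directional branching established in the first step.
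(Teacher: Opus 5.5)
Your proposal is correct and is essentially the paper's proof. Every $n$--flat is directionally $r$--branch-complemented, \cref{cor:fully_branching_implies_flat_in_cone_general_case} makes $\hat f(\hat F)$ a singular $n$--flat, and one comes back from the cone by \cite[Prop.~7.1.1]{kleinerleeb:rigidity} in place of \cref{prop:quasiflats-from-bilip-flats}, exactly as in \cref{thm:fully_branching_general}. Your fallback contradiction argument would not work as stated: a cone rescaled by $D_m$ only detects sublinear closeness on balls of radius comparable to $D_m$, which does not contradict a large global Hausdorff distance. It is not needed, though, since the paper also relies on the cited Kleiner--Leeb result, just as your main line does.
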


As a corollary, under the conditions of \cref{thm:fully_branching_most_general}, we get a combinatorial embedding $\partial_T X\to \partial_T Y$. Indeed, if $F\subseteq X$ is an $n$--flat, then $f(F)$ lies at finite Hausdorff distance from an $n$--flat $F'\subseteq Y$, and hence determines an apartment $\partial_TF'\subseteq\partial_TY$. Since vertices in a thick spherical building are intersections of finitely many apartments, and since $f$ coarsely preserves intersections of flats, this determines a combinatorial embedding $\partial_TF\to\partial_TF'$. As $X$ and $Y$ have the same spherical type, the combinatorial embedding $\partial_TF\to\partial_TF'$ is an isomorphism. These maps are compatible on overlaps of apartments, and therefore induce a combinatorial embedding $\partial_TX\to\partial_TY$.


\begin{corollary}\label{cor:embedding_fully_branching_boundary_general}
Let $X$ and $Y$ each be either a symmetric space of non-compact type, or a thick Euclidean building, and assume that they have the same spherical type and rank. If $f:X \to Y$ is a quasiisometric embedding, then $f$ induces an embedding
$$
\partial_{\mathrm{sing}}X \to \partial_{\mathrm{sing}}Y
$$
as a subgraph.
\end{corollary}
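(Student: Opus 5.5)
The plan is to follow the proof of \cref{cor:embedding_fully_branching_boundary}, now in the building/symmetric space setting. There every singular flat is directionally strongly branch-complemented, so the analogue of \cref{cor:embedding_orthant_strong_fullybranching} is available for Weyl cones.

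First, vertices. A vertex $\xi$ of $\partial_{\mathrm{sing}}X$ is the endpoint of a singular ray $\rho$, which extends to a singular geodesic $\gamma$ inside some maximal flat $F$. By \cref{thm:fully_branching_most_general}, $f(F)$ is uniformly Hausdorff-close to a maximal flat $F'$. Singular geodesics are intersections of finitely many maximal flats, since the space is thick or symmetric. Applying the analogue of \cref{prop:intersection_semisingular}, with singular flats and Weyl cones in place of cubes (the Helly step replaced by \cite[Thm~3.1]{kleinerleeb:rigidity:invariant}, which says coarse intersections of flats are close to intersections), I will show that $f(\gamma)$ is uniformly close to a singular geodesic of $F'$. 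Hence $f(\rho)$ is close to a singular ray, and we define the image of $\xi$ to be its endpoint.

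Well-definedness and injectivity follow from the quasiisometric embedding property. Parallel rays have images at finite Hausdorff distance, so the image does not depend on the choice of ray. Conversely, if the images of two rays are at finite distance, then the rays themselves are at finite distance, so distinct vertices have distinct images.

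Next, edges. Two adjacent vertices are represented by singular rays spanning a 2-dimensional Weyl-cone face $O$ of some maximal flat $F$. The analogue of \cref{lem:orthants_to_semisingular_orthants} replaces orthants with Weyl cones. Its proof uses only that singular geodesics of $F$ go uniformly close to singular geodesics, plus coarse separation inside $F'$, and both remain valid here. It gives a singular cone $O'\subset F'$ close to $f(O)$, whose boundary arc joins the two image vertices, so adjacency is preserved.

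To see the map is an embedding as a subgraph, note that on each apartment $\partial_TF\to\partial_TF'$ it is the combinatorial isomorphism described just before the statement. Hence non-adjacent pairs lying in a common apartment stay non-adjacent. Any two vertices lie in a common apartment by the building axioms for $\partial_TX$, so this covers all pairs.

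The main obstacle is the orthant step: making the coarse-separation argument of \cref{lem:orthants_to_semisingular_orthants} work for Weyl chambers of non-right-angled type. There the boundary faces of $O$ are no longer coordinate orthants. I would handle this by induction on face dimension in the spherical Coxeter complex $\partial_TF'$, using that a chamber is determined by its codimension-one walls.
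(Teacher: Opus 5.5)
Your vertex map and your non-adjacency argument are fine in outline; the latter is exactly the argument the paper gives. The edge step, however, has a genuine gap, and it is the one you flag yourself.

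A Weyl-cone version of \cref{lem:orthants_to_semisingular_orthants} needs one constant $D$ controlling \emph{every} singular geodesic of the flat containing $O$. You want this constant from an analogue of \cref{prop:intersection_semisingular}. That proposition is uniform only because of the Helly property in the domain (\cref{prop:coarse_intersection_subcomplexes_ccx}, entering through \cref{cor:distance_from_intersection}), and this has no uniform analogue in symmetric spaces. For instance, let $\gamma_0,\gamma_\theta\subset\mathbb H^2$ cross at $p$ with angle $\theta$, and let $\delta$ be a geodesic in the second factor of $\mathbb H^2\times\mathbb H^2$. Then $(\gamma_0\times\delta)^{+1}\cap(\gamma_\theta\times\delta)^{+1}$ contains points at distance roughly $\log(1/\theta)$ from the intersection $\{p\}\times\delta$.

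Nor is \cite[Thm~3.1]{kleinerleeb:rigidity:invariant} a coarse-intersection result. The paper uses it to say that a top-dimensional subbuilding of $\partial_TY$ bounds a totally geodesic subspace. What you actually have is the non-uniform \cref{lem:image_of_intersection_of_flats_top_rank}, which is enough for vertices but not for the orthant lemma.

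Moreover, even with uniform control, coarse separation only puts $f(O)$ near one of the sectors of $F'$ bounded by the singular rays ending at $\xi'_1,\xi'_2$. Outside type $A_1^n$ such a sector need not be a single Weyl cone: in type $A_2$, two vertices at angle $2\pi/3$ bound a union of two chambers. Your proposed induction on face dimension does not address this. The paper itself only obtains uniform control of Weyl cones later, in \cref{thm:strong_fully_branching_general_case}. There it assumes no rank-one factors and uses this very corollary together with Kleiner--Leeb.

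The step is also unnecessary. The apartment-wise combinatorial isomorphisms $\partial_TF\to\partial_TF'$ that you invoke for non-adjacency already preserve adjacency. Adjacent vertices lie in a common chamber, hence in some apartment $\partial_TF$, and the isomorphism onto $\partial_TF'$ carries that chamber to a chamber. Together with compatibility on overlaps and the fact that any two simplices of $\partial_TX$ lie in a common apartment, this gives the whole statement.

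This is precisely the paper's proof:
\begin{itemize}
\item \cref{thm:fully_branching_most_general} sends apartments to apartments;
\item vertices are cut out by finitely many apartments, while $f$ coarsely preserves intersections of flats;
\item equality of spherical types upgrades each resulting embedding $\partial_TF\to\partial_TF'$ to an isomorphism.
\end{itemize}
So drop the Weyl-cone step and let these isomorphisms handle both adjacency and non-adjacency.
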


By \cite[Thm~3.1]{kleinerleeb:rigidity:invariant}, the image of the induced map is a subbuilding, which is the boundary of a subsymmetric space or building $Y'\subseteq Y$ and $f:X\to Y'$ is a quasiisometry. This yields the following consequence of \cite[Theorem~1.1.3]{kleinerleeb:rigidity}.

\begin{theorem}[Analogue of \cref{thm:strong_fully_branching} and \cref{cor:embedding_orthant_strong_fullybranching}]\label{thm:strong_fully_branching_general_case}
Let $X$ and $Y$ each be a symmetric space of non-compact type or a thick Euclidean building. Assume that $X$ and $Y$ have rank $n$, the same spherical type, and no rank-one factors. For every $q\geq 1$ and $r\geq 0$, there exists a constant $D$ such that the following holds: if $f : X \to Y$ is a $q$-quasiisometric embedding, and $H \subseteq X$ is a singular $p$--flat, then $f(H)$ lies at Hausdorff distance at most $D$ from a singular $p$--flat in $Y$.

Furthermore, there exists $D'(q,D)$, such that $f$ maps every singular Weyl cone within Hausdorff distance $D'$ from a singular Weyl cone.
\end{theorem}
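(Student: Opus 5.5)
The plan is to reduce to the quasiisometry rigidity theorem of Kleiner--Leeb, using the boundary embedding from \cref{cor:embedding_fully_branching_boundary_general}, rather than to redo the intersection arguments of \cref{sec:structure_quasi_flats} in the building setting. First, I apply \cref{thm:fully_branching_most_general} to get a constant $D_0=D_0(q,n,\text{type})$ such that every $n$--flat $F\subseteq X$ is sent within Hausdorff distance $D_0$ of an $n$--flat $F'\subseteq Y$. As explained after that theorem, this gives a combinatorial embedding $\partial_TX\to\partial_TY$ that restricts to an isomorphism of apartments $\partial_TF\to\partial_TF'$. By \cite[Thm~3.1]{kleinerleeb:rigidity:invariant}, its image is a thick spherical subbuilding of the same type. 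That subbuilding is the Tits boundary of a convex subspace $Y'\subseteq Y$, which is again a symmetric space or thick Euclidean building of rank $n$ and the same type. The map $f$, followed by closest-point projection to $Y'$, is then a quasiisometry $X\to Y'$.

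Here I need to control the constants. Every point of $f(X)$ lies within $D_0$ of an $n$--flat $F'$ whose boundary is in the image subbuilding, and such flats lie in $Y'$. Hence $f(X)\subseteq Y'^{+D_0}$, and the projected map is a $q'$--quasiisometric embedding with $q'=q'(q,D_0)$. It is coarsely surjective with a uniform constant: every apartment of $Y'$ is the image of an apartment of $X$, so it is $D_0$--close to some $f(F)$.

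Second, since $X$ and $Y'$ have no rank-one factors, \cite[Thm~1.1.3]{kleinerleeb:rigidity} applies: the quasiisometry $X\to Y'$ lies within bounded distance $C$ of a homothety, after permuting and rescaling de Rham (or building) factors. A homothety of this kind sends singular $p$--flats to singular $p$--flats and singular Weyl cones to singular Weyl cones. Because $Y'$ has the same spherical type as $Y$, its walls are walls of $Y$, so singular objects of $Y'$ are singular in $Y$. This gives both conclusions with $D=C+D_0$ and $D'=D'(q,D)$.

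The main obstacle is that the constant $C$ must depend only on $q$ (and $n$, $r$), not on $f$, $X$ or $Y$. I would get this through the proof of \cite{kleinerleeb:rigidity}, where the distance to the homothety is governed by how far flats are displaced. By \cref{thm:fully_branching_most_general}, that displacement is at most $D_0(q,n,\text{type})$. Concretely, one checks that the boundary-induced map sends each apartment $F$ to within $D_0$ of its image apartment, and that points of $X$ are coarse intersections of boundedly many apartments. Then the intersection estimate, analogous to \cref{cor:distance_from_intersection}, bounds $C$ uniformly.

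If this uniformity proves hard to extract, I would instead argue directly as in \cref{thm:strong_fully_branching}. A singular $p$--flat is an intersection of boundedly many apartments, so a version of \cref{prop:intersection_semisingular} with uniform constants would send singular geodesics within a uniform distance of singular geodesics. In that version, Weyl cones replace orthants and the Helly step is replaced by convexity in the Euclidean building. \cref{prop:singular_quasi_flats} and the Weyl-cone version of \cref{lem:orthants_to_semisingular_orthants} would then finish the argument.
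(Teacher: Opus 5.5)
Your proposal is correct and follows essentially the paper's argument. \cref{thm:fully_branching_most_general} gives the combinatorial embedding $\partial_TX\to\partial_TY$; \cite[Thm~3.1]{kleinerleeb:rigidity:invariant} identifies its image with $\partial_TY'$ for a sub-symmetric space or sub-building $Y'\subseteq Y$ onto which $f$ is a quasiisometry; and \cite[Thm~1.1.3]{kleinerleeb:rigidity} then puts $f$ near a homothety, which preserves singular flats and Weyl cones. The paper passes over the uniformity of the Kleiner--Leeb constant that you flag. A cleaner fix than reopening their proof is to note that $Y'$ is factorwise homothetic to $X$ with scale factors in $[1/q',q']$, and then apply \cite[Thm~1.1.3]{kleinerleeb:rigidity} to the resulting self-quasiisometry of $X$. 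Its constants depend only on $q$, so the bound is independent of $f$.
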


\begin{theorem}
Let $X = X_1 \times \cdots \times X_n$ and $Y = Y_1 \times \cdots \times Y_n$ be products of irreducible symmetric spaces of non-compact type and thick Euclidean buildings, with no compact, Euclidean, or rank-one factors. Assume moreover that $X$ and $Y$ have the same spherical type and rank. Every quasiisometric embedding $f : X \to Y$ is at finite distance from a product map $(f_1,\dots,f_n):X_1\times\cdots\times X_n\to Y_1\times\cdots\times Y_n$, after possibly permuting the factors of $Y$, where each $f_i:X_i\to Y_i$ is a homothetic embedding.
\end{theorem}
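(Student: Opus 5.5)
The plan is to reduce to the quasiisometry rigidity theorem of Kleiner--Leeb \cite[Theorem~1.1.3]{kleinerleeb:rigidity}, using the subbuilding statement \cite[Thm~3.1]{kleinerleeb:rigidity:invariant} quoted just before \cref{thm:strong_fully_branching_general_case}.

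\textbf{Step 1: a boundary embedding.} By \cref{thm:fully_branching_most_general} and the discussion following it, $f$ sends every $n$--flat of $X$ uniformly close to an $n$--flat of $Y$. Since $f$ coarsely preserves intersections of flats, it therefore induces a combinatorial embedding $\partial f:\partial_TX\to\partial_TY$. This embedding is an isomorphism from each apartment onto an apartment. Because $X$ and $Y$ have the same spherical type, $\partial f$ preserves types.

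\textbf{Step 2: the subspace $Y'$.} The image of $\partial f$ is a subbuilding $B\subseteq\partial_TY$ with all apartments of $\partial_TY$ of full rank. By \cite[Thm~3.1]{kleinerleeb:rigidity:invariant}, $B=\partial_TY'$ for a closed convex subspace $Y'\subseteq Y$ that is itself a symmetric space or Euclidean building. Moreover $f$ is at finite distance from a quasiisometry $X\to Y'$.

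\textbf{Step 3: splitting $Y'$ compatibly.} Next I would show that $Y'$ splits as a product compatible with the factors of $Y$. The spherical building $\partial_TY$ decomposes uniquely as the join $\partial_TY_1*\cdots*\partial_TY_n$ of its irreducible components, and likewise $\partial_TX=\partial_TX_1*\cdots*\partial_TX_n$. Both $X$ and $Y$ have $n$ factors with matching irreducible types.
\begin{itemize}
\item Since $\partial f$ is a type-preserving embedding, it sends each irreducible join factor $\partial_TX_i$ into a single factor $\partial_TY_{\sigma(i)}$, for some permutation $\sigma$. Indeed, vertices in different join factors are at angle $\frac\pi2$ with all types joined, and type preservation forces each irreducible component to land in one component.
\item Hence $B=B_1*\cdots*B_n$ with $B_i\subseteq\partial_TY_{\sigma(i)}$ a thick subbuilding of the same type as $\partial_TX_i$.
\item Applying \cite[Thm~3.1]{kleinerleeb:rigidity:invariant} factorwise gives convex subspaces $Y'_{\sigma(i)}\subseteq Y_{\sigma(i)}$ with $\partial_TY'_{\sigma(i)}=B_i$.
\item The product $\prod_iY'_{\sigma(i)}$ is a convex subspace of $Y$ with Tits boundary $B$. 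It is therefore parallel to $Y'$, so we may take $Y'=\prod_iY'_{\sigma(i)}$.
\end{itemize}
Each $Y'_{\sigma(i)}$ is irreducible with no rank-one, Euclidean, or compact factor, because its boundary is an irreducible thick building of the same type as $\partial_TX_i$.

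\textbf{Step 4: applying Kleiner--Leeb.} Now $f:X\to Y'$ is a quasiisometry between products of irreducible symmetric spaces and thick Euclidean buildings with no rank-one, Euclidean, or compact factors. By \cite[Theorem~1.1.3]{kleinerleeb:rigidity}, $f$ is at finite distance from a product of homotheties $X_i\to Y'_{\tau(i)}$ after a permutation $\tau$. Step 3 forces $\tau$ to agree with $\sigma$, since the product map induces the same boundary map $\partial f$. Composing each homothety with the isometric inclusion $Y'_{\sigma(i)}\hookrightarrow Y_{\sigma(i)}$ yields the homothetic embeddings $f_i$.

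\textbf{Main obstacle.} I expect Step 3 to be the hardest part: checking rigorously that $\partial f$ respects the join decomposition, and that the Kleiner--Leeb subspace $Y'$ coincides, up to parallelism, with a product of convex subspaces of the individual factors. If Step 3 gives trouble, I would instead first apply Kleiner--Leeb to $f:X\to Y'$ to get the product structure on $Y'$. I would then show that each factor of $Y'$ lies in a single factor of $Y$, using the de~Rham decomposition of the convex subspace $Y'\subseteq\prod Y_j$ together with irreducibility of its factors.
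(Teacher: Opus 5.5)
Your proposal is correct and follows essentially the same route as the paper. Both arguments obtain the boundary embedding from \cref{thm:fully_branching_most_general}, use \cite[Thm~3.1]{kleinerleeb:rigidity:invariant} to realise its image as $\partial_TY'$ so that $f$ becomes a quasiisometry $X\to Y'$, and finish with Kleiner--Leeb quasiisometric rigidity for products \cite[Theorem~1.1.2]{kleinerleeb:rigidity}. Your Step~3 only makes explicit, correctly via the irreducible join decompositions of apartments, that $Y'$ is a product of convex subspaces of the individual factors $Y_j$, which the paper takes directly from the cited subbuilding theorem.
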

\begin{proof}
    By \Cref{thm:fully_branching_most_general}, $f$ sends top-dimensional flats of $X$ within uniformly finite Hausdorff distance from top-dimensional flats of $Y$. Hence $f$ induces a combinatorial embedding $\partial_T X \to \partial_T Y$. Its image is a top-dimensional subbuilding. By \cite[Theorem~3.1]{kleinerleeb:rigidity:invariant}, since $Y$ has no rank-one factor, this subbuilding is the Tits boundary of a totally geodesic product $Y'\subseteq Y$ of subsymmetric spaces or subbuildings. Thus $f$ may be viewed as a quasiisometry $X\to Y'$. The conclusion then follows from \cite[Theorem~1.1.2]{kleinerleeb:rigidity}.
\end{proof}

Compare with \cite[Theorem~1.2]{nguyen:quasiisometric}, which gives a product decomposition for quasiisometric embeddings between reducible symmetric spaces.

\bibliographystyle{alpha}
\footnotesize{\bibliography{biblio}}
\Addresses
\end{document}